\newcommand{\citep}{\autocite}
\newcommand{\citet}{\textcite}
\DeclareMathOperator{\Exists}{\exists}
\DeclareMathOperator{\Forall}{\forall}
\newcommand{\MTkillspecial}[1]{
	\begingroup%
	\catcode`\&=9%
	\let\\\relax%
	\scantokens{#1}%
	\endgroup%
}
\newcommand{\MTemptyplaceholder}{\:\cdot\:}
\DeclarePairedDelimiter\abs\lvert\rvert
\reDeclarePairedDelimiterInnerWrapper\abs{star}{%
	\mathopen{#1\vphantom{\MTkillspecial{#2}}\kern-\nulldelimiterspace\right.}%
	\ifblank{#2}{\MTemptyplaceholder}{#2}%
	\mathclose{\left.\kern-\nulldelimiterspace\vphantom{\MTkillspecial{#2}}#3}%
}
\DeclarePairedDelimiterXPP\iprodWrapper[2]{}{\langle}{\rangle}{}{
	\ifblank{#1}{\MTemptyplaceholder}{#1},
	\ifblank{#2}{\MTemptyplaceholder}{#2}
}
\NewDocumentCommand\iprod{ s o m m }{
	\IfBooleanTF {#1}
	{ \iprodWrapper*{#3}{#4} }
	{ \IfNoValueTF{#2}
		{
			\iprodWrapper{#3}{#4} 
		}
		{
			\iprodWrapper[#2]{#3}{#4}
		}
	}
}
\DeclarePairedDelimiterXPP\normWrapper[1]{}\lVert\rVert{}{\ifblank{#1}{\MTemptyplaceholder}{#1}}
\NewDocumentCommand\norm{ s o m }{
	\IfBooleanTF {#1}
	{ \normWrapper*{#3}}
	{ \IfNoValueTF{#2}
		{
			\normWrapper{#3}
		}
		{
			\normWrapper[#2]{#3}
		}
	}
}
\newcommand{\tnorm}[1]{{\left\vert\kern-0.25ex\left\vert\kern-0.25ex\left\vert #1 
    \right\vert\kern-0.25ex\right\vert\kern-0.25ex\right\vert}}
\providecommand\given{}
\DeclarePairedDelimiterX\Set[1]\{\}{%
	\renewcommand\given{\SetSymbol[\delimsize]}
	#1
}
\DeclarePairedDelimiterXPP\ProbWrapper[2]{#1}(){}{
	\renewcommand\given{\nonscript\:\delimsize\vert\nonscript\:\mathopen{}}
\NewDocumentCommand\prob{ s O{} O{} O{} m }{
	\ifblank {#5}{\mathrm{P}_{#2}^{#3}}
		{\IfBooleanTF {#1}
			{ \ProbWrapper*{\mathrm{P}_{#2}^{#3}}{#5} }
			{ \IfNoValueTF{#4}
				{
					\ProbWrapper{\mathrm{P}_{#2}^{#3}}{#5}
				}
				{
					\ProbWrapper[#4]{\mathrm{P}_{#2}^{#3}}{#5}
				}
			}
		}
}
\NewDocumentCommand\qrob{ s O{} O{} O{} m }{
	\ifblank {#5}{\mathrm{Q}_{#2}^{#3}}
		{\IfBooleanTF {#1}
			{ \ProbWrapper*{\mathrm{Q}_{#2}^{#3}}{#5} }
			{ \IfNoValueTF{#4}
				{
					\ProbWrapper{\mathrm{Q}_{#2}^{#3}}{#5}
				}
				{
					\ProbWrapper[#4]{\mathrm{Q}_{#2}^{#3}}{#5}
				}
			}
		}
}
\DeclarePairedDelimiterXPP\EVWrapper[2]{#1}[]{}{
	\renewcommand\given{\mathrel{}\mathclose{}\delimsize\vert\mathopen{}\mathrel{}}
	#2
}
\NewDocumentCommand\EV{ s O{} O{} O{} m }{
	\ifblank {#5}{\mathrm{E}_{#2}^{#3}}
		{\IfBooleanTF {#1}
			{ \EVWrapper*{\mathrm{E}_{#2}^{#3}}{#5} }
			{ \IfNoValueTF{#4}
				{
					\EVWrapper{\mathrm{E}_{#2}^{#3}}{#5}
				}
				{
					\EVWrapper[#4]{\mathrm{E}_{#2}^{#3}}{#5}
				}
			}
		}
}
\NewDocumentCommand{\Lip}{O{} m}{
    \ifblank {#1} {\norm{#2}_{\operatorname{Lip}}}
    {\norm{#2}_{\operatorname{Lip(#1)}}}
    }
\DeclarePairedDelimiter{\ceil}{\lceil}{\rceil}
\DeclarePairedDelimiter{\floor}{\lfloor}{\rfloor}
\renewcommand{\underbar}[1]{\underaccent{\bar}{#1}}
\providecommand{\transpose}{^{\intercal}}
\newcommand{\inv}{^{{\mathsmaller{-1}}}}
\newcommand{\identity}{\mathbb{I}}		
\newcommand{\indicator}{\mathbb{1}}		
\providecommand*{\bigcupdot}{%
	\mathop{%
		\vphantom{\bigcup}%
		\mathpalette\@bigcupdot{}%
	}%
}
\newcommand*{\@bigcupdot}[2]{%
	\ooalign{%
		$\m@th#1\bigcup$\cr
		\sbox0{$#1\bigcup$}%
		\dimen@=\ht0 %
		\advance\dimen@ by -\dp0 %
		\sbox0{\scalebox{2}{$\m@th#1\cdot$}}%
		\advance\dimen@ by -\ht0 %
		\dimen@=.5\dimen@
		\hidewidth\raise\dimen@\box0\hidewidth
	}%
}
\newcommand{\D}{\ensuremath{\mathrm{d}}}
\let\Pr=P
\newcommand{\subalign}[1]{%
	\vcenter{%
		\Let@ \restore@math@cr \default@tag
		\baselineskip\fontdimen10 \scriptfont\tw@
		\advance\baselineskip\fontdimen12 \scriptfont\tw@
		\lineskip\thr@@\fontdimen8 \scriptfont\thr@@
		\lineskiplimit\lineskip
		\ialign{\hfil$\m@th\scriptstyle##$&$\m@th\scriptstyle{}##$\crcr
			#1\crcr
		}%
	}
}
	\renewcommand{\phi}{\varphi}			
	\renewcommand{\epsilon}{\varepsilon}	
	\renewcommand{\theta}{\vartheta}		
	\renewcommand{\Delta}{\varDelta}		
\newcommand{\smallo}{
	  \mathchoice
	    {{\scriptstyle\mathcal{O}}}
	    {{\scriptstyle\mathcal{O}}}
	    {{\scriptscriptstyle\mathcal{O}}}
	    {\scalebox{.7}{$\scriptscriptstyle\mathcal{O}$}}
	  }	
	\newcounter{thc}[section]
	\numberwithin{thc}{section}
	\numberwithin{equation}{section}
	\theoremstyle{plain}
		\newtheorem{corollary}[thc]{Corollary}
		\newtheorem{proposition}[thc]{Proposition}
		\newtheorem*{proposition*}{Proposition}
		\newtheorem{theorem}[thc]{Theorem}
		\newtheorem{lemma}[thc]{Lemma}
		\newtheorem*{lemma*}{Lemma}
		\newtheorem{assumption}[thc]{Assumption}
	\theoremstyle{definition}
		\newtheorem{example}[thc]{Example}
		\newtheorem{remark}[thc]{Remark}
\title{Nonparametric Bayesian Inference for Stochastic Reaction-Diffusion Equations}
\author{Randolf Altmeyer\footnote{Imperial College London \\ Email: r.altmeyer<at>imperial.ac.uk}\,  \& Sascha Gaudlitz\footnote{Humboldt--Universität zu Berlin \\ Email: sascha.gaudlitz<at>hu-berlin.de}}
\date{}
\begin{document}

\maketitle
\begin{abstract}
	We consider the Bayesian nonparametric estimation of a nonlinear reaction function in a reaction-diffusion stochastic partial differential equation (SPDE). The likelihood is well-defined and tractable by the infinite-dimensional Girsanov theorem, and the posterior distribution is analysed in the growing domain asymptotic. Based on a Gaussian wavelet prior, the contraction of the posterior distribution around the truth at the minimax optimal rate is proved. The analysis of the posterior distribution is complemented by a semiparametric Bernstein--von Mises theorem. The proofs rely on the sub-Gaussian concentration of spatio-temporal averages of transformations of the SPDE, which is derived by combining the Clark-Ocone formula with bounds for the derivatives of the (marginal) densities of the SPDE.
\end{abstract}

\noindent\textit{2020 MSC subject classifications.} Primary 62G20, 60H15; secondary 60H07, 62F15. \\
\textit{Key words.} SPDE, Reaction-diffusion equations, Asymptotics of nonparametric Bayes procedures, Bernstein--von Mises theorem.

\section{Introduction}

For a time horizon $T>0$ and a bounded spatial domain $\Lambda\subset\mathbb{R}$ consider the semi-linear SPDE
\begin{equation}
    \D X_t(y) = \Updelta X_t(y)\,\D t + f(X_t(y))\,\D t + \,\D W_t(y),\quad 0\le t\le T,y\in\Lambda,\label{eq:SPDE}
\end{equation}
with a continuous and deterministic initial value $X_0$, a Lipschitz function $f:\mathbb{R}\to\mathbb{R}$ and a cylindrical Wiener process $W$ defined on $L^2(\Lambda)$. We endow the SPDE with homogeneous Neumann boundary conditions, such that $\Delta =\partial_{y}^2$ is the Neumann-Laplacian. We are interested in the nonparametric Bayesian inference on the function $f$ from a single observation of $(X_t(y)\colon (t,y)\in [0,T]\times \Lambda)$ solving \eqref{eq:SPDE}. 

Semi-linear SPDEs of type \eqref{eq:SPDE} are known as \emph{stochastic reaction-diffusion equations}. They are of fundamental importance in various fields, such as cell biology \citep{Alonso2018, Altmeyer2020b}, biodiversity \citep{groseljHowTurbulenceRegulates2015}, climate modelling \citep{debusscheStochasticChafeeInfante2013, franzkeStochasticClimateTheory2015}, phase segregation and interface modelling \citep{spohnInterfaceMotionModels1993, funakiScalingLimitStochastic1995, kharchenkoNoiseInducedPatterning2009, bertiniStochasticAllenCahn2017}, optics \citep{caraballoEffectNoiseChafeeInfante2006, kamelStochasticAnalysisSoliton2024} and spatial statistics \citep{roquesSpatialStatisticsStochastic2022}. The Laplacian describes diffusion in a medium and the nonlinear \emph{reaction function} $f$ captures local state-dependent reactions. For example, if $X_t(y)$ models to heat diffusion at a time $t$ and location $y$, then the function $f$ corresponds to temperature-dependent reactions. The choice $f(x) = x(x-a)(x-b)$ yields the Allen-Cahn equation in phase field modelling with stable points $a,b\in\mathbb{R}$ \citep{berglundIntroductionSingularStochastic2022}. The \enquote{space-time white noise} $\D W_t(y)$ represents neglected effects arising from the absence of detailed knowledge about the physical setting \citep{benzi1989stochastically}.

Inference for semi-linear SPDEs is well-studied with respect to the linear part, in particular for the diffusivity in a stochastic heat equation \citep{Huebner1995, krizCentralLimitTheorems2019, cialencoDriftEstimationDiscretely2020, Chong2020, Altmeyer2021, Hildebrandt2021a, altmeyerOptimalParameterEstimation2024}. Much less is known about estimating the reaction function $f$. References for the classical noiseless inverse problem of inferring $f$ from boundary or interior measurements of $\Lambda$ (with $\D W_t(y)=0$) can be found in \citep{cannonStructuralIdentificationUnknown1998}. In our measurement model, maximum likelihood estimation for a finite-dimensional reaction parameter is studied by \citet{Ibragimov1999,Ibragimov2000,Ibragimov2001,Ibragimov2003} in the small noise limit, by \citet{gaudlitzEstimationReactionTerm2023} for vanishing diffusivity and by  \citet{goldysParameterEstimationControlled2002} as $T\to\infty$. Nonparametric procedures were considered by  \citet{hildebrandtNonparametricCalibrationStochastic2023}, who obtain $L^2$ convergence rates for least-squares estimators in the large time limit, and by \citet{gaudlitzNonparametricEstimationReaction2023} using kernel-type estimators when diffusivity and noise level are coupled and both tend to zero. Bayesian posteriors were only studied when $f$ does not depend on the state $X_t(y)$ and the solution to \eqref{eq:SPDE} is Gaussian, see \citep{bishwalBernsteinvonMisesTheorem2002,yanBayesianInferenceGaussian2020a}.

In this paper, the log-likelihood function for the observation $(X_t(y)\colon (t,y)\in [0,T]\times \Lambda)$ is obtained from an infinite-dimensional Girsanov theorem. It is a quadratic form in $f$ such that a conjugate Gaussian process prior gives rise to a Gaussian posterior. This allows for efficient posterior sampling techniques, see Section \ref{subsec:Numerics_Balanced} for numerical examples and the approximation of the posterior from discrete observations. The posterior mean provides a point estimator for the reaction function $f$, and coincides with the Bayesian maximum-a-posterior (MAP) estimator in our Gaussian setting. Uncertainty quantification with posterior credible sets is computationally feasible.

We provide theoretical guarantees for this approach relative to a data generating \enquote{truth} $f_0$ in the large domain limit: With the time horizon $T$ fixed and  $\Lambda=\lambda\bar\Lambda$ for an open interval $\bar\Lambda\subset\mathbb{R}$, we assume that $\lambda\to\infty$. Large domain asymptotics are common in spatial statistics \citep{mardia1984, zhang2005, giordano2023nonparametric, guan2008consistent}, but our analysis of the posterior hinges on establishing that the solution to \eqref{eq:SPDE} is \emph{spatially ergodic} with respect to spatial averages of $X_t(y)$ \citep{chenSpatialErgodicitySPDEs2021} and on identifying the (spatially) invariant measure. In fact, one of our main results shows for a density $p_{f_0}$ and a large class of functions functions $g$ the convergence in probability of
\begin{equation}
	\int_0^T\frac{1}{\lambda}\int_{\lambda\bar{\Lambda}} g(X_t(y))\,\D y\D t \longrightarrow \int_{\mathbb{R}}g(z)p_{f_0}(z)\D z,\quad\lambda\to\infty.\label{eq:spatial_ergodicity}
\end{equation}

Following the established test-based approach of Bayesian nonparametrics \citep{ghosalConvergenceRatesPosterior2000} and by exploiting the spatial ergodicity, we obtain posterior contraction rates as $\lambda\to\infty$ with $T$ fixed. Consequently, we deduce frequentist minimax optimal convergence rates for the posterior mean in $L^2$-loss over Hölder balls. Using the methodology introduced by \citet{castillo2013} (see also the recent extensions by \citet{nicklNonparametricStatisticalInference2020, giordano2025} to diffusion processes), we subsequently show that the posterior satisfies a nonparametric Bernstein--von Mises theorem. It demonstrates that the rescaled posterior, centred at the posterior mean, corresponds asymptotically as $\lambda\to\infty$ to the law of a Gaussian process (defined on suitable negative Sobolev spaces) with a covariance operator that is \enquote{minimal} in an information-theoretic sense. In particular, linear functionals of the posterior mean provide efficient estimators attaining the parametric rate for the corresponding linear functionals of the reaction function. To the best of our knowledge, these are the first nonparametric Bayesian contraction results and the first Bernstein--von Mises theorem for semi-linear SPDEs. 

The test-based approach relies on relating what can be considered the \enquote{natural distance} in our statistical model, namely the path-dependent \emph{Hellinger semi-metric} 
\begin{equation*}
	h_\lambda(f,g)^2
	\coloneq \int_0^T\frac{1}{\lambda}\int_{\lambda\bar{\Lambda}} (f(X_t(y))-g(X_t(y)))^2\,\D y\D t,
\end{equation*}
to the $L^2$ distance, which was first done for (temporal) semi-martingales by  \citet{vandermeulenConvergenceRatesPosterior2006}. Since classical techniques such as local times, martingale approximations or spectral properties (cf. \citet{Kutoyants2013,vandermeulenConvergenceRatesPosterior2006,nicklNonparametricStatisticalInference2020,aeckerle-willemsConcentrationScalarErgodic2021,dalalyanAsymptoticStatisticalEquivalence2007,trottnerConcentrationAnalysisMultivariate2023,comteNonparametricEstimationFractional2019}) are not available for the process $y\mapsto X_t(y)$, we develop uniform sub-Gaussian concentration inequalities for the spatio-temporal averages in \eqref{eq:spatial_ergodicity}. Such results have recently been obtained in several works starting with \citep{chenSpatialErgodicitySPDEs2021}, but these are suboptimal for our purposes as they depend on the Lipschitz constant of the function $g$ in \eqref{eq:spatial_ergodicity}. Instead, building on ideas from \citet{gaudlitzNonparametricEstimationReaction2023}, we use tools from Malliavin Calculus to rewrite the spatial averages as martingales. Suitably integrating by parts and upper bounding the marginal densities of $X_{t}(y)$ allows us to obtain sharp sub-Gaussian concentration inequalities in terms of the $L^1$-norm of $g$. 

Let us briefly mention that our measurement model is equivalent to the small-diffusivity setting of \citet{gaudlitzEstimationReactionTerm2023, gaudlitzNonparametricEstimationReaction2023}. Using Lemma A.2 of \citet{gaudlitzNonparametricEstimationReaction2023}, the rescaled process $Y_t(y)\coloneq X_t(\lambda y)$, $y\in \bar{\Lambda}$, $0\le t\le T$ solves
\begin{equation}
    \D Y_t(y) = \nu\Updelta Y_t(y)\,\D t + f(Y_t(y))\,\D t + \sigma\,\D W_t(y),\label{eq:Y_rescaled}
\end{equation}
with known diffusivity $\nu\coloneq\lambda^{-2}$ and noise level $\sigma\coloneq\lambda^{-1/2}$. It follows that the Bayesian approach applies equivalently to $Y$ and that all results derived for $X$ as $\lambda\to\infty$ transfer to $Y$ as $\nu,\sigma\to 0$. The latter is physically relevant, cf. \citep{ Altmeyer2020b}. The specific coupling $\nu/\sigma^4\asymp 1$ ensures that the spatially ergodic averages in \eqref{eq:spatial_ergodicity} do not degenerate as $\nu,\sigma\to 0$. This is closely related to the well-known scaling assumption in the Fluctuation-Dissipation theorem \citep{pavliotisStochasticProcessesApplications2014} on the drift and volatility coefficients of a finite-dimensional diffusion process, which guarantees the existence of a temporal invariant measure. For extensions of the SPDE model \eqref{eq:SPDE} to multivariate domains, linear operators different from the Laplacian and coloured noise, as well as further comparisons to other asymptotic regimes see Section \ref{subsec:Discussion}.

The paper is structured as follows. We introduce the basic notations and the statistical model in Sections \ref{subsec:Basic_Notation}- \ref{subsec:Bayes_Setting_Posterior}. Spatial ergodicity of $X$ is explained in Section \ref{subsec:Spatial_Ergodicity}. In Sections \ref{subsec:Posterior_Contraction} and \ref{subsec:BvM}, we state the posterior contraction and the Bernstein--von Mises Theorem for the posterior. Section \ref{sec:Bayes_Setting} is concluded by a numerical illustration in Section \ref{subsec:Numerics_Balanced} and a discussion in Section \ref{subsec:Discussion}. Section \ref{sec:Proof_Subgaussian_Concentration} contains the proof of the ergodic properties stated in Section \ref{subsec:Spatial_Ergodicity}, and the proofs of the posterior contraction and the Bernstein--von Mises Theorem are carried out in Section \ref{sec:Bayes_proofContraction}. The remaining technical proofs are included in the Appendices \ref{sec:Appendix_Stats} and \ref{sec:Appendix_Probability}.

\section{Main results}\label{sec:Bayes_Setting}

\subsection{Basic notation}\label{subsec:Basic_Notation}

For a metric space $(\mathcal{X},d)$ and a function $U\colon\mathcal{X}\to\mathbb{R}$, we define the supremum and Lipschitz norms
\begin{align*}
	\norm{U}_{L^\infty(\mathcal{X})}\coloneq \sup_{x\in\mathcal{X}}\abs{U(x)},\quad \Lip[\mathcal{X}]{U} \coloneq \norm{U}_{L^\infty(\mathcal{X})}+\sup_{x,y\in\mathcal{X}, x\neq y} \frac{\abs{U(x)-U(y)}}{d(x,y)},
\end{align*}
and let $B(a,d,L)$ denote the ball in $(\mathcal{X},d)$ with centre $a\in\mathcal{X}$ and radius $L>0$. We denote $\Lip{\,\cdot\,}\coloneq \Lip[\mathbb{R}]{\,\cdot\,}$. For an open or closed set $O\subset\mathbb{R}$, $s\in\mathbb{R}$ and $0<p\le \infty$, we denote by $L^p(O)$ and $H^s(O)$ the usual Lebesgue and $L^2$-Sobolev spaces equipped with the norms $\norm{}_{L^p(O)}$ and $\norm{}_{H^s(O)}$. For $s\ge 0$ and open $O\subset\mathbb{R}$ let $C^s(O)$ be the Hölder spaces with
\begin{equation*}
   \norm{f}_{C^{s}(O)}\coloneq \max_{k=0,\dots,\floor{s}}\norm{f^{(k)}}_{L^\infty(O)} + \sup_{x,y\in O, x\neq y}\frac{\abs{f^{(\floor{s})}(x)-f^{(\floor{s})}(x)}}{\abs{x-y}^{s-\floor{s}}}<\infty,
\end{equation*}
where $f^{(k)}$, $k\in\mathbb{N}_0$, denotes the $k$-th derivative of $f$. If $O$ is closed, then the derivatives are taken one-sided. Note that functions belonging to $C^s(O)$ have bounded derivatives up to order $\floor{s}$, even if $O$ is unbounded. We write $C(O)=C^0(O)$. For brevity, we set
\begin{equation*}
    \mathcal{F}_{s,K}\coloneq B(0,\norm{}_{C^s(\mathbb{R})},L),\quad s\geq 0.
\end{equation*}
We write $a\lesssim b$ (or $b\gtrsim a$) if there exists a constant $0<C<\infty$ depending only on nonasymptotic quantities such that $a\le Cb$ and $a\sim b$ if $a\lesssim b$ and $a\gtrsim b$.
 We highlight the dependency of the constant $C$ on a particular parameter $\xi$ by $a\lesssim_\xi b$. 
 We write $\xrightarrow{\prob*{}}$ for convergence in probability and $\xrightarrow{d}$ for convergence in distribution of random variables. 
 For equality in distribution under a law $\qrob{}$ we write $\overset{\qrob{}}{=}$. 
 For $a,b\in\mathbb{R}$ let $a\wedge b\coloneqq \min(a,b)$ and $a\vee b \coloneqq \max(a,b)$. 
 For a set $M\subset\mathbb{R}$ and a point $a\in\mathbb{R}$ we define their distance as $\operatorname{dist}(a,M)\coloneqq \inf\Set{\abs{a-m}\given m\in M}$. 
 The Lebesgue-measure of a Borel-set $M\subset\mathbb{R}$ is denoted by $\abs{M}$ and its indicator function by $\indicator_M$.

\subsection{SPDE and likelihood}\label{subsec:SPDE_Likelihood}

Consider the semi-linear SPDE \eqref{eq:SPDE} with a Lipschitz-continuous reaction function $f\colon\mathbb{R}\rightarrow\mathbb{R}$ and a continuous and deterministic initial condition $X_0$. Suppose that $\Lambda$ has length $\lambda = \abs{\Lambda}\geq 1$, such that $\Lambda=\lambda\bar\Lambda$ for an open interval $\bar\Lambda\subset\mathbb{R}$ of unit length. The time horizon $T>0$ is fixed throughout, and we sometimes write $F(z)\coloneq f\circ z$ for $z\in L^2(\Lambda)$. 

The classical mild solution concept of \citet{DaPrato2014} shows that \eqref{eq:SPDE} has a unique strong solution $X$ in the probability sense taking values in $C([0,T],L^2(\Lambda))$, which is also a mild solution in the analytical sense, see Theorem 7.5 of \citet{DaPrato2014}. Equivalently, as discussed by \citet{Dalang2011}, the mild solution can be obtained pointwise at $0\leq t\leq T$ and $y\in\Lambda$ as the solution to the integral equation
\begin{equation}
    X_t(y) =\int_\Lambda G_{t-s}^\lambda (y,\eta)X_0(\eta)\,\D\eta+ \int_0^t \int_\Lambda G_{t-s}^\lambda (y,\eta)\mathcal{W}(\D \eta,\D s) +\int_0^t\int_\Lambda G_{t-s}^\lambda (y,\eta)f(X_s(\eta))\,\D \eta\D s.
    \label{eq:mild_solution}
\end{equation}
Here, $G^\lambda$ denotes the Neumann heat kernel on $\Lambda$, and $\mathcal{W}$ (formally understood as $\D W_t/\D t$) is an isonormal Gaussian process on $L^2([0,T],L^2(\Lambda))$. 

A process satisfying the SPDE \eqref{eq:SPDE} in the sense of \eqref{eq:mild_solution} is called \emph{random field solution}  \citep{walshIntroductionStochasticPartial1986}. The representation \eqref{eq:mild_solution} is convenient for our purposes as we frequently rely on pointwise properties of $X_t(y)$. Moreover, the random field solution provides a useful framework for studying the marginal (Lebesgue-) density $p_{f,\lambda}(t,y,\MTemptyplaceholder)$ of $X_t(y)$, which exists for every $0<t\le T$ and $y\in\Lambda$ according to \citep{ballyMalliavinCalculusWhite1998}. We summarise the key probabilistic properties of the random field solution in Subsection \ref{subsec:ProbabilisticProperties}. 

Let us write $X^{\lambda}\coloneq(X_t(y)\,|\, (t,y)\in [0,T]\times\Lambda)$ and denote by $\prob[f]{}$ the law of $X^{\lambda}$ on the path space $C([0,T],L^2(\Lambda))$. In particular, $\prob[0]{}$ is the law of the solution to \eqref{eq:SPDE} with $f\equiv 0$. Our goal is to estimate the reaction function $f$ from a single realisation of $X^{\lambda}\sim \prob[f]{}$. As $f$ is Lipschitz-continuous, the Girsanov theorem (e.g.\ Theorem 10.18 of \citet{DaPrato2014}) implies that the laws $\prob[f]{}$ and $\prob[0]{}$ (with the same initial condition) are equivalent on $C([0,T],L^2(\Lambda))$. Their Radon-Nikodym derivative is given by
\begin{equation*}
	\frac{\D\prob[f]{}}{\D\prob[0]{}} = \exp\left(\int_0^T \iprod{ F(X_t)}{ \D X_t - \Updelta X_t\,\D t}_{L^2(\Lambda)}-\frac{1}{2}\int_0^T \norm{ F(X_t)}_{L^2(\Lambda)}^2\,\D t\right),
\end{equation*}
where we \emph{define} $\int_0^T \iprod*{F(X_t)}{\D X_t-\Updelta X_t\,\D t}_{L^2(\Lambda)}$ as the $L^2(\prob[0]{})$-limit of 
\begin{equation}
	\sum_{k\in\mathbb{N}}\int_0^T\iprod*{F(X_t)}{e_k}_{L^2(\Lambda)}\left(\iprod*{e_k}{\D X_t}_{L^2(\Lambda)} - \iprod*{\Updelta e_k}{X_t}_{L^2(\Lambda)}\,\D t\right)\label{eq:aux_Def_Likelihood}
\end{equation}
for an orthonormal basis $(e_k)_{k\in\mathbb{N}}$ of $L^2(\Lambda)$ contained in the domain of the Neumann Laplacian. In particular, for two Lipschitz-continuous functions $f,f_0\colon\mathbb{R}\to\mathbb{R}$, the log-likelihood function relative to $\prob[f_0]{}$ is equal to
\begin{align}
	\log\left(\frac{\D\prob*[f]{}}{\D\prob*[f_0]{}}\right) & \coloneq \log\left(\frac{\D\prob*[f]{}}{\D\prob*[0]{}}\frac{\D\prob*[0]{}}{\D\prob*[f_0]{}}\right) = \sqrt{\lambda} \mathcal{M}_\lambda-\frac{\lambda}{2}h_\lambda(f,f_0)^2,\label{eq:Bayes:Likelihood_Hellinger}
\end{align}
with $\mathcal{M}_\lambda\coloneq \lambda^{-1/2}\int_0^T\iprod{F(X_s)-F_0(X_s)}{\D W_s}_{L^2(\Lambda)}$ and with the squared \emph{Hellinger semi-metric} defined as
\begin{equation}
	h_\lambda(f,f_0)^2
	\coloneq \frac{1}{\lambda}\int_0^T\int_\Lambda (f(X_t(y))-f_0(X_t(y)))^2\,\D y\D t.\label{eq:Bayes_Intro_Hellinger}
\end{equation}
Since the Hellinger semi-metric can be understood as the quadratic variation of the (temporal) martingale $\mathcal{M}_\lambda$, it characterises the log-likelihood and should be viewed as the natural distance between $f$ and $f_0$. 

The analysis of the posterior distribution defined in the next section relies on finding upper and lower bounds on the expected Hellinger semi-metric $\EV[f_0]{h_{\lambda}(f,f_0)^2}$ in terms of the standard (squared) $L^2(\mathbb{R})$-metric. As $X_t(y)$ is supported on all of $\mathbb{R}$ for all $0<t\le T$, cf. the Gaussian-type density lower bound from Proposition \ref{prop:DensityBounds}, we consider reaction functions supported on a known compact set. Moreover, we restrict to initial conditions satisfying a scaling property.

\begin{assumption}\label{assump:Parameterspace} For a fixed compact interval $\Xi\subset\mathbb{R}$ the parameter set is 
\begin{equation*}
    \Theta\coloneq \Set{f\in C^3(\mathbb{R}) \given \operatorname{supp}(f)\subset \Xi}.
\end{equation*}
There exists a function $\chi\in H^{\tilde{s}}(\bar{\Lambda})\supset C(\bar{\Lambda})$ for some $\tilde{s}>3/2$ such that the initial condition satisfies $X_0(y)=\chi(y/\abs{\Lambda})$ for all $y\in\Lambda$.
\end{assumption}

Restricting to compactly supported regression functions is common when considering random (unbounded) design as in our case \citep{vandermeulenConvergenceRatesPosterior2006,comte2007, vandervaartRatesContractionPosterior2008}. As the Hellinger semi-metric depends on $f$ and $f_0$ only through the difference $f-f_0$, we could remove the support assumption if $f_0$ was known outside of $\Xi$. Note that any $f\in\Theta$ is Lipschitz, such that the random field solution \eqref{eq:mild_solution} always exists. The additional regularity of $f$ is required to control the marginal densities of $X_t(y)$, see Proposition \ref{prop:DensityBoundedDerivative} below. The scaling assumption on the initial conditions ensures the well-behavedness of $X$ in the large domain asymptotic $\lambda\to\infty$. 

\subsection{A Bayesian posterior for the reaction function}\label{subsec:Bayes_Setting_Posterior}

We consider a Bayesian approach for inference on the unknown reaction function $f$ belonging to the parameter set $\Theta$. We begin by defining a prior distribution on $\Theta$ based on a finite-dimensional truncated Gaussian series expansion on $L^2(\Xi)$. Such priors are a common choice in Bayesian nonparametrics \citep{vandervaartRatesContractionPosterior2008} and they are conjugate in our setting, see below. For details on Gaussian processes and the prior measures constructed from them see, e.g., Section 11.2 of \citet{Vaart2017}. 

We describe the series expansion in terms of an orthonormal wavelet basis $(\psi_\mu)_{\mu}$ of $L^2(\Xi)$, where following the notational convention from \citet{cohenNumericalAnalysisWavelet2003, gobetNonparametricEstimationScalar2004} each $\mu=(j,k)$ is a multiindex  and $\abs{(j,k)}\coloneq j$, $j\in \mathbb{N}$, $k=0,\dots, \max(0,2^{j}-1)$. We assume throughout that the $\psi_{\mu}$ are $S$-regular, $S\in\mathbb{N}\cup\{0\}$, boundary-corrected Daubechies wavelets, see Section 3.9 of \citet{cohenNumericalAnalysisWavelet2003} or Section 4.3.5 of \citet{gineMathematicalFoundationsInfiniteDimensional2015} for their construction. In particular, the $\psi_\mu:\mathbb{R}\to \mathbb{R}$ are $S$-times continuously differentiable and supported in $\Xi$. Taking $S\geq s\geq 0$ large enough, recall (e.g. from Equation (4.139) of \citet{gineMathematicalFoundationsInfiniteDimensional2015}) the wavelet characterisation of the Sobolev spaces on $\Xi$ given by
\begin{equation}
  H^s(\Xi) = \Set[\bigg]{f\colon\mathbb{R}\to\mathbb{R}\,\Big\vert\,  f|_{\Xi^c}=0,\,\norm{f}_{H^s(\Xi)}^2 =  \sum_{\mu}2^{2s \abs{\mu}}\iprod{f}{\psi_{\mu}}_{L^2(\Xi)}^2<\infty}.\label{eq:Sobolev_Statistician}
\end{equation}

In the following, we always assume that $S\geq \beta$ is large enough depending on the known regularity of a data generating \enquote{true} reaction function $f_0 \in C^\beta(\Xi)$. For a maximal frequency $M\in\mathbb{N}$, which may depend on $\lambda$, and some $\beta_0\ge 0$ we take as prior $\Pi_\lambda=\mathcal{L}(f)$ the law of the random function
\begin{equation}
	f = \sum_{\abs{\mu}\le M}2^{-\beta_0\abs{\mu}} Z_\mu \psi_{\mu},\quad Z_{\mu}\overset{i.i.d.}{\sim} N(0,1).\label{eq:Prior}
\end{equation}
This induces a centred Gaussian probability measure on $C^S(\mathbb{R})$ supported on the finite-dimensional approximation spaces 
\begin{align*}
	V_M &\coloneq \operatorname{span}(\psi_\mu\,\colon\, \abs{\mu}\leq M)\subset\Theta.
\end{align*} 
Since the $\psi_\mu$ are compactly supported, $V_M$ contains for each $j$ only finitely many $\psi_\mu$ with $\mu=(j,k)$ and by standard wavelet theory $V_M$ has dimension $\operatorname{dim}(V_M)\lesssim 2^M$. 

Denoting by $\ell_{\lambda}(f)\coloneq\log(\D \prob[f]{}/\D \prob[f_0]{})$ the log-likelihood function of \eqref{eq:Bayes:Likelihood_Hellinger}, the posterior distribution arises from Bayes' theorem and takes the form
\begin{align}
	\D \Pi_{\lambda}(f\,|\,X^\lambda) & = \frac{e^{\ell_\lambda(f)}\D \Pi_{\lambda}(f)}{\int_{V_M}{e^{\ell_\lambda(\bar{f})}\D \Pi_{\lambda}(\bar{f})}},\quad f\in C^S(\mathbb{R}).\label{eq:posterior}
\end{align}

Practical implementation of the posterior \eqref{eq:posterior} is straightforward as the Gaussian prior \eqref{eq:Prior} is conjugate noting the quadratic form of log-likelihood \eqref{eq:aux_Def_Likelihood}. To see this, let us write a candidate reaction function $f\in V_M$ relative to the first $\operatorname{dim}(V_M)$ wavelet basis functions as $f=\mathbf{f}\transpose\mathbf{\Psi}$ with coefficients $\mathbf{f}\in\mathbb{R}^{\operatorname{dim}(V_M)}$ and with a vector of basis functions $\mathbf{\Psi}=(\psi_{\mu}\,\colon\,\abs{\mu}\le M)\transpose$. Setting 
\begin{equation*}
		\mathbf{G}\coloneq \int_0^T\int_\Lambda \mathbf{\Psi}(X_t(y))\mathbf{\Psi}(X_t(y))\transpose\,\D y\D t,\quad \mathbf{a}\coloneq\int_0^T \iprod{\mathbf{\Psi}(X_t)}{\D X_t-\Updelta X_t\,\D t}_{L^2(\Lambda)}
\end{equation*}
the log-likelihood function equals $\ell_{\lambda}(f)=\mathbf{a}\transpose\mathbf{f}-(1/2)\mathbf{f}\transpose\mathbf{G}\mathbf{f}$. Denote furthermore by $\mathbf{\Sigma}$ the diagonal covariance matrix of the Gaussian prior $\Pi_{\lambda}$ with entries $2^{-2\beta_0|\mu|}$, $|\mu|\leq M$, on the diagonal according to \eqref{eq:Prior}. Setting $\mathbf{Q}\coloneq (\mathbf{G}+\mathbf{\Sigma}\inv)\inv$, $\boldsymbol{\mu}\coloneq \mathbf{Q} \mathbf{a}$, we conclude that 
\begin{align*}
    \log\left(\D \Pi_{\lambda}(f\,|\,X^\lambda)\right) & \sim \mathbf{a}\transpose\mathbf{f}-\frac{1}{2}\mathbf{f}\transpose(\mathbf{G}+\mathbf{\Sigma}^{-1})\mathbf{f}\sim-\frac{1}{2}(\mathbf{f}-\boldsymbol{\mu})\transpose \mathbf{Q}\inv (\mathbf{f}-\boldsymbol{\mu}),
\end{align*}
such that the posterior distribution of the parameter $\mathbf{f}$ is multivariate normal with mean $\boldsymbol{\mu}$ and covariance matrix $\mathbf{Q}$. This allows for direct sampling methods instead of computationally more demanding MCMC algorithms, see Section \ref{subsec:Numerics_Balanced} for an illustration. Even though there is no analytical formula for higher order Daubechies wavelets, they can be efficiently approximated using the cascade algorithm (Section 6.5 of \citet{daubechiesTenLecturesWavelets1992}). Gaussian conjugacy holds similarly for posteriors of continuously observed (finite-dimensional) diffusion processes with a Gaussian prior on the drift vector, cf. \citep{nicklNonparametricStatisticalInference2020, giordanoNonparametricBayesianInference2022}. 

\subsection{Limit process and spatial ergodicity}\label{subsec:Spatial_Ergodicity}

Our first two main results study the statistical properties of the model in the spatially ergodic setting in the large domain limit $|\Lambda|=\lambda\to\infty$. For $\bar{y}\in\bar{\Lambda}$ define the process $(Z_t^{\bar{y}})_{t\in[0,T]}$ as the random field solution to the SPDE
\begin{equation}
		\D Z_t^{\bar{y}}(y) = \Updelta Z_t^{\bar{y}}(y)\,\D t + f(Z_t^{\bar{y}}(y))\,\D t + \D W_t(y),\quad 0\le t\le T,y\in\mathbb{R},\label{eq:Z}
\end{equation}
on $\mathbb{R}$ with spatially constant initial condition $Z_0^{\bar{y}}(y)= \chi(\bar{y})$, $y\in\mathbb{R}$. By Lemma 7.1 of \citet{chenSpatialErgodicitySPDEs2021} the process $y\mapsto Z_t^{\bar{y}}(y)$ is stationary for every $t\ge 0$ and $\bar{y}\in\bar{\Lambda}$ in the sense that $Z_t^{\bar{y}}(y)\overset{d}{=}Z_t^{\bar{y}}(0)$ for all $y\in\mathbb{R}$.

We begin by showing that the process $(Z_t^{\bar{y}})_{t\in[0,T]}$ is the natural pointwise limit of the rescaled process $X_t(\lambda\bar{y})$, and that ergodic spatial averages of $X$ converge as $\lambda\to\infty$. While the Neumann boundary conditions disappear in the large domain limit, $Z^{\bar{y}}$ still depends on $\bar{y}$ due to the initial condition $X_0(\lambda\bar{y})=\chi(\bar{y})$ being constant and independent of $\lambda$. In particular, when $X_0\equiv 0$, $Z^{\bar{y}}$ does not depend on the specific point $\bar{y}$. 

For the case $f\equiv 0$ and $X_0\equiv 0$, the convergence of $X$ to the global solution $Z$ has been obtained by \citet{Altmeyer2021, pasemannNonparametricDiffusivityEstimation2025} using the convergence of the associated heat semi-groups. Since we are interested in pointwise convergence for both $\bar{y}\in\bar{\Lambda}$ and $0\le t\le T$ held fixed, our proof is based on comparing the heat kernels on $\Lambda$ with Neumann boundary conditions and on $\mathbb{R}$, respectively. The statement for general $f$ and $X_0$ follows using a Gronwall-type expansion and by tracing the impact of the Neumann heat semi-group for a short time $\lambda^{-2}t\to 0$ as $\lambda\to\infty$.

\begin{theorem}\label{thm:ErgodicProperties}
Grant Assumption \ref{assump:Parameterspace} and let $f_0\in C^3(\mathbb{R})$.
    \begin{enumerate}[(a)]
        \item\label{num:thm_Ergodicity1} For every $\bar{y}\in\bar{\Lambda}$ and $0\le t\le T$ we have $      X_t(\lambda \bar{y})\xrightarrow{d}Z_t^{\bar{y}}(0)$, $\lambda\to\infty$.
        \item\label{num:thm_Ergodicity2} For any $g\in L^1(\mathbb{R})$ the spatial ergodicity property holds, i.e.
\begin{equation*}
	\mathcal{G}_{\lambda}(g)\coloneq\int_0^T \frac{1}{\lambda} \int_{\lambda\bar{\Lambda}} g(X_t(y))\,\D y\D t - \int_0^T \int_{\bar{\Lambda}}\EV[f_0]{g(Z_t^{\bar{y}}(0))}\,\D \bar{y}\D t\xrightarrow{\prob[f_0]{}}0,\quad\lambda\to\infty.
\end{equation*}

    \end{enumerate}
\end{theorem}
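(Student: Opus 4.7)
For part~(\ref{num:thm_Ergodicity1}), the approach is to couple $X$ and $Z^{\bar y}$ through the same cylindrical noise $W$ and compare their mild-solution representations using the reflection-principle representation of the Neumann kernel $G^\lambda$ on $\Lambda$ as an image sum of the whole-line kernel $G^\infty$. Since $\bar y$ lies strictly inside the open interval $\bar\Lambda$, every image past the principal one contributes a factor $O(\exp(-c\lambda^2/T))$ uniformly in $t\in[0,T]$, which lets one replace $G^\lambda$ by $G^\infty$ up to $L^2(\prob[f_0]{})$-negligible error. This reduces the difference of initial-condition integrals to $\int_{\mathbb R} G^\infty_t(u)(\chi(\bar y+u/\lambda)-\chi(\bar y))\D u\to 0$ by continuity of $\chi$ and the Gaussian concentration of $G^\infty_t$ on scale $\sqrt T\ll\lambda$, and makes the difference of stochastic convolutions vanish in $L^2(\prob[f_0]{})$. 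A Picard-iteration argument closed by Gronwall's inequality then propagates the convergence through the Lipschitz nonlinearity $f_0$, yielding $L^2(\prob[f_0]{})$-convergence of $X_t(\lambda\bar y)$ to $Z_t^{\bar y}(0)$ and hence the stated convergence in distribution.

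For part~(\ref{num:thm_Ergodicity2}), the plan is to split
\[\mathcal G_\lambda(g)=\bigl(\mathcal G_\lambda(g)-\EV[f_0]{\mathcal G_\lambda(g)}\bigr)+\EV[f_0]{\mathcal G_\lambda(g)}\]
and treat the bias and the centred part separately. After the change of variables $\bar y=y/\lambda$, the bias equals
\[\EV[f_0]{\mathcal G_\lambda(g)}=\int_0^T\!\!\int_{\bar\Lambda}\bigl(\EV[f_0]{g(X_t(\lambda\bar y))}-\EV[f_0]{g(Z_t^{\bar y}(0))}\bigr)\,\D\bar y\,\D t,\]
whose integrand vanishes pointwise by part~(\ref{num:thm_Ergodicity1}): for bounded continuous $g$ this is the portmanteau theorem, and the extension to $g\in L^1(\mathbb R)$ proceeds by a standard mollification combined with the uniform upper bound $\|p_{f_0,\lambda}(t,\lambda\bar y,\cdot)\|_{L^\infty(\mathbb R)}\le C$ (Proposition~\ref{prop:DensityBounds}). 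The same density bound furnishes the dominating envelope $|\EV[f_0]{g(X_t(\lambda\bar y))}|\le C\|g\|_{L^1(\mathbb R)}$, so dominated convergence forces the bias to $0$.

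The main obstacle is the centred term. The plan is to prove the variance estimate $\var_{f_0}(\mathcal G_\lambda(g))=O(\lambda^{-1}\|g\|_{L^1(\mathbb R)}^2)$ and conclude via Chebyshev's inequality. Expanding the variance as a four-fold integral of $\cov_{f_0}(g(X_t(y)),g(X_{t'}(y')))$ and applying the Clark--Ocone formula to each centred factor expresses the covariance as a stochastic integral whose integrand involves the Malliavin derivative $D\,g(X_t(y))=g'(X_t(y))\,DX_t(y)$. Since $DX_t(y)$ inherits heat-kernel localisation on scale $\sqrt{t-s}\le\sqrt T$, covariances with $|y-y'|\gg\sqrt T$ decay rapidly, explaining the $\lambda^{-1}$ prefactor relative to the $\lambda^{-2}$ coming from the spatial averaging. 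The technical heart---flagged in the introduction as the key new ingredient---is to bound the Malliavin-weighted moments by $\|g\|_{L^1(\mathbb R)}$ rather than by $\|g\|_{\mathrm{Lip}}$: one integrates by parts in the Malliavin sense to transfer the derivative off $g$, whereupon the uniform upper bound on the marginal density $p_{f_0,\lambda}(t,y,\cdot)$ converts the resulting quantity into a constant multiple of $\|g\|_{L^1(\mathbb R)}$.
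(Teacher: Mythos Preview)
Your plan matches the paper's approach closely. For (\ref{num:thm_Ergodicity1}) the paper also couples $X$ and $Z^{\bar y}$ through the same noise, compares the Neumann kernel $G^\lambda$ with the whole-line kernel (via domain monotonicity of the Neumann heat kernel rather than the reflection sum, but to the same effect---both give exponential decay in $\lambda$ for $\bar y$ in the interior), and then propagates through the Lipschitz nonlinearity by a Gronwall-type expansion. For (\ref{num:thm_Ergodicity2}) the paper likewise splits bias plus centred term and controls the centred part via Clark--Ocone; it actually pushes further to a sub-Gaussian tail bound (bounding the quadratic variation almost surely and applying a martingale Bernstein inequality), but your variance estimate is enough for the convergence in probability stated here.

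One correction to your centred-term sketch: after integrating by parts to remove the derivative from $g$, the bound you need is not on the density $p_{f_0,\lambda}(t,y,\cdot)$ but on its \emph{derivative}. Concretely, using the Markov property and classical integration by parts,
\[
\bigl|\EV[f_0]{g'(X_t(y))\mid\mathcal F_\tau}\bigr|=\Bigl|\int_{\mathbb R} g(x)\,\partial_x p_{f_0,\lambda}(t-\tau,y,x)\,\D x\Bigr|\le \|g\|_{L^1(\mathbb R)}\,\|\partial_x p_{f_0,\lambda}(t-\tau,y,\cdot)\|_{L^\infty(\mathbb R)},
\]
so the crucial input is $\|\partial_x p_{f_0,\lambda}(t,y,\cdot)\|_{L^\infty}\lesssim t^{-1/2}$ (Proposition~\ref{prop:DensityBoundedDerivative}), established via Malliavin calculus. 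This is exactly why the hypothesis $f_0\in C^3(\mathbb R)$ enters: two extra derivatives of $f_0$ are spent to control one derivative of the density.
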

\begin{proof}
    Part \eqref{num:thm_Ergodicity1} follows from Proposition \ref{prop:Convergencetoglobal} (below). Part \eqref{num:thm_Ergodicity2} follows from Theorem \ref{thm:SubgaussianConcentration} (below).
\end{proof}

While it is generally not true that the pointwise convergence in part \eqref{num:thm_Ergodicity1} of Theorem \ref{thm:ErgodicProperties} implies convergence of the respective densities, this is true here. Indeed, the convergence of the densities can be deduced from the converse Scheffé Theorem (Theorem 1 of \citet{sweetingConverseScheffesTheorem1986}) by combining part \eqref{num:thm_Ergodicity1} of the theorem with the bound on the first derivative of the marginal densities of $X_t(y)$ from Proposition \ref{prop:DensityBoundedDerivative} (below).

The following sub-Gaussian concentration inequality makes the convergence in part \eqref{num:thm_Ergodicity2}  of Theorem \ref{thm:ErgodicProperties} quantitative. This is the key technical tool for controlling the Hellinger semi-metric \eqref{eq:Bayes_Intro_Hellinger} in the proof of Theorem \ref{thm:PosteriorContraction}. The proof of the concentration inequality is nonstandard since the process $y\mapsto X_t(y)$ is neither Gaussian nor are techniques for semi-martingales directly applicable. The main steps of our proof rely on Malliavin Calculus. They are reviewed in Section \ref{sec:Proof_Subgaussian_Concentration}.

\begin{theorem}[Subgaussian concentration of $\mathcal{G}_{\lambda}(g)$]\label{thm:SubgaussianConcentration}
	Grant Assumption \ref{assump:Parameterspace}, let $f_0\in\Theta$ and $g\in L^1(\mathbb{R})$. There exists a constant $0<C<\infty$, depending only on $\norm{f_0}_{C^3(\mathbb{R})}$, $T$ and $\norm{\chi}_{H^{\tilde{s}}(\bar{\Lambda})}$, such that the following hold:
	\begin{enumerate}[(a)]
		\item\label{num:SubgaussianThm2} We have the sub-Gaussian concentration
		      \begin{equation}
			      \prob[f_0]{\abs{\mathcal{G}_{\lambda}(g)-\EV[f_0]{\mathcal{G}_{\lambda}(g)}}\ge \lambda^{-1/2} x}\le 2\exp\bigg(-\frac{x^2}{2 C \norm{g}_{L^1(\mathbb{R})}^2}\bigg),\quad x\ge 0.\label{eq:Subgaussian_2}
		      \end{equation}
	
		\item\label{num:SubgaussianThm3} We can bound the expectation
		      \begin{equation*}
			      \abs{\EV[f_0]{\mathcal{G}_{\lambda}(g)}}\leq \lambda^{-1}C\norm{g'}_{L^{\infty}(\mathbb{R})}.
		      \end{equation*}
	\end{enumerate}
\end{theorem}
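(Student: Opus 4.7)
The plan is to combine the Clark--Ocone formula with a conditional Malliavin integration by parts so that the (temporal) martingale representation of $\mathcal{G}_\lambda(g)-\EV[f_0]{\mathcal{G}_\lambda(g)}$ has an integrand bounded deterministically in terms of $\norm{g}_{L^1(\mathbb{R})}$, yielding the sub-Gaussian tail \eqref{eq:Subgaussian_2} via the exponential martingale inequality; for the expectation bound I would instead couple $X_t(\lambda\bar y)$ to $Z_t^{\bar y}(0)$ and exploit that $g$ is Lipschitz with constant $\norm{g'}_{L^\infty(\mathbb{R})}$.

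For the concentration statement, first approximate $g$ by a smooth compactly supported $g^\varepsilon$ so that Malliavin derivatives are well defined, then use Clark--Ocone to write
\[
    \mathcal{G}_\lambda(g^\varepsilon)-\EV[f_0]{\mathcal{G}_\lambda(g^\varepsilon)}=\int_0^T\int_\Lambda u_{s,\eta}\,\D W_s(\eta),\quad u_{s,\eta}\coloneq\EV[f_0]{D_{s,\eta}\mathcal{G}_\lambda(g^\varepsilon)\given\mathcal{F}_s}.
\]
A deterministic pointwise bound $\abs{u_{s,\eta}}\lesssim\lambda^{-1}\norm{g^\varepsilon}_{L^1(\mathbb{R})}$ integrates over $(s,\eta)\in[0,T]\times\Lambda$ to $\int\abs{u_{s,\eta}}^2\,\D\eta\,\D s\lesssim\norm{g^\varepsilon}_{L^1(\mathbb{R})}^2/\lambda$ using $\abs{\Lambda}=\lambda$, and then \eqref{eq:Subgaussian_2} follows from the exponential martingale inequality. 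The chain rule produces $D_{s,\eta}\mathcal{G}_\lambda(g^\varepsilon)=\lambda^{-1}\int_s^T\int_{\lambda\bar\Lambda}(g^\varepsilon)'(X_t(y))D_{s,\eta}X_t(y)\,\D y\,\D t$, and since $f_0'$ is bounded, the linear SPDE satisfied by $D_{s,\eta}X_t(y)$ yields by Gronwall the pathwise estimate $\abs{D_{s,\eta}X_t(y)}\lesssim G_{t-s}^\lambda(y,\eta)$. To trade $\norm{(g^\varepsilon)'}_{L^\infty(\mathbb{R})}$ for $\norm{g^\varepsilon}_{L^1(\mathbb{R})}$, I would disintegrate each inner conditional expectation over the value of $X_t(y)$ given $\mathcal{F}_s$: writing $\rho_s$ for the conditional density of $X_t(y)$ given $\mathcal{F}_s$ and $k_{t,y,s,\eta}(z)\coloneq\EV[f_0]{D_{s,\eta}X_t(y)\given X_t(y)=z,\mathcal{F}_s}\,\rho_s(z)$, a conditional integration by parts in $z$ yields
\[
    \abs[\big]{\EV[f_0]{(g^\varepsilon)'(X_t(y))D_{s,\eta}X_t(y)\given\mathcal{F}_s}}\le\norm{g^\varepsilon}_{L^1(\mathbb{R})}\sup_z\abs{\partial_z k_{t,y,s,\eta}(z)}.
\]
Combining the pathwise bound on $D_{s,\eta}X_t(y)$ with the conditional analogues of Propositions~\ref{prop:DensityBounds} and~\ref{prop:DensityBoundedDerivative}, I expect $\sup_z\abs{\partial_z k_{t,y,s,\eta}(z)}\lesssim G_{t-s}^\lambda(y,\eta)\,(t-s)^{-c}$ for some $c<1$; since $\int_{\lambda\bar\Lambda}G_{t-s}^\lambda(y,\eta)\,\D y\le 1$, the $(t,y)$-integration delivers the desired pointwise bound on $u_{s,\eta}$, and a density argument lifts the inequality from smooth to general $g\in L^1(\mathbb{R})$.

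For the expectation bound, the Lipschitz estimate reduces the claim to showing $\EV[f_0]{\abs{X_t(\lambda\bar y)-Z_t^{\bar y}(0)}}\lesssim\lambda^{-1}$ uniformly in $(t,\bar y)\in[0,T]\times\bar\Lambda$. Coupling $X$ and $Z^{\bar y}$ through a common driving noise and subtracting their mild formulations reduces this further to the discrepancy between the Neumann kernel $G^\lambda_t(\lambda\bar y,\cdot)$ and the whole-line kernel $G^\infty_t(0,\cdot)$, which the reflection-principle representation of $G^\lambda$ bounds by an exponential in $\lambda^2\operatorname{dist}(\bar y,\partial\bar\Lambda)^2/t$; a Gronwall step absorbs the Lipschitz nonlinearity coming from $f_0$. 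The main obstacle is the conditional density analysis underlying the Malliavin integration by parts: Propositions~\ref{prop:DensityBounds} and~\ref{prop:DensityBoundedDerivative} control only the \emph{marginal} density of $X_t(y)$ and its derivative, whereas the argument needs uniform-in-$\omega$ control of the conditional density $\rho_s$ and its $z$-derivative together with an inverse-Malliavin-matrix-type estimate producing the $(t-s)^{-c}$ scaling with $c<1$. This is the technically delicate step and requires careful Malliavin calculus estimates in the spirit of \citet{gaudlitzNonparametricEstimationReaction2023}.
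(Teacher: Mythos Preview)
Your overall architecture for part \eqref{num:SubgaussianThm2} --- Clark--Ocone, integration by parts to convert $\norm{g'}_{L^\infty}$ into $\norm{g}_{L^1}$, then the exponential martingale (Bernstein) inequality, followed by $L^1$-approximation --- is exactly the paper's route. The difference lies in how the integration by parts is executed, and here you are making your life harder than necessary.

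You propose to disintegrate $\EV[f_0]{(g^\varepsilon)'(X_t(y))D_{s,\eta}X_t(y)\given\mathcal{F}_s}$ over the \emph{joint} conditional law of $(X_t(y),D_{s,\eta}X_t(y))$ given $\mathcal{F}_s$, which forces you to control $\partial_z k_{t,y,s,\eta}$ with $k$ built from $\EV[f_0]{D_{s,\eta}X_t(y)\given X_t(y)=z,\mathcal{F}_s}$. You correctly flag this as the hard step, and indeed Propositions~\ref{prop:DensityBounds} and~\ref{prop:DensityBoundedDerivative} say nothing about this object. The paper avoids this entirely through two simplifications. First, an external lemma (Lemma~4.6 of \citet{gaudlitzNonparametricEstimationReaction2023}) decouples the $g'(X_t(y))$ factor from the Malliavin derivative $D_{\tau,z}X_t(y)$ in the quadratic variation, so that one only needs to bound $\abs{\EV[f_0]{g'(X_t(y))\given\mathcal{F}_\tau}}$. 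Second --- and this is the point you are missing --- the Markov property of $t\mapsto X_t$ gives
\[
    \EV[f_0]{g'(X_t(y))\given\mathcal{F}_\tau}=\EV[f_0]{g'(X_{t-\tau}(y))}\big|_{X_0=X_\tau},
\]
which is an \emph{unconditional} expectation for the SPDE restarted at the (random) initial value $X_\tau$. Now the ordinary integration-by-parts in the state variable applies against the \emph{marginal} density $p_{f_0,\lambda}(t-\tau,y,\cdot)$, and the crucial feature of Proposition~\ref{prop:DensityBoundedDerivative} is that its bound $\abs{\partial_x p_{f_0,\lambda}(s,y,x)}\le p_{\max}^{(1)}s^{-1/2}$ holds for \emph{any} continuous deterministic initial condition. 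So the conditional-density difficulty dissolves: the ``conditional density given $\mathcal{F}_\tau$'' \emph{is} a marginal density, and the needed uniformity is already built into Proposition~\ref{prop:DensityBoundedDerivative}. This yields $\abs{\EV[f_0]{g'(X_t(y))\given\mathcal{F}_\tau}}\le\norm{g}_{L^1}p_{\max}^{(1)}(t-\tau)^{-1/2}$ directly, and feeding this $\kappa(t,\tau)$ into the cited Lemma~4.6 bounds $\langle\mathcal{M}\rangle_T\lesssim\lambda^{-1}\norm{g}_{L^1}^2$.

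For part \eqref{num:SubgaussianThm3} your idea is right, but the claim ``$\EV[f_0]{\abs{X_t(\lambda\bar y)-Z_t^{\bar y}(0)}}\lesssim\lambda^{-1}$ uniformly in $(t,\bar y)$'' is false: near $\partial\bar\Lambda$ the Neumann boundary effect does not vanish and no pointwise rate holds. The correct statement (Corollary~\ref{cor:Convergence_polynomials} with $k=1$) is the \emph{spatially averaged} one, which is all you need since $\mathcal{G}_\lambda(g)$ already carries the $\lambda^{-1}\int_{\lambda\bar\Lambda}$ average. After integrating the kernel-discrepancy and boundary-hitting estimates over $y\in\Lambda$ the $\lambda^{-1}$ rate emerges; your Gronwall treatment of the nonlinearity is the right complement.
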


\begin{proof}
	The proof of the sub-Gaussian concentration in \eqref{num:SubgaussianThm2} is carried out in Section \ref{sec:Proof_Subgaussian_Concentration}. Part \eqref{num:SubgaussianThm3} follows from Corollary \ref{cor:Convergence_polynomials} taking $k=1$. 
\end{proof}


\subsection{Posterior contraction}\label{subsec:Posterior_Contraction}

We proceed with a contraction result for the posterior distribution \eqref{eq:posterior} using the Gaussian process prior \eqref{eq:Prior} for $\beta_0\geq 0$ in the large domain limit $\lambda\to\infty$. We rely on the test-based approach of \citet{ghosalConvergenceRatesPosterior2000}  and the concentration results to first establish posterior contraction in the expected random Hellinger semi-metric \eqref{eq:Bayes_Intro_Hellinger}, which is equivalent to the $L^2(\Xi)$ metric. Borrowing ideas from \citet{giordano2020consistency}, we then show that the posterior concentrates on sets of bounded higher-order smoothness if $\beta_0>3/2$, cf. Equation \eqref{eq:Concentration_on_Lipschitzball} (below). Exploiting such additional regularisation of the posterior has recently been instrumental for dealing with different Bayesian nonlinear inverse problems, see \citep{nickl2023bayesian} for an overview. In our case, this allows us to use Theorem \ref{thm:SubgaussianConcentration} \eqref{num:thm_Ergodicity2} to relate the expected Hellinger semi-metric to the norm induced by the inner product
\begin{equation*}
    \iprod{f}{g}_0\coloneq \int_0^T\int_{\bar{\Lambda}}\EV[f_0]{f(Z_t^{\bar{y}}(0))g(Z_t^{\bar{y}}(0))}\,\D\bar{y}\D t = \int_0^T\int_{\bar{\Lambda}}\int_{\mathbb{R}}f(z)g(z)p_{f_0,\bar{y}}(t,z)\,\D z\D\bar{y}\D t,
\end{equation*}
where $p_{f_0,\bar{y}}(t,\MTemptyplaceholder)$ for $t>0$ and $\bar{y}\in\bar{\Lambda}$ denotes the density of $Z_t^{\bar{y}}(0)$. We note that by the density bounds from Corollary D.2 of \citet{gaudlitzNonparametricEstimationReaction2023}, there exist constants $0<c,C<\infty$ depending on $\Xi$, $\Lip{f_0}$ and $T$ satisfying the norm equivalence
\begin{equation}
    c\norm{f}_{L^2(\Xi)}\le \norm{f}_0\le C\norm{f}_{L^2(\Xi)},\quad f\in \Theta.\label{eq:norm_equivalence}
\end{equation}
 
\begin{theorem}[Posterior contraction]\label{thm:PosteriorContraction}
	Grant Assumption \ref{assump:Parameterspace} and suppose $f_0\in C^\beta(\mathbb{R})\cap\Theta$ for $\beta\geq 3$. Consider the Gaussian prior $\Pi_\lambda$ from \eqref{eq:Prior} with $0\le \beta_0<\beta$ and cut-off $M=M_\lambda\in\mathbb{N}$ such that $2^{M}\sim \lambda ^{1/(2\beta+1)}$.
    \begin{enumerate}[(a)]
        \item\label{num:PosteriorContraction_L2} Then for every $K_\lambda\to\infty$
		\begin{equation*}
			\Pi_\lambda(f\in V_M \colon \norm{f-f_0}_{L^2(\Xi)}\ge K_\lambda \lambda^{-\beta/(2\beta+1)}\log(\lambda)\,|\,X^\lambda)\xrightarrow{\prob[f_0]{}}0,\quad \lambda\to \infty.
		\end{equation*}
        \item\label{num:PosteriorContraction_0} If $3/2<\beta_0\le \beta$ we have for any large enough $L>0$ 
	\begin{equation}
        \Pi_\lambda(f\in V_M\,\colon\, \norm{f}_{C^1(\mathbb{R})}\geq L\,\vert\,X^\lambda)\xrightarrow{\prob[f_0]{}}0,\quad \lambda\to\infty,\label{eq:Concentration_on_Lipschitzball}
    \end{equation}
	and for every $K_\lambda\to\infty$
		\begin{equation*}
			\Pi_\lambda(f\in V_M \colon \norm{f-f_0}_0\ge K_\lambda \lambda^{-\beta/(2\beta+1)}\log(\lambda)\,|\,X^\lambda)\xrightarrow{\prob[f_0]{}}0,\quad \lambda\to \infty.
		\end{equation*}
    \end{enumerate}
\end{theorem}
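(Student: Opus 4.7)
The plan is to invoke the test-based posterior contraction framework of \citet{ghosalConvergenceRatesPosterior2000}, adapted to the random Hellinger semi-metric $h_\lambda$ along the lines of \citet{vandermeulenConvergenceRatesPosterior2006}, and then to convert $h_\lambda$-contraction into contraction in the deterministic $L^2(\Xi)$- and $\norm{\MTemptyplaceholder}_0$-norms by means of the sharp sub-Gaussian concentration of Theorem \ref{thm:SubgaussianConcentration}. Throughout, set $\epsilon_\lambda\coloneq \lambda^{-\beta/(2\beta+1)}$ and note that $\lambda\epsilon_\lambda^2\sim 2^M\sim \dim V_M$.

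First I would verify the three classical conditions on $(\Pi_\lambda,V_M,\epsilon_\lambda)$. For the prior mass, Proposition 11.19 of \citet{Vaart2017} reduces the KL-type bound $\Pi_\lambda(\norm{f-f_0}_{L^\infty(\Xi)}\le \epsilon_\lambda)\ge \exp(-C\lambda\epsilon_\lambda^2)$ to controlling the RKHS norm of a good approximant of $f_0$, which by \eqref{eq:Sobolev_Statistician} agrees with $\norm{\MTemptyplaceholder}_{H^{\beta_0}(\Xi)}$ restricted to $V_M$. The wavelet projection $P_M f_0$ serves as such an approximant, as $f_0\in C^\beta(\mathbb{R})\cap\Theta$ with $\beta_0<\beta$ yields both $\norm{P_M f_0-f_0}_{L^\infty(\Xi)}\lesssim 2^{-M\beta}\lesssim \epsilon_\lambda$ and $\norm{P_M f_0}_{H^{\beta_0}(\Xi)}\lesssim 1$. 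For the sieve, set $\Theta_\lambda\coloneq\Set{f\in V_M\given\norm{f}_{C^1(\mathbb{R})}\le L_\lambda}$ for $L_\lambda\sim \sqrt{2^M\log\lambda}$: Borell's inequality applied to the finite-dimensional Gaussian \eqref{eq:Prior} delivers $\Pi_\lambda(\Theta_\lambda^c)\le \exp(-c\lambda\epsilon_\lambda^2)$, while a volumetric covering argument gives the entropy estimate $\log N(\epsilon_\lambda,\Theta_\lambda,\norm{\MTemptyplaceholder}_{L^\infty(\Xi)})\lesssim 2^M\log(L_\lambda/\epsilon_\lambda)\lesssim \lambda\epsilon_\lambda^2\log\lambda$.

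Second, I would construct tests. The log-likelihood decomposition \eqref{eq:Bayes:Likelihood_Hellinger} exhibits $\ell_\lambda(f)=\sqrt\lambda\,\mathcal{M}_\lambda-(\lambda/2)h_\lambda(f,f_0)^2$ with a temporal $\prob[f_0]{}$-martingale $\mathcal{M}_\lambda$ of quadratic variation $h_\lambda(f,f_0)^2$, so likelihood-ratio tests have exponential errors $\exp(-c\lambda h_\lambda(f,f_0)^2)$. Chaining these pointwise tests over the $\epsilon_\lambda$-net of $\Theta_\lambda$ (using that $L^\infty$-distance dominates $h_\lambda$-distance deterministically) and plugging everything into Theorem 2.1 of \citet{ghosalConvergenceRatesPosterior2000} gives posterior contraction in $h_\lambda$ at rate $\epsilon_\lambda\log\lambda$. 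To pass to the deterministic metric I would apply Theorem \ref{thm:SubgaussianConcentration} to $g=(f-f_0)^2$, for which $\norm{g}_{L^1(\mathbb{R})}=\norm{f-f_0}_{L^2(\Xi)}^2$: a union bound over the $\epsilon_\lambda$-net yields, uniformly over $\Theta_\lambda$ and with $\prob[f_0]{}$-probability tending to one,
\begin{equation*}
\abs{h_\lambda(f,f_0)^2-\norm{f-f_0}_0^2}\lesssim \lambda^{-1/2}(\log\lambda)\norm{f-f_0}_{L^2(\Xi)}^2+\lambda^{-1}L_\lambda\norm{f-f_0}_{L^\infty(\Xi)}.
\end{equation*}
Combined with the norm equivalence \eqref{eq:norm_equivalence}, this proves part \eqref{num:PosteriorContraction_L2}.

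For part \eqref{num:PosteriorContraction_0}, under $\beta_0>3/2$ one can pick $s\in(3/2,\beta_0)$ with $\sum_\mu 2^{-2(\beta_0-s)|\mu|}<\infty$, so that $\EV[\Pi_\lambda]{\norm{f}_{H^s(\Xi)}^2}\lesssim 1$ uniformly in $M$. The Sobolev embedding $H^s(\Xi)\hookrightarrow C^1(\mathbb{R})$ then gives the analogous bound for $\EV[\Pi_\lambda]{\norm{f}_{C^1(\mathbb{R})}^2}$. Combining this prior regularity with the small-ball lower bound from Step 1 and the denominator control inherent to the test-based argument, as in the regularisation scheme of \citet{giordano2020consistency}, promotes this uniform prior moment bound to the posterior statement \eqref{eq:Concentration_on_Lipschitzball}. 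On the resulting bounded-$C^1$ event the auxiliary sieve parameter $L_\lambda$ can be taken constant, so the second term in the random-to-deterministic conversion is $O(\lambda^{-1})$, giving contraction directly in $\norm{\MTemptyplaceholder}_0$. The hard part throughout is this uniform random-to-deterministic conversion: without the $\norm{\MTemptyplaceholder}_{L^1(\mathbb{R})}$-scaling of Theorem \ref{thm:SubgaussianConcentration}, a Lipschitz-type concentration bound for spatial averages would scale with $\Lip{f-f_0}$ and fail to match the quadratic $\norm{f-f_0}_{L^2(\Xi)}^2$ needed to propagate the $\epsilon_\lambda$ rate over the $2^M$-dimensional sieve, which is precisely why the Malliavin-calculus refinements behind Theorem \ref{thm:SubgaussianConcentration} are indispensable here.
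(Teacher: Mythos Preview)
Your high-level strategy is correct and matches the paper: use the van der Meulen--Schauer--van Zanten framework with the random Hellinger semi-metric $h_\lambda$, then convert to a deterministic norm via Theorem~\ref{thm:SubgaussianConcentration}. The gap is in the uniform conversion step. Your claimed bound $\abs{h_\lambda(f,f_0)^2-\norm{f-f_0}_0^2}\lesssim \lambda^{-1/2}\log(\lambda)\norm{f-f_0}_{L^2(\Xi)}^2+\cdots$ cannot come from a union bound over an $\epsilon_\lambda$-net of $\Theta_\lambda$: that net has $\exp(C\lambda\epsilon_\lambda^2\log\lambda)$ points, so Theorem~\ref{thm:SubgaussianConcentration}\,\eqref{num:SubgaussianThm2} applied to $g=(f-f_0)^2$ would force $x\gtrsim\norm{f-f_0}_{L^2(\Xi)}^2\sqrt{\lambda\epsilon_\lambda^2\log\lambda}$, i.e.\ a relative deviation of order $\epsilon_\lambda\sqrt{\log\lambda}$, not $\lambda^{-1/2}\log\lambda$. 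More seriously, you do not explain the extension from net points to all $f\in\Theta_\lambda$: the additive approximation error $\abs{h_\lambda(f,f_0)^2-h_\lambda(f_k,f_0)^2}\lesssim \epsilon_\lambda\norm{f-f_0}_\infty$ is not small relative to $\norm{f-f_0}_{L^2(\Xi)}^2$ at the critical scale $\norm{f-f_0}_{L^2(\Xi)}\sim\epsilon_\lambda$. The abstract theorem you invoke also needs the $h_\lambda$--deterministic equivalence for \emph{pairs} $f,g$ in the sieve, not only against~$f_0$.

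The paper resolves this via Lemma~\ref{lem:supremum_Control_Hellinger}: writing $h_\lambda(f,g)^2$ and $d_\lambda(f,g)^2\coloneq\EV[f_0]{h_\lambda(f,g)^2}$ as quadratic forms $(\mathbf f-\mathbf g)^\top\hat{\mathbf G}(\mathbf f-\mathbf g)$ and $(\mathbf f-\mathbf g)^\top\mathbf G(\mathbf f-\mathbf g)$ in the wavelet coefficients, the scale-invariant ratio $\sup_{u\ne 0}\abs{u^\top(\hat{\mathbf G}-\mathbf G)u}/(u^\top\mathbf Gu)$ is a supremum over the \emph{unit} $\mathbf G$-ball in $\mathbb{R}^{\dim V_M}$. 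A fixed-radius $\delta$-net of that ball has only $\exp(c\dim V_M)$ points, and the quadratic-form structure gives the passage from net to all $u$ by the standard $\delta^2+2\delta$ self-bounding trick---precisely what an additive $\epsilon_\lambda$-net cannot do. Theorem~\ref{thm:SubgaussianConcentration}\,\eqref{num:SubgaussianThm2} applied to $g=u^2$ with $\norm{u^2}_{L^1}=\norm{u}_{L^2}^2\lesssim 1$ then beats the net size since $\dim V_M\sim\lambda^{1/(2\beta+1)}\ll\lambda$. This yields condition~\eqref{eq:Cond4} of Theorem~\ref{thm:TripleVan} directly with the deterministic metric $d_\lambda$ (not $\norm{\MTemptyplaceholder}_0$), and the centering term from Theorem~\ref{thm:SubgaussianConcentration}\,\eqref{num:SubgaussianThm3} never enters. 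The passage $d_\lambda\to\norm{\MTemptyplaceholder}_{L^2(\Xi)}$ is then the deterministic norm equivalence~\eqref{eq:norm_equivalence_d}, and for part~\eqref{num:PosteriorContraction_0} the step $d_\lambda\to\norm{\MTemptyplaceholder}_0$ is done only \emph{after} \eqref{eq:Concentration_on_Lipschitzball} is established. For the latter the paper does not use your prior-moment route but the simpler deterministic fact (Lemma~\ref{lem:Prior}\,\eqref{lem:aux_rescaled_Prior_restrict}) that $f\in V_M$ with $\norm{f-f_0}_{L^2(\Xi)}\lesssim\epsilon_\lambda$ already forces $\norm{f}_{H^{\beta_0}(\Xi)}\lesssim 1$, combined with part~\eqref{num:PosteriorContraction_L2} and Sobolev embedding.
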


\begin{proof}
	See Section \ref{sec:Bayes_proofContraction}.
\end{proof}

The minimal smoothness assumption $f_0\in C^3(\mathbb{R})$ is required to bound the derivative of the density $p_{f_0,\lambda}(t,y,\MTemptyplaceholder)$ of $X_t(y)$ in Proposition \ref{prop:DensityBoundedDerivative}, which plays a crucial role for the sub-Gaussian concentration of the Hellinger semi-metric $h_\lambda$ around the $\norm{}_0$-norm using Theorem \ref{thm:SubgaussianConcentration}. See Remark \ref{rmk:OnDensity} for a discussion on the necessity of this regularity assumption. 
Moreover, the restriction of $f_0\in\Theta$ to have compact support is not required for the spatial ergodicity results in Section \ref{subsec:Spatial_Ergodicity}, but it is necessary for the posterior contraction in Theorem \ref{thm:PosteriorContraction} to ensure that the expected (squared) Hellinger semi-metric $\EV{h_\lambda(f,f_0)^2}$ and $\norm{}_0^2$ are equivalent to the (squared) $L^2(\Xi)$-metric.

Contraction of the posterior immediately yields a frequentist convergence rate for the posterior mean $\hat{f} = \EV[\lambda]{f\given X^\lambda}$. Since the posterior is Gaussian, the posterior mean equals the maximum-a-posteriori estimator
\begin{align*}
	\hat{f} = \operatorname{argmax}_{f\in V_M} \D \Pi(f\,|\,X^\lambda) = \operatorname{argmax}_{f\in V_M} \left(\ell_\lambda(f)-\frac{1}{2}\norm{f}^2_{\mathbb{H}^\lambda}\right),
\end{align*}
where $\norm{}_{\mathbb{H}^{\lambda}}$ is the reproducing kernel Hilbert space (RKHS) norm with respect to \eqref{eq:Prior}. Lemma 11.43 of \citet{Vaart2017} shows that the RKHS is given by $(V_M,\norm{}_{\mathbb{H}^\lambda})$ and
\begin{equation}
    \norm{h}_{\mathbb{H}^\lambda}\coloneq \norm{h}_{H^{\beta_0}(\Xi)},\quad h\in V_M.\label{eq:RKHS}
\end{equation}

\begin{corollary}
    Let $\hat{f} = \EV[\lambda]{f\given X^\lambda}$ be the posterior mean. Under the conditions of Theorem \ref{thm:PosteriorContraction} we have for every $K_{\lambda}\to\infty$
    \begin{align*}
        \norm{\hat{f}-f_0}_0 = \mathcal{O}_{\prob[f_0]{}}\left(K_{\lambda}\lambda^{-\beta/(2\beta+1)}\log(\lambda)\right),\quad \lambda\to\infty.
    \end{align*}
\end{corollary}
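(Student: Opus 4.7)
The plan is to deduce the rate for the posterior mean from the posterior contraction in Theorem \ref{thm:PosteriorContraction}\eqref{num:PosteriorContraction_0} using the standard Jensen plus ``good event'' split. By convexity of the norm and Jensen's inequality,
\begin{equation*}
    \norm{\hat{f}-f_0}_0 = \norm[\Big]{\int_{V_M} (f-f_0)\, \D \Pi_\lambda(f\,|\,X^\lambda)}_0 \le \int_{V_M}\norm{f-f_0}_0\,\D \Pi_\lambda(f\,|\,X^\lambda),
\end{equation*}
so it suffices to control the right-hand side in $\prob[f_0]{}$-probability.

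Writing $\rho_\lambda \coloneq \lambda^{-\beta/(2\beta+1)}\log(\lambda)$ and $A_\lambda\coloneq \Set{f\in V_M\given \norm{f-f_0}_0 \le K_\lambda \rho_\lambda}$, I would split the integral over $A_\lambda$ and $A_\lambda^c$. On $A_\lambda$ the integrand is pointwise bounded by $K_\lambda\rho_\lambda$, giving the claimed rate contribution. On $A_\lambda^c$, Cauchy--Schwarz yields
\begin{equation*}
    \int_{A_\lambda^c}\norm{f-f_0}_0\,\D \Pi_\lambda(f\,|\,X^\lambda) \le \bigg(\int_{V_M}\norm{f-f_0}_0^{2}\,\D \Pi_\lambda(f\,|\,X^\lambda)\bigg)^{\!1/2}\sqrt{\Pi_\lambda(A_\lambda^c\,|\,X^\lambda)}.
\end{equation*}
Theorem \ref{thm:PosteriorContraction}\eqref{num:PosteriorContraction_0} gives $\Pi_\lambda(A_\lambda^c\,|\,X^\lambda)\xrightarrow{\prob[f_0]{}}0$ for any $K_\lambda\to\infty$, so we only need to argue that the posterior second moment of $\norm{f-f_0}_0$ grows at most polynomially in $\lambda$ with $\prob[f_0]{}$-probability approaching one.

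For the latter, I would exploit the explicit Gaussian conjugacy from Section \ref{subsec:Bayes_Setting_Posterior}: the posterior on $V_M$ is the multivariate normal $N(\boldsymbol{\mu},\mathbf{Q})$ with $\mathbf{Q}=(\mathbf{G}+\mathbf{\Sigma}\inv)\inv\preceq \mathbf{\Sigma}$ in the Loewner order. Using the norm equivalence \eqref{eq:norm_equivalence} together with orthonormality of the wavelet basis, this yields
\begin{equation*}
    \int_{V_M}\norm{f-f_0}_0^{2}\,\D \Pi_\lambda(f\,|\,X^\lambda) \lesssim \norm{\hat{f}-f_0}_{L^2(\Xi)}^2 + \operatorname{tr}(\mathbf{Q}) \lesssim \norm{\hat{f}-f_0}_{L^2(\Xi)}^2 + \operatorname{tr}(\mathbf{\Sigma}),
\end{equation*}
where $\operatorname{tr}(\mathbf{\Sigma})=\sum_{\abs{\mu}\le M}2^{-2\beta_0\abs{\mu}}\lesssim 2^M\lesssim \lambda^{1/(2\beta+1)}$ is at worst polynomial in $\lambda$. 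Combined with the crude a priori bound $\norm{\hat{f}-f_0}_{L^2(\Xi)}^2\lesssim 1 + \norm{\hat{f}}_{L^2(\Xi)}^2$ (which can itself be controlled on the high-probability event $\{\norm{f}_{C^1(\mathbb{R})}\le L\}$ from \eqref{eq:Concentration_on_Lipschitzball}, noting compact support of $\Xi$), the second moment is $\mathcal{O}_{\prob[f_0]{}}(\lambda^{\alpha})$ for some fixed $\alpha>0$. Since $\sqrt{\Pi_\lambda(A_\lambda^c\,|\,X^\lambda)}\xrightarrow{\prob[f_0]{}}0$, choosing $K_\lambda$ to tend to infinity slowly makes the remainder term $o_{\prob[f_0]{}}(\rho_\lambda)$.

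The main obstacle is the control of the posterior second moment, since Theorem \ref{thm:PosteriorContraction} only supplies an ``in probability'' contraction without quantitative tail bounds. The Gaussian conjugacy bypasses this difficulty by allowing a direct, deterministic bound on the posterior covariance via the prior covariance $\mathbf{\Sigma}$, after which the argument reduces to the standard trade-off between contraction rate and polynomial moment growth.
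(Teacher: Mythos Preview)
There is a genuine gap in your moment-splitting argument. After Cauchy--Schwarz, the remainder is bounded by $\sqrt{R_\lambda}\sqrt{\Pi_\lambda(A_\lambda^c\,|\,X^\lambda)}$, where you control the posterior second moment $R_\lambda$ only polynomially in $\lambda$. But Theorem \ref{thm:PosteriorContraction} gives $\Pi_\lambda(A_\lambda^c\,|\,X^\lambda)=\smallo_{\prob[f_0]{}}(1)$ without any rate, and a polynomially growing factor times an unquantified $\smallo_{\prob[f_0]{}}(1)$ cannot be forced below $K_\lambda\rho_\lambda\to 0$. Your sentence ``choosing $K_\lambda$ to tend to infinity slowly makes the remainder term $\smallo_{\prob[f_0]{}}(\rho_\lambda)$'' is precisely the step that fails: since $\rho_\lambda$ decays polynomially, you would need $\Pi_\lambda(A_\lambda^c\,|\,X^\lambda)$ to decay at a specific polynomial rate, which the theorem as stated does not supply (and the Corollary is for \emph{every} $K_\lambda\to\infty$, so you are not free to choose it). In addition, your second-moment bound contains $\norm{\hat f-f_0}_{L^2(\Xi)}^2$, essentially the quantity you are trying to control; the attempt to bound $\norm{\hat f}_{L^2(\Xi)}$ via \eqref{eq:Concentration_on_Lipschitzball} runs into the same unquantified tail on the complement of the good set.

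The paper sidesteps both difficulties by exploiting Gaussianity of the posterior through Anderson's lemma rather than moments: since $\Pi_\lambda(\,\cdot\,|\,X^\lambda)$ is Gaussian on $V_M$ with centre $\hat f$, the map $g\mapsto \Pi_\lambda(\{f:\norm{f-g}_{L^2(\Xi)}\le K_\lambda\rho_\lambda\}\,|\,X^\lambda)$ is maximised at $g=\hat f$. By Theorem \ref{thm:PosteriorContraction}\eqref{num:PosteriorContraction_L2} the ball around $f_0$ has posterior mass tending to $1$ in $\prob[f_0]{}$-probability, hence so does the ball around $\hat f$; once both masses exceed $1/2$ the balls intersect, yielding $\norm{\hat f-f_0}_{L^2(\Xi)}\le 2K_\lambda\rho_\lambda$ directly (this is packaged as Theorem 2.5 of \citet{ghosalConvergenceRatesPosterior2000}), and the norm equivalence \eqref{eq:norm_equivalence} transfers this to $\norm{\cdot}_0$. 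No moment computation is needed. If you wish to salvage your route, you would have to invoke the exponential bound $\EV[f_0]{\Pi_\lambda(A_\lambda^c\,|\,X^\lambda)}\lesssim e^{-c\lambda\epsilon_\lambda^2}$ that underlies the testing argument in Theorem \ref{thm:TripleVan}, not merely its $\smallo_{\prob[f_0]{}}(1)$ consequence.
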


\begin{proof}
    The posterior is a Gaussian measure on $V_M$ with centring $\hat{f}\in V_M$. By Anderson's Lemma (Theorem 2.4.5 of \citet{gineMathematicalFoundationsInfiniteDimensional2015}), this means that the function $Q(g)\coloneq \Pi_{\lambda}(C+g\,\vert\,X^\lambda)$, $g\in V_M$, with $C\coloneq \left\{f\in V_M\,\colon\, \norm{f}_{L^2(\Xi)}\leq K_\lambda \lambda^{-\beta/(2\beta+1)}\log(\lambda)\right\}$ is maximal at $g=\hat{f}$. Theorem 2.5 of \citet{ghosalConvergenceRatesPosterior2000} and Theorem \ref{thm:PosteriorContraction} therefore imply the rate of convergence $ \norm{\hat{f}-f_0}_{L^2(\Xi)}= \mathcal{O}_{\prob[f_0]{}}(K_{\lambda}\lambda^{-\beta/(2\beta+1)})\log(\lambda)$. This transfers to the $\norm{}_0$-norm by \eqref{eq:norm_equivalence}.
\end{proof}

The rate of convergence $\lambda^{-\beta/(2\beta+1)}\log(\lambda)$ is minimax-optimal over $\beta$-regular Hölder balls up to a log-factor.
\begin{proposition}\label{prop:Bayes_lowerBounds}
	Grant Assumption \ref{assump:Parameterspace}, let $\beta\ge 1$ and $L>0$. Then
	\begin{equation*}
		\liminf_{\lambda\to \infty}\inf_{T_\lambda}\sup_{f\in\mathcal{F}_{\beta,L}\cap \Theta}\prob*[f]{\norm{T_\lambda-f}_{0}\ge \lambda^{-\beta/(2\beta+1)}}\ge C,
	\end{equation*}
	where $\inf_{T_\lambda}$ denotes the infimum over all estimators $T_\lambda$ for $f$ based on $X^\lambda$ solving the semi-linear SPDE \eqref{eq:SPDE} with domain size $\lambda$, and where the constant $0<C<\infty$ depends on $L,T$.
\end{proposition}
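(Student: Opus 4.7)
The plan is to apply the standard Fano-based minimax lower bound (Theorem~2.5 of \citet{tsybakovIntroductionNonparametricEstimation2009}) to a Varshamov--Gilbert family of localised perturbations. Fix a nonzero $\psi\in C_c^\infty((0,1))$, set $h=\lambda^{-1/(2\beta+1)}$, place $N\asymp h^{-1}$ disjoint scaled translates $\psi_j(z)=h^\beta\psi((z-x_j)/h)$ supported in $\Xi$, and for $\epsilon\in\{0,1\}^N$ define
\[
    f_\epsilon \coloneq c_0 \sum_{j=1}^N \epsilon_j\,\psi_j,
\]
with $c_0>0$ a small absolute constant to be fixed. Disjointness of supports and $\psi\in C_c^\infty$ yield $f_\epsilon\in C_c^\infty(\Xi)\subset\Theta$ and $\norm{f_\epsilon}_{C^\beta(\mathbb{R})}\le c_0 \norm{\psi}_{C^\beta(\mathbb{R})}\le L$ for $c_0$ small enough, hence $f_\epsilon\in \mathcal{F}_{\beta,L}\cap \Theta$. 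The Varshamov--Gilbert bound (Lemma~2.9 of \citet{tsybakovIntroductionNonparametricEstimation2009}) supplies a subfamily $\mathcal{E}\subset\{0,1\}^N$ with $\abs{\mathcal{E}}\ge 2^{N/8}$ and pairwise Hamming distance at least $N/8$.

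Orthogonality of the $\psi_j$ gives the $L^2(\Xi)$-separation
\[
    \norm{f_\epsilon - f_{\epsilon'}}_{L^2(\Xi)}^2 = c_0^2\, h^{2\beta+1}\, \norm{\psi}_{L^2(\mathbb{R})}^2\, d_H(\epsilon,\epsilon') \gtrsim h^{2\beta} \sim \lambda^{-2\beta/(2\beta+1)},
\]
which transfers to $\norm{f_\epsilon - f_{\epsilon'}}_0 \gtrsim \lambda^{-\beta/(2\beta+1)}$ via the norm equivalence \eqref{eq:norm_equivalence}, whose constants are uniform over $\mathcal{F}_{\beta,L}$ since $\Lip{f_\epsilon}\le L$. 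For the Kullback--Leibler divergences, the Girsanov likelihood \eqref{eq:Bayes:Likelihood_Hellinger} together with the vanishing expectation $\EV[f_\epsilon]{\mathcal{M}_\lambda}=0$ yields
\[
    \mathrm{KL}(\prob[f_\epsilon]{}\,\|\,\prob[f_{\epsilon'}]{}) = \tfrac{\lambda}{2}\EV[f_\epsilon]{h_\lambda(f_\epsilon,f_{\epsilon'})^2} = \tfrac{1}{2}\int_0^T\!\int_\Lambda\!\int_\mathbb{R}(f_\epsilon - f_{\epsilon'})(z)^2\, p_{f_\epsilon,\lambda}(t,y,z)\,\D z\,\D y\,\D t,
\]
and the uniform upper bound on $p_{f_\epsilon,\lambda}(t,y,\cdot)$ on $\Xi$ provided by Proposition~\ref{prop:DensityBounds} (depending only on $\Lip{f_\epsilon}\le L$, $T$ and $\norm{\chi}_{H^{\tilde s}(\bar\Lambda)}$) bounds this by $\lesssim \lambda\norm{f_\epsilon - f_{\epsilon'}}_{L^2(\Xi)}^2 \lesssim c_0^2\, \lambda h^{2\beta} = c_0^2\, \lambda^{1/(2\beta+1)}$. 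Since $\log\abs{\mathcal{E}}\ge (N/8)\log 2 \gtrsim \lambda^{1/(2\beta+1)}$, picking $c_0$ small enough forces $\sup_{\epsilon\ne\epsilon'} \mathrm{KL}/\log\abs{\mathcal{E}} < 1/8$, and Fano's inequality concludes.

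The main obstacle I anticipate is obtaining a density upper bound whose constant does not depend on higher Hölder norms of the sharply oscillating $f_\epsilon$, since $\norm{f_\epsilon}_{C^3(\mathbb{R})}$ scales like $h^{\beta-3}$ and blows up as $h\to 0$ when $\beta<3$. Because the density estimates cited in Proposition~\ref{prop:DensityBounds} depend only on the Lipschitz constant of $f_\epsilon$, this is automatic, and the remainder of the argument reduces to a textbook application of Tsybakov's scheme.
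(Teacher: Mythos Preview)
Your approach is essentially identical to the paper's: both reduce to the $L^2(\Xi)$ metric via the norm equivalence \eqref{eq:norm_equivalence}, build the Varshamov--Gilbert family of localised bumps from Section~2.6.1 of Tsybakov, and bound the Kullback--Leibler divergence by $\lambda\norm{f_\epsilon-f_{\epsilon'}}_{L^2(\Xi)}^2$ using the marginal-density upper bound whose constant depends only on $\Lip{f}$ and $T$ (so the blow-up of higher Hölder norms for $\beta<3$ is indeed irrelevant, as you correctly anticipate). One small slip: with $\mathcal{M}_\lambda$ defined as in \eqref{eq:Bayes:Likelihood_Hellinger} relative to the $\prob[f_{\epsilon'}]{}$-Wiener process, you do \emph{not} have $\EV[f_\epsilon]{\mathcal{M}_\lambda}=0$; under $\prob[f_\epsilon]{}$ that stochastic integral picks up the drift $(f_\epsilon-f_{\epsilon'})(X_s)\,\D s$, giving $\sqrt{\lambda}\,\EV[f_\epsilon]{\mathcal{M}_\lambda}=\lambda\,\EV[f_\epsilon]{h_\lambda^2}$, and it is the cancellation $\lambda-\lambda/2$ that produces the correct formula $\mathrm{KL}=\tfrac{\lambda}{2}\EV[f_\epsilon]{h_\lambda(f_\epsilon,f_{\epsilon'})^2}$. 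The conclusion and the rest of the argument are unaffected.
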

\begin{proof}
	See Section \ref{sec:Bayes_proofContraction}.
\end{proof}

\begin{remark}
	The rate of contraction in Theorem \ref{thm:PosteriorContraction} hinges on the optimal choice of the cut-off $M$. Using standard arguments (e.g., Section 2.2.3 of \citet{giordano2020consistency}), posterior contraction at the same rate extends to adaptive priors, where $M$ is randomised independent of the unknown smoothness $\beta$. In this case, however, the conjugacy for the prior is lost and sampling needs to be done using computationally more demanding MCMC algorithms. Moreover, similar to Section 2.2.2 of \citet{giordano2020consistency} it can be shown by modifying the proof of Lemma \ref{lem:Prior} (below) that the posterior contracts at the minimax-optimal rate for a rescaled Gaussian process prior $\bar{\Pi}_{\lambda}=\mathcal{L}(\bar{f})$, where $\bar{f}=\lambda^{-1/(4\beta+2)}f$ with $f$ from \eqref{eq:Prior} and with $\beta_0=\beta$. In the case of rescaled Gaussian priors, however, achieving adaptation is an open problem.
\end{remark}

\subsection{A nonparametric Bernstein--von Mises theorem}\label{subsec:BvM}

In classical parametric statistical models the Bernstein--von Mises theorem (e.g., \citet{Vaart1998}) states that in the large data limit the posterior distribution approximately equals a normal distribution centred at an efficient estimator (usually the posterior mean) with the variance attaining the Cramér-Rao bound in the underlying statistical model. Several works such as \citep{cox1993analysis, freedman1999} have shown that such a theorem cannot hold in nonparametric statistical models. 

Following the semiparametric ideas introduced by \citet{castillo2013, castillo2014a}, which were recently extended to the finite-dimensional diffusion setting by \citet{nicklNonparametricStatisticalInference2020,giordano2025}, we show that a Bernstein--von Mises-type theorem for the posterior, centred at the posterior mean, holds in function spaces with norms weaker than $L^2$. Here, we consider the negative Sobolev spaces $(\mathcal{X},d)=(H^{-\rho}(\Xi),\norm{}_{H^{-\rho}(\Xi)})$ for $\rho>3/2$ to ensure that Lemma \ref{lem:LAN} can be applied, cf. \eqref{eq:gamma0_bound} (below).  Since weak convergence in $H^{-\rho}(\Xi)$ corresponds by duality to uniform weak convergence of linear functionals, it suffices to analyse the posterior by the action $\gamma\mapsto \iprod[]{f-\hat{f}}{\gamma}_{L^2(\Xi)}$, $\gamma\in H^\rho(\Xi)$. 

Weak convergence of probability measures on $(\mathcal{X},d)$ is metrised by the bounded Lipschitz metric, see Chapter 18 of \citet{dudleyRealAnalysisProbability2002}. For two probability measures $\tau,\tau'$ on $(\mathcal{X},d)$ it is defined by
\begin{align*}
	\beta_{\mathcal{X}}(\tau,\tau') = \sup_{F\colon\Lip[\mathcal{X}]{F}\leq 1}\abs*{\int_{\mathcal{X}} F\,\D\tau - \int_{\mathcal{X}} F\,\D\tau'}.
\end{align*}

The limiting distribution of the posterior will be given below by a generalised Wiener process $\mathbb{W}=(\mathbb{W}(\gamma):\gamma\in H^\rho(\Xi))$ on $H^{\rho}(\Xi)$ with covariances
\begin{align}
	\EV[f_0]{\mathbb{W}(\gamma)\mathbb{W}(\bar\gamma)}=\int_{\Xi}\gamma(z)\bar\gamma(z)/p_{f_0}(z)\,\D z,\quad f_0\in\Theta.\label{eq:Bvm_cov}
\end{align}
Here, we set $p_{f_0}(z)\coloneq\int_0^T\int_{\bar{\Lambda}}p_{f_0,\bar{y}}(t,z)\,\D \bar{y}\D t$ such that
 \begin{equation*}
    \norm{h}^2_0=\norm{hp_{f_0}^{1/2}}^2_{L^2(\mathbb{R})}=\int_0^T\int_{\bar{\Lambda}}\int_{\mathbb{R}}h^2(z)p_{f_0,\bar{y}}(t,z)\,\D z\D\bar{y}\D t.
\end{equation*}
The numerical example in Section \ref{subsec:Numerics_Balanced} illustrates the role of the density $p_{f_0}$ appearing in the limiting covariance. Note that for $\rho>3/2$, the Wiener process $\mathbb{W}$ indeed yields a tight Gaussian measure on $H^{-\rho}(\Xi)$, see Theorem 3.1 of \citet{nicklNonparametricStatisticalInference2020} for details.

At the heart of the analysis in this section is the following LAN (\enquote{local asymptotic normality}) expansion with the associated LAN-norm $\norm{}_0$. Consequently, well-known arguments from semiparametric statistics such as in Chapter of \citet{Vaart1998} imply that the asymptotic variances appearing in Theorem \ref{thm:BvM} and Corollary \ref{cor:BvM_Truth} below are optimal in an information-theoretic sense.

\begin{lemma}[LAN expansion]\label{lem:LAN}
	Grant Assumption \ref{assump:Parameterspace} and let $h\colon\mathbb{R}\to\mathbb{R}$ be  Lipschitz-continuous. Set $W_\lambda(h)\coloneq \frac{1}{\sqrt{\lambda}}\int_0^T\iprod{h(X_t)}{\D W_t}_{L^2(\Lambda)}$, $\mathcal{I}_\lambda(h) \coloneq \frac{1}{\lambda}\int_0^T\norm{h(X_t)}_{L^2(\Lambda)}^2\,\D t$. Then we have the likelihood expansion
	\begin{equation}
		\ell_\lambda\left(f_0+\frac{h}{\sqrt{\lambda}}\right) -  \ell_\lambda(f_0) = W_\lambda(h) - \frac{1}{2}\mathcal{I}_\lambda(h),\quad f_0\in\Theta\label{eq:LikelihoodExpansion}
	\end{equation}
	along with the convergences $W_\lambda(h)\xrightarrow{d}N(0,\norm{h}_{0}^2)$,  $\mathcal{I}_\lambda(h)\xrightarrow{\prob[f_0]{}}\norm{h}_{0}^2$ as $\lambda\to \infty$.
\end{lemma}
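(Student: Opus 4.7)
The likelihood expansion \eqref{eq:LikelihoodExpansion} is an immediate substitution into the Girsanov representation \eqref{eq:Bayes:Likelihood_Hellinger}: plugging $f = f_0 + h/\sqrt{\lambda}$ gives $F(X_s) - F_0(X_s) = h(X_s)/\sqrt{\lambda}$, and cancelling powers of $\lambda$ yields $\sqrt{\lambda}\,\mathcal{M}_\lambda = W_\lambda(h)$ together with $(\lambda/2)\,h_\lambda(f_0 + h/\sqrt{\lambda}, f_0)^2 = \mathcal{I}_\lambda(h)/2$. No further computation is needed for this part.

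For the convergence $\mathcal{I}_\lambda(h) \xrightarrow{\prob[f_0]{}} \norm{h}_0^2$, the plan is to recognise
\begin{equation*}
    \mathcal{I}_\lambda(h) - \norm{h}_0^2 = \mathcal{G}_\lambda(h^2)
\end{equation*}
by the definition of $\norm{\cdot}_0$, and then appeal to Theorem \ref{thm:ErgodicProperties}\eqref{num:thm_Ergodicity2} applied with $g=h^2$. When $h$ is compactly supported — the regime arising in subsequent applications, where $h$ is a difference of elements of $\Theta$ — one has $h^2 \in L^1(\mathbb{R})$ and the theorem applies directly. For a general Lipschitz $h$, I would truncate $h$ outside a large interval $[-R,R]$, apply Theorem \ref{thm:ErgodicProperties} to the truncation, and control the remaining tail contribution using the Gaussian-type upper bounds on the marginal densities of $X_t(y)$ from Proposition \ref{prop:DensityBounds} before sending $R\to\infty$.

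The distributional convergence of $W_\lambda(h)$ follows from the continuous-martingale CLT. The process
\begin{equation*}
    M_t^{(\lambda)} \coloneq \lambda^{-1/2}\int_0^t \iprod{h(X_s)}{\D W_s}_{L^2(\Lambda)},\quad t\in[0,T],
\end{equation*}
is a continuous square-integrable $\prob[f_0]{}$-martingale with $M_T^{(\lambda)} = W_\lambda(h)$ and quadratic variation $\langle M^{(\lambda)}\rangle_T = \mathcal{I}_\lambda(h)$. By the Dambis--Dubins--Schwarz representation, $M_T^{(\lambda)} = B^{(\lambda)}_{\mathcal{I}_\lambda(h)}$ on a possibly enlarged probability space for a standard Brownian motion $B^{(\lambda)}$. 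Combining the already-established convergence $\mathcal{I}_\lambda(h) \to \norm{h}_0^2$ in probability with the distribution-free modulus of continuity of Brownian paths yields $B^{(\lambda)}_{\mathcal{I}_\lambda(h)} - B^{(\lambda)}_{\norm{h}_0^2} \to 0$ in probability, while $B^{(\lambda)}_{\norm{h}_0^2} \sim N(0, \norm{h}_0^2)$ for every $\lambda$, so that $W_\lambda(h)\xrightarrow{d} N(0,\norm{h}_0^2)$. The only non-routine step of the whole proof is the truncation in the second paragraph, forced by the fact that the hypothesis $g\in L^1(\mathbb{R})$ of Theorem \ref{thm:ErgodicProperties} may fail for unbounded Lipschitz $h$; everything else is either a direct calculation or a textbook invocation of the continuous martingale CLT.
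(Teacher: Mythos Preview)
Your proposal is correct and follows essentially the same route as the paper: the expansion from \eqref{eq:Bayes:Likelihood_Hellinger}, the identification $\mathcal{I}_\lambda(h)-\norm{h}_0^2=\mathcal{G}_\lambda(h^2)$ together with spatial ergodicity (the paper invokes Theorem~\ref{thm:SubgaussianConcentration} directly, of which Theorem~\ref{thm:ErgodicProperties}\eqref{num:thm_Ergodicity2} is a consequence), and a martingale CLT for $W_\lambda(h)$ based on $\langle W_\lambda(h)\rangle=\mathcal{I}_\lambda(h)$ (the paper cites Theorem~5.5.4 of Liptser--Shiryayev rather than spelling out the DDS argument). Your additional care in flagging the $h^2\in L^1(\mathbb{R})$ hypothesis and proposing a truncation for general Lipschitz $h$ is a detail the paper's proof does not make explicit.
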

\begin{proof}
	The likelihood expansion \eqref{eq:LikelihoodExpansion} follows from \eqref{eq:Bayes:Likelihood_Hellinger}. By Theorem \ref{thm:SubgaussianConcentration} with $f=f_0$ and $g=h^2$, we find $\mathcal{I}_\lambda/\norm{h}_0^2\to 1$ in $\prob[f_0]{}$-probability as $\lambda\to \infty$. The convergence in distribution of the (time-) martingale $W_\lambda$ follows by noting that its quadratic variation is given by $\mathcal{I}_\lambda$. Since the latter converges to $\norm{h}_0^2$ in $\prob[f_0]{}$-probability, we conclude by a standard martingale central limit theorem (Theorem 5.5.4 of \citet{Liptser1989}).
\end{proof}

Besides weak convergence on $H^{-\rho}(\Xi)$, we also have convergence in total variation distance
\begin{align}
	\operatorname{TV}(\tau,\tau') = \sup_{F\colon\norm{F}_{L^\infty(\mathcal{X})}\leq 1}\abs*{\int_{\mathcal{X}} F\,\D\tau - \int_{\mathcal{X}} F\,\D\tau'}\label{eq:TV}
\end{align}
for the posterior distribution acting on a single linear functional. 

\begin{theorem}[Semiparametric Bernstein--von Mises]\label{thm:BvM}
	Grant Assumption \ref{assump:Parameterspace} and suppose $f_0\in C^\beta(\mathbb{R})\cap\Theta$ for $\beta\geq 3$. Let $\rho>3/2$ and let $\mathbb{W}$ be the generalised Wiener process from \eqref{eq:Bvm_cov}. Consider the Gaussian prior $\Pi_\lambda$ from \eqref{eq:Prior} with $3/2<\beta_0<\beta$ and cut-off $M=M_\lambda\in\mathbb{N}$ such that $2^{M}\sim \lambda ^{1/(2\beta_0+1)}$. Suppose $f\sim \Pi_\lambda(\MTemptyplaceholder|\,X^\lambda)$ and let $\hat{f} = \EV[\lambda]{f\given X^\lambda}$ be the posterior mean. Then the following holds as $\lambda\to\infty$:
	\begin{enumerate}[(a)]
		\item If $\gamma\in H^\rho(\Xi)$, then $\operatorname{TV}\left(\mathcal{L}(\sqrt{\lambda}\iprod{f-\hat f}{\gamma}_{L^2(\Xi)}), \mathcal{L}(\mathbb{W}(\gamma))\right) \overset{\prob[f_0]{}}\rightarrow 0$.\label{num:BvM_TVConvergence}
		\item $\beta_{H^{-\rho}(\Xi)}\left(\mathcal{L}(\sqrt{\lambda}(f-\hat{f})), \mathcal{L}(\mathbb{W})\right) \overset{\prob[f_0]{}}\rightarrow 0$.\label{num:BvM_BetaConvergence}
	\end{enumerate}
\end{theorem}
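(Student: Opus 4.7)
The plan is to exploit the Gaussian conjugacy established in Section \ref{subsec:Bayes_Setting_Posterior}: conditionally on $X^\lambda$, the posterior law of the wavelet coefficient vector of $f\in V_M$ is $\mathcal{N}(\boldsymbol\mu,\mathbf{Q})$ with $\mathbf{Q}=(\mathbf{G}+\boldsymbol\Sigma^{-1})^{-1}$. Writing $\boldsymbol\gamma_M=(\iprod{\gamma}{\psi_\mu}_{L^2(\Xi)})_{|\mu|\le M}$ for the wavelet coefficients of the $V_M$-projection of $\gamma$, the posterior distribution of the scaled linear functional is the centred Gaussian
\begin{equation*}
\mathcal{L}\bigl(\sqrt{\lambda}\iprod{f-\hat f}{\gamma}_{L^2(\Xi)}\,\big|\,X^\lambda\bigr)=\mathcal{N}\bigl(0,\lambda\iprod{\mathbf{Q}\boldsymbol\gamma_M}{\boldsymbol\gamma_M}_{\ell^2}\bigr),
\end{equation*}
whereas $\mathcal{L}(\mathbb{W}(\gamma))=\mathcal{N}(0,\int_\Xi \gamma^2/p_{f_0})$. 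Since the total variation distance between two centred univariate Gaussians is a continuous function of the ratio of their variances, part (a) reduces to the convergence in $\prob[f_0]{}$-probability $\lambda\iprod{\mathbf{Q}\boldsymbol\gamma_M}{\boldsymbol\gamma_M}_{\ell^2}\rightarrow\int_\Xi \gamma^2(z)/p_{f_0}(z)\,\D z$.

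The core step is to prove $\lambda\mathbf{Q}\to\mathbf{G}_0^{-1}$ in operator norm in $\prob[f_0]{}$-probability, where $(\mathbf{G}_0)_{\mu\nu}=\iprod{\psi_\mu}{\psi_\nu}_0$. Theorem \ref{thm:SubgaussianConcentration} applied to $g=\psi_\mu\psi_\nu$, whose $L^1$-norm is bounded by $\norm{\psi_\mu}_{L^2}\norm{\psi_\nu}_{L^2}=1$, gives sub-Gaussian concentration of $\lambda^{-1}\mathbf{G}_{\mu\nu}$ around $(\mathbf{G}_0)_{\mu\nu}$ at rate $\lambda^{-1/2}$ with a constant independent of $\mu,\nu$. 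A union bound over the $\mathcal{O}(4^M)$ entries yields $\norm{\lambda^{-1}\mathbf{G}-\mathbf{G}_0}_{\operatorname{op}}=\mathcal{O}_{\prob[f_0]{}}(2^M\sqrt{M/\lambda})$, which vanishes because $2^M\sim\lambda^{1/(2\beta_0+1)}$ with $\beta_0>3/2$ gives $2^M=o(\lambda^{1/2}/\sqrt{\log\lambda})$. The prior-penalty term is asymptotically negligible since $\norm{\lambda^{-1}\boldsymbol\Sigma^{-1}}_{\operatorname{op}}=\mathcal{O}(\lambda^{-1/(2\beta_0+1)})\to 0$, and the Gaussian-type density lower bound from Proposition \ref{prop:DensityBounds} (already used for the norm equivalence \eqref{eq:norm_equivalence}) ensures that $\mathbf{G}_0$ is uniformly positive definite on $V_M$. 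Consequently $\lambda\mathbf{Q}\to\mathbf{G}_0^{-1}$ in operator norm. Finally, $\iprod{\mathbf{G}_0^{-1}\boldsymbol\gamma_M}{\boldsymbol\gamma_M}_{\ell^2}=\iprod{u_M}{P_M\gamma}_{L^2(\Xi)}$, where $u_M\in V_M$ is the Galerkin solution of $P_M(u_Mp_{f_0})=P_M\gamma$; since $\gamma\in H^\rho(\Xi)$ with $\rho>3/2$ and $p_{f_0}$ is bounded below on $\Xi$, $u_M\to \gamma/p_{f_0}$ in $L^2(\Xi)$ by standard wavelet approximation, yielding the limit $\int_\Xi \gamma^2/p_{f_0}$.

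For part (b), weak convergence of Gaussian measures on $H^{-\rho}(\Xi)$ follows from convergence of all finite-dimensional projections, provided by part (a) together with the Cram\'er--Wold device applied to arbitrary $\gamma\in H^\rho(\Xi)$, combined with uniform tightness on $H^{-\rho}(\Xi)$. For Gaussian posteriors, tightness amounts to uniform boundedness of
\begin{equation*}
\EV[\lambda]{\lambda\norm{f-\hat f}_{H^{-\rho}(\Xi)}^2\,\big|\,X^\lambda}=\sum_{|\mu|\le M}2^{-2\rho|\mu|}(\lambda\mathbf{Q})_{\mu\mu}\le \norm{\lambda\mathbf{Q}}_{\operatorname{op}}\sum_{|\mu|\le M}2^{-2\rho|\mu|}.
\end{equation*}
The first factor is $\mathcal{O}_{\prob[f_0]{}}(1)$ by the analysis of part (a), and the series converges since $\rho>1/2$, giving the required tightness and hence convergence in the bounded Lipschitz metric $\beta_{H^{-\rho}(\Xi)}$.

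The main obstacle is upgrading the entry-wise sub-Gaussian concentration of Theorem \ref{thm:SubgaussianConcentration} to operator-norm convergence on the growing space $V_M$: the rate $\lambda^{-1/2}$ per entry is essentially sharp, and the union bound over $\mathcal{O}(4^M)$ entries dictates how large $M$ may be taken. While $\beta_0>1/2$ would technically suffice for this step in isolation, the sharper condition $\beta_0>3/2$ is crucial because it guarantees posterior concentration on bounded $C^1$-balls via Theorem \ref{thm:PosteriorContraction}\eqref{num:PosteriorContraction_0}, which in turn underpins the sub-Gaussian control of $h_\lambda$ around $\norm{\cdot}_0$ needed throughout the argument.
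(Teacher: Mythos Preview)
Your approach is genuinely different from the paper's and, with a few caveats, sound. The paper does \emph{not} compute the posterior covariance directly; instead it follows the Castillo--Nickl program via Lemmas~\ref{lem:ExpansionLaplaceTransform} and~\ref{lem:SubGaussianBounds}: for $\gamma_0=\gamma/p_{f_0}$ it expands the Laplace transform of the auxiliary process $G_\lambda(\gamma)=\sqrt{\lambda}\iprod{f-f_0}{\gamma}_{L^2}+\sqrt{\lambda}\mathcal{G}_\lambda((f-f_0)P_M\gamma_0)-W_\lambda(P_M\gamma_0)$ under the restricted posterior $\Pi_\lambda^{D_\lambda}$, shows it converges to $e^{u^2\norm{\gamma_0}_0^2/2}$, then uses Gaussianity of $\bar G_\lambda$ to upgrade weak convergence to TV and moment convergence to replace the centering by $\hat f$. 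Part~(b) proceeds by the triangle inequality in $\beta_{-\rho}$ with projections $P_J$, controlling the tails through chaining maximal inequalities on $H^\rho$-balls. Your route---show $\lambda\mathbf{Q}\to\mathbf{G}_0^{-1}$ in operator norm, then a Galerkin limit for the variance---is more elementary and exploits conjugacy directly, whereas the paper's machinery is robust to non-conjugate priors and yields, as a by-product, the stochastic linear expansion $\sqrt{\lambda}\iprod{\hat f-f_0}{\gamma}=W_\lambda(P_M\gamma_0)+o_{\prob[f_0]{}}(1)$ needed for Corollary~\ref{cor:BvM_Truth}.

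Two points need tightening. First, Theorem~\ref{thm:SubgaussianConcentration}\eqref{num:SubgaussianThm2} gives concentration of $\mathcal{G}_\lambda(g)$ around $\EV[f_0]{\mathcal{G}_\lambda(g)}$, not around zero; you must also invoke part~\eqref{num:SubgaussianThm3} to bound the bias $\abs{\EV[f_0]{\mathcal{G}_\lambda(\psi_\mu\psi_\nu)}}\lesssim\lambda^{-1}\norm{(\psi_\mu\psi_\nu)'}_{L^\infty}\lesssim\lambda^{-1}2^{2M}$, contributing $\lambda^{-1}2^{3M}=\lambda^{-1+3/(2\beta_0+1)}\to 0$ to the operator norm. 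Second, the tightness claim in~(b) is imprecise: boundedness of $\EV[\lambda]{\lambda\norm{f-\hat f}_{H^{-\rho}}^2\,\vert\,X^\lambda}$ alone does not give tightness in $H^{-\rho}$, since norm balls are not compact. What you actually have is the uniform tail estimate $\sum_{\abs{\mu}>J}2^{-2\rho\abs{\mu}}(\lambda\mathbf{Q})_{\mu\mu}\le\norm{\lambda\mathbf{Q}}_{\mathrm{op}}\sum_{\abs{\mu}>J}2^{-2\rho\abs{\mu}}\to 0$ as $J\to\infty$ uniformly in $\lambda$ (equivalently, boundedness in $H^{-\rho'}$ for some $1/2<\rho'<\rho$, which embeds compactly). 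Finally, your closing remark that $\beta_0>3/2$ is needed via posterior concentration on $C^1$-balls is misplaced: your operator-norm argument nowhere uses Theorem~\ref{thm:PosteriorContraction}\eqref{num:PosteriorContraction_0}; that condition is intrinsic to the paper's Lemma~\ref{lem:ExpansionLaplaceTransform} (through \eqref{eq:gamma0_bound} and \eqref{eq:projected_gamma_norm}), not to your direct covariance computation.
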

\begin{proof}
	See Section \ref{sec:Bayes_proofContraction}.
\end{proof}

Note that the rate of convergence in Theorem \ref{thm:BvM} is $1/\sqrt{\lambda}$, regardless of the regularity of $f_0$ (as long as $\beta\geq 3$), which is always faster than the rate of convergence in Theorem \ref{thm:PosteriorContraction}. Moreover, we have chosen a larger cut-off value $M$ compared to Theorem \ref{thm:PosteriorContraction}, which is now, however, independent of the unknown smoothness $\beta$.

The Bernstein--von Mises theorem yields asymptotic results for the posterior mean as estimator of the ground truth $f_0$.

\begin{corollary}\label{cor:BvM_Truth}
	Under the assumptions of Theorem \ref{thm:BvM} the following holds true as $\lambda\to\infty$:
	\begin{enumerate}[(a)]
		\item If $\gamma\in H^\rho(\Xi)$, then $\sqrt{\lambda}\iprod{\hat{f}-f_0}{\gamma}_{L^2(\Xi)} \xrightarrow{d}N(0,\norm{\gamma/p_{f_0}}_0^2)$.\label{num:cor_BvM_pointwise}
		\item $\sqrt{\lambda}(\hat{f}-f_0) \xrightarrow{d}\mathbb{W}$ on $H^{-\rho}(\Xi)$.\label{num:cor_BvM_uniform}
	\end{enumerate}
\end{corollary}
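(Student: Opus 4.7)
My plan transfers the Bayesian Bernstein--von Mises statement of Theorem \ref{thm:BvM} into a frequentist central limit theorem for the posterior mean, following the semiparametric template of \citet{castillo2013} and \citet{nicklNonparametricStatisticalInference2020}. The key simplification here is the Gaussian conjugacy derived in Section \ref{subsec:Bayes_Setting_Posterior}, which yields the closed-form expression $\hat{\mathbf{f}} = \mathbf{Q}\mathbf{a}$ in wavelet coordinates with $\mathbf{Q} = (\mathbf{G}+\mathbf{\Sigma}\inv)\inv$. Let $f_{0,M}$ denote the $V_M$-projection of $f_0$ with coefficients $\mathbf{f}_{0,M}$. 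Under $\prob[f_0]{}$ and Girsanov's theorem, $\mathbf{a} = \mathbf{G}\mathbf{f}_{0,M} + \mathbf{r} + \boldsymbol{\eta}$, where $r_\mu = \int_0^T\int_\Lambda \psi_\mu(X_t(y))(f_0-f_{0,M})(X_t(y))\,\D y\D t$ captures the in-likelihood truncation error and $\eta_\mu = \int_0^T\iprod{\psi_\mu(X_t)}{\D W_t}_{L^2(\Lambda)}$. Denoting by $\boldsymbol{\gamma}_M$ the coefficients of the $V_M$-projection of $\gamma \in H^\rho(\Xi)$, a direct manipulation yields
\begin{equation*}
\sqrt{\lambda}\iprod{\hat f - f_0}{\gamma}_{L^2(\Xi)}
= \sqrt{\lambda}\,\boldsymbol{\gamma}_M\transpose \mathbf{Q}\boldsymbol{\eta}
+ \sqrt{\lambda}\,\boldsymbol{\gamma}_M\transpose \mathbf{Q}\mathbf{r}
- \sqrt{\lambda}\,\boldsymbol{\gamma}_M\transpose \mathbf{Q}\mathbf{\Sigma}\inv\mathbf{f}_{0,M}
- \sqrt{\lambda}\iprod{f_0-f_{0,M}}{\gamma}_{L^2(\Xi)}.
\end{equation*}

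For Part \eqref{num:cor_BvM_pointwise}, the plan is to show that the first term provides the efficient Gaussian limit and the three remaining terms are $o_{\prob[f_0]{}}(1)$. Identifying $q_\lambda = \lambda \mathbf{Q}\boldsymbol{\gamma}_M$ with an element of $V_M$, the main term equals $\sqrt{\lambda}\,\boldsymbol{\gamma}_M\transpose \mathbf{Q}\boldsymbol{\eta} = W_\lambda(q_\lambda)$ with $W_\lambda$ from Lemma \ref{lem:LAN}. Theorems \ref{thm:ErgodicProperties} and \ref{thm:SubgaussianConcentration} imply that $\lambda\inv \mathbf{G} \to \mathbf{G}^\infty$ in an appropriate operator sense, where $\mathbf{G}^\infty_{\mu\mu'} = \int_\Xi \psi_\mu \psi_{\mu'} p_{f_0}$, so that $q_\lambda \to \gamma/p_{f_0}$ in $\norm{\,\cdot\,}_0$. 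A stability estimate for $W_\lambda$ combined with Lemma \ref{lem:LAN} then yields $W_\lambda(q_\lambda) \xrightarrow{d} N(0,\norm{\gamma/p_{f_0}}_0^2)$. Among the bias terms, the truncation bias $\sqrt{\lambda}\iprod{f_0-f_{0,M}}{\gamma}_{L^2(\Xi)}$ is controlled through $\norm{f_0-f_{0,M}}_{H^{-\rho}(\Xi)}\norm{\gamma}_{H^\rho(\Xi)}\lesssim 2^{-M(\beta+\rho)}$ under $\beta \geq 3$, $\rho > 3/2$ and $2^M \sim \lambda^{1/(2\beta_0+1)}$; the stochastic residual $\sqrt{\lambda}\,\boldsymbol{\gamma}_M\transpose \mathbf{Q}\mathbf{r}$ is handled through Theorem \ref{thm:SubgaussianConcentration} applied to the functionals $\psi_\mu (f_0-f_{0,M})$ together with $\mathbf{Q}\sim\lambda\inv(\mathbf{G}^\infty)\inv$; and the prior bias $\sqrt{\lambda}\,\boldsymbol{\gamma}_M\transpose \mathbf{Q}\mathbf{\Sigma}\inv\mathbf{f}_{0,M}$ exploits the fast decay of the entries of $\mathbf{\Sigma}\inv\mathbf{f}_{0,M}$ combined with $\beta_0 > 3/2$.

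For Part \eqref{num:cor_BvM_uniform}, the Cram\'er--Wold device and Part \eqref{num:cor_BvM_pointwise} give convergence of the finite-dimensional distributions of $\sqrt{\lambda}(\hat f - f_0)$ when tested against $\gamma_1,\dots,\gamma_k \in H^\rho(\Xi)$ to the corresponding Gaussian vectors arising from $\mathbb{W}$. Tightness in $H^{-\rho}(\Xi)$ is obtained from the above decomposition: the Gaussian noise part $\sum_\mu(\sqrt{\lambda}\mathbf{Q}\boldsymbol{\eta})_\mu \psi_\mu$ is a centred Gaussian element of $H^{-\rho}(\Xi)$ with covariance operator converging to that of $\mathbb{W}$, which is tight on $H^{-\rho}(\Xi)$ for $\rho > 3/2$ (see the discussion after \eqref{eq:Bvm_cov}), while the bias terms are controlled uniformly in $H^{-\rho}(\Xi)$ by the same arguments as in Part \eqref{num:cor_BvM_pointwise}.

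The main obstacle will be the quantitative wavelet-basis operator analysis needed to make the heuristic $\mathbf{Q} \sim \lambda\inv(\mathbf{G}^\infty)\inv$ rigorous and to translate the ergodic convergence from Theorems \ref{thm:ErgodicProperties} and \ref{thm:SubgaussianConcentration} into the required $\lambda^{-1/2}$-scale bounds for each bias contribution. In particular, the assumption $\beta_0 > 3/2$ combined with the enlarged cut-off $2^M \sim \lambda^{1/(2\beta_0+1)}$ (larger than in Theorem \ref{thm:PosteriorContraction}) has to be carefully exploited so that the prior-induced bias $\sqrt{\lambda}\,\boldsymbol{\gamma}_M\transpose \mathbf{Q}\mathbf{\Sigma}\inv\mathbf{f}_{0,M}$ does not dominate.
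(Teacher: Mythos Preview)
Your proposal takes a genuinely different route from the paper. The paper's proof is almost immediate from the Bernstein--von Mises Theorem \ref{thm:BvM}: in the course of that proof, equation \eqref{eq:moment_convergence} already establishes
\[
\sqrt{\lambda}\iprod{\hat f-f_0}{\gamma}_{L^2(\Xi)} = W_\lambda(P_M\gamma_0) + \smallo_{\prob[f_0]{}}(1),
\]
because the posterior is conditionally Gaussian and its weak convergence to a fixed Gaussian forces the conditional mean $\EV[\lambda]{\bar G_\lambda(\gamma)\given X^\lambda}$ to vanish. The corollary then reduces to showing $W_\lambda(P_M(\cdot)_0)\xrightarrow{d}\mathbb{W}$ in $H^{-\rho}(\Xi)$, which follows from Lemma \ref{lem:LAN} for the finite-dimensional marginals together with the maximal inequalities of Lemma \ref{lem:SubGaussianBounds} for the tail control. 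No matrix analysis of $\mathbf{Q}$ is performed.

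Your approach expands $\hat{\mathbf f}=\mathbf Q\mathbf a$ directly and controls each bias contribution by hand. This is more self-contained in that it does not invoke the posterior at all, but it is considerably more laborious, and one step is not yet under control: the quantity $q_\lambda=\lambda\mathbf Q\boldsymbol{\gamma}_M$ depends on the entire path through $\mathbf G$, so the expression $W_\lambda(q_\lambda)$ is not a bona fide It\^o integral and Lemma \ref{lem:LAN} does not apply to it directly. You would need to decouple, for instance by writing $\sum_\mu(q_\lambda)_\mu W_\lambda(\psi_\mu)$ and balancing the rate at which $q_\lambda\to P_M\gamma_0$ against the vector $(W_\lambda(\psi_\mu))_{\abs{\mu}\le M}$, whose squared $\ell^2$-norm has expectation of order $\operatorname{dim}(V_M)$. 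The same randomness issue resurfaces in your tightness argument for Part \eqref{num:cor_BvM_uniform}, where the ``Gaussian noise part'' is only conditionally Gaussian given $\mathbf G$. These points are fixable, but the paper's indirect route via the posterior sidesteps them entirely.
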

\begin{proof}
	See Section \ref{sec:Bayes_proofContraction}.
\end{proof}

Theorem \ref{thm:BvM} \eqref{num:BvM_BetaConvergence} can be used to derive asymptotic confidence sets for $f_0$ in weak norms from posterior credible balls, see \citep{castillo2013} for details. We continue with concrete examples in which asymptotic normality for linear or nonlinear functionals can be used for statistical inference.

\begin{example}[Test for polynomial degree]\label{examp:BvM1}
	Of particular interest in applications are stochastic reaction-diffusion equations with polynomial reaction functions \citep{cristofolUniquenessPointwiseObservations2012} where the steady states correspond to the roots of the polynomials. Such models are compatible with Assumption \ref{assump:Parameterspace} if we suppose that $f_0(x)=\sum_{k=0}^q a_k x^k$ for $x\in\bar{\Xi}$, $a_1,\dots,a_q\in\mathbb{R}$, $q\in\mathbb{N}_0$, where $\bar{\Xi}\subset\Xi$ is a compact interval, and that $f_0$ extends outside of $\bar{\Xi}$ smoothly to zero with support in $\Xi$.
		
	Suppose we want to derive a statistical test for the null hypothesis that $f_0$ is a polynomial of degree at most $q^\ast\geq 0$ on $\bar{\Xi}$. Let $\gamma$ be a sufficiently smooth function with compact support in $\bar{\Xi}$ that is orthogonal to all polynomials up to degree $q^\ast$ such that $\iprod[]{(\MTemptyplaceholder)^{\bar{q}}}{\gamma}_{L^2(\Xi)}=0$ for all $0\leq\bar{q}\leq q^\ast$. Such functions are constructed by \citet{Tsybakov2009}, or we may take suitable \enquote{mother} wavelets, cf. Proposition 4.2.7 of \citet{gineMathematicalFoundationsInfiniteDimensional2015}. Under the null hypothesis the inner product $\iprod[]{f_0}{\gamma}_{L^2(\Xi)}$ vanishes, such that by Corollary \ref{cor:BvM_Truth} \eqref{num:cor_BvM_pointwise} the test statistic $\tau\coloneq\sqrt{\lambda}\iprod[]{\hat{f}}{\gamma}_{L^2(\Xi)}$ is for large $\lambda$ approximately normally distributed with variance $\norm{\gamma/p_{f_0}}^2_0$. If an upper bound on the variance is known (e.g. using the density bounds from Corollary \ref{cor:DensityBounds}), we can derive approximate critical values for the test statistic $\tau$ relative to the standard normal distribution. 
\end{example}

\begin{example}[Estimating polynomial coefficients]\label{examp:BvM2}
	Continuing Example \ref{examp:BvM1}, suppose that $f_0$ is a polynomial of degree $q^\ast\in\mathbb{N}$ on $\bar{\Xi}$. If $\gamma$ is orthogonal to all polynomials of degree $q^\ast-1$ and $\iprod[]{(\MTemptyplaceholder)^{q^\ast}}{\gamma}_{L^2(\Xi)}\neq 0$, then $\iprod[]{f_0}{\gamma}_{L^2(\Xi)}=a_{q^\ast}\iprod[]{x^{q^\ast}}{\gamma}_{L^2(\Xi)}$ and using Corollary \ref{cor:BvM_Truth} \eqref{num:cor_BvM_pointwise} we can estimate $a_{q^\ast}$ by $\iprod[]{\hat{f}}{\gamma}_{L^2(\Xi)}/\iprod[]{(\MTemptyplaceholder)^{q^\ast}}{\gamma}_{L^2(\Xi)}$ with the rate of convergence $\sqrt{\lambda}$. Using this approach, we can estimate all coefficients $a_1,\dots,a_{q^\ast}$.
\end{example}

\begin{example}[Estimating power functions]\label{examp:BvM3}
	Besides linear functionals, as discussed in Section 2.3.2 of \citet{castillo2013}, the Bernstein--von Mises theorem \ref{thm:BvM} also yields asymptotic normality for smooth nonlinear functionals $\Psi(f_0)$ as long as $\Psi\colon L^2\to\mathbb{R}$ is differentiable in a functional sense near $f_0$ with a sufficiently small remainder term. Skipping details, we can argue as \citet{giordano2025} in their Section 2.2 and show that this yields asymptotic normality for the plug-in estimator $\Psi(\hat{f})$ for $\Psi(f_0)$ and functionals of the form $\Psi(f)=\int_{\mathbb{R}}f(x)^q\,\D x$, $q\geq 2$. In particular, this allows one to estimate the $L^2$ norm of $f_0$.
\end{example}

\subsection{Numerical exploration}\label{subsec:Numerics_Balanced}

To visualise our results we consider a (modified) stochastic Allen-Cahn equation with stable states $\pm 3$ given by
\begin{equation}
	\D X_t(y) =\Updelta X_t(y)\,\D t + f(X_t(y))\,\D t + \,\D W_t(y),\quad 0\le t\le 1,\quad X_0\equiv 0,\label{eq:Bayes_Simulation_SPDE}
\end{equation}
on $\Lambda =\lambda\bar{\Lambda}$ with $\bar{\Lambda}=(-1/2,1/2)$ and Neumann boundary conditions. Let $\Xi=[-3.5,3.5]$ and let the data-generating reaction function $f_0\colon\mathbb{R}\to\mathbb{R}$ be given by
\begin{equation*}
	f_0(x) = -(x^3-9x),\quad -3.25\le x\le 3.25,
\end{equation*}
with (polynomial) spline interpolation on $[-3.5,-3.25)\cup (3.25,3.5]$ such that $f_0(x)=f_0'(x)=f_0''(x)=f_0'''(x)=0$ for all $\abs{x}\ge 3.5$. For this illustration, the size of the domain is set to $\lambda=50$. The SPDE \eqref{eq:Bayes_Simulation_SPDE} is simulated using a semi-implicit Euler scheme with a finite difference approximation of $\Updelta$ based on \citet{Lord2014}. In Figure \ref{fig:Nonpara_Balanced} (left), a typical realisation of $X$ solving \eqref{eq:Bayes_Simulation_SPDE} is displayed.

We choose the sieved Gaussian wavelet prior from \eqref{eq:Prior} with Haar wavelets on $L^2(\Xi)$ and frequency cut-off $M=7$, $\beta_0 = 1/2$. Using the Gaussianity of the posterior and the explicit formulae for the posterior mean and covariance of Section \ref{subsec:Bayes_Setting_Posterior}, we arrive at Figure \ref{fig:Nonpara_Balanced} (middle and right).
   Since we approximate the spatial and temporal integrals in the likelihood \eqref{eq:Bayes:Likelihood_Hellinger} by Riemann sums, the figure displays an approximation to the true posterior distribution. A precise control of approximation error, or an analysis of Bayesian inference based on the exact likelihood of discrete observation as in \citep{nicklConsistentInferenceDiffusions2024, giordanoStatisticalAlgorithmsLowfrequency2025} present interesting avenues for future research. Although Haar wavelets only benefit from regularity up to $\beta=1$ in the approximation bound \eqref{eq:Bayes_SupNorm_Approximation}, Figure \ref{fig:Nonpara_Balanced} shows that the posterior nevertheless has most of its mass close to the data-generating reaction function $f_0$.
   An inspection of the proof of Theorem \ref{thm:PosteriorContraction} shows that the contraction rate for Haar-wavelets is $\lambda^{-1/3}$ instead of $\lambda^{-\beta/(2\beta+1)}$, $\beta\ge 3$.

In Figure \ref{fig:Nonpara_Balanced} (right), we contrast the (pointwise) posterior variance with the (rescaled) occupation time
\begin{equation}
	M(x) \coloneq 2^{7}\int_0^T\int_\Lambda \indicator(\abs{X_t(y)-x}\le 2^{-8})\,\D y\D t,\quad -3.5\le x\le 3.5.\label{eq:Bayes_OccupationTime}
\end{equation}
The comparison shows that the posterior variance is large where the occupation time is low. This connection is natural, since a shorter occupation time around a point $x\in\mathbb{R}$ implies that less information can be gathered about $f_0$ around $x$. Since the rescaled occupation time is an estimator for the density $p_{f_0}$, this intuition aligns well with the covariance structure of the limiting Gaussian process $\mathbb{W}$ in the Bernstein--von Mises Theorem \ref{thm:BvM}, which is inversely proportional to $p_{f_0}$. The relatively high occupation time around $x=0$ is due to the effect of the initial condition $X_0\equiv 0$. Note that the smooth appearance of the occupation time $M(x)$ does not necessarily yield insights into the regularity of the local time or the occupation density of the spatio-temporal process $(X_t(y)\,\colon\,0\le t\le T, y\in\Lambda)$ due to smoothing.
\begin{figure}
	\centering
	\begin{subfigure}{0.32\textwidth}
		\includegraphics[width = \textwidth]{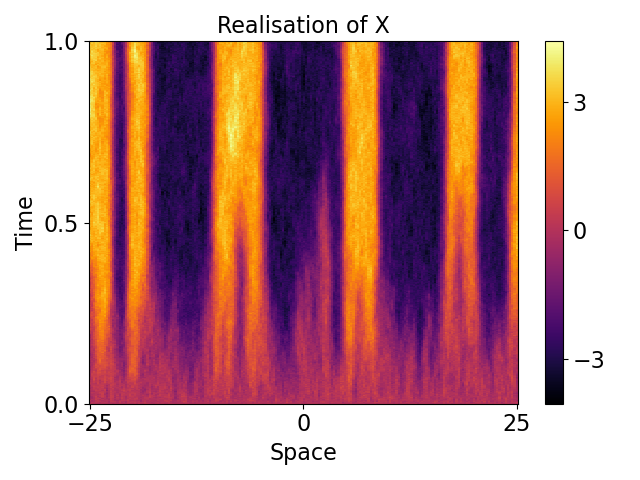}
	\end{subfigure}
	\begin{subfigure}{0.32\textwidth}
		\includegraphics[width = \textwidth]{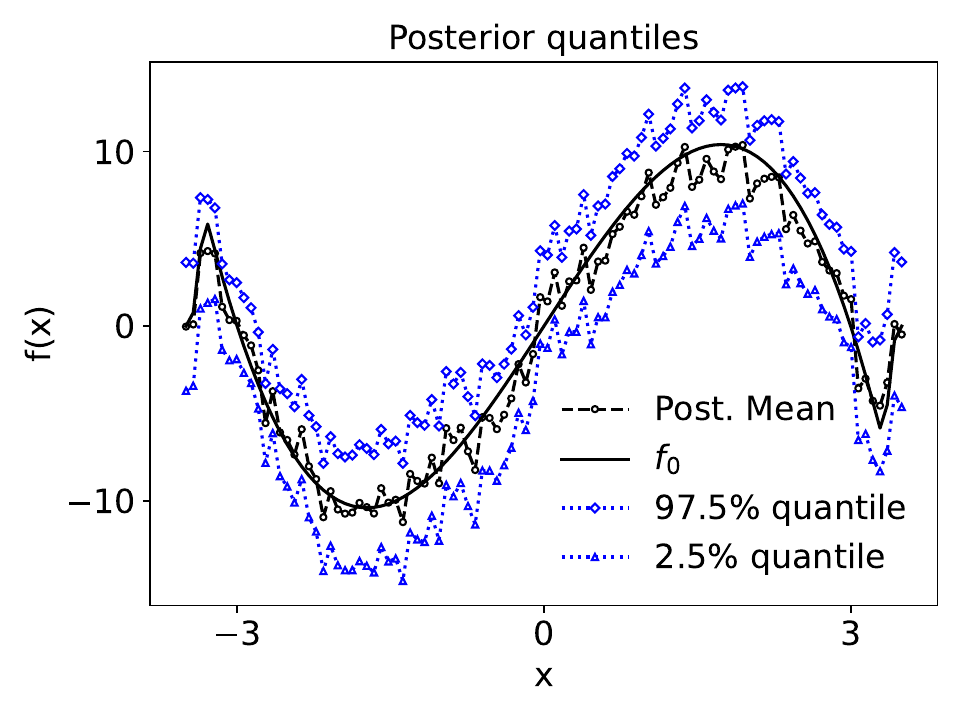}
	\end{subfigure}
	\begin{subfigure}{0.32\textwidth}
		\includegraphics[width = \textwidth]{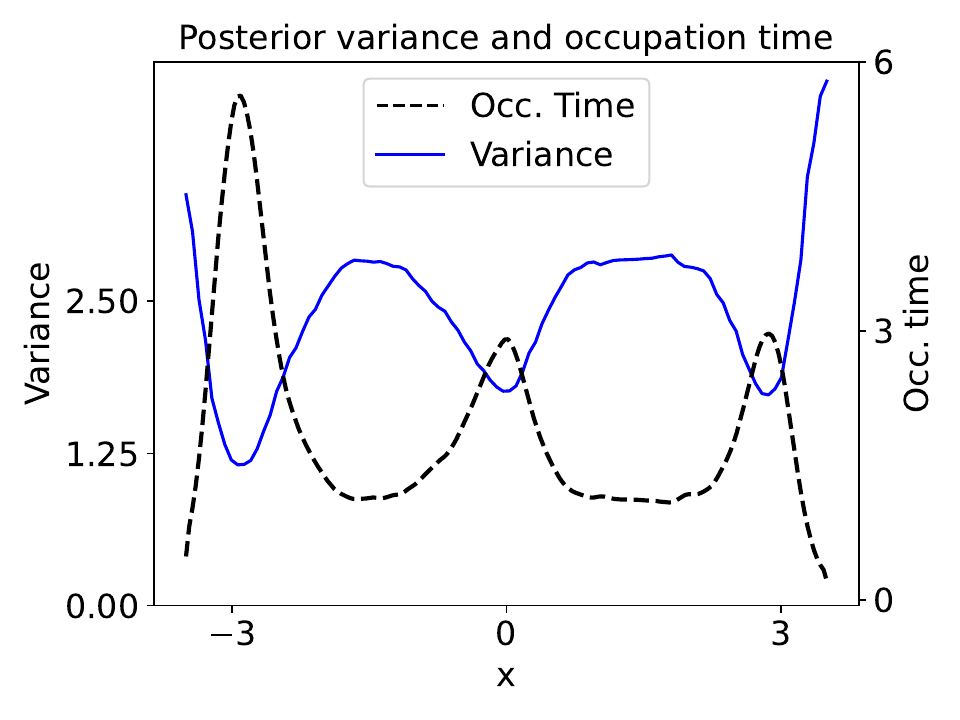}
	\end{subfigure}
	\caption{Illustration for $\lambda=50$. Left: Realisation of $X$ solving the semi-linear SPDE \eqref{eq:Bayes_Simulation_SPDE}. Middle: (Pointwise) quantiles of the corresponding posterior distribution with prior specified in Subsection \ref{subsec:Numerics_Balanced}. Right: Comparison of the posterior variance with the (rescaled) occupation time \eqref{eq:Bayes_OccupationTime}.}
	\label{fig:Nonpara_Balanced}
\end{figure}

\subsection{Discussion}\label{subsec:Discussion}

\subsubsection{The Bayesian approach for more general SPDEs}\label{subsec:Generalisation1}

A natural extension to \eqref{eq:SPDE} is the semi-linear SPDE
\begin{equation}
    \D X_t = A X_t \,\D t + F(X_t)\,\D t + B \,\D W_t,\quad 0\le t\le T,\label{eq:SPDE_GeneralSemiLinear}
\end{equation}
defined on a bounded domain $\Lambda\subset \mathbb{R}^d$, where $A\colon\operatorname{dom}(A)\subset L^2(\Lambda)\to L^2(\Lambda)$ generates a $C_0$-semi-group on $L^2(\Lambda)$, $B\colon L^2(\Lambda)\to L^2(\Lambda)$ is linear bounded operator and $F\colon L^2(\Lambda)\to L^2(\Lambda)$ is not necessarily of Nemytskii type as in \eqref{eq:SPDE}. Typical examples for $A$ include $A=\nu\Updelta$ (diffusivity $\nu>0$), $A=-(-\Updelta)^\rho$ (fractional Laplacian with $\rho \ge 1/2$) and $A=\partial_y (a(y)\partial_y\MTemptyplaceholder)$ (divergence form), while $B$ can for example have the form $B=\sigma\identity$ (space-time white noise with intensity $\sigma$) or $B=-(-\Updelta)^{-\kappa}$ with $\kappa>0$, the latter inducing higher spatial smoothness for the solution. Sufficient conditions for the wellposedness of \eqref{eq:SPDE_GeneralSemiLinear} as a (unique) mild and weak solution on $L^2(\Lambda)$ are provided in Theorems 7.2 and 7.5 of \citet{DaPrato2014} and Proposition G.0.5 of \citet{Roeckner2015}. If
\begin{equation*}
    \sup_{t\in[0,T]}\EV[f]{\exp(\delta\norm{B\inv F(X_t)}_{L^2(\Lambda)}^2)}<\infty
\end{equation*}
for some $\delta>0$, Theorem 10.18 of \citet{DaPrato2014} shows that the log-likelihood with respect to the nonlinearity $F$ is given by
\begin{equation*}
    \ell (F) = \int_0^T \iprod{B\inv F(X_t)}{B\inv[\D X_t - A X_t\,\D t]}_{L^2(\Lambda)}-\frac{1}{2}\int_0^T \norm{B\inv F(X_t)}_{L^2(\Lambda)}^2\,\D t.
\end{equation*}
Analogously to \eqref{eq:aux_Def_Likelihood}, we define the first integral as the $L^2(\prob[0]{})$-limit
\begin{equation*}
	\sum_{k\in\mathbb{N}}\int_0^T\iprod*{F(X_t)}{(B\inv)^\ast e_k}_{L^2(\Lambda)}\left(\iprod*{(B\inv)^\ast e_k}{\D X_t}_{L^2(\Lambda)} - \iprod*{A^\ast (B\inv)^\ast e_k}{X_t}_{L^2(\Lambda)}\,\D t\right)
\end{equation*}
for an orthonormal basis $(e_k)_{k\in\mathbb{N}}$ of $L^2(\Lambda)$ such that $e_k\in \operatorname{dom}(A^\ast(B\inv)^\ast)$, $k\in\mathbb{N}$. 

This shows that the likelihood is well-defined and the Bayesian procedure is applicable under mild assumptions. The likelihood is quadratic in the operator $F$, so Gaussian conjugacy is preserved (assuming a suitable Gaussian prior). The computation of the likelihood, however, requires knowledge of the operators $A$ and $B$. By adjusting the likelihood, the analysis presented in this paper transfers to the setting $A=\nu\Updelta$ and $B=\sigma\identity$ for known $\nu,\sigma^2>0$.

\subsubsection{Spatial ergodicity for more general SPDEs}\label{subsec:Generalisation2}

The analysis of the posterior distribution in the general setting \eqref{eq:SPDE_GeneralSemiLinear} is delicate. On the one hand, by following the proofs of Propositions \ref{prop:DensityBoundedDerivative} and Theorem \ref{thm:SubgaussianConcentration}, the concentration results for the averages $\int_0^T\frac{1}{\lambda} \int_{\lambda\bar{\Lambda}} g(X_t(y))\,\D y\D t$ can be generalised beyond $A=\Updelta$ and $B=\identity$, provided that there exists some $0<\alpha<1$ such that
\begin{equation*}
    \int_0^t \norm{B G_{t-s}^\lambda(y,\eta)}_{L^2(\Lambda)}^2\,\D s\sim t^{-\alpha},\quad 0\le t\le T,
\end{equation*}
for all $\lambda\ge 1$ where $G^\lambda$ is the Green function of $A$ on $\Lambda$. On the other hand, if $B\inv$ is an unbounded operator on $L^2(\Lambda)$ or $F$ is not of Nemytskii-type, the Hellinger semi-metric $h_\lambda$ no longer takes the form of an average, since 
\begin{equation*}
	h_\lambda(F,G)^2 = \frac{1}{\lambda}\int_0^T\norm{B\inv(F(X_t)-G(X_t))}_{L^2(\Lambda)}^2\,\D t.
\end{equation*}
Whereas $B\inv$ is in general a local operator and spatial ergodicity is expected, the analysis requires a precise understanding of the joint action of $B\inv$ and any candidate functions $F, G$ on $X_t$ and lies beyond the scope of this work.

\subsubsection{Small noise asymptotics}\label{subsec:Rescaling}

Suppose we observe $Y^\sigma = (Y_t(y)\,\colon \, (t,y)\in[0,T] \times\bar{\Lambda})$ solving the SPDE \eqref{eq:Y_rescaled} on a domain $\bar{\Lambda}\subset\mathbb{R}$ with known diffusivity $\nu>0$ and known noise level $\sigma>0$. Provided that $f$ is Lipschitz-continuous, the log-likelihood reads
\begin{equation*}
    \ell(f) = \frac{1}{\sigma^2}\int_0^T \iprod{F(Y_t)}{\D Y_t-\nu\Updelta Y_t}_{L^2(\bar{\Lambda})} - \frac{1}{2\sigma^2}\int_0^T \norm{F(Y_t)}_{L^2(\bar{\Lambda})}^2\,\D t.
\end{equation*}
Analogously to Subsection \ref{subsec:Bayes_Setting_Posterior}, Gaussian conjugacy holds, and the posterior distribution can be analysed in the small noise asymptotic $\sigma\to 0$ with diffusivity $\nu>0$ held constant. Using Theorem 3 of \citet{Ibragimov2003} and the proof strategy of Proposition 5.2 of \citet{vanzantenRateConvergenceMaximum2005a}, it can be shown that the appropriate limiting metric is given by
\begin{equation*}
    \int_0^T\int_{\bar{\Lambda}}\left(g(Y_t^{\operatorname{det}}(y))-\tilde{g}(Y_t^{\operatorname{det}}(y))\right)^2\,\D y\D t,\quad g,\tilde{g}\in\Theta,
\end{equation*}
where $Y^{\operatorname{det}}$ satisfies the deterministic PDE \eqref{eq:Y_rescaled} with $\sigma=0$ and $f=f_0$. Posterior contraction in this metric can be shown along the lines of Subsection 3.1 of \citet{vandermeulenConvergenceRatesPosterior2006}.

\section{The Malliavin calculus and the proof of Theorem \ref{thm:SubgaussianConcentration}}\label{sec:Proof_Subgaussian_Concentration}

The Malliavin calculus was initiated by \citet{malliavinStochasticCalculusVariation1978} to obtain a probabilistic proof of Hörmander's Theorem and has since experienced significant extensions and new areas of application. The interested reader is referred to the monographs \citet{SanzSole2005, nualartMalliavinCalculusRelated2006, bogachevDifferentiableMeasuresMalliavin2010, nualartIntroductionMalliavinCalculus2018, ikedaIntroductionMalliavinsCalculus1984, nourdinNormalApproximationsMalliavin2012, malliavinStochasticCalculusVariations2006, malliavinStochasticAnalysis1997} and references therein. Its interpretation as an \enquote{infinite-dimensional differential calculus on the Wiener space} \citep{nualartMalliavinCalculusRelated2006} makes the Malliavin calculus appealing in our setting, since we are aiming to control quantities, e.g.\ $\mathcal{G}_\lambda$, which are nonlinear transformations of the Gaussian process $(W_t)_{t\in[0,T]}$. If $X$ is the solution to the semi-linear SPDE \eqref{eq:SPDE} and $f\in C^1(\mathbb{R})$, then the Malliavin derivative $(\tau,z)\mapsto\mathcal{D}_{\tau,z}X_t(y)\in L^2([0,T],L^2(\Lambda))$ of $X_t(y)$ for any time point $0<t\le T$ and location $y\in\Lambda$ exists. It satisfies the linear PDE with random coefficients
\begin{equation}
	\begin{cases}
		\partial_t \mathcal{D}_{\tau,z} X_t(y) = \partial_y^2 \mathcal{D}_{\tau,z}X_t(y) + f'(X_t(y))\mathcal{D}_{\tau,z}X_t(y),& 0\le\tau<t\le T,\quad  z\in\Lambda\\
		\mathcal{D}_{\tau,z}X_\tau(y) = \delta_{\tau,z}(y),&z\in\Lambda,
	\end{cases}\label{eq:malliavin_derivative}
\end{equation}
and $\mathcal{D}_{\tau,z}X_t(y)=0$ for $0< t<\tau\le T$, $z,y\in\Lambda$. Assume that $g\in C^1(\mathbb{R})$. We apply the Malliavin calculus at two instances in the proof of the sub-Gaussian concentration of Theorem \ref{thm:SubgaussianConcentration}. The first instance is the well-known Clark-Ocone formula in the form given by Proposition 6.3 in \citep{chenSpatialErgodicitySPDEs2021}, which implies
	\begin{equation}
		\mathcal{G}_{\lambda} - \EV*[f]{\mathcal{G}_{\lambda}} = \mathcal{M}_T\coloneq \int_0^T \int_\Lambda \EV*[f]{\mathcal{D}_{\tau,z}\mathcal{G}_{\lambda}\given \mathscr{F}_{\tau}}\,\mathcal{W}(\D z,\D \tau),\label{eq:Clark_ocone}
	\end{equation}
	almost surely, where $(\mathscr{F}_t)_{t\ge 0}$ is the natural filtration generated by $\mathcal{W}$.
	 The stochastic integral $\mathcal{M}$ is a martingale (in time) with quadratic variation
	\begin{equation*}
		\langle \mathcal{M}\rangle_T = \int_0^T \int_\Lambda \EV[f]{\mathcal{D}_{\tau,z}\mathcal{G}_\lambda\given \mathcal{F}_\tau}^2 \,\D z\D \tau = \int_0^T\int_\Lambda \bigg(\int_\tau^T \frac{1}{\lambda}\int_\Lambda  \EV[f]{g'(X_t(y))\mathcal{D}_{\tau,z} X_t(y)\given \mathcal{F}_\tau}\,\D y\D t\bigg)^2\,\D z\D \tau.
	\end{equation*}
	This representation allows us to control deviations of $\mathcal{G}_{\lambda}$ by bounding the quadratic variation $\langle \mathcal{M}\rangle_T$ and applying a (martingale) Bernstein inequality.
	 Lemma 4.6 of \citet{gaudlitzNonparametricEstimationReaction2023} shows that $\langle \mathcal{M}\rangle_T$ can be controlled once we can bound the conditional expectation
	 \begin{equation*}
		\abs{\EV[f]{g'(X_t(y))\given \mathcal{F}_\tau}}.
	 \end{equation*}
  At this point, a second application of the Malliavin calculus allows us to control the conditional expectation via density estimates. Importantly, the subsequent bound on the derivatives of the density of $X_t(y)$ does not depend on the initial condition.
\begin{proposition}\label{prop:DensityBoundedDerivative}
    Let $X_0\in C(\Lambda)$ be any deterministic initial condition, $N\in \mathbb{N}_0$, $K>0$ and $f\in\mathcal{F}_{N+2,K}$. Then there exists a constant $0<p_{\max}^{(N)}<\infty$, depending on $T$, $K$ and $N$, such that we have for all $0\le n\le N$, $0<t\le T$ and $\lambda\ge 1$
		\begin{equation*}
			\abs{\partial_x^n p_{f,\lambda}(t,y,x)}\le p_{\max}^{(N)}t^{-(N+1)/4},\quad x\in\mathbb{R}, y\in\Lambda.
		\end{equation*}
\end{proposition}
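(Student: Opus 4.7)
The plan is to combine the Malliavin integration-by-parts formula for smooth densities with a two-sided control of the Malliavin matrix of $X_t(y)$ that is uniform in $\lambda\ge 1$. Recall that for a sufficiently regular and non-degenerate random variable $F\in\mathbb{D}^{n+1,\infty}$ with $\gamma_F^{-1}\in\bigcap_p L^p$, Proposition 2.1.5 of \citet{nualartMalliavinCalculusRelated2006} yields the representation
\begin{equation*}
	\partial_x^n p_F(x) = (-1)^n \EV*[f]{\indicator_{\{F>x\}}H_{n+1}(F,1)},
\end{equation*}
where the Malliavin weights $H_{k+1}$ are defined recursively through the Skorokhod integral, involving products of $\gamma_F^{-1}$, $\mathcal{D}F$ and higher-order Malliavin derivatives of $F$. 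Taking $F=X_t(y)$, the absolute value of the indicator drops out and we are left with the task of bounding $\EV[f]{\abs{H_{n+1}(X_t(y),1)}}$ uniformly in $y\in\Lambda$ and $\lambda\ge 1$.

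First, I would establish that $X_t(y)\in\mathbb{D}^{N+1,\infty}$ by iteratively differentiating equation \eqref{eq:malliavin_derivative}, which requires $f\in C^{N+2}$. This yields linear PDEs with random coefficients for $\mathcal{D}^k X_t(y)$ whose source terms involve $f^{(j)}(X)\mathcal{D}^{i_1}X\cdots \mathcal{D}^{i_j}X$. Via the mild-form representation with the Neumann kernel $G^{\lambda}$ and a Gronwall argument on $[0,T]$, one then bounds all $L^p(\Omega)$-moments of these derivatives uniformly in $\lambda\ge 1$ by a constant depending only on $T$ and $\norm{f}_{C^{N+2}(\mathbb{R})}$.

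The crucial step is the negative-moment bound on the Malliavin matrix $\gamma_t\coloneq\int_0^t\int_\Lambda (\mathcal{D}_{\tau,z}X_t(y))^2\,\D z\D\tau$. The idea is to restrict attention to a small time window $[t-\delta,t]$ with $\delta\in(0,t]$ to be chosen, and to expand
\begin{equation*}
	\mathcal{D}_{\tau,z}X_t(y) = G^{(t-\tau)}_\lambda(y,z) + \int_\tau^t\int_\Lambda G^{(t-s)}_\lambda(y,\eta)f'(X_s(\eta))\mathcal{D}_{\tau,z}X_s(\eta)\,\D\eta\D s,
\end{equation*}
with the second term controlled in $L^2([t-\delta,t]\times\Lambda)$ by a factor $\delta\norm{f'}_{L^\infty}$ times the leading kernel term. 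Since $\int_{t-\delta}^t\int_\Lambda (G^{(t-\tau)}_\lambda(y,z))^2\,\D z\D\tau\sim\sqrt{\delta\wedge t}$ uniformly in $y\in\Lambda$ and $\lambda\ge 1$ (via the reflection representation of the Neumann kernel), choosing $\delta\sim 1\wedge\norm{f'}_{L^\infty}^{-2}$ then gives the pointwise lower bound $\gamma_t\gtrsim\sqrt{t}$, and hence $\norm{\gamma_t^{-p}}_{L^q(\Omega)}\lesssim t^{-p/2}$ for any $p,q>0$.

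Combining the two ingredients via Hölder's inequality, each factor $\gamma_t^{-1}$ in $H_{n+1}$ contributes $t^{-1/2}$, and the divergences produce integrable Malliavin-regular terms whose $L^p$ norms are bounded by the first step. Summing the $(n+1)$ factors of $\gamma_t^{-1}$ yields $\abs{\partial_x^n p_{f,\lambda}(t,y,x)}\lesssim t^{-(n+1)/4}\le t^{-(N+1)/4}$, with constants independent of $\lambda$, $y$ and $x$. The main obstacle is the negative-moment bound on $\gamma_t$: while the strategy above is classical for SPDEs, it is essential that the implicit constants are uniform in $\lambda\ge 1$ (this is where the scaling properties of the Neumann kernel and the assumption $X_0\in C(\Lambda)$ enter transparently, since the bound does not depend on $X_0$ at all) and that the small-time cutoff $\delta$ can be chosen independently of $y\in\Lambda$.
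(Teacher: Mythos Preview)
Your overall strategy matches the paper's proof exactly: Malliavin integration-by-parts \`a la Nualart Proposition~2.1.4/2.1.5, control of the Malliavin derivatives via the mild formulation and Gronwall, a pointwise lower bound $\gamma_t\gtrsim t^{1/2}$ uniform in $\lambda$, and then H\"older's inequality to assemble the weight $H_{n+1}$.

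There is, however, a genuine gap in your power-counting. You bound the $L^p$-moments of $\mathcal{D}^k X_t(y)$ only by a \emph{constant} depending on $T,K$, and you then claim that each factor $\gamma_t^{-1}$ contributes $t^{-1/2}$ while ``the divergences produce \ldots\ terms whose $L^p$ norms are bounded''. That gives $t^{-(n+1)/2}$, not $t^{-(n+1)/4}$; your final line does not follow from what you have written. The missing ingredient is the small-time decay $\norm{\mathcal{D}X_t(y)}_{\mathfrak{H}}\lesssim t^{1/4}$ (and, more generally, $\norm{\mathcal{D}^k X_t(y)}_{\mathfrak{H}^{\otimes k}}\lesssim t^{1+k/4}$ for $k\ge 2$), which is what the paper tracks carefully in its Step~1 by induction over the order of the Malliavin derivative. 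These estimates ensure that the $(n+1,s)$-norm of $\mathcal{D}X_t(y)$ is dominated by its first component and behaves like $t^{1/4}$; combined with $\norm{\gamma_t^{-1}}_{n+1,r}\lesssim t^{-1/2}$ (the zeroth component dominates in the recursion for $\mathcal{D}^k\gamma_t^{-1}$), the product in the Nualart estimate then yields $t^{-(n+1)/2}\cdot t^{(n+1)/4}=t^{-(n+1)/4}$. Without this refinement your argument only delivers $t^{-(n+1)/2}$, which is too weak to prove the proposition for $n$ close to $N$.

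Your sketch of the lower bound $\gamma_t\gtrsim t^{1/2}$ via a small-window perturbation argument is fine and is essentially what the paper invokes from \citet{gaudlitzNonparametricEstimationReaction2023}; note also that in the present one-dimensional setting the bound is in fact deterministic (pointwise almost sure), so negative moments are immediate once you have it.
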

\begin{proof}
    The proof follows the classical approach for proving the smoothness of densities using the Malliavin calculus of \citet{watanabeLecturesStochasticDifferential1984, ikedaIntroductionMalliavinsCalculus1984}, see also Section 2.1.4 of \citet{nualartMalliavinCalculusRelated2006} and Section 7.2 of \citet{nualartIntroductionMalliavinCalculus2018} for modern expositions. It is carried out in Subsection \ref{subsec:DensityDerivatives}.
\end{proof}
\begin{remark}[Assuming $f\in \mathcal{F}_{N+2,K}$]\label{rmk:OnDensity}
	Proposition \ref{prop:DensityBoundedDerivative} shows that assuming $f\in \mathcal{F}_{N+2,K}$ in Theorem \ref{thm:SubgaussianConcentration} allows us to control the derivatives of the density up to the order $N\in\mathbb{N}_0$. The gap of 2 between the regularity of $f$ and the regularity of the density is commonly found when proceeding similarly to Theorem 2.1.4 of \citet{nualartMalliavinCalculusRelated2006}, see \citep{carmonaRandomNonlinearWave1988a, milletStochasticWaveEquation1999, marquez-carrerasStochasticPartialDifferential2001, dalangPotentialTheoryHyperbolic2004, SanzSole2005, nualartExistenceSmoothnessDensity2007a, dalangHittingProbabilitiesSystems2009,  marinelliExistenceRegularityDensity2013}. The gap has been reduced by \citet{nourdinDensityFormulaConcentration2009, romitoSimpleMethodExistence2018}, but their results do not apply to our required case $N=1$. Providing a tighter connection between the regularity of $f$ and of the density poses an intricate question for future research.
\end{remark}
  
We proceed with the proof of the sub-Gaussian concentration for spatio-temporal averages in Theorem \ref{thm:SubgaussianConcentration} (\ref{num:SubgaussianThm2}) in three steps.
 When $g\in C^1(\mathbb{R})$, we combine the integration-by-parts formula with the bounds for $\partial_x p_{f,\lambda}(t,y,x)$ from Proposition \ref{prop:DensityBoundedDerivative} to obtain a bound for $\abs{\EV[f]{g'(X_t(y))\given \mathcal{F}_\tau}}$ in terms of $\norm{g}_{L^1(\mathbb{R})}$ in Lemma \ref{lem:Bayes_BoundConditionalExpectation}.
  Subsequently, when $g\in C^1(\mathbb{R})$ we apply Lemma 4.6 of \citet{gaudlitzNonparametricEstimationReaction2023} to obtain the claimed sub-Gaussian concentration for $\mathcal{G}_{\lambda}(g)$ in Lemma \ref{lem:General_Deviation_Control}.
   We conclude by an approximation argument for $g\in L^1(\mathbb{R})$.

\subsubsection*{Step 1: Bounding the conditional expectation}

We denote by $\mathcal{P}$ the set of all functions $\mathbb{R}\to\mathbb{R}$ that are of at most polynomial growth.
 The subsequent lemma bounds the conditional expectation $\abs{\EV[f_0]{g'(X_t(y))\given \mathcal{F}_\tau}}$ in terms of $\norm{g}_{L^1(\mathbb{R})}$ by combining the integration-by-parts formula with the bound for the derivative of the density $\partial_xp_{f,\lambda}(t,y,x)$ from Proposition \ref{prop:DensityBoundedDerivative}.
\begin{lemma}\label{lem:Bayes_BoundConditionalExpectation}
	Let $f_0\in\Theta$ and $g\in L^1(\mathbb{R})\cap C^1(\mathbb{R})\cap \mathcal{P}$. Then
	\begin{equation*}
		\abs{\EV[f_0]{g'(X_t(y))\given \mathcal{F}_\tau}}\le \norm{g}_{L^1(\mathbb{R})}p_{\max}^{(1)}(t-\tau)^{-1/2},\quad y\in\Lambda,\quad 0\le\tau<t\le T.
	\end{equation*}
\end{lemma}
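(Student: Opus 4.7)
The plan is to combine the Markov property of the random field solution with the pointwise derivative bound for the marginal density of $X_t(y)$ from Proposition \ref{prop:DensityBoundedDerivative}, turning the conditional expectation into an integral against a density and then integrating by parts.

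First, by the Markov property of the mild/random field solution to \eqref{eq:SPDE}, conditionally on $\mathscr{F}_\tau$ the process $(X_{\tau+s})_{s\ge 0}$ solves the same SPDE driven by the shifted Wiener noise and started from the random initial condition $X_\tau$. Since solutions of \eqref{eq:SPDE} are almost surely continuous in the spatial variable, $X_\tau\in C(\Lambda)$ on a set of full probability, so Proposition \ref{prop:DensityBoundedDerivative} applies pathwise for each realisation of the initial condition. In particular, on this set of full probability there exists a (random) Lebesgue density $\tilde p_\tau(x)$ of $X_t(y)$ given $\mathscr{F}_\tau$, and taking $N=1$ and parameter $t-\tau$ in place of $t$ in Proposition \ref{prop:DensityBoundedDerivative} yields the pointwise bound
\begin{equation*}
    \abs{\partial_x \tilde p_\tau(x)}\le p_{\max}^{(1)}(t-\tau)^{-1/2},\quad x\in\mathbb{R},
\end{equation*}
with $p_{\max}^{(1)}$ depending only on $T$ and $\|f_0\|_{C^3(\mathbb{R})}$ (in particular, uniformly in the initial condition $X_\tau$).

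With this in hand, I would write
\begin{equation*}
    \EV[f_0]{g'(X_t(y))\given\mathscr{F}_\tau}=\int_{\mathbb{R}}g'(x)\tilde p_\tau(x)\,\D x
\end{equation*}
and integrate by parts to obtain $-\int_{\mathbb{R}}g(x)\partial_x\tilde p_\tau(x)\,\D x$. Applying the bound above then gives
\begin{equation*}
    \abs{\EV[f_0]{g'(X_t(y))\given\mathscr{F}_\tau}}\le p_{\max}^{(1)}(t-\tau)^{-1/2}\norm{g}_{L^1(\mathbb{R})},
\end{equation*}
which is the claim.

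The only nontrivial technical point is justifying the integration by parts, i.e.\ vanishing of the boundary term $g(x)\tilde p_\tau(x)$ as $\abs{x}\to\infty$. Since $g\in\mathcal{P}$ has at most polynomial growth and the marginal density of $X_t(y)$ admits a sub-Gaussian upper bound (which the paper establishes in Proposition \ref{prop:DensityBounds}), the product $g\tilde p_\tau$ vanishes at infinity and $g\,\partial_x\tilde p_\tau\in L^1(\mathbb{R})$ as long as $g\in L^1(\mathbb{R})$, so no subtleties arise. Note also that one does not need the density estimate uniformly over the (random) initial condition beyond what Proposition \ref{prop:DensityBoundedDerivative} already provides; the constant $p_{\max}^{(1)}$ is deterministic. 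I expect this pathwise application of the density estimate together with the Markov structure to be the main conceptual content, while the integration by parts is essentially routine once the Gaussian-type tails of $\tilde p_\tau$ are recorded.
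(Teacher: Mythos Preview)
Your proposal is correct and follows essentially the same approach as the paper: invoke the Markov property to reduce to a deterministic initial condition $X_\tau\in C(\Lambda)$, represent the conditional expectation as an integral against the marginal density, integrate by parts using the Gaussian tails (the paper cites Corollary~\ref{cor:DensityBounds}\eqref{num:cordensitybounds_c} rather than Proposition~\ref{prop:DensityBounds} directly), and apply the derivative bound from Proposition~\ref{prop:DensityBoundedDerivative} with $N=1$.
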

\begin{proof}
	Using the Markov property of $t\mapsto X_t$ we can write
	\begin{equation*}
		\EV[f_0]{g'(X_t(y))\given \mathcal{F}_\tau}  = \EV[f_0]{g'(X_t(y))\given X_\tau} = \left.\EV[f_0]{g'(X_{t-\tau}(y))}\right|_{X_0=X_\tau}
	\end{equation*}
	Let $\xi=X_\tau$. In the next step, we rewrite the expectation on the right-hand-side as an integral with respect to the density $p_{f_0,\lambda}(s,y,\MTemptyplaceholder)$ of the process $(X_s(y)\,\vert\, y\in\Lambda, 0<s<T-\tau)$ conditioned to start at the initial condition $\xi$, which is treated as deterministic due to the conditioning.
	 Noting that the density $p_{f_0,\lambda}(s,y,\MTemptyplaceholder)$ depends on initial condition $\xi\in C(\Lambda)$, we find
	\begin{equation*}
		\EV[f_0]{g'(X_t(y))\given \mathcal{F}_\tau}=\left.\int_{\mathbb{R}} g'(x)p_{f_0,\lambda}(t-\tau,y,x) \,\D x\right|_{X_0=\xi}
	\end{equation*}
	for all $y\in\Lambda$ and $0\le\tau<t\le T$.
	 Proposition \ref{prop:DensityBoundedDerivative} with $f=f_0$ shows that $p_{f_0,\lambda}(t,y,\MTemptyplaceholder)\in C^1(\mathbb{R})$ for any deterministic and continuous initial condition.
	  Furthermore, Part \eqref{num:cordensitybounds_c} of Corollary \ref{cor:DensityBounds} (below) shows that $p_{f_0,\lambda}(t,y,\MTemptyplaceholder)$ has Gaussian tails for any deterministic initial condition.
	   Since $g'$ grows at most polynomially, we integrate by parts without boundary terms to obtain
	\begin{equation*}
		\abs{\EV[f_0]{g'(X_t(y))\given \mathcal{F}_\tau}} = \abs*{\int_{\mathbb{R}} g(x) \partial_x p_{f_0,\lambda}(t-\tau,y,x)\,\D x}.
	\end{equation*}
	Applying the $\xi$-independent upper bound for $\norm{\partial_x p_{f_0,\lambda}(t,y,\MTemptyplaceholder)}_{L^\infty(\mathbb{R})}$ from Proposition \ref{prop:DensityBoundedDerivative} with $f=f_0$, we find
	\begin{equation*}
		\abs{\EV[f_0]{g'(X_t(y))\given \mathcal{F}_\tau}}  \le \norm{g}_{L^1(\mathbb{R})}p_{\max}^{(1)}(t-\tau)^{-1/2}.
	\end{equation*}
\end{proof}

\subsubsection*{Step 2: Subgaussian concentration for $g\in C^1(\mathbb{R})$}

\begin{lemma}[Subgaussian concentration of $\mathcal{G}_{\lambda}(g)$ -- $C^1(\mathbb{R})$-version]\label{lem:General_Deviation_Control}
	Let $f_0\in \Theta$ and $g\in L^1(\mathbb{R})\cap C^1(\mathbb{R})\cap\mathcal{P}$. Then there exists a constant $C$, depending on $\norm{f_0}_{C^3(\mathbb{R})}$ and $T$, such that we can bound
	\begin{equation}
		\prob*[f_0]{\abs{\mathcal{G}_{\lambda}(g)-\EV[f_0]{\mathcal{G}_{\lambda}(g)}}\ge \lambda^{-1/2} x}\le 2\exp\bigg(-\frac{x^2}{2 C \norm{g}_{L^1(\mathbb{R})}^2}\bigg),\quad x\ge 0.\label{eq:Subgaussian_1}
	\end{equation}
\end{lemma}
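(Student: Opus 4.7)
The plan is to exploit the Clark–Ocone martingale representation \eqref{eq:Clark_ocone} and bound the quadratic variation of the resulting martingale via the conditional-expectation estimate from Lemma \ref{lem:Bayes_BoundConditionalExpectation}. Since $g\in C^1(\mathbb{R})\cap\mathcal{P}$ and the mild solution to \eqref{eq:SPDE} is Malliavin-differentiable, the chain rule gives
\begin{equation*}
\mathcal{D}_{\tau,z}\mathcal{G}_\lambda(g)=\int_\tau^T\frac{1}{\lambda}\int_{\lambda\bar\Lambda}g'(X_t(y))\,\mathcal{D}_{\tau,z}X_t(y)\,\D y\D t,\quad 0\le\tau\le T,\; z\in\Lambda,
\end{equation*}
so that \eqref{eq:Clark_ocone} expresses $\mathcal{G}_\lambda(g)-\EV[f_0]{\mathcal{G}_\lambda(g)}$ as the terminal value of a continuous square-integrable $(\mathscr{F}_t)_{t\in[0,T]}$-martingale $\mathcal{M}$, with quadratic variation as displayed right after \eqref{eq:Clark_ocone}.

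The next step is to invoke Lemma 4.6 of \citet{gaudlitzNonparametricEstimationReaction2023}, which decouples the unknown Malliavin derivative $\mathcal{D}_{\tau,z}X_t(y)$ from $g'(X_t(y))$ in $\langle\mathcal{M}\rangle_T$ and provides a deterministic upper bound in terms of the quantity $\sup_{0<\tau<t\le T,\,y\in\Lambda}(t-\tau)^{1/2}\abs{\EV[f_0]{g'(X_t(y))\mid\mathscr{F}_\tau}}$. Inserting Lemma \ref{lem:Bayes_BoundConditionalExpectation} this supremum is at most $p_{\max}^{(1)}\norm{g}_{L^1(\mathbb{R})}$. Collecting the factor $\lambda^{-2}$ produced by squaring the prefactor $1/\lambda$ in $\mathcal{G}_\lambda(g)$, one spatial integration over $z\in\lambda\bar\Lambda$ of volume $\lambda$, and the integrable time singularity $(t-\tau)^{-1/2}$ on $[0,T]^2$, I expect an almost-sure estimate of the form $\langle\mathcal{M}\rangle_T\le C\lambda^{-1}\norm{g}_{L^1(\mathbb{R})}^2$ with a constant $C$ depending only on $\norm{f_0}_{C^3(\mathbb{R})}$ and $T$.

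With such an almost-sure bound on the quadratic variation, the sub-Gaussian inequality \eqref{eq:Subgaussian_1} follows from the standard exponential (Bernstein) inequality for continuous martingales, applied to both $\mathcal{M}$ and $-\mathcal{M}$. The main subtlety is that Lemma 4.6 of \citet{gaudlitzNonparametricEstimationReaction2023} must be applied uniformly in the Markovian initial state $X_\tau$, which is possible precisely because the density derivative bound in Proposition \ref{prop:DensityBoundedDerivative}, and hence the bound in Lemma \ref{lem:Bayes_BoundConditionalExpectation}, is independent of the initial condition. A secondary check is that the polynomial growth of $g$ and $g'$ encoded in $g\in\mathcal{P}$, combined with the Gaussian tail bounds for the marginal densities of $X_t(y)$ (Corollary \ref{cor:DensityBounds}), guarantees that $\mathcal{G}_\lambda(g)\in L^2(\prob[f_0]{})$ and lies in the domain of the Malliavin derivative, so that the Clark–Ocone representation is legitimate in the first place.
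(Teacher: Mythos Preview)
Your proposal is correct and follows essentially the same approach as the paper: Clark--Ocone representation, chain rule to compute $\mathcal{D}_{\tau,z}\mathcal{G}_\lambda(g)$, Lemma~4.6 of \citet{gaudlitzNonparametricEstimationReaction2023} with the kernel $\kappa(t,\tau)=p_{\max}^{(1)}\norm{g}_{L^1(\mathbb{R})}(t-\tau)^{-1/2}$ coming from Lemma~\ref{lem:Bayes_BoundConditionalExpectation}, and the Bernstein inequality for continuous martingales. Your remarks on uniformity in the initial state and on the legitimacy of the Clark--Ocone representation are apt and make explicit points that the paper leaves implicit.
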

\begin{proof}
	Using the Clark-Ocone formula, we can write
	\begin{equation*}
		\mathcal{G}_{\lambda}(g)-\EV[f_0]{\mathcal{G}_{\lambda}(g)}=\mathcal{M}_T\coloneq\int_0^T\int_\Lambda \EV*[f_0]{\mathcal{D}_{\tau,z}\mathcal{G}_{\lambda}(g)\given\mathcal{F}_\tau}\,\mathcal{W}(\D z,\D \tau),
	\end{equation*}
	which is a martingale in time under $\prob[f_0]{}$ evaluated at time $T$ with quadratic variation
	\begin{equation*}
		\langle \mathcal{M} \rangle_T=\frac{1}{\lambda^2}\int_0^T \int_\Lambda \bigg(\int_\tau^T \int_\Lambda \EV[f_0]{g'(X_t(y))\mathcal{D}_{\tau,z}X_t(y)\given \mathcal{F}_\tau}\,\D y\D t\bigg)^2\,\D z\D\tau.
	\end{equation*}
    The bound $\abs{\EV[f_0]{g'(X_t(y))\given \mathcal{F}_\tau}}\le \norm{g}_{L^1(\mathbb{R})}p_{\max}^{(1)}(t-\tau)^{-1/2}$ from Lemma \ref{lem:Bayes_BoundConditionalExpectation} implies that we can use Lemma 4.6 of \citet{gaudlitzNonparametricEstimationReaction2023} with $\kappa(t,\tau) = \norm{g}_{L^1(\mathbb{R})} p_{\max}^{(1)}(t-\tau)^{-1/2}$ to obtain
	\begin{align*}
		\langle \mathcal{M}\rangle_T&
		 \le \frac{1}{\lambda}\norm{g}_{L^1(\mathbb{R})}^2 (p_{\max}^{(1)})^2  \int_0^T \bigg(2(T-\tau)^{1/2} + e^{\Lip{f_0} T}\Lip{f_0} \frac{4}{3}(T-\tau)^{3/2}\bigg)^2\,\D\tau\\
		& = \frac{1}{\lambda}\norm{g}_{L^1(\mathbb{R})}^2 (p_{\max}^{(1)})^2 \left(2T^2 + \frac{16}{9}T^3 e^{\Lip{f_0} T}\Lip{f_0}+\frac{4}{9}T^4e^{2\Lip{f_0} T}\Lip{f_0}^2\right).
	\end{align*}
	Therefore, we can almost surely bound the quadratic variation by
	\begin{equation*}
		\langle \mathcal{M}\rangle_T \le C^2 \frac{1}{\lambda} \norm{g}_{L^1(\mathbb{R})}^2,
	\end{equation*}
	with a suitable constant $C$ depending on $T$ and $\norm{f_0}_{C^3(\mathbb{R})}$. The Bernstein inequality for continuous martingales (see page 153 of \citet{RevuzYor1999}) implies that for $x\ge 0$
	\begin{equation*}
		\prob*[f_0]{\abs{ \mathcal{G}_{\lambda}(g)-\EV[f_0]{\mathcal{G}_{\lambda}(g)}} \ge \frac{C}{\sqrt{\lambda}}  x} = \prob*[f_0]{\abs{ \mathcal{M}_T} \ge \frac{C}{\sqrt{\lambda}} x, \langle \mathcal{M}\rangle_T \le\frac{C^2}{\lambda} \norm{g}_{L^1(\mathbb{R})}^2}  \le 2\exp\bigg(-\frac{x^2}{2  \norm{g}_{L^1(\mathbb{R})}^2}\bigg).
	\end{equation*}
	This concludes the proof of the sub-Gaussian concentration inequality \eqref{eq:Subgaussian_1}.
\end{proof}

\subsubsection*{Step 3: Approximation and conclusion}

\begin{proof}[Proof of Theorem \ref{thm:SubgaussianConcentration} \eqref{num:SubgaussianThm2}]
	Let $\tilde{\mathcal{G}}_\lambda(g) =  \mathcal{G}_\lambda(g)-\EV[f_0]{\mathcal{G}_{\lambda}(g)}$. Fix any $x\ge 0$ and take an approximating sequence $(g_n)_{n\in\mathbb{N}}\subset L^1(\mathbb{R})\cap C^1(\mathbb{R})$ with compact, growing supports and $\norm{g-g_n}_{L^1(\mathbb{R})}\to 0$ as $n\to \infty$. Since the map $g\mapsto \tilde{\mathcal{G}}_{\lambda}(g)$ is linear, we obtain for all $n\in\mathbb{R}$ the bound
	\begin{align*}
		\prob[f_0]{\abs{\tilde{\mathcal{G}}_{\lambda}(g)}\ge \lambda^{-1/2} x}&=\prob[f_0]{\abs{\tilde{\mathcal{G}}_{\lambda}(g_n) - \tilde{\mathcal{G}}_{\lambda}(g_n-g)}\ge \lambda^{-1/2} x}\\
        &\le  \prob[f_0]{\abs{\tilde{\mathcal{G}}_{\lambda}(g_n)}\ge \lambda^{-1/2} x/2} + \prob[f_0]{\abs{\tilde{\mathcal{G}}_{\lambda}(g_n-g)}\ge \lambda^{-1/2} x/2}.
	\end{align*}
	Applying Lemma \ref{lem:General_Deviation_Control} to $\tilde{\mathcal{G}}_{\lambda}(g_n)$ and Markov's inequality to $\tilde{\mathcal{G}}_{\lambda}(g_n-g)$, we find
	\begin{equation*}
		\prob[f_0]{\abs{\tilde{\mathcal{G}}_{\lambda}(g)}\ge \lambda^{-1/2} x}\le 2\exp\bigg(-\frac{x^2}{4C^2\norm{g_n}_{L^1(\mathbb{R})}^2}\bigg) + 2\lambda^{1/2}\frac{\EV[f_0]{\abs{\tilde{\mathcal{G}}_{\lambda}(g_n-g)}}}{ x}.
	\end{equation*}
	The triangle inequality and the upper density bound from Part \eqref{num:cordensitybounds_a} of Corollary \ref{cor:DensityBounds} imply
	\begin{equation*}
		\EV[f_0]{\abs{\tilde{\mathcal{G}}_{\lambda}(g_n-g)}}\le \frac{2}{\lambda} \EV[f_0][][\bigg]{\int_0^T\int_\Lambda \abs*{g_n(X_t(y))-g(X_t(y))}\,\D y\D t}\lesssim_{T,\norm{f_0}_{C^3(\mathbb{R})}}\norm{g_n-g}_{L^1(\mathbb{R})}\to 0.
	\end{equation*}
    as $n\to\infty$, which shows the sub-Gaussian concentration inequality \eqref{eq:Subgaussian_2}. 
\end{proof}

\section{Proofs for Sections \ref{subsec:Posterior_Contraction} and \ref{subsec:BvM}} \label{sec:Bayes_proofContraction}

Let us first recall a few facts for the wavelet approximation spaces $V_M=\operatorname{span}(\psi_\mu\,\colon\, \abs{\mu}\leq M)$. Define the associated projection operators 
\begin{align*}
P_Mf\coloneq \sum_{\abs{\mu}\leq M}\iprod{f}{\psi_{\mu}}_{L^2(\Xi)}\psi_{\mu},\quad f\in L^2(\Xi). 
\end{align*}
Standard wavelet-theory (e.g., Proposition 4.1.5 of \citet{gineMathematicalFoundationsInfiniteDimensional2015} which also applies to the boundary-corrected Daubechies wavelets) gives for $f\in H^\beta(\Xi)$ (or $f\in C^\beta(\Xi)$) the classical wavelet (Jackson) estimates 
\begin{equation}
\norm*{f-P_Mf}_{L^2(\Xi)}\lesssim 2^{-M\beta}\norm{f}_{H^\beta(\Xi)} \quad \left(\text{ or }\norm*{f-P_Mf}_{L^\infty(\Xi)}\lesssim 2^{-M\beta}\norm{f}_{C^\beta(\Xi)}\quad\right)\label{eq:Bayes_SupNorm_Approximation}
\end{equation}
with implied constants depending on $\beta$, the wavelet basis and the domain $\Xi$. 

The proof of the posterior contraction (Theorem \ref{thm:PosteriorContraction}) proceeds using a variant of Theorem 2.1 and Lemma 2.2 of \citet{vandermeulenConvergenceRatesPosterior2006}, see also Theorem 7 of \citet{nicklNonparametricStatisticalInference2020}. It relies on the Bernstein inequality for martingales and carries over to the present setting with minor notational changes that are left to the reader. Define the metric
\begin{equation*}
    d_\lambda(f,g)^2\coloneq \EV[f_0]{h_\lambda(f,g)^2},\quad f,g\in L^\infty(\mathbb{R}),
\end{equation*}
For $f,g\in \Theta$ the upper and lower density bounds of Corollary \ref{cor:DensityBounds} imply the norm equivalence
\begin{equation}
    \tilde{c}\norm{f-g}_{L^2(\Xi)}\le d_\lambda(f,g)\le \tilde{C}\norm{f-g}_{L^2(\Xi)},\label{eq:norm_equivalence_d}
\end{equation}
where $0<\tilde{c},\tilde{C}<\infty$ are constants depending only on $\Lip{f_0}$, $T$ and $\Xi$.
\begin{theorem}[Theorem 2.1, Lemma 2.2 of \citet{vandermeulenConvergenceRatesPosterior2006}]\label{thm:TripleVan}
	Let $f_0\in \Theta$ and let $\mathfrak{B}_\lambda\subset \Theta$ be a $\lambda$-dependent parameter set. Let $\bar{\epsilon}_\lambda\to 0$ such that $\lambda\bar{\epsilon}_\lambda^2\to\infty$. Suppose that for any constants $0<\smash{\underbar{c}},\bar{c},C_1,C_2,C_3,C_4<\infty$ with $C_1-C_3\ge \max(\bar{c},C_4)$ there exist measurable sets $\mathcal{B}_\lambda\subset\mathfrak{B}_\lambda$ and $C_2,C_3>0$ such that
	\begin{align}
		 & \text{Sieve set: } \Pi_\lambda(\mathcal{B}_\lambda^c)\le e^{-C_1 \lambda\bar{\epsilon}_\lambda^2}\label{eq:Cond1}                            \\
		 & \text{Entropy bound: } \log(N(\mathcal{B}_\lambda,d_\lambda,\bar{\epsilon}_\lambda))\le C_2 \lambda\bar{\epsilon}_\lambda^2,\label{eq:Cond2}            \\
		 & \text{Small ball probability: } \Pi_\lambda(f\colon  d_\lambda(f,f_0)\le\bar{\epsilon}_\lambda)\ge e^{-C_3\lambda\bar{\epsilon}_\lambda^2}\label{eq:Cond3}, \\
		\begin{split}
			 & \lim_{\lambda\to \infty}\mathrm{P}_{f_0}\Big(\big\{\Forall f\in\mathfrak{B}_\lambda\text{ s.t. }h_\lambda(f,f_0)\ge C_4 \bar{\epsilon}_\lambda\colon \quad \smash{\underbar{c}}d_\lambda(f,f_0)\le h_\lambda (f,f_0)\big\} \\
			 & \quad\cap \big\{\Forall f,g\in\mathfrak{B}_\lambda\text{ s.t. }h_\lambda(f,g)\ge C_4 \bar{\epsilon}_\lambda\colon \quad h_\lambda(f,g)\le \bar{c} d_\lambda(f,g)\big\}\Big)=1.
		\end{split}\label{eq:Cond4}
	\end{align}
	Then, for every $K_\lambda\to\infty$,
	\begin{equation*}
		\Pi_\lambda(f\in \mathfrak{B}_\lambda\colon  h_\lambda(f,f_0)\ge K_\lambda\bar{\epsilon}_\lambda\,\vert\, X^\lambda)\xrightarrow{\prob[f_0]{}}0,\quad \lambda\to \infty.
	\end{equation*}
\end{theorem}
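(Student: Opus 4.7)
The plan is to follow the classical test-based strategy of Ghosal--Ghosh--van der Vaart, adapted to our SPDE/semi-martingale setting along the lines of \citet{vandermeulenConvergenceRatesPosterior2006}. Fixing $K_\lambda\to\infty$ I would write Bayes' formula
\[
\Pi_\lambda(A_\lambda\,\vert\,X^\lambda)=\frac{\int_{A_\lambda}e^{\ell_\lambda(f)}\,\D\Pi_\lambda(f)}{\int e^{\ell_\lambda(\bar f)}\,\D\Pi_\lambda(\bar f)},\qquad A_\lambda\coloneq\{f\in\mathfrak{B}_\lambda\,\colon\, h_\lambda(f,f_0)\geq K_\lambda\bar\epsilon_\lambda\},
\]
split $A_\lambda=(A_\lambda\cap\mathcal{B}_\lambda^c)\cup(A_\lambda\cap\mathcal{B}_\lambda)$, and handle the first piece via the prior-mass bound \eqref{eq:Cond1} and the second by exponentially consistent tests built from \eqref{eq:Cond2}. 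The entire argument is carried out on the $\prob[f_0]{}$-high-probability event $E_\lambda$ supplied by \eqref{eq:Cond4}, on which the two-sided comparison $\underbar{c}\,d_\lambda\leq h_\lambda\leq \bar c\,d_\lambda$ is available on the relevant scales.

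For the denominator I would exploit the Girsanov expansion \eqref{eq:Bayes:Likelihood_Hellinger}, by which $\ell_\lambda(f)=\sqrt{\lambda}\mathcal{M}_\lambda(f,f_0)-(\lambda/2)h_\lambda(f,f_0)^2$, where the continuous time-martingale $\mathcal{M}_\lambda(f,f_0)$ has quadratic variation $h_\lambda(f,f_0)^2$. A Bernstein inequality for continuous martingales then gives $\ell_\lambda(f)\geq -c\lambda d_\lambda(f,f_0)^2$ uniformly over the $d_\lambda$-ball of radius $\bar\epsilon_\lambda$ around $f_0$ on a $\prob[f_0]{}$-high-probability event, and combining with the small-ball hypothesis \eqref{eq:Cond3} produces the evidence lower bound
\[
\int e^{\ell_\lambda(\bar f)}\,\D\Pi_\lambda(\bar f)\geq e^{-(c+C_3)\lambda\bar\epsilon_\lambda^2}.
\]

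For the tests I would use \eqref{eq:Cond2} to cover each $d_\lambda$-shell $\{j\bar\epsilon_\lambda\leq d_\lambda(f,f_0)\leq (j+1)\bar\epsilon_\lambda\}\cap \mathcal{B}_\lambda$ by at most $\exp(C_2\lambda\bar\epsilon_\lambda^2)$ $d_\lambda$-balls of radius proportional to $j\bar\epsilon_\lambda$. For each centre $f_j$ a likelihood-ratio test against $f_0$ has type I and type II errors both bounded on $E_\lambda$ by $\exp(-c'\lambda j^2\bar\epsilon_\lambda^2)$ by another application of the martingale Bernstein inequality to $\mathcal{M}_\lambda(f_j,f_0)$, whose quadratic variation is controlled below by $h_\lambda^2\gtrsim j^2\bar\epsilon_\lambda^2$ on $E_\lambda$ and on the ball. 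Taking the maximum of these tests over all centres and all $j\geq K_\lambda$ yields a global test $\phi_\lambda$ with summable errors, provided the constants satisfy $C_1-C_3\geq \max(\bar c,C_4)$ as in the statement. Combining this with the evidence bound and the prior-mass estimate gives in the standard way
\[
\EV[f_0]{\Pi_\lambda(A_\lambda\,\vert\,X^\lambda)(1-\phi_\lambda)\indicator_{E_\lambda}}\leq e^{-c''\lambda\bar\epsilon_\lambda^2}\to 0,
\]
which together with $\prob[f_0]{\phi_\lambda=1}+\prob[f_0]{E_\lambda^c}\to 0$ closes the argument.

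The main obstacle---and the only genuinely nonstandard step---is the path-dependence of the natural Hellinger distance: likelihood-based tests deliver exponentials in the random $h_\lambda$, whereas entropy and prior mass are naturally measured against the deterministic $d_\lambda$. Hypothesis \eqref{eq:Cond4} is designed precisely to bridge these, and the delicate point is to verify that $E_\lambda$ is rich enough that the evidence bound, the entropy cover, and the Bernstein tests can all be executed simultaneously on it. Once \eqref{eq:Cond4} is in force, the remaining steps reduce to the notationally adjusted Ghosal--van der Vaart arithmetic already carried out in \citet{vandermeulenConvergenceRatesPosterior2006}.
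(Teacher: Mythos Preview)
Your proposal is correct and follows exactly the approach the paper indicates: it does not supply its own proof but states that the result ``relies on the Bernstein inequality for martingales and carries over to the present setting with minor notational changes'' from \citet{vandermeulenConvergenceRatesPosterior2006} (see also Theorem 7 of \citet{nicklNonparametricStatisticalInference2020}). Your outline---Bayes' formula, evidence lower bound via the martingale Bernstein inequality and \eqref{eq:Cond3}, shell-wise likelihood-ratio tests controlled through the entropy bound \eqref{eq:Cond2}, and the use of the event in \eqref{eq:Cond4} to pass between the random $h_\lambda$ and the deterministic $d_\lambda$---is precisely that argument.
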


\begin{proof}[Proof of Theorem \ref{thm:PosteriorContraction}]

	We first prove posterior concentration in the Hellinger semi-metric $h_{\lambda}$ by verifying the conditions of  Theorem \ref{thm:TripleVan} with $\bar{\epsilon}_{\lambda}\coloneq L\varepsilon_{\lambda}$ and with the sets $\mathfrak{B}_\lambda=V_M$, $\mathcal{B}_{\lambda}=V_M\cap B(0,\norm{}_{H^{\beta_0}(\Xi)},L\sqrt{\lambda}\epsilon_{\lambda})$ for some large enough $L>0$. For such $L$, parts \eqref{lem:aux_rescaled_Prior_sieve}-\eqref{lem:aux_rescaled_Prior_smallball} of Lemma \ref{lem:Prior} (below) already show that conditions \eqref{eq:Cond1}-\eqref{eq:Cond3} hold, recalling the norm equivalence \eqref{eq:norm_equivalence_d}. We are therefore left with verifying condition \eqref{eq:Cond4}. To this end, the main ingredients are the approximation bound \eqref{eq:Bayes_SupNorm_Approximation} and the sub-Gaussian concentration of the Hellinger semi-metric $h_\lambda$ (Lemma \ref{lem:supremum_Control_Hellinger}). Suppose that
    \begin{equation}
        2\smash{\underbar{c}} d_\lambda(f,g)\le h_\lambda(f,g)\le \frac{1}{2}\bar{c}d_\lambda(f,g),\quad f,g\in V_M.\label{eq:Claim_Proof_Contraction}
    \end{equation}
    for some $\bar{c}>0$ and $0<\smash{\underbar{c}}<1/2$. This condition already implies the second condition in \eqref{eq:Cond4}. To prove its first condition, we note that $P_Mf_0$ is the projection of $f_0$ onto $V_M$. Since $f_0\in C^{\beta}(\Xi)$ and the frequency cut-off satisfies $2^{-M\beta}\lesssim 2^{-\beta/(2\beta+1)}\leq  \epsilon_\lambda$, we can apply the Jackson estimate \eqref{eq:Bayes_SupNorm_Approximation} to obtain $\norm{f_0-P_M f_0}_{L^\infty(\Xi)}\lesssim \norm{f_0}_{C^\beta(\Xi)}\epsilon_\lambda$. Since $h_\lambda(f_0,P_Mf_0)\le \sqrt{T}\norm{f_0-P_Mf_0}_{L^\infty(\Xi)}$, we deduce the bias bounds $h_\lambda(f_0,P_Mf_0)\le C_{T,f_0}\epsilon_\lambda$ and $d_\lambda(f_0,P_M f_0)\le C_{T,f_0}\epsilon_\lambda$ for some constant $0<C_{T,f_0}<\infty$. Consequently, for $f\in V_M$ with $h_\lambda(f,f_0)\ge C_4\epsilon_\lambda$, we have
    \begin{align*}
        \smash{\underbar{c}} d_\lambda(f,f_0)&\le \smash{\underbar{c}} d_\lambda(f,P_Mf_0) + \smash{\underbar{c}} d_\lambda(f_0,P_Mf_0)\le \frac{1}{2}h_\lambda(f,P_Mf_0) + \smash{\underbar{c}}C_{T,f_0}\epsilon_\lambda\\
        &\le \frac{1}{2}\left(h_\lambda(f,f_0) + h_\lambda(f_0,P_Mf_0)\right) + \smash{\underbar{c}} C_{T,f_0}\epsilon_\lambda\\
        &\le \frac{1}{2}\left(h_\lambda(f,f_0) + C_{T,f_0}\epsilon_\lambda\right) + C_{T,f_0}\epsilon_\lambda\\
        &\le h_\lambda(f,f_0)\left(\frac{1}{2} + \frac{C_{T,f_0}}{2C_4} + \frac{\smash{\underbar{c}}C_{T,f_0}}{C_4}\right).
    \end{align*}
    By readjusting the constant $C_4$, we find $\smash{\underbar{c}} d_\lambda(f,f_0)\le h_\lambda(f,f_0)$, as required.	The probability of the event \eqref{eq:Claim_Proof_Contraction} equals
	\begin{align*}
		 & \prob[f_0][][\bigg]{\Forall f,g\in V_M,f\neq g\colon\quad 4\smash{\underbar{c}}^2-1\le \frac{h_\lambda(f,g)^2}{d_\lambda(f,g)^2}-1 \le \frac{1}{4}\bar{c}^2 -1} \\
		 &\hspace{15em}  \ge 1-\prob[f_0][][\bigg]{\sup_{f,g\in V_M, f\neq g}\frac{\abs{h_\lambda(f,g)^2-d_\lambda(f,g)^2}}{d_\lambda(f,g)^2}>K},
	\end{align*}
	where we chose $K=\min(1-4\smash{\underbar{c}}^2,\bar{c}^2/4-1)>0$. Lemma \ref{lem:supremum_Control_Hellinger} (below) with $x=K\lambda^{1/2}/\hat{C}\ge 0$ yields 
	\begin{equation}
		\prob[f_0][][\bigg]{\forall f,g\in V_M \colon\quad 2\smash{\underbar{c}} d_\lambda(f,g)\le h_\lambda(f,g)\le \frac{1}{2}\bar{c}d_\lambda(f,g)}\le \exp\bigg(\hat{c}\operatorname{dim}(V_M) - \frac{K^2}{\lambda \hat{C}^2}\bigg)\to 0,
        \label{eq:ProofContractionGoodEvent}
	\end{equation}
    as $\lambda\to\infty$, because $\lambda^{-1/2}\sqrt{\operatorname{dim}(V_M)}\sim \lambda^{-\beta/(2\beta+1)}\to 0$. This statement verifies \eqref{eq:Cond4} and we can apply Theorem \ref{thm:TripleVan} to find posterior contraction rates in the (random) Hellinger semi-metric $h_\lambda$ such that
	\begin{equation*}
		\Pi_\lambda(f\in V_M \colon h_\lambda(f,f_0)\ge K_\lambda \epsilon_\lambda\, \vert\,  X^\lambda)\xrightarrow{\prob[f_0]{}}0
	\end{equation*}
	for every $K_\lambda\to\infty$ as $\lambda\to\infty$. To extend the posterior concentration to $d_\lambda$, we recall \eqref{eq:ProofContractionGoodEvent} and the bias bound $\max(h_\lambda(f_0,P_Mf_0),d_\lambda(f_0,P_Mf_0))\le C_{T,f_0}\epsilon_\lambda$ to obtain for any $\tilde{K}_\lambda\to\infty$ as $\lambda\to\infty$ the convergence
	\begin{align*}
		\Pi_\lambda\left(f\in V_M\colon d_\lambda(f,f_0)\ge \tilde{K}_\lambda \epsilon_\lambda\, \vert\,  X^\lambda \right)\hspace{-10em}&\\
        &\le \Pi_\lambda\bigg(f\in V_M\colon \frac{h_\lambda(f,P_M f_0)}{2\smash{\underbar{c}}} + C_{T,f_0}\epsilon_\lambda \ge \tilde{K}_\lambda \epsilon_\lambda\, \vert\,  X^\lambda \bigg) + \smallo_{\prob[f_0]{}}(1)\\
        &\le \Pi_\lambda\bigg(f\in V_M\colon \frac{h_\lambda(f,f_0) + h_\lambda(f_0,P_Mf_0)}{2\smash{\underbar{c}}} + C_{T,f_0}\epsilon_\lambda \ge \tilde{K}_\lambda \epsilon_\lambda\, \vert\,  X^\lambda \bigg) + \smallo_{\prob[f_0]{}}(1)\\
        &\le \Pi_\lambda\bigg(f\in V_M\colon \frac{h_\lambda(f,f_0) + C_{T,f_0}\epsilon_\lambda}{2\smash{\underbar{c}}} + C_{T,f_0}\epsilon_\lambda \ge \tilde{K}_\lambda \epsilon_\lambda\, \vert\,  X^\lambda \bigg) + \smallo_{\prob[f_0]{}}(1)\\
        &= \Pi_\lambda\bigg(f\in V_M\colon h_\lambda(f,f_0)\ge \Big(\tilde{K}_\lambda - C_{T,f_0} - \frac{C_{T,f_0}}{2\smash{\underbar{c}}}\Big)2\smash{\underbar{c}}\epsilon_\lambda\, \vert\,  X^\lambda \bigg) + \smallo_{\prob[f_0]{}}(1)\xrightarrow{\prob[f_0]{}} 0.
	\end{align*}
    Combining the result from the last display with the norm equivalence \eqref{eq:norm_equivalence_d} between $d$ and $\norm{}_{L^2(\Xi)}$ yields \eqref{num:PosteriorContraction_L2}.
    In the final step, we extend the posterior convergence to $\norm{}_0$. By \eqref{num:PosteriorContraction_L2} and Lemma \ref{lem:Prior} \eqref{lem:aux_rescaled_Prior_restrict}, we find that the posterior concentrates on $B(0,\norm{}_{H^{\beta_0}(\Xi)},L'')$ with probability approaching 1 for any large enough $L''$. Since $\beta_0> 3/2$, the posterior further concentrates by the Sobolev embedding on $\mathcal{F}_{1,L}$ for $L>0$ large enough. This proves \eqref{eq:Concentration_on_Lipschitzball}. Note that for $g,h\colon\mathbb{R}\to\mathbb{R}$, $h_\lambda(g,h)^2 - \norm{g-h}_0^2 = \mathcal{G}_{\lambda}( (g-h)^2)$. Corollary \ref{cor:Convergence_polynomials} (below) with $k=2$, $f=f_0$ and $h=g-\tilde{g}$ then yields
    \begin{equation*}
        \sup_{f\in \mathcal{F}_{1,L}}\abs*{d_\lambda(f,f_0) - \norm{f-f_0}_0}\le C_{L,\Lip{f_0}}\lambda^{-1/2}\le \epsilon_\lambda
    \end{equation*}
    whenever $\lambda$ is large enough. Consequently,
    \begin{equation*}
        \Pi_\lambda\left(f\colon \norm{f-f_0}_0\ge K_\lambda \epsilon_\lambda\,\vert\, X^\lambda \right) \le \Pi_\lambda\left(f\colon d_\lambda(f,f_0)\ge (\tilde{K}_\lambda+1) \epsilon_\lambda\,\vert\, X^\lambda \right) + \smallo_{\prob[f_0]{}}(1)
    \end{equation*}
    converges to zero in $\prob[f_0]{}$-probability as $\lambda\to\infty$. 
\end{proof}

\begin{lemma}\label{lem:Prior}
	For $0\leq\beta_0<\beta$ and $f_0\in H^\beta(\Xi)$ consider the Gaussian wavelet prior $\Pi_\lambda$ from \eqref{eq:Prior} on $V_M$  with cut-off $M=M_\lambda\in\mathbb{N}$ such that $2^M\sim \lambda^{1/(2\beta+1)}$. Let $\epsilon_\lambda = \lambda^{-\beta/(2\beta+1)}\log(\lambda)$ and let $0<C<\infty$ such that $2^M\le C_1 \lambda^{1/(2\beta+1)}$. Then for any large enough $L,L',L''$ and any $L'''>0$ there exist constants $0<C_1,C_2,C_3<\infty$ depending on $L,L',L''$ and $\Xi$ such that the following properties hold:
	\begin{enumerate}[(a)]
		\item\label{lem:aux_rescaled_Prior_sieve} Sieve set: $\Pi_\lambda(f\in V_M:\norm{f}_{H^{\beta_0}(\Xi)}>L\sqrt{\lambda}\varepsilon_{\lambda})\le \exp(-C_1\lambda\varepsilon^2_{\lambda})$.
		\item\label{lem:aux_rescaled_Prior_entropy} Entropy bound: $\log\left(N\left(V_M\cap B(0,\norm{}_{H^{\beta_0}(\Xi)},L\sqrt{\lambda}\varepsilon_\lambda), \norm{}_{L^2(\Xi)}, \epsilon_\lambda\right)\right)\le C_2\lambda \epsilon_\lambda^2$.
        \item\label{lem:aux_rescaled_Prior_smallball} Small ball probability: $\Pi_\lambda (f\in V_M:\norm{f-f_0}_{L^2(\Xi)}\leq L'\epsilon_\lambda)\ge \exp(-C_3\lambda\varepsilon^2_{\lambda})$.	
        \item\label{lem:aux_rescaled_Prior_restrict} Posterior regularity:
        \begin{equation*}
            \Pi_\lambda(f\in V_M: \norm{f}_{H^{\beta_0}(\Xi)}> L'',\norm{f-f_0}_{L^2(\Xi)}\leq L'''\varepsilon_{\lambda}\,\vert\, X^\lambda)\xrightarrow{\prob[f_0]{}} 0,\qquad\lambda\to\infty.
        \end{equation*} 
	\end{enumerate}
\end{lemma}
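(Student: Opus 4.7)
The plan is to exploit the explicit Gaussian structure of $\Pi_\lambda$ on the finite-dimensional wavelet sieve $V_M$: the squared RKHS norm $\norm{f}_{H^{\beta_0}(\Xi)}^2$ under $\Pi_\lambda$ is exactly a chi-squared random variable; the wavelet Jackson and Bernstein (direct and inverse) estimates link the $L^2(\Xi)$ and $H^{\beta_0}(\Xi)$ norms on $V_M$; and the Gaussian concentration function machinery handles the centred small-ball probability in \eqref{lem:aux_rescaled_Prior_smallball}. The main obstacle will be \eqref{lem:aux_rescaled_Prior_smallball}, since the other three parts reduce to standard finite-dimensional calculations once these ingredients are in place.

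For \eqref{lem:aux_rescaled_Prior_sieve}, I would combine the wavelet characterisation \eqref{eq:Sobolev_Statistician} with the definition \eqref{eq:Prior} to see that, under $\Pi_\lambda$,
\begin{equation*}
	\norm{f}_{H^{\beta_0}(\Xi)}^2 = \sum_{\abs{\mu}\le M}2^{2\beta_0\abs{\mu}}\iprod{f}{\psi_\mu}_{L^2(\Xi)}^2 = \sum_{\abs{\mu}\le M}Z_\mu^2
\end{equation*}
is $\chi^2$-distributed with $\operatorname{dim}(V_M)\sim 2^M\sim\lambda^{1/(2\beta+1)}$ degrees of freedom. Since $\lambda\varepsilon_\lambda^2 = \lambda^{1/(2\beta+1)}\log^2\lambda$ dominates $\operatorname{dim}(V_M)$, Laurent--Massart chi-squared tails give the sieve bound once $L$ is large. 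For \eqref{lem:aux_rescaled_Prior_entropy}, the trivial inclusion $\norm{f}_{L^2(\Xi)}\le\norm{f}_{H^{\beta_0}(\Xi)}$ on $V_M$ places the $H^{\beta_0}$-ball of radius $L\sqrt{\lambda}\varepsilon_\lambda$ inside an $L^2$-ball of the same radius, and the volumetric estimate $N(B_R,\norm{\cdot},\varepsilon)\le (3R/\varepsilon)^{\operatorname{dim}(V_M)}$ yields $\log N\lesssim \operatorname{dim}(V_M)\log\lambda\sim \lambda\varepsilon_\lambda^2/\log\lambda$, which is dominated by $C_2\lambda\varepsilon_\lambda^2$.

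For \eqref{lem:aux_rescaled_Prior_smallball}, I would invoke the concentration function approach for Gaussian priors (e.g.\ Proposition 11.19 of \citet{Vaart2017}): $\Pi_\lambda(\norm{f-f_0}_{L^2(\Xi)}\le 2\varepsilon)\ge\exp(-\phi_{f_0}(\varepsilon))$, where $\phi_{f_0}(\varepsilon)$ is the sum of the decentering term $\inf\{\tfrac{1}{2}\norm{h}_{H^{\beta_0}(\Xi)}^2:\norm{h-f_0}_{L^2(\Xi)}\le\varepsilon\}$ and the centred small-ball $-\log\Pi_\lambda(\norm{f}_{L^2(\Xi)}\le\varepsilon)$. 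Taking $h=P_M f_0\in V_M$, the Jackson estimate \eqref{eq:Bayes_SupNorm_Approximation} gives $\norm{P_M f_0-f_0}_{L^2(\Xi)}\lesssim 2^{-M\beta}\norm{f_0}_{H^\beta(\Xi)}\lesssim\varepsilon_\lambda$ and $\norm{P_M f_0}_{H^{\beta_0}(\Xi)}\le\norm{f_0}_{H^{\beta_0}(\Xi)}=O(1)$, so the decentering contribution is of order one. For the centred small-ball I use independence of the $Z_\mu$ and the representation $\norm{f}_{L^2(\Xi)}^2=\sum_{\abs{\mu}\le M} 2^{-2\beta_0\abs{\mu}}Z_\mu^2$ to bound
\begin{equation*}
	\Pi_\lambda(\norm{f}_{L^2(\Xi)}\le\varepsilon_\lambda)\ge \prod_{\abs{\mu}\le M} \prob{\abs{Z_\mu}\le 2^{\beta_0\abs{\mu}}\varepsilon_\lambda/\sqrt{\operatorname{dim}(V_M)}},
\end{equation*}
and then apply the elementary Gaussian tail $-\log\prob{\abs{Z}\le x}\lesssim \log^+(1/x)+1$ frequency-by-frequency. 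Summing over $\mu$, with cancellations between the factors $\log(1/\varepsilon_\lambda)$, $\log\operatorname{dim}(V_M)$ and $\beta_0\abs{\mu}\log 2$, yields $-\log\Pi_\lambda(\norm{f}_{L^2(\Xi)}\le\varepsilon_\lambda)\lesssim \operatorname{dim}(V_M)\log\lambda\lesssim \lambda\varepsilon_\lambda^2$; this careful book-keeping across frequencies is the main obstacle, exploiting that high-frequency constraints become essentially free as the prior variances decay.

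For \eqref{lem:aux_rescaled_Prior_restrict}, I argue that the set in question is eventually empty, so its posterior probability is (trivially) zero. On $\{\norm{f-f_0}_{L^2(\Xi)}\le L'''\varepsilon_\lambda\}$, the triangle inequality with the bias bound $\norm{f_0-P_M f_0}_{L^2(\Xi)}\lesssim\varepsilon_\lambda$ gives $\norm{f-P_M f_0}_{L^2(\Xi)}\lesssim\varepsilon_\lambda$. Since $f-P_M f_0\in V_M$, the wavelet Bernstein (inverse) estimate $\norm{g}_{H^{\beta_0}(\Xi)}\le 2^{\beta_0 M}\norm{g}_{L^2(\Xi)}$ for $g\in V_M$ yields
\begin{equation*}
	\norm{f-P_M f_0}_{H^{\beta_0}(\Xi)}\lesssim 2^{\beta_0 M}\varepsilon_\lambda \sim \lambda^{(\beta_0-\beta)/(2\beta+1)}\log\lambda \longrightarrow 0,\quad \lambda\to\infty,
\end{equation*}
using $\beta_0<\beta$. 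Hence $\norm{f}_{H^{\beta_0}(\Xi)}\le\norm{f_0}_{H^{\beta_0}(\Xi)}+1$ for all $\lambda$ large, and choosing $L''>\norm{f_0}_{H^{\beta_0}(\Xi)}+1$ renders the set in \eqref{lem:aux_rescaled_Prior_restrict} empty deterministically.
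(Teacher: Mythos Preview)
Your proposal is correct and follows essentially the same approach as the paper: chi-squared tails for \eqref{lem:aux_rescaled_Prior_sieve}, a finite-dimensional volumetric entropy bound for \eqref{lem:aux_rescaled_Prior_entropy}, a Cameron--Martin shift plus a product-of-Gaussians small-ball lower bound for \eqref{lem:aux_rescaled_Prior_smallball}, and the wavelet Bernstein inequality $\norm{g}_{H^{\beta_0}(\Xi)}\le 2^{M\beta_0}\norm{g}_{L^2(\Xi)}$ on $V_M$ to show the set in \eqref{lem:aux_rescaled_Prior_restrict} is eventually empty. The only cosmetic differences are that the paper passes to the $H^{\beta_0}$ small ball (where $\norm{f}_{H^{\beta_0}}^2=\sum_\mu Z_\mu^2$ is exactly $\chi^2$) via Corollary 2.6.18 of \citet{gineMathematicalFoundationsInfiniteDimensional2015} rather than the concentration-function formulation, and in \eqref{lem:aux_rescaled_Prior_entropy} uses the looser inclusion via the Bernstein inequality; your route through the $L^2$ small ball with frequency-dependent radii works just as well and yields the same $\operatorname{dim}(V_M)\log\lambda\lesssim\lambda\varepsilon_\lambda^2$ bound without any delicate cancellations.
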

\begin{proof}
	See Section \ref{sec:Appendix_Stats}.	
\end{proof}

\begin{lemma}\label{lem:supremum_Control_Hellinger}
	Grant Assumption \ref{assump:Parameterspace}, let $f_0\in\Theta$ and let $M\in\mathbb{N}$ be the frequency cut-off. Then
	\begin{equation}
		\prob[f_0][][\bigg]{\sup_{g,h\in V_M\colon g\neq h} \abs[\bigg]{\frac{h_\lambda(g,h)^2- d_\lambda(g,h)^2}{d_\lambda(g,h)^2}}\ge \hat{C}\lambda^{-1/2} x}\le \exp\left(\hat{c}\operatorname{dim}(V_M)-x^2/2\right),\label{eq:Bayes_Probability}
	\end{equation}
	for any $x\ge 0$, where $0<\hat{c},\hat{C}<\infty$ are absolute constants depending only on $\norm{f_0}_{C^3(\mathbb{R})}$, $T$ and $\Xi$.
\end{lemma}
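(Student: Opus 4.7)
The plan is to reduce the uniform bound to a random-matrix operator-norm problem in the wavelet basis and then combine a standard covering argument on the sphere with Theorem \ref{thm:SubgaussianConcentration} applied to squared basis elements. First, since $V_M$ is a linear subspace, writing $u=g-h$ recasts the supremum as
\begin{equation*}
    \sup_{u\in V_M\setminus\{0\}}\frac{\abs{h_\lambda(u,0)^2 - d_\lambda(u,0)^2}}{d_\lambda(u,0)^2},
\end{equation*}
and the norm equivalence \eqref{eq:norm_equivalence_d} gives $d_\lambda(u,0)^2\ge \tilde{c}^2 \norm{u}_{L^2(\Xi)}^2$, so it suffices to control the numerator relative to $\norm{u}_{L^2(\Xi)}^2$.

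Next I would exploit the quadratic structure of the numerator in the wavelet coefficients. Expanding $u=\sum_{\abs{\mu}\le M} u_\mu \psi_\mu$ with coefficient vector $\mathbf{u}\in\mathbb{R}^{\operatorname{dim}(V_M)}$ and writing $\mathbf{D}\coloneq \EV[f_0]{\mathbf{G}}$ for the expectation of the matrix $\mathbf{G}$ from Section \ref{subsec:Bayes_Setting_Posterior}, one has
\begin{equation*}
    h_\lambda(u,0)^2 - d_\lambda(u,0)^2 = \lambda^{-1}\mathbf{u}\transpose(\mathbf{G}-\mathbf{D})\mathbf{u}.
\end{equation*}
Since $(\psi_\mu)$ is orthonormal in $L^2(\Xi)$, $\norm{u}_{L^2(\Xi)}=\norm{\mathbf{u}}_{\mathbb{R}^{\operatorname{dim}(V_M)}}$, so the target supremum is bounded by $(\lambda\tilde{c}^2)^{-1}\norm{\mathbf{G}-\mathbf{D}}_{\operatorname{op}}$, the operator norm of the symmetric random matrix $\mathbf{G}-\mathbf{D}$.

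To control this operator norm I would use the standard covering trick: there exists a $1/4$-net $\mathcal{N}$ of the unit Euclidean sphere in $\mathbb{R}^{\operatorname{dim}(V_M)}$ with $\abs{\mathcal{N}}\le 9^{\operatorname{dim}(V_M)}$ such that $\norm{\mathbf{G}-\mathbf{D}}_{\operatorname{op}}\le 2\sup_{\mathbf{v}\in\mathcal{N}}\abs{\mathbf{v}\transpose(\mathbf{G}-\mathbf{D})\mathbf{v}}$. For each $\mathbf{v}\in\mathcal{N}$ setting $v=\sum v_\mu \psi_\mu$ with $\norm{v}_{L^2(\Xi)}=1$ yields
\begin{equation*}
    \lambda^{-1}\mathbf{v}\transpose(\mathbf{G}-\mathbf{D})\mathbf{v} = \mathcal{G}_\lambda(v^2) - \EV[f_0]{\mathcal{G}_\lambda(v^2)}.
\end{equation*}
The key identity $\norm{v^2}_{L^1(\mathbb{R})} = \norm{v}_{L^2(\Xi)}^2 = 1$ then allows me to apply Theorem \ref{thm:SubgaussianConcentration}\eqref{num:SubgaussianThm2} with $g=v^2$ to obtain the same sub-Gaussian tail $\prob[f_0]{\abs{\cdot}\ge \lambda^{-1/2}x}\le 2\exp(-x^2/(2C))$ at every net point, with $C$ depending only on $\norm{f_0}_{C^3(\mathbb{R})}$ and $T$. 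A union bound over $\mathcal{N}$ and an absorption of $2,\tilde{c},C$ into $\hat{C}$ and of $\log 18$ into $\hat{c}$ will yield the claimed inequality after a linear rescaling of $x$.

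The main obstacle will be matching the $\norm{g}_{L^1(\mathbb{R})}^2$ exponent in Theorem \ref{thm:SubgaussianConcentration} to the $L^2$-normalisation chosen on the sphere. This is precisely resolved by the identity $\norm{v^2}_{L^1(\mathbb{R})}=\norm{v}_{L^2(\Xi)}^2$: every net point enjoys the same sub-Gaussian constant, and the union bound therefore produces the target $\exp(\hat{c}\operatorname{dim}(V_M) - x^2/2)$ form without any dimensional loss in the exponent, which is exactly what is needed to make the contraction proof absorb the factor $\sqrt{\operatorname{dim}(V_M)}/\sqrt{\lambda}\to 0$.
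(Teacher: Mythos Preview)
Your proposal is correct and follows essentially the same strategy as the paper: reduce to a quadratic form in the wavelet coefficients, apply a covering/net argument, and invoke Theorem~\ref{thm:SubgaussianConcentration}\eqref{num:SubgaussianThm2} with $g=v^2$ using the key identity $\norm{v^2}_{L^1(\mathbb{R})}=\norm{v}_{L^2(\Xi)}^2$. The only cosmetic difference is that the paper performs the covering on the $\mathbf{G}$-ellipsoid $\{\mathbf{u}:\mathbf{u}\transpose\mathbf{G}\mathbf{u}\le 1\}$ and invokes the norm equivalence~\eqref{eq:norm_equivalence_d} inside the covering step, whereas you apply the norm equivalence upfront and then use the standard $1/4$-net lemma on the Euclidean sphere; your packaging is slightly cleaner but the two arguments are equivalent.
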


\begin{proof}
	See Section \ref{sec:Appendix_Stats}.
\end{proof}

\begin{proof}[Proof of Proposition \ref{prop:Bayes_lowerBounds}]
	Since $\norm{}_{L^2(\Xi)}$ and $\norm{}_0$ are equivalent norms by \eqref{eq:norm_equivalence}, it suffices to prove the lower bound in terms of $\norm{}_{L^2(\Xi)}$. The proof follows from combining the methodology developed in Section 2.6 of \citet{Tsybakov2009} with the upper density bound of Part \eqref{num:cordensitybounds_a} of Corollary \ref{cor:DensityBounds}. For simplicity, we restrict to the case $\Xi=[0,1]$ and proceed as in Section 2.6.1 of \citet{Tsybakov2009} with the sample size $n$ replaced by the size of the domain $\lambda$. For some constant $c_0>0$ to be chosen later we can construct $N\gtrsim 2^{c_0\ceil{\lambda^{1/(2\beta+1)}}/8}$ many $\lambda$-dependent hypotheses $f_{j}$, $j=1,\dots, N$, which satisfy the following properties:
	\begin{enumerate}[(a)]
		\item $f_{j}\in \mathcal{F}_{\beta,L}\cap\Theta$ for all $j=1,\dots, N$;
		\item $\norm{f_{j}-f_{k}}_{L^2(\Xi)}\gtrsim \lambda^{-2\beta/(2\beta+1)}$ uniformly in $j\neq k$, for $\lambda$ sufficiently large;
		\item $\norm{f_{j}}_{L^2(\mathbb{R})}^2=\norm{f_{j}}_{L^2(\Xi)}^2\le c\lambda^{-2\beta/(2\beta +1)}$, $j=1,\dots, M$ for some $c>0$ not depending on $j$, or $N$.
	\end{enumerate}
	In view of Theorem 2.5 of \citet{Tsybakov2009}, it suffices to prove with respect to the Kullback-Leibler distance
	\begin{equation}
		d_{\operatorname{KL}}\left(\prob[f_{j}]{}\,\Vert\, \prob[f_{0}]{}\right)\le \lambda\tilde{C} \norm{f_{j}}_{L^2(\Xi)}^2,\quad j=1,\dots, N,\quad  \lambda\ge 1,\label{eq:Bayes_Lowerbound_Claim}
	\end{equation}
	for some constant $0<\tilde{C}<\infty$. Suppose \eqref{eq:Bayes_Lowerbound_Claim} is true. Then $d_{\operatorname{KL}}(\prob[f_{j}]{}\,\Vert\,\prob[f_{0}]{})\le c\tilde{C}\lambda^{1/(2\beta +1)}\lesssim \log(N)$
	for an appropriate choice of the constant $c$, similarly to Section 2.6.1 of \citet{Tsybakov2009}. This verifies condition (2.46) of Theorem 2.5 of \citet{Tsybakov2009} and the claimed lower bound follows. To show \eqref{eq:Bayes_Lowerbound_Claim} we apply the density bound of Part \eqref{num:cordensitybounds_a} of Corollary \ref{cor:DensityBounds} to bound
    \begin{equation*}
        \int_{0}^T \int_{\Lambda}\EV*[f_{j}]{f_{j}(X_t(y))^2}\,\D y\D t \lesssim_{T,L} \lambda \norm{f_j}_{L^2(\mathbb{R})}^2.
    \end{equation*}
    Consequently, we find for $j=1,\dots, N$
	\begin{align*}
		d_{\operatorname{KL}}\left(\prob[f_{j}]{}\,\Vert\,\prob[f_0]{}\right) & = \EV[f_{j}][][\bigg]{\operatorname{log}\bigg(\frac{\D \prob[f_{j}]{}}{\D \prob[f_0]{}}\bigg)}                                                       
		= \frac{1}{2}\int_{0}^T \int_{\Lambda}\EV*[f_{j}]{f_{j}(X_t(y))^2}\,\D y\D t  \lesssim_{T,L} \lambda\norm{f_{j}}_{L^2(\mathbb{R})}^2.
	\end{align*}\qedhere
\end{proof}

The proof of the Bernstein--von Mises Theorem \ref{thm:BvM} follows the general proof strategy of \citet{castillo2013} to show weak convergence in $H^{-\rho}(\Xi)$ by pointwise convergence of the Laplace transforms for the finite-dimensional distributions. Before proving the theorem, we state a lemma about expansion of the Laplace transform followed by a number of maximal inequalities that ensure convergence of the finite-dimensional distributions is all we need to show. Fix any $\gamma\in H^\rho(\Xi)$. For $K,\bar{K}>0$, $2^{M}\sim \lambda^{1/(2\beta_0+1)}$ and $\epsilon_{\lambda}=\lambda^{-\beta_0/(2\beta_0+1)}\log(\lambda)$ define the set
    \begin{align}
    	D_\lambda
    	 & = \Set{f\in V_M\colon  \norm{f-f_0}_{L^2(\Xi)}\leq K\epsilon_{\lambda}, \norm{f}_{H^\beta(\Xi)}\leq \bar{K}}\label{eq:D_t}
    \end{align}
    and introduce the restricted posterior 
    \begin{align}
        \Pi_\lambda^{D_\lambda}(A\,|\,X^\lambda)=\Pi_\lambda(A\cap D_\lambda\,|\,X^\lambda)/\Pi_\lambda(D_\lambda\,|\,X^\lambda),\quad A\subset C(\mathbb{R})\,\, \text{any Borel set}. \label{eq:restr_posterior}
    \end{align}
    For large enough $\bar{K}$ we find from Lemma \ref{lem:Prior} \eqref{lem:aux_rescaled_Prior_restrict} that $D_\lambda$ is asymptotically negligible under the posterior:
    \begin{align}
        \Pi_\lambda(D^c_\lambda\,|\,X^\lambda)\xrightarrow{\prob[f_0]{}} 0,\quad \lambda\to\infty.\label{eq:D_negligible}
    \end{align}

\begin{lemma}[Asymptotic expansion of the Laplace Transform]\label{lem:ExpansionLaplaceTransform}
	Grant Assumption \ref{assump:Parameterspace} and let $f_0\in\Theta$. Suppose that $2^{M}\sim \lambda^{1/(2\beta_0+1)}$ and $\epsilon_{\lambda}=\lambda^{-\beta_0/(2\beta_0+1)}\log(\lambda)$. For $\gamma\in H^{\rho}(\Xi)$, $\rho>3/2$, write $\gamma_0=\gamma/p_{f_0}$ and define the process
	\begin{align*}
		G_\lambda(\gamma) = \sqrt{\lambda}\iprod{f-f_0}{\gamma}_{L^2(\Xi)} + \sqrt{\lambda}\mathcal{G}_{\lambda}((f-f_0)P_M\gamma_0) - W_\lambda(P_M\gamma_0),\quad f\sim \Pi_\lambda(\MTemptyplaceholder\vert\, X^\lambda).
	\end{align*}
	Then the following holds for $\gamma,\bar\gamma\in H^{\rho}(\Xi)$, $u\in \mathbb{R}$:
	\begin{enumerate}[(a)]
		\item\label{num:Laplace_Metrics} There exist constants $0<C,\bar C<\infty$ depending only on $T,\Xi,K$ such that
			\begin{align*}
				\EV[\lambda][D_\lambda]{e^{u (G_\lambda(\gamma)-G_\lambda(\bar\gamma))}\given X^\lambda} \leq \frac{C}{\Pi_\lambda(D_\lambda\,|\,X^\lambda)}e^{u^2d_1(\gamma,\bar \gamma)^2/2+\abs{u}d_2(\gamma,\bar \gamma)},
			\end{align*}
			where $d_1$, $d_2$ are metrics on $H^{\rho}(\Xi)$ given by
			\begin{align}
            \begin{split}
				d_1(\gamma,\bar\gamma) &= \bar{C}\norm{P_M(\gamma_0-\bar\gamma_0)}_{\infty},\\
                d_2(\gamma,\bar\gamma) &= \bar{C}\sqrt{\lambda} \epsilon_\lambda\norm{\gamma_0-\bar\gamma_0-P_M (\gamma_0-\bar\gamma_0)}_{L^2(\Xi)}.
            \end{split}\label{eq:Metrics_d1d2}
			\end{align}

		\item\label{num:Laplace_Convergence} $\EV[\lambda][D_\lambda]{e^{u G_{\lambda}(\gamma)}\given X^\lambda}\overset{\prob[f_0]{}}{\rightarrow}e^{u^2\norm{\gamma_0}^2_{0}/2}$ as $\lambda\to\infty$.
\end{enumerate} 
\end{lemma}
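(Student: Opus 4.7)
The plan is to reduce both claims to an exact algebraic identity that reexpresses the Laplace transform as a shifted log-likelihood plus explicit correction terms, after which the Gaussian Cameron--Martin formula on $V_M$ finishes the job. By the linearity $\gamma\mapsto G_\lambda(\gamma)$ inherent in the defining formula, it suffices to bound $\EV[\lambda][D_\lambda]{e^{uG_\lambda(\tilde\gamma)}\given X^\lambda}$ for $\tilde\gamma=\gamma-\bar\gamma$ and $\tilde\gamma_0=\gamma_0-\bar\gamma_0$. Writing $\ell_\lambda(f)=\int_0^T\iprod{(f-f_0)(X_t)}{\D W_t}_{L^2(\Lambda)}-\frac{1}{2}\int_0^T\norm{(f-f_0)(X_t)}_{L^2(\Lambda)}^2\,\D t$ and bilinearly expanding at the shift $g\coloneq f-uP_M\tilde\gamma_0/\sqrt{\lambda}$, one uses the elementary identity
\begin{equation*}
\frac{1}{\sqrt{\lambda}}\int_0^T\!\!\int_\Lambda (f-f_0)(X_t)P_M\tilde\gamma_0(X_t)\,\D y\,\D t = \sqrt{\lambda}\mathcal{G}_\lambda\bigl((f-f_0)P_M\tilde\gamma_0\bigr)+\sqrt{\lambda}\iprod{f-f_0}{P_M\tilde\gamma_0\,p_{f_0}}_{L^2(\Xi)}
\end{equation*}
to reorganise the cross term and arrive at the key identity
\begin{equation*}
uG_\lambda(\tilde\gamma)+\ell_\lambda(f) = \ell_\lambda(g) + u\sqrt{\lambda}\iprod{f-f_0}{(\tilde\gamma_0-P_M\tilde\gamma_0)p_{f_0}}_{L^2(\Xi)} + \frac{u^2}{2}\mathcal{I}_\lambda(P_M\tilde\gamma_0).
\end{equation*}

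For part (a), I would bound each correction on $D_\lambda$ and then apply Cameron--Martin to the remaining integral $Z^{-1}\int_{D_\lambda}e^{\ell_\lambda(g)}\,\D\Pi_\lambda(f)$, where $Z=\int e^{\ell_\lambda}\,\D\Pi_\lambda$. By Cauchy--Schwarz, $\norm{f-f_0}_{L^2(\Xi)}\leq K\epsilon_\lambda$ and $\norm{p_{f_0}}_\infty\lesssim 1$ (Corollary \ref{cor:DensityBounds}), the linear correction is at most $|u|d_2(\gamma,\bar\gamma)$; the pathwise estimate $\mathcal{I}_\lambda(h)\leq T\norm{h}_\infty^2$ yields $\tfrac12 u^2\mathcal{I}_\lambda(P_M\tilde\gamma_0)\leq u^2d_1(\gamma,\bar\gamma)^2/2$. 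Since the RKHS of $\Pi_\lambda$ is $(V_M,\norm{}_{H^{\beta_0}(\Xi)})$ by \eqref{eq:RKHS} and $P_M\tilde\gamma_0\in V_M$, the Cameron--Martin change of variable $\tilde g=f-uP_M\tilde\gamma_0/\sqrt{\lambda}$ contributes the Radon--Nikodym factor $\exp(-\tfrac{u}{\sqrt{\lambda}}\iprod{\tilde g}{P_M\tilde\gamma_0}_{H^{\beta_0}(\Xi)}-\tfrac{u^2}{2\lambda}\norm{P_M\tilde\gamma_0}^2_{H^{\beta_0}(\Xi)})$. On the shifted $D_\lambda$ one has $\norm{\tilde g}_{H^{\beta_0}(\Xi)}\leq\bar K+|u|\norm{P_M\tilde\gamma_0}_{H^{\beta_0}(\Xi)}/\sqrt{\lambda}$, and the scaling $\norm{P_M\tilde\gamma_0}_{H^{\beta_0}(\Xi)}\lesssim 2^{\beta_0M}\norm{\tilde\gamma_0}_{L^2(\Xi)}\sim\lambda^{\beta_0/(2\beta_0+1)}\norm{\tilde\gamma_0}_{L^2(\Xi)}$ gives $\norm{P_M\tilde\gamma_0}_{H^{\beta_0}(\Xi)}/\sqrt{\lambda}\sim\lambda^{-1/(4\beta_0+2)}\to 0$, so the Radon--Nikodym factor is bounded by an absolute constant. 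Dividing by $\Pi_\lambda(D_\lambda\given X^\lambda)$ yields (a).

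For part (b), I would apply the same identity with $\tilde\gamma$ replaced by $\gamma$ and verify convergence in $\prob[f_0]{}$-probability of each ingredient: (i) the linear correction vanishes uniformly on $D_\lambda$ because $\sqrt{\lambda}\epsilon_\lambda\norm{\gamma_0-P_M\gamma_0}_{L^2(\Xi)}\to 0$ by the Jackson estimate \eqref{eq:Bayes_SupNorm_Approximation} applied to $\gamma_0=\gamma/p_{f_0}$ (which inherits sufficient regularity from $\gamma\in H^\rho(\Xi)$ since $p_{f_0}$ is smooth and bounded away from $0$ on $\Xi$ by Corollary \ref{cor:DensityBounds}); (ii) $\mathcal{I}_\lambda(P_M\gamma_0)\xrightarrow{\prob[f_0]{}}\norm{\gamma_0}_0^2$ by applying Theorem \ref{thm:SubgaussianConcentration}\eqref{num:SubgaussianThm2} to $g=(P_M\gamma_0)^2$ and passing to the limit via $\norm{P_M\gamma_0-\gamma_0}_{L^2(\Xi)}\to 0$ and \eqref{eq:norm_equivalence}; (iii) the Cameron--Martin factor now converges to $1$ uniformly on the relevant set because the exponent itself vanishes by the same scaling as in (a); (iv) the shifted posterior mass $\Pi_\lambda(D_\lambda-uP_M\gamma_0/\sqrt{\lambda}\given X^\lambda)$ tends to $1$ since the $L^2$-shift is $o(\epsilon_\lambda)$ and \eqref{eq:D_negligible}. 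Assembling these convergences gives $\EV[\lambda][D_\lambda]{e^{uG_\lambda(\gamma)}\given X^\lambda}\xrightarrow{\prob[f_0]{}}e^{u^2\norm{\gamma_0}_0^2/2}$.

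The main obstacle is controlling the Cameron--Martin factor in (a). Without the specific cutoff $2^M\sim\lambda^{1/(2\beta_0+1)}$, the RKHS norm $\norm{P_M\tilde\gamma_0}_{H^{\beta_0}(\Xi)}$ entering the Radon--Nikodym derivative could blow up with $\lambda$ and destroy the bound. The chosen scaling is precisely the balance that renders both $\norm{P_M\tilde\gamma_0}_{H^{\beta_0}(\Xi)}^2/\lambda$ and $\norm{P_M\tilde\gamma_0}_{H^{\beta_0}(\Xi)}/\sqrt{\lambda}$ of order at most $\lambda^{-1/(2\beta_0+1)}\to 0$ -- the same balance that underlies the sieve construction in Lemma \ref{lem:Prior}.
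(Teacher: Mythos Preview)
Your proposal is correct and follows essentially the same route as the paper: both establish the identity $uG_\lambda(\gamma)+\ell_\lambda(f)=\ell_\lambda(f-uP_M\gamma_0/\sqrt{\lambda})+u\sqrt{\lambda}\iprod{f-f_0}{(\gamma_0-P_M\gamma_0)p_{f_0}}_{L^2(\Xi)}+\tfrac{u^2}{2}\mathcal{I}_\lambda(P_M\gamma_0)$, bound the two corrections by $|u|d_2$ and $u^2d_1^2/2$, and then apply the Cameron--Martin shift on $(V_M,\norm{\cdot}_{H^{\beta_0}})$ with the scaling $2^M\sim\lambda^{1/(2\beta_0+1)}$ to control the Radon--Nikodym density. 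The only imprecision is your claim that the Radon--Nikodym factor is ``an absolute constant'': its exponent actually contains a residual $o(1)\,u^2\norm{P_M\tilde\gamma_0}_\infty^2$ contribution (coming from $\lambda^{-1/2}\norm{P_M\tilde\gamma_0}_{H^{\beta_0}}\lesssim o(1)\norm{P_M\tilde\gamma_0}_\infty$), which is not constant in $u,\gamma$ but can be absorbed into $u^2d_1^2/2$ by enlarging $\bar C$---the paper does exactly this via the inequality $a\le a^2+1$.
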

\begin{proof}
	See Section \ref{sec:Appendix_Stats}.
\end{proof}

\begin{lemma}[Maximal inequalities]\label{lem:SubGaussianBounds}
	Suppose the assumptions of Lemma \ref{lem:ExpansionLaplaceTransform} hold and consider the set
	\begin{equation*}
		\mathcal{F} = \Set{g=(f-f_0)P_M\gamma_0\given f\in D_\lambda, \gamma\in H^{\rho}(\Xi), \norm{\gamma}_{H^{\rho}(\Xi)}\leq 1}.
	\end{equation*}
	Then we have for any $J\geq 1$:
	\begin{enumerate}[(a)]
		\item $\EV[f_0]{\sup_{g\in\mathcal{F}}\abs{\mathcal{G}_{\lambda}(g)}}= o(\lambda^{-1/2})$.\label{num:Maximal1}
		\item $\EV[f_0]{\sup_{\gamma\in B(0,\norm{}_{H^\rho(\Xi)},1)}\abs{\mathbb{W}(\gamma-P_J\gamma)}}\lesssim 2^{-J/2}$. \label{num:Maximal2}
		\item $\EV[\lambda][D_\lambda]{\sup_{\gamma\in B(0,\norm{}_{H^\rho(\Xi)},1)}\abs{G_\lambda(\gamma-P_J\gamma)}\given X^\lambda}\lesssim 2^{-J(\rho-1)}+\lambda^{-(\rho-3/2)/(2\beta_0+1)}$. \label{num:Maximal3}
		\item\label{num:Maximal4} $\EV[f_0]{\sup_{\gamma\in B(0,\norm{}_{H^\rho(\Xi)},1)}\abs{W_\lambda(P_M(\gamma_0-P_J\gamma_0))}}\lesssim 2^{-J/2}$.
	\end{enumerate}
\end{lemma}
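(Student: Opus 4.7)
All four estimates are maximal inequalities, and I plan to split them according to the underlying probabilistic technique: parts (b) and (d) use second-moment bounds for Gaussian/martingale processes combined with Cauchy--Schwarz in the wavelet basis, while parts (a) and (c) rely on the sub-Gaussian concentration from Theorem \ref{thm:SubgaussianConcentration} combined with the fact that the indexing classes live in the finite-dimensional space $V_M$.

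For (b), I would expand $\gamma=\sum_{\mu}\gamma_{\mu}\psi_{\mu}$ with $\sum_{\mu}2^{2\rho\abs{\mu}}\gamma_{\mu}^{2}\le 1$, write $\mathbb{W}(\gamma-P_{J}\gamma)=\sum_{\abs{\mu}>J}\gamma_{\mu}\mathbb{W}(\psi_{\mu})$, and apply Cauchy--Schwarz against the $H^{\rho}$-weights to get
\begin{equation*}
\abs{\mathbb{W}(\gamma-P_{J}\gamma)}^{2}\le\norm{\gamma}_{H^{\rho}(\Xi)}^{2}\sum_{\abs{\mu}>J}2^{-2\rho\abs{\mu}}\mathbb{W}(\psi_{\mu})^{2}.
\end{equation*}
Taking expectations under $\prob[f_{0}]{}$, using $\EV[f_{0}]{\mathbb{W}(\psi_{\mu})^{2}}=\int_{\Xi}\psi_{\mu}^{2}/p_{f_{0}}\lesssim 1$ via the density lower bound in Corollary \ref{cor:DensityBounds}, and summing the geometric series $\sum_{\abs{\mu}>J}2^{-2\rho\abs{\mu}}\lesssim 2^{J(1-2\rho)}$, Jensen's inequality gives $\EV[f_0]{\sup}\lesssim 2^{J(1/2-\rho)}\le 2^{-J/2}$ because $\rho>3/2$. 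Part (d) follows by the same scheme applied to the identity $P_{M}(\gamma_{0}-P_{J}\gamma_{0})=\sum_{J<\abs{\mu}\le M}\iprod{\gamma_{0}}{\psi_{\mu}}\psi_{\mu}$, using the martingale variance bound $\EV[f_{0}]{W_{\lambda}(h)^{2}}=\EV[f_{0}]{\mathcal{I}_{\lambda}(h)}\lesssim\norm{h}_{L^{2}(\Xi)}^{2}$ (from the upper density bound, cf.\ also Lemma \ref{lem:LAN}) combined with $\norm{\gamma_{0}}_{H^{\rho}(\Xi)}\lesssim\norm{\gamma}_{H^{\rho}(\Xi)}$, where the latter follows because multiplication by $1/p_{f_{0}}$ is bounded on $H^{\rho}$ by the smoothness of $p_{f_{0}}$ from Proposition \ref{prop:DensityBoundedDerivative}.

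For (a), every $g\in\mathcal{F}$ satisfies $\norm{g}_{L^{1}(\mathbb{R})}\le\norm{f-f_{0}}_{L^{2}(\Xi)}\norm{P_{M}\gamma_{0}}_{L^{2}(\Xi)}\lesssim\epsilon_{\lambda}$ since $f\in D_{\lambda}$ and $\gamma\mapsto P_{M}\gamma_{0}$ is bounded $H^{\rho}(\Xi)\to L^{2}(\Xi)$. As $\mathcal{F}$ sits inside the image of a bilinear map $V_{M}\times V_{M}\to C(\mathbb{R})$, a standard volumetric argument yields an $L^{1}$-covering with $\log N(\delta)\lesssim\dim V_{M}\log(1/\delta)$. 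Combining this with the sub-Gaussian tail from Theorem \ref{thm:SubgaussianConcentration}\,(\ref{num:SubgaussianThm2}) via the chaining/union-bound maximal inequality, and with the mean control from Theorem \ref{thm:SubgaussianConcentration}\,(\ref{num:SubgaussianThm3}), I arrive at
\begin{equation*}
\EV[f_{0}]{\sup_{g\in\mathcal{F}}\abs{\mathcal{G}_{\lambda}(g)}}\lesssim \lambda^{-1/2}\epsilon_{\lambda}\sqrt{\dim V_{M}}+\lambda^{-1}\sup_{g\in\mathcal{F}}\norm{g'}_{L^{\infty}(\mathbb{R})}.
\end{equation*}
With $\dim V_{M}\sim 2^{M}\sim\lambda^{1/(2\beta_{0}+1)}$ and $\epsilon_{\lambda}\sim\lambda^{-\beta_{0}/(2\beta_{0}+1)}\log\lambda$, the first term equals $\lambda^{-1/2}\cdot\lambda^{(1-2\beta_{0})/(4\beta_{0}+2)}\log\lambda=o(\lambda^{-1/2})$ because $\beta_{0}>1/2$, while the second is $O(\lambda^{-1})$.

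For (c), I split by linearity
\begin{align*}
G_{\lambda}(\gamma-P_{J}\gamma)&=\sqrt{\lambda}\iprod{f-f_{0}}{\gamma-P_{J}\gamma}_{L^{2}(\Xi)}+\sqrt{\lambda}\mathcal{G}_{\lambda}((f-f_{0})P_{M}((\gamma-P_{J}\gamma)/p_{f_{0}}))\\
&\quad -W_{\lambda}(P_{M}((\gamma-P_{J}\gamma)/p_{f_{0}}))=:T_{1}+T_{2}+T_{3}.
\end{align*}
For $T_{1}$, I decompose $f-f_{0}=(f-P_{M}f_{0})+(P_{M}f_{0}-f_{0})$: the $V_{M}$-component is orthogonal to $\gamma-P_{J}\gamma$ whenever $J\ge M$, and in the regime $J<M$ is controlled by Cauchy--Schwarz using $\norm{f-P_{M}f_{0}}_{L^{2}(\Xi)}\lesssim\epsilon_{\lambda}$ on $D_{\lambda}$; the residual $P_{M}f_{0}-f_{0}$ is absorbed via the Jackson estimate \eqref{eq:Bayes_SupNorm_Approximation}. $T_{2}$ is treated exactly as in part (a), but with the sharper $L^{1}$-diameter $\norm{(f-f_{0})P_{M}((\gamma-P_{J}\gamma)/p_{f_{0}})}_{L^{1}(\mathbb{R})}\lesssim\epsilon_{\lambda}2^{-J\rho}$ carried by the projection. $T_{3}$ is treated as in (d). Summing these, the contributions split into a bias-type part scaling like $2^{-J(\rho-1)}$ (dominant for large $J$) and a dimension-type part scaling like $2^{-M(\rho-3/2)}=\lambda^{-(\rho-3/2)/(2\beta_{0}+1)}$ (dominant for small $J$), matching the claimed hybrid bound. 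The main obstacle is precisely this last arithmetic: getting the three terms of (c) to combine cleanly across the regimes $J\le M$ and $J\ge M$ requires exploiting $\sqrt{\lambda}\epsilon_{\lambda}\sim 2^{M/2}\log\lambda$ and the orthogonality structure of $T_{1}$ beyond what pure Cauchy--Schwarz delivers.
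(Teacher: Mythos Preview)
Your arguments for (a) and (b) are valid alternatives to the paper's entropy-integral route; the wavelet Cauchy--Schwarz in (b) is in fact slicker than the Dudley computation the paper carries out. In (a) the claim that the mean correction is ``$O(\lambda^{-1})$'' is too hasty --- $\sup_{g\in\mathcal{F}}\norm{g'}_{L^\infty}$ is \emph{not} bounded (it grows like $\lambda^{(2-\beta_0)/(2\beta_0+1)}$ when $\beta_0<2$, cf.\ the paper's computation in \eqref{eq:lipschitz_g}) --- but after the correct arithmetic one still obtains $o(\lambda^{-1/2})$, so this is cosmetic.

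Part (d) has a real gap: you assert that multiplication by $1/p_{f_0}$ is bounded on $H^\rho(\Xi)$, but under the standing assumption $f_0\in\Theta\subset C^3$ Proposition~\ref{prop:DensityBoundedDerivative} only gives $p_{f_0}\in C^1$, which is not enough for $\rho>3/2$. The paper sidesteps this by treating $W_\lambda$ as sub-Gaussian with respect to $\norm{\cdot}_{L^\infty}$ (via the Bernstein inequality) and controlling the Dudley integral through $H^1$, where $1/p_{f_0}$ \emph{is} a bounded multiplier. Your wavelet scheme can be repaired the same way, by running Cauchy--Schwarz with $H^1$-weights instead of $H^\rho$-weights.

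The serious obstacle is (c), and it is not merely ``arithmetic''. A worst-case bound over $f\in D_\lambda$ for your $T_1=\sqrt{\lambda}\iprod{f-f_0}{\gamma-P_J\gamma}$ yields at best
\[
\sup_{\norm{\gamma}_{H^\rho}\le 1}\abs{T_1}\le \sqrt{\lambda}\,\norm{(I-P_J)(f-f_0)}_{H^{-\rho}}\lesssim \sqrt{\lambda}\,\epsilon_\lambda\,2^{-J\rho}\sim 2^{M/2}2^{-J\rho},
\]
and for small $J$ this diverges like $2^{M/2}$, exceeding both $2^{-J(\rho-1)}$ and $2^{-M(\rho-3/2)}$. (The bound is sharp: take $f-P_Mf_0=\epsilon_\lambda\psi_\mu$ with $\abs{\mu}=J+1$.) The paper does \emph{not} bound the three pieces separately. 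Instead it uses Lemma~\ref{lem:ExpansionLaplaceTransform}\,\eqref{num:Laplace_Metrics} to show that, conditionally on the data, $G_\lambda$ has mixed sub-Gaussian/sub-exponential tails in the metrics $d_1,d_2$, and then applies Dirksen's generic chaining for mixed-tail processes. The crucial point is that the Laplace-transform computation absorbs $T_1$ into a Cameron--Martin shift of the prior, after which only the approximation bias $\norm{\gamma_0-P_M\gamma_0}_{L^2}$ survives in $d_2$ --- not the posterior radius $\epsilon_\lambda$. This cancellation is structural (it encodes the LAN expansion) and cannot be recovered from pointwise bounds on $D_\lambda$.
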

\begin{proof}
	See Section \ref{sec:Appendix_Stats}.
\end{proof}

\begin{proof}[Proof of Theorem \ref{thm:BvM}]~
\begin{enumerate}[(a)]
\item By a standard inequality for the total variation distance (e.g., Page 142 of \citet{Vaart1998}) and \eqref{eq:D_negligible} we have
	\begin{align}
        \operatorname{TV}\left(\Pi_\lambda(\MTemptyplaceholder|\,X^\lambda),\Pi_\lambda^{D_\lambda}(\MTemptyplaceholder|\,X^\lambda)\right)\leq 2\Pi_\lambda(D^c_\lambda\,|\,X^\lambda)\xrightarrow{\prob[f_0]{}} 0,\quad \lambda\rightarrow \infty.\label{eq:TV_convergence}
	\end{align}
    Consider for $\gamma\in H^\rho(\Xi)$ and $\gamma_0=\gamma/p_{f_0}$ the processes 
	\begin{align*}
		G_\lambda(\gamma) 
            &= \sqrt{\lambda}\iprod{f-f_0}{\gamma}_{L^2(\Xi)} + \sqrt{\lambda}\mathcal{G}_{\lambda}((f-f_0)P_M\gamma_0) - W_\lambda(P_M\gamma_0),\\
        \bar{G}_\lambda(\gamma) 
			&= \sqrt{\lambda}\iprod{f-f_0}{\gamma}_{L^2(\Xi)}  - W_\lambda(P_M\gamma_0).
	\end{align*}
	By Lemma \ref{lem:ExpansionLaplaceTransform}   \eqref{num:Laplace_Convergence} the Laplace transform of $G_\lambda(\gamma)$ with $f\sim \Pi^{D_\lambda}_\lambda(\MTemptyplaceholder|\,X^\lambda)$ converges pointwise conditionally on $X^\lambda$ in $\prob[f_0]{}$-probability to the Laplace transform of a centred normal distribution with variance $\norm{\gamma_0}^2_0=\norm{\gamma/p_{f_0}^{1/2}}^2_{L^2(\Xi)}$. This convergence transfers to the Laplace transform of $\bar G_\lambda(\gamma)$ with $f\sim \Pi_\lambda(\MTemptyplaceholder|\,X^\lambda)$ since $G_\lambda(\gamma)=\bar G_\lambda(\gamma)+\smallo_{\prob[f_0]{}}(1)$ by Lemma \ref{lem:SubGaussianBounds} \eqref{num:Maximal1}. Recall from Proposition 29 of \citet{nickl2020b} that convergence of Laplace transforms in probability is equivalent to weak convergence of the respective laws in probability. As $\bar G_\lambda(\gamma)$ is conditionally on the data Gaussian, this means that also the total variation distance between $\mathcal{L}(\bar G_\lambda(\gamma))$ and $\mathcal{L}(\mathbb{W}(\gamma))=N(0,\norm{\gamma_0}^2_0)$ converges to zero in $\prob[f_0]{}$-probability. 
    
    Finally, we argue that in this convergence $\bar{G}_{\lambda}(\gamma)$ can be replaced by $\sqrt{\lambda}\iprod{f-\hat f}{\gamma}_{L^2(\Xi)}$. Again, conditional Gaussianity of $\bar{G}_{\lambda}(\gamma)$ and the arguments in the proof of Theorem 2.7 of \citet{monard2019} show that the moments of $\bar G_\lambda(\gamma)$ converge conditionally on the data to the moments of $\mathcal{L}(\mathbb{W}(\gamma))$ in $\prob[f_0]{}$-probability. So, in particular,
	\begin{align}
		\sqrt{\lambda}\iprod{\hat f-f_0}{\gamma}_{L^2(\Xi)} - W_\lambda(P_M\gamma_0)=\EV[\lambda]{\bar G_\lambda(\gamma)\given X^\lambda}=\smallo_{\prob[f_0]{}}(1),\label{eq:moment_convergence}
	\end{align} 
	and thus $\sqrt{\lambda}\iprod{f-\hat f}{\gamma}_{L^2(\Xi)}=\bar G_\lambda(\gamma)+\smallo_{\prob[f_0]{}}(1)$, implying the claimed convergence in total variation distance in $\prob[f_0]{}$-probability.
	\item For an index $J\in\mathbb{N}$ to be chosen later define on $H^{\rho}(\Xi)$ the projected processes $\bar G_{\lambda,J}$ and $\mathbb{W}_J$ through $\bar G_{\lambda,J}(\gamma)=\bar G_\lambda(P_J\gamma)$ and $\mathbb{W}_J(\gamma)=\mathbb{W}(P_J\gamma)$. By the triangle inequality for the metric $\beta_{-\rho}=\beta_{H^{-\rho}(\Xi)}$ we find 
	\begin{align*}
		\beta_{-\rho}\left(\prob[][\bar G_\lambda]{}, \prob[][\mathbb{W}]{}\right) 
			& \leq \beta_{-\rho}\left(\prob[][\bar G_{\lambda,J}]{}, \prob[][\mathbb{W}_J]{}\right)
			+ \beta_{-\rho}\left(\prob[][\bar G_\lambda]{}, \prob[][\bar G_{\lambda,J}]{}\right) + \beta_{-\rho}\left(\prob[][\mathbb{W}]{}, \prob[][\mathbb{W}_J]{}\right).
	\end{align*}
	From \eqref{num:BvM_TVConvergence} we know for any fixed $\gamma\in H^\rho(\Xi)$ that $\prob[][\bar G_{\lambda,J}(\gamma)]{}$ converges weakly to $\prob[][\mathbb{W}_J(\gamma)]{}$. By linearity, the same holds true for any finite linear combination  $\gamma=\sum_{i=1}^m a_i\gamma_i$ with $\gamma_i\in V_J$, $i=1,\dots, m$, $a\in\mathbb{R}^m$, $m\in\mathbb{N}$. By the Cramér-Wold device we thus have weak convergence of the finite-dimensional distributions of $\bar G_J$. In particular, for $J$ fixed,
	\begin{align}
		\beta_{-\rho}\left(\prob[][\bar G_{\lambda,J}]{}, \prob[][\mathbb{W}_J]{}\right)\xrightarrow{\prob[f_0]{}} 0,\quad \lambda\to \infty.\label{eq:BVM_term1}
	\end{align}
	Let $\tilde{G}_\lambda$ and $\tilde{G}_{\lambda,J}$ be defined as $\bar{G}_\lambda$ and $\bar{G}_{\lambda,J}$, respectively, but with $f\sim \Pi^{D_\lambda}_\lambda(\MTemptyplaceholder|\,X^\lambda)$ instead of $f\sim \Pi_\lambda(\MTemptyplaceholder|\,X^\lambda)$. The standard inequality \eqref{eq:TV_convergence} yields $\operatorname{TV}(\prob[][\tilde G_\lambda]{}, \prob[][\tilde G_{\lambda,J}]{})=\smallo_{\prob[f_0]{}}(1)$ and $\operatorname{TV}(\prob[][\tilde G_{\lambda,J}]{}, \prob[][\bar G_{\lambda,J}]{})=\smallo_{\prob[f_0]{}}(1)$ uniformly in $J\in\mathbb{N}$. Recalling the variational characterisation of the total variation distance \eqref{eq:TV} we get
	\begin{align*}
		&\beta_{-\rho}\left(\prob[][\bar G_\lambda]{}, \prob[][\bar G_{\lambda,J}]{}\right)
			\leq\beta_{-\rho}\left(\prob[][\bar G_\lambda]{}, \prob[][\tilde G_\lambda]{}\right) + \operatorname{TV}\left(\prob[][\tilde G_\lambda]{}, \prob[][\tilde G_{\lambda,J}]{}\right) + \operatorname{TV}\left(\prob[][\tilde G_{\lambda,J}]{}, \prob[][\bar G_{\lambda,J}]{}\right)\\
			&\quad = \sup_{F\colon\Lip{F}\leq 1} \abs[\bigg]{\int_{V_M}F(\bar G_\lambda)\D\Pi^{D_\lambda}_{\lambda}(\MTemptyplaceholder|\,X^\lambda)-\int_{V_M}F(\bar G_{\lambda,J})\D\Pi^{D_\lambda}_{\lambda}(\MTemptyplaceholder|\,X^\lambda)}
            + \smallo_{\prob[f_0]{}}(1).
	\end{align*}
	Using the Lipschitz property of the functions $F$ and then applying Lemma \ref{lem:SubGaussianBounds} \eqref{num:Maximal1} and \eqref{num:Maximal3}, the first term is up to error terms uniform in $J$ upper bounded by 
	\begin{align*}
		\EV*[\lambda][D_\lambda]{\norm{\bar G_\lambda-\bar G_{\lambda,J}}_{H^{-\rho}(\Xi)}\given X^\lambda} &=\EV[\lambda][D_\lambda][\Big]{\sup_{\norm{\gamma}_{H^\rho(\Xi)}\leq 1}\abs*{\bar G_\lambda(\gamma-P_J\gamma)}\given X^\lambda}\\
        &= \EV*[\lambda][D_\lambda][\Big]{\sup_{\norm{\gamma}_{H^\rho(\Xi)}\leq 1}\abs*{G_\lambda(\gamma-P_J\gamma)}\given X^\lambda} + \smallo_{\prob[f_0]{}}(1)\\
        &= \mathcal{O}_{\prob[f_0]{}}(2^{-J/2}) + \smallo_{\prob[f_0]{}}(1).
	\end{align*}
	From this deduce
	\begin{align}
		\beta_{-\rho}\left(\prob[][\bar G_\lambda]{}, \prob[][\bar G_{\lambda,J}]{}\right)= \mathcal{O}_{\prob[f_0]{}}(2^{-J/2})+\smallo_{\prob[f_0]{}}(1).\label{eq:BVM_term2}
	\end{align}
	Finally, using Lemma \ref{lem:SubGaussianBounds} \eqref{num:Maximal2} we find that
	\begin{equation}
		\beta_{-\rho}\left(\prob[][\mathbb{W}]{}, \prob[][\mathbb{W}_J]{}\right)\leq \EV[f_0]{\norm{\mathbb{W}-\mathbb{W}_J}_{H^{-\rho}(\Xi)}}\lesssim 2^{-J/2}.\label{eq:Projection_Betadistance_GaussianPart}
	\end{equation}
	Combining this with \eqref{eq:BVM_term1}, \eqref{eq:BVM_term2} and because $J\in\mathbb{N}$ was arbitrary, we obtain in all $\beta_{-\rho}(\prob[][\bar{G}_\lambda]{}, \prob[][\mathbb{W}]{})= \smallo_{\prob[f_0]{}}(1)$ as $\lambda\to\infty$. Consequently, the (Gaussian) law of $\bar{G}_\lambda$ converges weakly in $H^{-\rho}(\Xi)$ to the law of $\mathbb{W}$. As in \eqref{num:BvM_TVConvergence} we can invoke the arguments of Theorem 2.7 of \citet{monard2019} to conclude that $\EV[\lambda]{\bar{G}_\lambda\given X^\lambda}\rightarrow 0$ in $\prob[f_0]{}$-probability relative to $H^{-\rho}(\Xi)$) and $\sqrt{\lambda}(f-\hat{f}) =\bar{G}_\lambda+\smallo_{\prob[f_0]{}}(1)$ in $H^{-\rho}(\Xi)$.
\end{enumerate}\qedhere
\end{proof}

\begin{proof}[Proof of Corollary \ref{cor:BvM_Truth}]
	We only need to prove Part \eqref{num:cor_BvM_uniform} as Part \eqref{num:cor_BvM_pointwise} is a special case. The proof of Theorem \ref{thm:BvM} \eqref{num:BvM_BetaConvergence} already argues that 
	\begin{align}
		\sqrt{\lambda}(\hat f-f_0)=W_\lambda(P_M(\MTemptyplaceholder)_0)+\EV[\lambda]{\bar G_\lambda\given X^\lambda}=W_\lambda(P_M(\MTemptyplaceholder)_0)+\smallo_{\prob[f_0]{}}(1)
	\end{align} 
	in $H^{-\rho}(\Xi)$. It therefore suffices to show that $W_\lambda(P_M(\MTemptyplaceholder)_0)$ converges to $\mathbb{W}$ in distribution in $H^{-\rho}(\Xi)$. As in \eqref{eq:Projection_Betadistance_GaussianPart} we find from Lemma \ref{lem:SubGaussianBounds} \eqref{num:Maximal4} and linearity of $W_\lambda$ that
	\begin{equation*}
		\beta_{-\rho}\left(\prob[][W_{\lambda}(P_M(\MTemptyplaceholder)_0)]{}, \prob[][W_{\lambda}(P_J(\MTemptyplaceholder)_0)]{}\right)\le \EV[f_0]{\norm{W_{\lambda}(P_M(\MTemptyplaceholder)_0)-W_{\lambda}(P_J(\MTemptyplaceholder)_0)}_{H^{-\rho}(\Xi)}}=\mathcal{O}_{\prob[f_0]{}}(2^{-J/2}).
	\end{equation*}
    By the triangle inequality for the $\beta_{-\rho}$ metric as in the proof of Theorem \ref{thm:BvM} \eqref{num:BvM_BetaConvergence} we find by Lemma \ref{lem:SubGaussianBounds} \eqref{num:Maximal2} and \eqref{num:Maximal4} for $1\leq J\leq M$ the bound
	\begin{equation*}
		\beta_{-\rho}\left(\prob[][W_{\lambda}(P_M(\MTemptyplaceholder)_0)]{}, \prob[][\mathbb{W}]{}\right)
			 \leq \beta_{-\rho}\left(\prob[][W_{\lambda}(P_J(\MTemptyplaceholder)_0)]{}, \prob[][\mathbb{W}_J]{}\right) + \mathcal{O}_{\prob[f_0]{}}(2^{-J/2}).
	\end{equation*}
	From Lemma \ref{lem:LAN} we know that $W_{\lambda}(P_J\gamma_0)$ converges in distribution to the random variable $\mathbb{W}_J(\gamma)\sim N(0,\norm{P_J\gamma_0}^2_0)$ for any fixed $\gamma\in V_m$ and $J\in\mathbb{N}$. By linearity, the same holds true for any finite linear combination of functions in $V_J$, and thus by the Cramér-Wold device $W_{\lambda}(P_J\gamma)$ converges in distribution in $H^{-\rho}(\Xi)$ to $\mathbb{W}$. From this obtain the claim.
\end{proof}

\section*{Acknowledgements} 
	We thank Matteo Giordano and Markus Reiß for very helpful comments and discussions. S.\ G.\ gratefully acknowledges financial support from DFG CRC/TRR 388 \enquote{Rough Analysis, Stochastic Dynamics and Related Fields}, Project B07, and DFG IRTG2544 \enquote{Stochastic Analysis in Interaction}, Project-ID 410208580.

\appendix

\section{Remaining proofs for the statistical results}\label{sec:Appendix_Stats}

This section contains the remaining proofs for Lemmas \ref{lem:Prior}, \ref{lem:supremum_Control_Hellinger}, \ref{lem:ExpansionLaplaceTransform} and \ref{lem:SubGaussianBounds}.

\begin{proof}[Proof of Lemma \ref{lem:Prior}]~
\begin{enumerate}[(a)]
\item We have $\operatorname{dim}(V_M)\lesssim 2^M\lesssim \lambda^{1/(2\beta+1)}\leq\lambda\varepsilon^2_{\lambda}$. So, for $L$ large enough $\operatorname{dim}(V_M)\leq (L^2/2)\lambda\varepsilon^2_{\lambda}$. For this $L$, we find with the random variables $Z_{\mu}\overset{i.i.d.}{\sim} N(0,1)$, $|\mu|\leq M$, from \eqref{eq:Prior} that
	\begin{equation*}
		\Pi_{\lambda}(f\in V_M:\norm{f}^2_{H^{\beta_0}(\Xi)}> L^2\lambda\varepsilon^2_{\lambda})
			=\prob[][][]{\sum_{|\mu|\leq M}Z^2_{\mu}>L^2\lambda\varepsilon^2_{\lambda}}\leq\prob[][][]{\sum_{|\mu|\leq M}(Z^2_{\mu}-1)>(L^2/2)\lambda\varepsilon^2_{\lambda}},
	\end{equation*}
	where the last inequality holds since there are at most $\operatorname{dim}(V_M)$ many $Z_{\mu}$ in the sum. The random variables $Z_{\mu}^2-1$ are independent centred chi-squared random variables. A standard subexponential inequality (e.g., Equation 3.29 of \citet{gineMathematicalFoundationsInfiniteDimensional2015}) implies for the last probability the upper bound 
	\begin{align*}
		\exp\bigg(-\frac{(L^2/2)^2(\lambda\varepsilon^2_{\lambda})^2}{4\operatorname{dim}(V_M)+(L^2/2)\lambda\varepsilon^2_{\lambda}}\bigg) \leq \exp(-(L^2/10)\lambda\varepsilon^2_{\lambda}).
	\end{align*}
	\item Note $\norm{f}_{H^{\beta_0}(\Xi)}\leq 2^{M\beta_0}\norm{f}_{L^2(\Xi)}$ for $f\in V_M$. Hence, 
	\begin{align*}
		&\log\left(N\left(V_M\cap B(0,\norm{}_{H^{\beta_0}(\Xi)},L\sqrt{\lambda}\varepsilon_\lambda), \norm{}_{L^2(\Xi)}, \epsilon_\lambda\right)\right)\\
		&\hspace{10em}  \leq \log\left(N\left(B(0,\norm{}_{\mathbb{R}^{\operatorname{dim}(V_M)}},L2^{M\beta_0}\sqrt{\lambda}\varepsilon_\lambda), \norm{}_{\mathbb{R}^{\operatorname{dim}(V_M)}}, \epsilon_\lambda\right)\right)\\
		&\quad \leq \operatorname{dim}(V_M)\log(3L2^{M\beta_0}\sqrt{\lambda})\lesssim \lambda^{1/(2\beta+1)}\log(\lambda)\lesssim \lambda\epsilon_\lambda^2,
	\end{align*} 
	using a metric entropy estimate for finite-dimensional Euclidean balls (Proposition 4.3.34 of \citet{gineMathematicalFoundationsInfiniteDimensional2015}).
	\item For $f_0\in H^{\beta}(\Xi)$ we have $\norm{f_0-P_Mf_0}_{L^2(\Xi)}\lesssim 2^{-M\beta}\norm{f_0}_{H^\beta(\Xi)}\lesssim \varepsilon_{\lambda}$, so for $L'$ large enough with $C=L'/2$
	\begin{align*}
		&\Pi_\lambda (f\in V_M:\norm{f-f_0}_{L^2(\Xi)}\leq L'\epsilon_\lambda) \geq \Pi_\lambda (f\in V_M:\norm{f-P_Mf_0}_{L^2(\Xi)}\leq C\epsilon_\lambda)\\
		&\quad \geq \Pi_\lambda (f\in V_M:\norm{f-P_M f_0}_{H^{\beta_0}(\Xi)}\leq C\epsilon_\lambda)\gtrsim \Pi_{\lambda}(f\in V_M:\norm{f}_{H^{\beta_0}(\Xi)}\leq C\epsilon_\lambda),
	\end{align*}
	concluding with Corollary 2.6.18 of \citep{gineMathematicalFoundationsInfiniteDimensional2015}, noting $P_Mf_0\in V_M$ and $\norm{P_Mf_0}_{\mathbb{H}^{\lambda}}\leq \norm{f_0}_{H^{\beta}(\Xi)}$, cf. \eqref{eq:RKHS}. Using the notation from (a) and letting $Z\overset{d}{\sim} N(0,1)$, the last expression is equal to
	\begin{align*}
		&\prob[][][]{\sum_{|\mu|\leq M}Z^2_{\mu}\leq C^2\varepsilon^2_{\lambda}}\geq \prob[][][]{\operatorname{dim}(V_M)\max_{|\mu|\leq M}Z^2_{\mu}\leq C^2\varepsilon^2_{\lambda}}\\
		&\quad = \prob[][][]{\operatorname{dim}(V_M)Z^2\leq C^2\varepsilon^2_{\lambda}}^{\operatorname{dim}(V_M)}\geq \prob[][][]{|Z|\leq C'\lambda^{-1/2}}^{\operatorname{dim}(V_M)}
	\end{align*}
	for some $C'>0$, using again that $\operatorname{dim}(V_M)\lesssim \lambda\varepsilon^2_{\lambda}$. Since the standard normal density is uniformly bounded away from zero near the origin, the last probability is up to a constant lower bounded by $\lambda^{-\operatorname{dim}(V_M)/2}\gtrsim \exp(-c\lambda^{1/(2\beta+1)}\log(\lambda))=\exp(-c\lambda\varepsilon^2_{\lambda})$ for some $c>0$. Adjusting the constant for multiplicative factors yields the desired small ball lower bound.
	\item Suppose $f\in V_M$ with $\norm{f-f_0}_{L^2(\Xi)}\leq L'''\varepsilon_{\lambda}=L'''\lambda^{-\beta/(2\beta+1)}\log(\lambda)$. Then, by the triangle inequality, we have
\begin{align*}
		\norm{f}_{H^{\beta_0}(\Xi)}
			&\leq \norm{f-P_Mf_0}_{H^{\beta_0}(\Xi)} + \norm{P_Mf_0}_{H^{\beta_0}(\Xi)}\\
            &\le 2^{M\beta_0}\norm{f-P_Mf_0}_{L^2(\Xi)}+ \norm{f_0}_{H^{\beta}(\Xi)}\\
            &\leq 2^{M\beta_0}\norm{f-f_0}_{L^2(\Xi)} + 2^{M\beta_0}\norm{f_0-P_Mf_0}_{L^2(\Xi)} + \norm{f_0}_{H^{\beta}(\Xi)}\\
            &\leq 2L''' \lambda^{(\beta_0-\beta)/(2\beta+1)}\log(\lambda) + \norm{f_0}_{H^{\beta}(\Xi)},
	\end{align*}
	which is bounded by a constant uniformly in $\lambda$ for $\lambda\to\infty$ as long as $\beta>\beta_0$. This proves the claim taking $L''>\norm{f_0}_{H^\beta(\Xi)}$ large enough, depending on $L'''$.
    \end{enumerate}
\end{proof}

\begin{proof}[Proof of Lemma \ref{lem:supremum_Control_Hellinger}]
	Given the sub-Gaussian concentration of the spatio-temporal averages of Theorem \ref{thm:SubgaussianConcentration} \eqref{num:SubgaussianThm2}, the proof of Lemma \ref{lem:supremum_Control_Hellinger} can be carried out along the lines of the proof of Lemma 4 of \citet{nicklNonparametricStatisticalInference2020}. We include the proof for completeness. To this end, define $\tilde{M}\coloneq\operatorname{dim}(V_M) \lesssim 2^{M}$. For $g,h\in V_M$ we write $g = \mathbf{g}\transpose \mathbf{\Psi}$ and $h=\mathbf{h}\transpose\mathbf{\Psi}$, where $\mathbf{g},\mathbf{h}\in \mathbb{R}^{\tilde{M}}$ and $\mathbf{\Psi}=(\psi_{\mu}\,\colon\,\abs{\mu}\le M)$.\\
	\textbf{Step 1 (Rewriting the probability in \eqref{eq:Bayes_Probability}).}  Introduce the empirical Gram matrix
    \begin{equation*}
        \hat{\mathbf{G}} = \frac{1}{\lambda}\int_0^T\int_\Lambda \mathbf{\Psi}(X_t(y))\mathbf{\Psi}(X_t(y))\transpose\,\D y\D t\in\mathbb{R}^{\tilde{M}\times\tilde{M}},
    \end{equation*}
    which depends on $\lambda$ and on $f_0$ through the law of $X$. Then
	\begin{align*}
		h_\lambda(g,h)^2 & = \frac{1}{\lambda}\int_0^T\int_\Lambda \bigg(\sum_{k=1}^{\tilde{M}} (\mathbf{g}_k-\mathbf{h}_k)\mathbf{\Psi}_k(X_t(y))\bigg)^2\,\D y\D t          \\
		& = \frac{1}{\lambda}\sum_{k,j=1}^{\tilde{M}} \mathbf{g}_k\mathbf{h}_j \int_0^T\int_\Lambda \mathbf{\Psi}_k(X_t(y))\mathbf{\Psi}_j(X_t(y))\,\D y\D t \\
		& =(\mathbf{g}-\mathbf{h})\transpose \hat{\mathbf{G}}(\mathbf{g}-\mathbf{h}).
	\end{align*}
	Similarly, with the Gram matrix $\mathbf{G} = \frac{1}{\lambda}\EV[f_0]{\int_0^T\int_\Lambda \mathbf{\Psi}(X_t(y))\mathbf{\Psi}(X_t(y))\transpose\,\D y\D t}$, we find
	\begin{equation*}
		d_\lambda(g,h)^2=(\mathbf{g}-\mathbf{h})\transpose \mathbf{G}(\mathbf{g}-\mathbf{h}).
	\end{equation*}
	Thus, the probability in \eqref{eq:Bayes_Probability} equals
	\begin{equation*}
		\prob[f_0][][\bigg]{\sup_{\mathbf{g},\mathbf{h}\in \mathbb{R}^{\tilde{M}}\colon (\mathbf{g}-\mathbf{h})\transpose\mathbf{G} (\mathbf{g}-\mathbf{h})\neq 0}\abs{\frac{(\mathbf{g}-\mathbf{h})\transpose(\hat{\mathbf{G}}-\mathbf{G})(\mathbf{g}-\mathbf{h})}{(\mathbf{g}-\mathbf{h})\transpose\mathbf{G} (\mathbf{g}-\mathbf{h})}}\ge \lambda^{-1/2}\hat{C}x}.
	\end{equation*}
	Define $\mathbf{u}\coloneq \mathbf{g}-\mathbf{h}\in \mathbb{R}^{\tilde{M}}$, the $\mathbf{G}$-unit ball $\Gamma\coloneq\Set{\mathbf{u}\in\mathbb{R}^{\tilde{M}}\given \norm{\mathbf{u}}_\mathbf{G}^2\coloneq \mathbf{u}\transpose \mathbf{G} \mathbf{u}\le 1}$ and $\mathbf{M} = \hat{\mathbf{G}}-\mathbf{G}$.  Then the probability in \eqref{eq:Bayes_Probability} equals
	\begin{equation}
		\prob*[f_0]{\sup_{\mathbf{u}\in \Gamma} \mathbf{u}\transpose\mathbf{M} \mathbf{u}\ge \lambda^{-1/2} \hat{C}x}.\label{eq:Bayes_Probability_2}
	\end{equation}
	\textbf{Step 2 (Bounding the probability \eqref{eq:Bayes_Probability_2} using a $\delta$-covering).} For $\delta>0$ let $(\mathbf{u}^l)_{l=1}^{N(\delta)}$ be a minimal $\delta$-covering of $\Gamma$ in $\norm{}_{\mathbf{G}}$-distance. Denote by $\mathbf{u}^l=\mathbf{u}^l(\mathbf{u})$ the closest point in the covering to $\mathbf{u}\in\Gamma$ such that $\norm{\mathbf{u}-\mathbf{u}^l}_{\mathbf{G}}\le \delta$. We obtain the bound
	\begin{equation}
		\abs*{(\mathbf{u}-\mathbf{u}^l)\transpose\mathbf{M} (\mathbf{u}-\mathbf{u}^l)}\le  \delta^2 \sup_{\mathbf{w}\in \Gamma}\abs{\mathbf{w}\transpose \mathbf{M} \mathbf{w}},\quad \mathbf{u}\in\Gamma.\label{eq:aux_Bayes_Prob_1}
	\end{equation}
	Moreover, using the norm equivalence \eqref{eq:norm_equivalence_d}, we find for any $\mathbf{v}\in\mathbb{R}^{\tilde{M}}$ the bound
	\begin{equation*}
		\norm{\mathbf{v}}_{\mathbb{R}^{\tilde{M}}}=\norm*{\mathbf{v}\transpose \mathbf{\Psi}}_{L^2(\Xi)}\lesssim_{T,K,\Xi}  \bigg(\int_0^T\frac{1}{\lambda}\int_\Lambda\EV[f_0]{(\mathbf{v}\transpose \mathbf{\Psi}(X_t(y)))^2}\,\D y\D t\bigg)^{1/2}=\norm{\mathbf{v}}_{\mathbf{G}}.
	\end{equation*}
	Denote by $\lambda_{\max}$ the maximal eigenvalue of $\mathbf{M}$, then we find
	\begin{equation*}
		\abs{(\mathbf{u}-\mathbf{u}^l)\transpose \mathbf{M} \mathbf{u}^l}  \le \norm{\mathbf{u}-\mathbf{u}^l}_{\mathbb{R}^{\tilde{M}}}\norm{\mathbf{M} \mathbf{u}^l}_{\mathbb{R}^{\tilde{M}}}\lesssim_{T,K,\Xi} \delta \lambda_{\max}\norm{\mathbf{u}^l}_{\mathbb{R}^{\tilde{M}}} \lesssim_{T,K,\Xi}  \delta \sup_{\norm{\mathbf{v}}_{\mathbb{R}^{\tilde{M}}}\le 1}\abs{\mathbf{v}\transpose\mathbf{M} \mathbf{v}}.
	\end{equation*}
	The norm equivalence \eqref{eq:norm_equivalence_d} implies $\norm{\mathbf{v}}_{\mathbb{R}^{\tilde{M}}}^2\gtrsim_{T,K}\norm{\mathbf{v}}_{\mathbf{G}}^2$ for any $v\in\mathbb{R}^{\tilde{M}}$ and thus
	\begin{equation}
		\abs{(\mathbf{u}-\mathbf{u}^l)\transpose\mathbf{M} \mathbf{u}^l}\lesssim_{T,K,\Xi} \delta \sup_{\mathbf{w}\in\Gamma}\abs{\mathbf{w}\transpose\mathbf{M} \mathbf{w}},\quad \mathbf{u}\in\Gamma.\label{eq:aux_Bayes_Prob_2}
	\end{equation}
	Combining \eqref{eq:aux_Bayes_Prob_1} and \eqref{eq:aux_Bayes_Prob_2}, we find for any $0<\delta<1$ the bound
	\begin{equation*}
		\sup_{\mathbf{u}\in\Gamma}\abs{\mathbf{u}\transpose\mathbf{M} \mathbf{u}}\le \tilde{C}\big((\delta^2+2\delta )\sup_{\mathbf{w}\in\Gamma}\abs{\mathbf{w}\transpose \mathbf{M} \mathbf{w}} + \max_{1\le l\le N(\delta)}\abs{(\mathbf{u}^l)\transpose \mathbf{M} \mathbf{u}^l}\big),
	\end{equation*}
	where the constant $\tilde{C}$ depends on $K$, $T$ and $\Xi$. Fixing any $\delta$ small enough such that $\tilde{C}(\delta^2+2\delta)\le 1/2$, we obtain
	\begin{equation*}
		\sup_{\mathbf{u}\in\Gamma}\abs{\mathbf{u}\transpose \mathbf{M} \mathbf{u}}\le 2 \tilde{C}\max_{1\le l\le N(\delta)}\abs{(\mathbf{u}^l)\transpose \mathbf{M} \mathbf{u}^l}.
	\end{equation*}
	The union bound implies that the probability \eqref{eq:Bayes_Probability_2} is bounded by
	\begin{equation*}
		N(\delta)\sup_{\mathbf{u}\in \Gamma}\prob*[f_0]{\abs{\mathbf{u}\transpose\mathbf{M} \mathbf{u}}\ge \sigma Cx/(2\tilde{C})}.
	\end{equation*}
	\textbf{Step 3 (Conclusion).} Introduce for $\mathbf{u}\in\mathbb{R}^{\tilde{M}}$ the function $u=\mathbf{u}\transpose \mathbf{\Psi}$ and note that
	\begin{equation*}
		\mathbf{u}\transpose \mathbf{M} \mathbf{u} = \int_0^T\frac{1}{\lambda}\int_\Lambda u(X_t(y))^2\,\D y\D t - \EV[f_0][][\bigg]{\int_0^T\frac{1}{\lambda}\int_\Lambda u(Z_t(y))^2\,\D y\D t} = \mathcal{G}_{\lambda}(u^2),
	\end{equation*}
	we can apply Theorem \ref{thm:SubgaussianConcentration} \eqref{num:SubgaussianThm2} to find
	\begin{equation*}
		\prob*[f_0]{\abs{\mathbf{u}\transpose \mathbf{M} \mathbf{u}} \ge \lambda^{-1/2} \hat{C}x/(2\tilde{C})}\le 2\exp\left(-\frac{x^2\hat{C}^2}{8C^2\tilde{C}^2\norm{u}_{L^2(\mathbb{R})}^4}\right),\quad \mathbf{u}\in\Gamma, x\ge 0,
	\end{equation*}
	with $\norm{\mathbf{u}}_{\mathbb{R}^{\tilde{M}}}=\norm{u}_{L^2(\mathbb{R})}\lesssim_{T,K,\Xi} \norm{\mathbf{u}}_{\mathbf{G}}\le 1$ by the norm equivalence \eqref{eq:norm_equivalence_d}. Since we have $\norm{}_{\mathbb{R}^{\tilde{M}}}\lesssim_{T,K,\Xi} \norm{}_{\mathbf{G}}\lesssim_{T,K}\norm{}_{\mathbb{R}^{\tilde{M}}}$ and $\delta$ is fixed, the covering number $N(\delta)$ satisfies $N(\delta)\le e^{\hat{c} \tilde{M}}$ for some constant $\hat{c}<\infty$ depending on $K$, $T$ and $\Xi$. Consequently, we find
	\begin{equation*}
		\prob[f_0][][\bigg]{\sup_{g,h\in V_M\colon g\neq h} \abs[\bigg]{\frac{h_\lambda(g,h)^2- d_\lambda(g,h)^2}{d_\lambda(g,h)^2}}\ge \lambda^{-1/2} \hat{C}x} \le N(\delta)2\exp\left(-\frac{x^2\hat{C}^2}{8C_{T,K}^2\tilde{C}^2}\right) \le 2\exp\left(c\tilde{M} - \frac{x^2\hat{C}^2}{8C^2\tilde{C}^2}\right).
	\end{equation*}
	Readjusting the constant $\hat{C}$ completes the proof.
\end{proof}

\begin{proof}[Proof of Lemma \ref{lem:ExpansionLaplaceTransform}]~
\begin{enumerate}[(a)]
\item Let $Z_{\lambda} \coloneq \int_{D_\lambda}e^{\ell_\lambda(f)}\D\Pi_{\lambda}(f)= \Pi_\lambda(D_\lambda)$ and write $\tilde{\gamma}=P_M\gamma_0\in V_M$, $g=f-u\lambda^{-1/2} \tilde{\gamma}$. Note that 
    \begin{align*}
        \sqrt{\lambda}\mathcal{G}_\lambda((f-f_0)\tilde{\gamma}) = \frac{1}{\sqrt{\lambda}} \int_0^T \iprod{(f-f_0)(X_t)}{\tilde{\gamma}(X_t)}_{L^2(\Lambda)} - \sqrt{\lambda}\iprod{f-f_0}{\tilde{\gamma}}_0.
    \end{align*}
    We thus find from the LAN-expansion (Lemma \ref{lem:LAN}) under $\prob[f_0]{}$ that $\ell_\lambda(f)-\ell_\lambda(g)$ equals
	\begin{align*}
			& u W_{\lambda}(\tilde{\gamma}) + \frac{u^2}{2}\mathcal{I}_\lambda(\tilde{\gamma}) -\frac{u}{\sqrt{\lambda}} \int_0^T \iprod{(f-f_0)(X_t)}{\tilde{\gamma}(X_t)}_{L^2(\Lambda)}\,\D t\\
			&\hspace{10em}=u W_{\lambda}(\tilde{\gamma}) + \frac{u^2}{2}\mathcal{I}_\lambda(\tilde{\gamma}) -u\sqrt{\lambda} \mathcal{G}_{\lambda}((f-f_0)\tilde{\gamma}) - u\sqrt{\lambda}\iprod{f-f_0}{\tilde{\gamma}}_0.
	\end{align*}
	With this and $$r_{\lambda}\coloneq\norm{p_{f_0}}_{L^\infty(\Xi)}\abs{u}\sqrt{\lambda} K\epsilon_\lambda\norm{\gamma_0-\tilde{\gamma}}_{L^2(\Xi)}$$ we get
	\begin{align}
		\EV[\lambda][D_\lambda]{e^{uG_\lambda(\gamma)}\given X^\lambda} 
			& =Z_\lambda^{-1}\int_{D_\lambda}e^{uG_\lambda(\gamma)+\ell_\lambda(f)}\,\D \Pi_\lambda(f)\nonumber\\
			&=Z_\lambda^{-1}\int_{D_\lambda}e^{\frac{u^2}{2}\mathcal{I}_\lambda(\tilde{\gamma}) +u\sqrt{\lambda} \iprod{f-f_0}{\gamma_0-\tilde{\gamma}}_0+\ell_\lambda(g)}\,\D \Pi_\lambda(f)\nonumber\\
			& \leq e^{\frac{u^2}{2}\mathcal{I}_\lambda(\tilde{\gamma})+r_{\lambda}} Z_\lambda^{-1}\int_{D_\lambda}e^{\ell_\lambda(g)}\,\D \Pi_\lambda(f).\label{eq:LaplaceTransformUpperBound_1}
	\end{align}
    Set $D_{\lambda,u}\coloneq \Set{g=f-u\lambda^{-1/2} \tilde{\gamma}\given f\in D_\lambda}\subset V_M$ and let $\Pi_{\lambda,u}$ be the law of $f-u\lambda^{-1/2} \tilde{\gamma}$ under the rescaled prior. Recalling the RKHS norm $\norm{h}_{\mathbb{H}^\lambda}= \norm{h}_{H^{\beta_0}(\Xi)}$, $h\in V_M$, from \eqref{eq:RKHS} the Cameron-Martin theorem \citep[Theorem 2.6.13]{gineMathematicalFoundationsInfiniteDimensional2015} implies
	\begin{align*}
		\int_{D_{\lambda}}e^{\ell_{\lambda}(g)}\D \Pi_{\lambda}(f)
		 & = \int_{D_{\lambda,u}}e^{\ell_{\lambda}(g)}\D \Pi_{\lambda,u}(g) = \int_{D_{\lambda,u}}e^{\ell_{\lambda}(g)}\frac{\D \Pi_{\lambda,u}}{\D \Pi_\lambda}(g)\,\D \Pi_{\lambda}(g)                        \\
		 & = \int_{D_{\lambda,u}}e^{\ell_\lambda(g)}\exp{\left(-u\lambda^{-1/2}\iprod{\tilde{\gamma}}{g}_{H^{\beta_0}(\Xi)}-\frac{u^2}{2\lambda}\norm{\tilde{\gamma}}_{H^{\beta_0}(\Xi)}^2\right)}\,\D\Pi_\lambda(g).
	\end{align*}
    For $g\in D_{\lambda,u}$ and $f\in D_\lambda$ we have $f=P_Mf$ and $\norm{P_M(f-f_0)}_{H^{\beta_0}(\Xi)}\leq 2^{M\beta_0}\norm{f-f_0}_{L^2(\Xi)}\leq K 2^{M\beta_0}\epsilon_\lambda\lesssim \log(\lambda)$, so
	\begin{align}
		\norm{g}_{H^{\beta_0}(\Xi)} \lesssim \log(\lambda) + \norm{f_0}_{H^{\beta_0}(\Xi)} + \abs{u}\lambda^{-1/2}\norm{\tilde\gamma}_{H^{\beta_0}(\Xi)}.\label{eq:g_upper}
	\end{align}
    We thus have for a constant $c$ depending on $f_0$ and $K$
	\begin{align*}
		&\abs*{-u\lambda^{-1/2}\iprod{\tilde{\gamma}}{g}_{H^{\beta_0}(\Xi)}-\frac{u^2}{2\lambda}\norm{\tilde{\gamma}}_{H^{\beta_0}(\Xi)}^2} \\
        &\quad \leq c\left(u^2\lambda^{-1}\norm{\tilde{\gamma}}^2_{H^{\beta_0}(\Xi)} + \abs{u}\lambda^{-1/2}\log(\lambda)\norm{\tilde{\gamma}}_{H^{\beta_0}(\Xi)}\right)=\colon \bar{r}_{\lambda},
	\end{align*}
	hence 
	\begin{align*}
		Z_\lambda^{-1}\int_{D_\lambda}e^{\ell_\lambda(g)}\,\D \Pi_\lambda(g) 
		&\leq e^{\bar{r}_\lambda}Z_\lambda^{-1}\int_{D_{\lambda,u}}e^{\ell_\lambda(g)}\,\D \Pi_\lambda(g)
		=e^{\bar{r}_\lambda} \frac{\Pi_\lambda(D_{\lambda,u}\,|\,X^\lambda)}{\Pi_\lambda(D_{\lambda}\,|\,X^\lambda)}.
	\end{align*}	
	Combining this with \eqref{eq:LaplaceTransformUpperBound_1} we have shown
	\begin{align}
		\EV*[\lambda][D_\lambda]{e^{uG_\lambda(\gamma)}\given X^\lambda} \leq \frac{e^{\frac{u^2}{2}\mathcal{I}_\lambda(\tilde{\gamma})+r_\lambda+\bar r_\lambda}}{\Pi_\lambda(D_{\lambda}\,|\,X^\lambda)}. \label{eq:LaplaceTransformUpperBound}
	\end{align}	
	To obtain the result for $\bar\gamma=0$, use the basic inequality $a\leq a^2+1$ for $a\geq 0$ such that $\bar{r}_\lambda\leq c(2u^2\lambda^{-1}\log(\lambda)^2\norm{\tilde\gamma}^2_{H^{\beta_0}(\Xi)}+1)$, as well as the relations $\norm{\tilde{\gamma}}_{H^{\beta_0}(\Xi)}\leq 2^{M\beta_0}\norm{\tilde\gamma}_{L^2(\Xi)}\lesssim \norm{p^{-1}_{f_0}}_{L^\infty(\Xi)}2^{M\beta_0}\norm{\gamma}_{L^\infty(\Xi)}$ and $\lambda^{-1/2}2^{M\beta_0}\log(\lambda) =\lambda^{-(1/2)/(2\beta_0+1)}\log(\lambda)=o(1)$. The result for general $\bar\gamma$ follows by linearity of $g\mapsto G_\lambda(g)$.
\item The proof of \eqref{eq:LaplaceTransformUpperBound} also shows
	\begin{align*}
		\EV[\lambda][D_\lambda]{e^{uG_\lambda(\gamma)}\given X^\lambda} \geq e^{\frac{u^2}{2}\mathcal{I}_\lambda(\tilde\gamma)-r_\lambda-\bar r_\lambda}\frac{\Pi_\lambda(D_{\lambda,u}\,|\,X^\lambda)}{\Pi_\lambda(D_{\lambda}\,|\,X^\lambda)}.	
	\end{align*}	
	Since $p_{f_0}\in C^1(\Xi)$ by Proposition \ref{prop:DensityBoundedDerivative} and because $p_{f_0}$ is bounded away from zero on $\Xi$, it follows that $1/p_{f_0}$ induces a bounded multiplication operator on $C^1(\Xi)$, hence $\norm{\gamma_0}_{C^1(\Xi)}\lesssim \norm{\gamma}_{C^1(\Xi)}$. The Sobolev embedding yields \begin{align}
	    \norm{\gamma_0}_{C^1(\Xi)}\lesssim \norm{\gamma}_{H^\rho(\Xi)}.\label{eq:gamma0_bound}
	\end{align}
    The wavelet Jackson estimates \eqref{eq:Bayes_SupNorm_Approximation} thus imply $\norm{\gamma_0-P_M\gamma_0}_{L^\infty(\Xi)}\lesssim 2^{-M}$. Moreover, $\norm{\tilde\gamma}_{H^1(\Xi)}\leq \norm{\gamma_0}_{H^1(\Xi)}\lesssim_{\Xi} \norm{\gamma_0}_{C^1(\Xi)}$, so with $\beta_0>3/2$
    \begin{align}
        \norm{\tilde\gamma}_{H^{\beta_0}(\Xi)}\leq 2^{M(\beta_0-1)}\norm{\tilde\gamma}_{H^{1}(\Xi)}\lesssim_{\Xi} 2^{M(\beta_0-1)}\norm{\gamma_0}_{C^1(\Xi)}.\label{eq:projected_gamma_norm}
    \end{align}
    Since $\lambda^{-1/2}2^{M\beta_0}\log(\lambda) =\sqrt{\lambda}\epsilon_{\lambda}2^{-M}=\lambda^{-(1/2)/(2\beta_0+1)}\log(\lambda)=o(1)$, this means $r_\lambda,\bar{r}_\lambda\rightarrow 0$  as $\lambda\rightarrow\infty$, such that
    \begin{align*}
		\frac{\EV[\lambda][D_\lambda]{e^{uG_\lambda(\gamma)}\given X^\lambda}}{e^{\frac{u^2}{2}\mathcal{I}_\lambda(\tilde\gamma)}\frac{\Pi_\lambda(D_{\lambda,u}\,|\,X^\lambda)}{\Pi_\lambda(D_{\lambda}\,|\,X^\lambda)}}\overset{\prob[f_0]{}}{\rightarrow}1.	
	\end{align*}
    We proceed by studying the denominator in the last display. Since $\gamma_0$ is supported on $\Xi$ and is Lipschitz-continuous as a function on $\Xi$, by the Kirszbraun extension theorem (Exercise 3.6.14 of \citet{gineMathematicalFoundationsInfiniteDimensional2015}) $\gamma_0$ extends to a Lipschitz-continuous function on $\mathbb{R}$ with the same Lipschitz constant. We can therefore apply Lemma \ref{lem:LAN} to show that $\mathcal{I}_\lambda(\gamma_0)\rightarrow \norm{\gamma_0}^2_{0}$ in $\prob[f_0]{}$-probability. Since also 
    \begin{equation*}
        \abs{\mathcal{I}_{\lambda}(\tilde\gamma)-\mathcal{I}_{\lambda}(\gamma_0)}\leq T\norm{\gamma_0-\tilde\gamma}_{\infty}(\norm{\gamma_0}_{\infty}+\norm{\tilde\gamma}_{\infty})\rightarrow 0,
    \end{equation*}
    we conclude $\mathcal{I}_\lambda(\tilde\gamma)\rightarrow \norm{\gamma_0}^2_{0}$ in $\prob[f_0]{}$-probability. By \eqref{eq:D_negligible} the claimed convergence in \eqref{num:Laplace_Convergence} follows if we can show $\Pi_\lambda(D^c_{\lambda,u}\,|\,X^{\lambda})=\smallo_{\prob[f_0]{}}(1)$. By conditioning on the data and using Lemma \ref{lem:Prior} \eqref{lem:aux_rescaled_Prior_restrict} together with the posterior contraction from Theorem \ref{thm:PosteriorContraction} we only have to argue that $D_{\lambda,u}\subset B(0,\norm{}_{H^{\beta_0}(\Xi)},L)$ for a large enough radius $L>0$ (depending on $u$). But this follows already from \eqref{eq:g_upper}, since by \eqref{eq:projected_gamma_norm}, $\lambda^{-1/2}\norm{\tilde\gamma}_{H^{\beta_0}(\Xi)}\lesssim \lambda^{-1/2}2^{M(\beta_0-1)}=o(1)$.
    \end{enumerate}\qedhere
\end{proof}

Recall from Definition 2.3.5 of \citet{gineMathematicalFoundationsInfiniteDimensional2015} that a centred process $Z=(Z(f)\colon f\in\mathcal{F})$ for an index set $\mathcal{F}$ with pseudo-distance $d$ is sub-Gaussian if its increments are sub-Gaussian, i.e.
\begin{align*}
    \prob{\abs{Z(f)-Z(g)}\geq x} \leq 2\exp{\left(-\frac{x^2}{2d^2(f,g)}\right)},\quad x>0,\quad f,g\in\mathcal{F}.
 \end{align*}
The next lemma is a basic deviation inequality for $Z$ uniformly over $\mathcal{F}$ and follows from standard sub-Gaussian calculus (see e.g., Chapter 2.3 of \citet{gineMathematicalFoundationsInfiniteDimensional2015}, and also Lemma 1 in \citet{nicklNonparametricStatisticalInference2020}). 

\begin{lemma}\label{lem:UniformSubgaussianConcentration}
	Let $(Z(f):f\in \mathcal{F})$ be a sub-Gaussian process with a pseudo-distance $d$. Denote by $\operatorname{diam}(\mathcal{F})=\sup_{f,g\in\mathcal{F}}d(f,g)$ the $d$-diameter of $\mathcal{F}$ and let
	\begin{equation*}
		J_{\mathcal{F}}\coloneq \int_0^{\operatorname{diam}(\mathcal{F})}\sqrt{\log{2N(\mathcal{F},6d,\tau)}},\,\D\tau<\infty
	\end{equation*}
    be the Dudley entropy integral, where $N(\mathcal{F},6d,\tau)$ denotes the covering number of the set $\mathcal{F}$ by $d$-balls of radius $\tau/6$. Then we have $\EV{\sup_{f\in\mathcal{F}}\abs{Z(f)}}\leq 4\sqrt{2}J_{\mathcal{F}}$ and
	\begin{align*}
		\prob*{\sup_{f\in\mathcal{F}}\abs{Z(f)}\geq 4\sqrt{2}J_{\mathcal{F}}+196J_{\mathcal{F}}x}\leq e^{-x^2/2}.
	\end{align*}
\end{lemma}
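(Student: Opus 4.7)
This is a standard Dudley chaining argument, and I would follow the proof of Theorem 2.3.7 of \citet{gineMathematicalFoundationsInfiniteDimensional2015} almost verbatim. Both the expectation bound and the deviation inequality fall out of the same chaining skeleton; only the tail refinement requires an additional careful choice of level-wise thresholds in a union bound.

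Concretely, set $D=\operatorname{diam}(\mathcal{F})$ and $\varepsilon_k=2^{-k}D$ for $k\geq 0$, and fix for each $k$ a minimal $\varepsilon_k$-net $\mathcal{F}_k\subset\mathcal{F}$ of cardinality $N_k:=N(\mathcal{F},d,\varepsilon_k)$, taking $\mathcal{F}_0=\{f_\ast\}$ for some fixed $f_\ast\in\mathcal{F}$. Assigning each $f\in\mathcal{F}$ its nearest neighbour $\pi_k(f)\in\mathcal{F}_k$, the triangle inequality yields $d(\pi_k(f),\pi_{k-1}(f))\leq\varepsilon_k+\varepsilon_{k-1}\leq 3\varepsilon_{k-1}$, and sub-Gaussian continuity of $Z$ along the chain gives the telescoping identity $Z(f)-Z(f_\ast)=\sum_{k\geq 1}\{Z(\pi_k(f))-Z(\pi_{k-1}(f))\}$ almost surely.

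For the expectation bound I would apply at each level $k$ the standard sub-Gaussian maximum inequality $\EV{\max_{1\leq i\leq N}\abs{X_i}}\leq \sigma\sqrt{2\log(2N)}$. Since the level-$k$ family contains at most $N_kN_{k-1}\leq N_k^2$ distinct increments, each sub-Gaussian with parameter $3\varepsilon_{k-1}$, this gives
\begin{equation*}
    \EV[][][][][\bigg]{\sup_{f\in\mathcal{F}}\abs{Z(\pi_k(f))-Z(\pi_{k-1}(f))}}\leq 6\sqrt{2}\,\varepsilon_{k-1}\sqrt{\log(2N_k)}.
\end{equation*}
Summing over $k\geq 1$ and recognising $\sum_k\varepsilon_{k-1}\sqrt{\log(2N_k)}$ as a dyadic Riemann sum for $J_{\mathcal{F}}$ (the factor $6$ inside the integrand accommodates the change from $d$ to $6d$) yields $\EV{\sup_f\abs{Z(f)-Z(f_\ast)}}\leq 4\sqrt{2}J_{\mathcal{F}}$, which proves the expectation claim.

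For the deviation inequality I would reuse the same chaining but replace each expectation step by the sub-Gaussian maximal tail bound $\prob*{\max_{i\leq N}\abs{X_i}\geq t}\leq 2Ne^{-t^2/(2\sigma^2)}$, applied at level $k$ with threshold $t_k=3\varepsilon_{k-1}\bigl(\sqrt{2\log(2N_k^2)}+a_k x\bigr)$. The positive weights $a_k$ are chosen so that $\sum_k 2N_k^2\exp(-a_k^2x^2/2)\leq e^{-x^2/2}$ and, simultaneously, $\sum_k 3\varepsilon_{k-1}a_k\lesssim J_{\mathcal{F}}$ by a second Dudley comparison. A union bound across levels then dominates $\sup_f\abs{Z(f)-Z(f_\ast)}$ by $\sum_k t_k$, whose deterministic part reproduces $4\sqrt{2}J_{\mathcal{F}}$ as in the previous step and whose stochastic part is bounded by a universal constant times $J_{\mathcal{F}}x$. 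The main obstacle is purely a constant chase: matching the explicit tail prefactor $196$ requires a careful calibration of the weights $a_k$ against the dyadic scale $\varepsilon_k=2^{-k}D$, but once the weights are tuned the argument is mechanical.
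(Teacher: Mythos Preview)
Your proposal is correct and matches the paper's approach exactly: the paper does not give a detailed proof but simply refers to ``standard sub-Gaussian calculus'' in Chapter~2.3 of \citet{gineMathematicalFoundationsInfiniteDimensional2015} (and Lemma~1 of \citet{nicklNonparametricStatisticalInference2020}), which is precisely the Dudley chaining argument you sketch. One small point worth making explicit is that chaining bounds $\sup_f|Z(f)-Z(f_\ast)|$ rather than $\sup_f|Z(f)|$; in every application in the paper the process is linear and vanishes at an element of the index set, so this distinction is harmless.
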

\begin{proof}[Proof of Lemma \ref{lem:SubGaussianBounds}]~
\begin{enumerate}[(a)]
\item Fix any $g\in\mathcal{F}$. Since $g$ is supported on $\Xi$, by Theorem \ref{thm:SubgaussianConcentration} \eqref{num:SubgaussianThm2} and linearity of $\gamma\mapsto \mathcal{G}_\lambda(\gamma)$, $\sqrt{\lambda}(\mathcal{G}_{\lambda}(g)-\EV[f_0]{\mathcal{G}_\lambda(g)})$ has sub-Gaussian increments with respect to $\norm{}_{L^1(\Xi)}$. Moreover, $g$ is supported on $\Xi$ and thus globally Lipschitz-continuous. This allows us to apply Lemma \ref{lem:UniformSubgaussianConcentration} and Theorem \ref{thm:SubgaussianConcentration} \eqref{num:SubgaussianThm3} to obtain
    \begin{equation} 
        \EV[f_0]{\sup_{g\in\mathcal{F}}\abs{\mathcal{G}_{\lambda}(g)}} \leq \EV[f_0]{\sup_{g\in\mathcal{F}}\abs{\mathcal{G}_{\lambda}(g)-\EV[f_0]{\mathcal{G}_{\lambda}(g)}}}+\sup_{g\in\mathcal{F}}\abs{\EV[f_0]{\mathcal{G}_{\lambda}(g)}}\lesssim \lambda^{-1/2}J_{\mathcal{F}} + \lambda^{-1}\sup_{g\in\mathcal{F}}\norm{g}_{C^1(\Xi)}.\label{eq:Maximal_aux1}
    \end{equation}
    We proceed by bounding these two summands. We have $\gamma\in H^{\rho'}(\Xi)$ for $3/2<\rho'<3/2+\varepsilon$ for any small enough $\varepsilon>0$. Using the Sobolev embedding and \eqref{eq:projected_gamma_norm} with $\rho'$ instead of $\beta$ for $\gamma\in H^{\rho'}(\Xi)$ we find $\norm{P_M\gamma_0}_{C^1(\Xi)}\lesssim 2^{M(\rho'-1)}$. This means for $g\in\mathcal{F}$
    \begin{align}
		\norm{g}_{C^1(\Xi)} 
            &\leq \norm{f-f_0}_{C^1(\Xi)}\norm{P_M\gamma_0}_{C^1(\Xi)}\lesssim 2^{M(\rho'-1)}\norm{f-f_0}_{H^{\rho'}(\Xi)}\nonumber\\
            &\leq 2^{M(\rho'-1)} \norm{P_M(f-f_0)}_{H^{\rho'}(\Xi)} + 2^{M(\rho'-1)}\norm{f_0-P_Mf_0}_{H^{\rho'}(\Xi)}\nonumber \\
            &\leq 2^{M(2\rho'-1)} \norm{f-f_0}_{L^2(\Xi)} + 2^{-M(\beta-2\rho'+1)}\nonumber \\
            &\lesssim 2^{M(2\rho'-1)}\varepsilon_\lambda + 2^{-M(\beta-2\rho'+1)} \lesssim \lambda^{-(\beta_0-2\rho'+1)/(2\beta_0+1)},\label{eq:lipschitz_g}
	\end{align}
    such that $\mathcal{F}$ is contained in a $C^1(\Xi)$-ball of radius $\Gamma\coloneq C\lambda^{-(\beta_0-2\rho'+1)/(2\beta_0+1)}$ for some constant $C>0$. We obtain from Corollary 4.3.38 of \citet{gineMathematicalFoundationsInfiniteDimensional2015} for any $\epsilon>0$ the metric entropy estimate
	\begin{align*}
		N(\mathcal{F}, \norm{}_{L^1(\Xi)}, \epsilon) \lesssim N(B(0,\norm{}_{C^1(\Xi)},\Gamma),\norm{}_{L^1(\Xi)},\epsilon)\lesssim \exp{(\Gamma/\epsilon)}.
	\end{align*}
    Moreover, we find that $\mathcal{F}$ has $\norm{}_{L^1(\Xi)}$-diameter 
	\begin{align*}
		\sup_{g,\bar{g}\in\mathcal{F}}\norm{g-\bar{g}}_{L^1(\Xi)}&\leq 2\sup_{g\in\mathcal{F}}\norm{g}_{L^1(\Xi)}\lesssim \sup_{f\in D_{\lambda}}\norm{f-f_0}_{L^2(\Xi)}\norm{P_M\gamma_0}_{L^\infty(\Xi)}
        \leq 2^{M(\rho'-1)}\epsilon_\lambda,
	\end{align*}
	so the Dudley entropy integral in Lemma \ref{lem:UniformSubgaussianConcentration} satisfies 
	\begin{align}
    \begin{split}
		J_{\mathcal{F}} &\lesssim \int_0^{\Gamma 2^{-M\rho'}}\sqrt{\log (2) + \Gamma/\epsilon}\,\D\epsilon \lesssim \Gamma 2^{-M\rho'/2}.
    \end{split}\label{eq:Dudley_Integral}
	\end{align} 
	Combining this bound with \eqref{eq:Maximal_aux1} and \eqref{eq:lipschitz_g} gives 
	\begin{align*}
		\EV[f_0]{\sup_{g\in\mathcal{F}}\abs{\mathcal{G}_{\lambda}(g)}} \lesssim \lambda^{-1/2}\Gamma 2^{-M\rho'/2} + \lambda^{-1}\Gamma = \lambda^{-1/2}(\lambda^{-\frac{\beta_0+1-3\rho'/2}{2\beta_0+1}} + \lambda^{-\frac{2\beta_0+3/2-2\rho'}{2\beta_0+1}}).
	\end{align*}
	The claimed maximal inequality follows for $\beta_0>3/2$ from taking $\varepsilon$ sufficiently small.
\item Fix any $\gamma\in B(0,\norm{}_{H^\rho(\Xi)},1)$. We have $\EV{e^{u\mathbb{W}(\gamma)}} = e^{u^2\norm{\gamma_0}_0^2/2}\leq e^{u^2c\norm{\gamma}_{L^\infty(\Xi)}^2/2}$ for any $u\in\mathbb{R}$ and a constant $c>0$ depending on $p_{f_0}$. In particular, $\mathbb{W}$ is sub-Gaussian on $\bar{\mathcal{F}}_{J}=\Set{h-P_Jh\,\colon\, h\in H^{\rho}(\Xi),\norm{h}_{H^{\rho}(\Xi)}\leq 1}$, $J\geq 1$, relative to $\norm{}_{L^{\infty}(\Xi)}$, and by the Jackson estimate \eqref{eq:Bayes_SupNorm_Approximation} and the Sobolev embedding $\bar{\mathcal{F}}_J$ has $\norm{}_{L^{\infty}(\Xi)}$-diameter 
	\begin{align}
		\sup_{\gamma,\bar\gamma\in B(0,\norm{}_{H^{\rho}(\Xi)},1)}\norm{\gamma-\bar\gamma-P_J(\gamma-\bar\gamma)}_{L^{\infty}(\Xi)}\lesssim 2^{-J}\norm{\gamma-\bar\gamma}_{C^1(\Xi)}\lesssim  2^{-J}.\label{eq:diameter_bound}
	\end{align}
	By another application of the Sobolev embedding and $\rho>3/2$, $\bar{\mathcal{F}}_{J}$ is contained in a $C^{1}(\Xi)$-ball of radius $C$ for some $C>0$. Consequently, using again Corollary 4.3.38 of \citet{gineMathematicalFoundationsInfiniteDimensional2015} we get
    \begin{align}
        N(\bar{\mathcal{F}}_J, \norm{}_{L^\infty(\Xi)}, \epsilon) \lesssim N(B(0,\norm{}_{C^{1}(\Xi)},C), \norm{}_{L^\infty(\Xi)}, \epsilon) \lesssim \exp{(C/\varepsilon)}.\label{eq:sub_entropy}
    \end{align}
    The claim follows from Lemma \ref{lem:UniformSubgaussianConcentration}, noting similar to \eqref{eq:Dudley_Integral} that the Dudley entropy integral satisfies $J_{\bar{\mathcal{F}}_{J}}\lesssim 2^{-J/2}$.
\item We use the notation from Lemma \ref{lem:ExpansionLaplaceTransform} and write $\sigma_1=d_1(\gamma,\bar \gamma)$, $\sigma_2=d_2(\gamma,\bar \gamma)$. Lemma \ref{lem:ExpansionLaplaceTransform} \eqref{num:Laplace_Metrics} and \eqref{eq:D_negligible} show for a random variable $0<C_\lambda<\infty$ satisfying $C_\lambda\rightarrow 1$ in $\prob[f_0]{}$-probability that
	\begin{align*}
		\EV[\lambda][D_\lambda]{e^{u (G_\lambda(\gamma)-G_\lambda(\bar\gamma))}\given X^\lambda} \leq C_\lambda e^{u^2\sigma_1^2/2+\abs{u}\sigma_2}.
	\end{align*}
	By the exponential Markov inequality, this gives for any $x>0$, $u\in\mathbb{R}$ the deviation inequality
	\begin{align*}
		\Pi^{D_\lambda}_{\lambda}\left(\abs{G_\lambda(\gamma)-G_\lambda(\bar\gamma)}\geq x\,\Big\vert\, X^\lambda\right) \leq 2C_{\lambda}e^{u^2\sigma^2_1/2+\abs{u}\sigma_2-ux}.
	\end{align*}
	Applying this to $u=x/(\sigma_1^2+\sigma_2x)$ implies 
	\begin{align*}
		\Pi^{D_\lambda}_{\lambda}\left(\abs{G_\lambda(\gamma)-G_\lambda(\bar\gamma)}\geq x\,\Big\vert\, X^\lambda\right) \leq 2eC_{\lambda}e^{-x^2/(2(\sigma_1^2+\sigma_2x))}, 
	\end{align*}
	which in turn gives for $z>0$
	\begin{align}
		\Pi^{D_\lambda}_{\lambda}\left(\abs{G_\lambda(\gamma)-G_\lambda(\bar\gamma)}\geq \sigma_1\sqrt{2z}+2\sigma_2z\,\Big\vert\, X^\lambda\right) \leq 2eC_{\lambda}e^{-z}.\label{eq:G_subgaussian}
	\end{align}	
	Up to modifying the metrics $d_1$, $d_2$ to absorb the factor $eC_\lambda$ into the upper bound, this means that the process $G_\lambda$ has (conditional on $X^\lambda$) mixed tails with respect to $d_1$, $d_2$ in the sense of Equation (3.8) of \citet{dirksen2015a}. As discussed before \eqref{eq:gamma0_bound}, $1/p_{f_0}\in C^1(\Xi)$, such that $1/p_{f_0}$ is a bounded multiplication operator on $H^1(\Xi)$ by Equation (1.87) of \citet{Yagi2010}. With this, if $\norm{\gamma}_{H^\rho(\Xi)}\leq 1$, then we have by the Sobolev embedding 
	\begin{align}
        d_1(\gamma,0) = 
		&\norm{P_M(\gamma-P_J\gamma)_0}_{L^\infty(\Xi)}
            \lesssim \norm{P_M(\gamma-P_J\gamma)_0}_{H^{1}(\Xi)}   leq \norm{(\gamma-P_J\gamma)_0}_{H^{1}(\Xi)}\nonumber\\
            &\lesssim \norm{\gamma-P_J\gamma}_{H^{1}(\Xi)} \lesssim 2^{-J(\rho-1)}\label{eq:d1}
	\end{align}
    and $d_2(\gamma,0)\lesssim \sqrt{\lambda}\epsilon_\lambda 2^{-M\rho}\norm{\gamma}_{H^\rho(\Xi)}\lesssim \lambda^{-(\rho-1/2)/(2\beta_0+1)}$. Together with the tail bound in \eqref{eq:G_subgaussian}, Lemma A.2 of \citet{dirksen2015a} implies  the moment bound	
	\begin{align*}
		\sup_{\gamma\in\bar{\mathcal{F}}_J}\EV[\lambda][D_\lambda]{\abs{G_\lambda(\gamma)}\given X^\lambda} \lesssim \sup_{\gamma\in\bar{\mathcal{F}}_J}(d_1(\gamma,0)+d_2(\gamma,0)) \lesssim 2^{-J(\rho-1)}+\lambda^{-(\rho-1/2)/(2\beta_0+1)}.
	\end{align*}
    We then obtain the claimed maximal inequality from Theorem 3.5 of \citet{dirksen2015a} by proving the following two entropy integral bounds:
	\begin{align}
    \begin{split}
		\int_0^{\infty}\sqrt{\log (2N(\bar{\mathcal{F}}_J, d_1, \epsilon))}\,\D\epsilon  &\lesssim 2^{-J(\rho-1)},\\
        \int_0^{\infty}\log (2N(\bar{\mathcal{F}}_J, d_2, \epsilon))\,\D\epsilon
			&\lesssim \lambda^{-(\rho-3/2)/(2\beta_0+1)}.
            \end{split}
            \label{eq:metric_entropy_integral_bounds}
	\end{align}	
	For this, let $\tilde{\mathcal{F}}_J=\{P_M\gamma_0:\gamma\in\bar{\mathcal{F}}_J\}$. We have $N(\bar{\mathcal{F}}_J,d_1,\varepsilon)=N(\tilde{\mathcal{F}}_J,\bar{C}\norm{}_{L^\infty(\Xi)},\varepsilon)$. Recalling \eqref{eq:d1}, we have $\norm{P_M(\gamma-P_J\gamma)_0}_{H^1(\Xi)}\lesssim 2^{-J(\rho-1)}$ if $\norm{\gamma}_{H^\rho(\Xi)}\leq 1$, such that $\tilde{\mathcal{F}}_J\subset B(0,\norm{}_{H^1(\Xi)},c2^{-J(\rho-1})$ for a constant $c=c(p_{f_0},\Xi)$ depending only on $p_{f_0}$ and $\Xi$. From Corollary 4.3.38 of \citet{gineMathematicalFoundationsInfiniteDimensional2015} we get $N(\bar{\mathcal{F}}_J,d_1,\varepsilon)\lesssim \exp(c2^{-J(\rho-1)}/\varepsilon)$, and \eqref{eq:diameter_bound} gives $    \sup_{\gamma,\bar\gamma\in\bar{\mathcal{F}}_J}d_1(\gamma,\bar\gamma)\lesssim 2^{-J(\rho-1)}$. The first integral bound in \eqref{eq:metric_entropy_integral_bounds} follows as in \eqref{num:Maximal2}. Regarding the second bound, note $\bar{\mathcal{F}}_J\subset B(0,\norm{}_{H^\rho(\Xi)},1)$ and $d_2(\gamma,\bar\gamma)\leq \sqrt{\lambda}\epsilon_\lambda\norm{\gamma-\bar\gamma}_{L^2(\Xi)}$ we have by Corollary 4.3.38 of \citet{gineMathematicalFoundationsInfiniteDimensional2015}
	\begin{align*}
		N(\bar{\mathcal{F}}_J, d_2, \epsilon) 
			&\leq N(\bar{\mathcal{F}}_J,\norm{}_{L^2(\Xi)},\varepsilon/(\sqrt{\lambda}\varepsilon_\lambda)) \lesssim \exp{((C\sqrt{\lambda}\varepsilon_\lambda/\varepsilon)^{1/\rho})}
	\end{align*}
    for a constant $C>0$. The second integral bound is obtained from
	\begin{align*}
		\int_0^{\infty}\log (2N(\bar{\mathcal{F}}_J, d_2, \epsilon))\,\D\epsilon 
			&\lesssim \int_0^{\bar c\sqrt{\lambda}\epsilon_\lambda 2^{-M\rho}}(\log (2) + \exp{((C\sqrt{\lambda}\varepsilon_\lambda/\varepsilon)^{1/\rho})}\,\D\epsilon\\
            &\lesssim  \sqrt{\lambda}\epsilon_\lambda 2^{-M\rho} + (\sqrt{\lambda}\varepsilon_\lambda)^{1/\rho}(\sqrt{\lambda}\epsilon_\lambda 2^{-M\rho})^{1-1/\rho}\\
            &\lesssim \lambda^{-(\rho-3/2)/(2\beta_0+1)}.
	\end{align*}
\item Let $\langle h\rangle_t=\lambda^{-1}\int_0^t\norm{h(X_s)}^2_{L^2(\Lambda)}\D s$ denote the quadratic variation of the continuous martingale $W_\lambda(h)\int_0^t h(X_s)\D W_s$, $t\geq 0$, for a square integrable function $h$. Then $\langle h\rangle_T\leq T\norm{h}_{\infty}^2$ and the Bernstein inequality for continuous martingales (e.g., Page 153 of \citet{RevuzYor1999}) implies for $x\geq 0$ 
	\begin{align*}
		\prob[f_0]{\abs{W_{\lambda}(\gamma)}\geq x} 
			&= \prob[f_0]{\abs{W_{\lambda}(\gamma)}\geq x, \langle \gamma\rangle_T\leq T\norm{\gamma}_{L^\infty(\Xi)}} \leq 2\exp\left(-\frac{x^2}{2T\norm{\gamma}_{L^\infty(\Xi)}^2}\right)
	\end{align*}
    for any $\gamma\in B(0,\norm{}_{H^\rho(\Xi)},1)$. In particular, $W_{\lambda}(\gamma)$ is sub-Gaussian with respect to the distance $d=T\norm{}_{\infty}^2$. Up to a constant this is the distance $d_1$ from \eqref{eq:Metrics_d1d2}, and the claim follows from Lemma \ref{lem:UniformSubgaussianConcentration} and the bound on the Dudley entropy integral relative to $d_1$ in \eqref{eq:metric_entropy_integral_bounds}.
\end{enumerate}
\end{proof}

\section{Probabilistic results}\label{sec:Appendix_Probability}

This section \ref{sec:Appendix_Probability} collects the general properties of the SPDE \eqref{eq:SPDE}, the required upper bounds for the derivative of the density $\partial_x p_{f_0,\lambda}(t,y,x)$ (Proposition \ref{prop:DensityBoundedDerivative}) and the convergence of $X_t(\lambda\bar{y})\xrightarrow{d}Z_t^{\bar{y}}(0)$ as $\lambda\to\infty$ in Proposition \ref{prop:Convergencetoglobal}.

\subsection{Properties of the semi-linear SPDE}\label{subsec:ProbabilisticProperties}

    As shown in Subsection \ref{subsec:Rescaling}, the process $Y_t(y)\coloneq X_t(\lambda^{-1}y)$ for $0\le t\le T$ and $y\in\Lambda$ satisfies the SPDE
    \begin{equation}
        \D Y_t(y) = \nu\Updelta Y_t(y)\,\D t+ f(Y_t(y))\,\D t + \nu^{1/4}\,\D W_t(y),\quad Y_0=\chi,\label{eq:Y_appendix1}
    \end{equation}
    on $\bar{\Lambda}$ with diffusivity level $\nu = \lambda^{-2}$ and noise level $\sigma = \lambda^{-1/2}=\nu^{1/4}$. The equivalence shows that the probabilistic results of \citet{gaudlitzNonparametricEstimationReaction2023} for the $(Y_t)_{t\in[0,T]}$ from \eqref{eq:Y_appendix1} also apply to $(X_t)_{t\in[0,T]}$ from \eqref{eq:SPDE} for any $\lambda\ge 1$. In the following, we collect the key analytical properties of the process $(X_t)_{t\in[0,T]}$.

		\begin{lemma}[Lemma E.1 of \citet{gaudlitzNonparametricEstimationReaction2023}]\label{lem:aux_WellPosedness}
            Let $\lambda\ge 1$ and $X_0\in C(\Lambda)$ be any deterministic initial condition for the SPDE \eqref{eq:SPDE}.
            \begin{enumerate}[(a)]
			\item\label{num:aux_wellposed_a} The semi-linear SPDE \eqref{eq:SPDE} has a unique mild solution $(X_t(y) \,\vert\, 0\le t\le T, y\in\Lambda)$ given by
            \begin{equation}
            \begin{split}
                X_t(y) =\int_\Lambda G_t(y,\eta)X_0(\eta)\,\D\eta+  \int_0^t \int_\Lambda G_{t-s}^\lambda (y,\eta)\mathcal{W}(\D \eta,\D s)+\int_0^t\int_\Lambda G_{t-s}^\lambda (y,\eta)f(X_s(\eta))\,\D \eta\D s.
                \end{split}
                \label{eq:RandomField1}
            \end{equation}
			\item\label{num:aux_wellposed_b} The Malliavin derivative $\mathcal{D}X_t(y)\in L^2([0,T],L^2(\Lambda))$ of $X_t(y)$ from \eqref{eq:RandomField1} exists for all times $0< t\le T$, locations $y\in\Lambda$, and satisfies 
            \begin{equation*}
                \mathcal{D}_{\tau,z}X_t(y) = G_{t-\tau}^\lambda(y,z) + \int_0^t\int_\Lambda G_{t-s}^\lambda(y,\eta)f'(X_s(\eta))\mathcal{D}_{\tau,z}X_s(\eta)\,\D \eta\D s
            \end{equation*}
            for all $0\le\tau<t$, and $z\in\Lambda$.
			\item\label{num:aux_wellposed_c} The process $(X_t(y) \,\vert\, 0\le t\le T, y\in\Lambda)$ from \eqref{eq:RandomField1} is also an analytically weak solution to \eqref{eq:SPDE} in the sense that
			\begin{equation*}
				 \iprod*{X_t}{\phi}_{L^2(\Lambda)}-\iprod*{X_0}{\phi}_{L^2(\Lambda)} = \int_0^t \left(\iprod*{X_s}{ \Updelta \phi}_{L^2(\Lambda)} + \iprod*{F(X_s)}{\phi}_{L^2(\Lambda)}\right)\,\D s+\int_0^t\iprod*{\phi}{\D W_s}_{L^2(\Lambda)}
			\end{equation*}
			for all $\phi\in \operatorname{dom}(\Updelta)$ and $0\le t\le T$.
			\end{enumerate}
		\end{lemma}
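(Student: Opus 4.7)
The three statements are standard consequences of the classical Walsh/Da~Prato--Zabczyk theory once one records the $1$-dimensional heat-kernel estimates
\begin{equation*}
    \int_0^t \int_\Lambda G_{t-s}^\lambda(y,\eta)^2 \,\D\eta\,\D s \lesssim \sqrt{t}, \qquad \int_0^t \int_\Lambda \abs{G_{t-s}^\lambda(y,\eta)}\,\D\eta\,\D s \lesssim t,
\end{equation*}
uniformly in $y\in\Lambda$ (the finite second-moment bound being precisely the Walsh condition valid in spatial dimension $1$). Since the lemma is taken from \citet{gaudlitzNonparametricEstimationReaction2023}, the natural strategy is simply to reproduce the three classical arguments in the Neumann setting and point to those bounds.

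For part \eqref{num:aux_wellposed_a} I would run a Picard iteration on the right-hand side of \eqref{eq:RandomField1}: set $X^{(0)}_t(y)$ equal to the deterministic plus stochastic convolution terms, which is well-defined in $L^2(\Omega)$ by the Walsh condition, and define $X^{(n+1)}$ by plugging $X^{(n)}$ into the drift integral. Using Lipschitz-continuity of $f$ and the $L^1$ bound on $G^\lambda$, a Gronwall argument shows $\sup_{y,t}\EV{|X^{(n+1)}_t(y)-X^{(n)}_t(y)|^2}\to 0$ geometrically, and the limit satisfies \eqref{eq:RandomField1}. Uniqueness follows similarly: if $X,\tilde X$ both solve \eqref{eq:RandomField1} with the same initial data, the $L^2$-difference satisfies $\phi(t,y)\le \Lip{f}\int_0^t\int_\Lambda \abs{G_{t-s}^\lambda(y,\eta)}\phi(s,\eta)\D\eta\D s$, and Gronwall forces $\phi\equiv 0$.

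For part \eqref{num:aux_wellposed_b} I would apply the Malliavin derivative $\mathcal{D}_{\tau,z}$ to both sides of \eqref{eq:RandomField1}. The deterministic term vanishes; for the stochastic convolution, Proposition 1.3.8 of \citet{nualartMalliavinCalculusRelated2006} (or its Walsh-isonormal counterpart) gives $\mathcal{D}_{\tau,z}\int_0^t\int_\Lambda G_{t-s}^\lambda(y,\eta)\,\mathcal{W}(\D\eta,\D s) = G_{t-\tau}^\lambda(y,z)\indicator_{\{\tau\le t\}}$; and the Malliavin chain rule applied to $f(X_s(\eta))$ (legitimate since $f\in C^1$ and the Picard iterates can be shown to lie in $\mathbb{D}^{1,2}$) produces the linear random integral equation for $\mathcal{D}_{\tau,z}X_t(y)$. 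The existence of the derivative in $L^2([0,T],L^2(\Lambda))$ and the closedness of $\mathcal{D}$ follow by a parallel Picard iteration on the derivative equation combined with the closedness lemma (Lemma 1.2.3 of \citet{nualartMalliavinCalculusRelated2006}), using the $L^2$-bound on $G^\lambda$ once more.

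For part \eqref{num:aux_wellposed_c} I would test \eqref{eq:RandomField1} against $\phi\in\operatorname{dom}(\Updelta)$ and use $P_t^\lambda\phi-\phi=\int_0^t P_s^\lambda \Updelta\phi\,\D s$ together with the self-adjointness of the Neumann semigroup $P_t^\lambda$ on $L^2(\Lambda)$. Applying stochastic Fubini to the noise term (justified because $\phi$ and $\Updelta\phi$ are bounded), and ordinary Fubini to the drift term, the identity $\iprod{X_t}{\phi}-\iprod{X_0}{\phi}=\int_0^t(\iprod{X_s}{\Updelta\phi}+\iprod{F(X_s)}{\phi})\D s+\int_0^t\iprod{\phi}{\D W_s}$ follows by telescoping. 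The main technical obstacle in the whole lemma is the careful justification of stochastic Fubini and the chain rule on the Picard iterates (to ensure that they actually lie in $\mathbb{D}^{1,2}$ so that closedness may be invoked); once those are in place, the three claims drop out from \eqref{eq:RandomField1} essentially mechanically.
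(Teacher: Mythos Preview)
Your outline is correct and follows the standard Walsh/Da~Prato--Zabczyk route. Note, however, that the paper does not give its own proof of this lemma: it is simply quoted from \citet{gaudlitzNonparametricEstimationReaction2023} (Lemma~E.1 there) and used as a black box, so there is no in-paper argument to compare against. Your sketch is precisely the classical argument one would expect to find in that reference.
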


        \begin{lemma}[Lemma C.3 of \citet{gaudlitzNonparametricEstimationReaction2023}]\label{lem:UniformBoudnednessSolution}
		Let $\lambda\ge 1$, $p\ge 1$ and $X_0\in C(\Lambda)$ be any deterministic initial condition. Then there exists a constant $0<C<\infty$, depending only on $p$, $\norm{X_0}_{L^\infty(\Lambda)}$, $T$, $f(0)$ and $\Lip{f}$, such that for all domain sizes $\lambda\ge 1$, time points $0\le t\le T$ and locations $y\in\Lambda$ we have the bound
		\begin{equation*}
			\EV*[f]{\abs*{X_{t}(y)}^p}\le C<\infty.
		\end{equation*}
		\end{lemma}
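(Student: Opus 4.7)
The plan is to use the mild-solution representation \eqref{eq:RandomField1} of $X_t(y)$ as the sum of three pieces --- the deterministic semi-group contribution $J^{(0)}_t(y) = \int_\Lambda G_t^\lambda(y,\eta)X_0(\eta)\,\D\eta$, the stochastic convolution $I_t(y)$, and the nonlinear drift integral $J^{(f)}_t(y)$ --- and to close an $L^p$-estimate via Gronwall's inequality. Two quantitative facts about the Neumann heat kernel drive the argument: $G_s^\lambda(y,\cdot)$ is a probability density on $\Lambda$, so $\int_\Lambda G_s^\lambda(y,\eta)\,\D\eta = 1$, and $\norm{G_s^\lambda(y,\cdot)}_{L^2(\Lambda)}^2 \lesssim s^{-1/2}$ uniformly in $\lambda\ge 1$ and $y\in\Lambda$. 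Both follow from the method-of-images representation of $G^\lambda$ in terms of the free heat kernel on $\mathbb{R}$, the second via the identity $\norm{G_s^\lambda(y,\cdot)}_{L^2(\Lambda)}^2 = G_{2s}^\lambda(y,y)$ and the uniformity of the diagonal estimate for $\lambda\ge 1$.

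First I would bound each summand. The initial-condition term satisfies $\abs{J^{(0)}_t(y)}\le \norm{X_0}_{L^\infty(\Lambda)}$ trivially. The stochastic convolution $I_t(y)$ is a Wiener integral, hence centred Gaussian with variance
\begin{equation*}
\EV[f]{I_t(y)^2} = \int_0^t \norm{G_{t-s}^\lambda(y,\cdot)}_{L^2(\Lambda)}^2\,\D s \lesssim \sqrt{T}
\end{equation*}
uniformly in $y\in\Lambda$ and $\lambda\ge 1$, so Gaussianity gives $\EV[f]{\abs{I_t(y)}^p}\lesssim_{p,T} 1$. For the drift term, the Lipschitz estimate $\abs{f(x)}\le \abs{f(0)} + \Lip{f}\abs{x}$ combined with Jensen's inequality applied to the probability measure $G_{t-s}^\lambda(y,\eta)\,\D\eta$ yields
\begin{equation*}
    \abs{J^{(f)}_t(y)}^p \lesssim_{p,T,f(0)} 1 + \Lip{f}^{p}\int_0^t\int_\Lambda G_{t-s}^\lambda(y,\eta)\abs{X_s(\eta)}^p\,\D\eta\,\D s.
\end{equation*}
Setting $\phi_\lambda(t) \coloneqq \sup_{y\in\Lambda}\EV[f]{\abs{X_t(y)}^p}$ and combining the three bounds through $(a+b+c)^p\le 3^{p-1}(a^p+b^p+c^p)$ yields an integral inequality $\phi_\lambda(t)\le C_1 + C_2\int_0^t\phi_\lambda(s)\,\D s$ whose constants depend only on $p$, $\norm{X_0}_{L^\infty(\Lambda)}$, $T$, $f(0)$ and $\Lip{f}$. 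Gronwall's lemma then closes the argument via $\phi_\lambda(t)\le C_1 e^{C_2 T}$.

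The main technical obstacle is ensuring that $\phi_\lambda(t)<\infty$ to begin with, as otherwise the Gronwall step is vacuous. The cleanest route is a truncation argument: replace $f$ by $f_R(x) = f(((-R)\vee x)\wedge R)$ to obtain a bounded Lipschitz nonlinearity, for which the mild equation admits a solution $X^R$ with trivially finite $L^p$-moments (bounded drift and Gaussian stochastic convolution). The estimate above applies to $X^R$ with a constant depending on $f_R$ only through $\abs{f_R(0)}=\abs{f(0)}$ and $\Lip{f_R}\le\Lip{f}$, hence independently of $R$. A standard Gronwall-based stability argument using the Lipschitz continuity of $f$ then gives $X^R\to X$ in probability as $R\to\infty$, and Fatou's lemma transfers the uniform bound to $X$. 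Alternatively, the moment bound can be extracted directly from the Picard iteration used to establish well-posedness in Lemma \ref{lem:aux_WellPosedness}, where the same integral inequality controls the iterates in a weighted $L^p$-space.
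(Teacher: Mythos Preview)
The paper does not give its own proof of this lemma; it is quoted verbatim from \citet{gaudlitzNonparametricEstimationReaction2023}, Lemma C.3, and no argument is reproduced here, so there is nothing in the present paper to compare your proposal against directly.

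That said, your proof is the standard one and is correct. The three-term decomposition via the mild formula \eqref{eq:RandomField1}, the $\lambda$-uniform bound $\int_0^t\norm{G_{t-s}^\lambda(y,\cdot)}_{L^2(\Lambda)}^2\,\D s\lesssim t^{1/2}$ (which the paper also records independently as Lemma \ref{lem:aux_GreenfunctionsGrowth}), Jensen's inequality against the probability measure $G_{t-s}^\lambda(y,\eta)\,\D\eta$, and Gronwall on $\phi_\lambda(t)=\sup_{y\in\Lambda}\EV[f]{\abs{X_t(y)}^p}$ all work exactly as you describe, and the constants depend on $f$ only through $f(0)$ and $\Lip{f}$ as required. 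Your attention to the a priori finiteness of $\phi_\lambda$ via truncation (or, equivalently, via the Picard iterates underlying Lemma \ref{lem:aux_WellPosedness}) is the right way to make the Gronwall step rigorous.
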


        \begin{proposition}[Proposition 4.1 of \citet{gaudlitzNonparametricEstimationReaction2023}]\label{prop:DensityBounds}
			There exist constants $0<c_1\le C_1<\infty$, $0<c_2\le C_2<\infty$, depending only on $\Lip{f}$ and $T$, such that for all domain sizes $\lambda\ge 1$, locations $y\in\Lambda$ and time points $0<t\le T$ the Lebesgue-density $p_{f,\lambda}(t,y,\MTemptyplaceholder)$ of $X_{t}(y)$ started at the deterministic initial condition $X_0\in C(\Lambda)$ exists and satisfies the bound
			\begin{equation*}
				p_{f,\lambda}(t,y,x-\EV[f]{X_{t}(y)})\le C_1 t^{-1/4}\exp\left(-\frac{x^2}{2C_2t^{1/2}}\right),\quad x\in\mathbb{R}.
			\end{equation*}
			Moreover, there exists $0<t_1\le T$ such that
			\begin{equation*}
				p_{f,\lambda}(t,y,x-\EV[f]{X_{t}(y)})\ge c_1 t^{-1/4}\exp\left(-\frac{x^2}{2c_2t^{1/2}}\right),\quad x\in\mathbb{R},\quad 0<t\le t_1.
			\end{equation*}
		\end{proposition}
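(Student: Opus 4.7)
The plan is to combine a Gaussian decomposition of $X_t(y)$ with Malliavin calculus to obtain the two-sided Gaussian heat-kernel bounds uniformly in the domain size $\lambda\ge 1$. Decompose
\[
X_t(y) = m_t(y) + V_t(y) + D_t(y),
\]
with deterministic initial-data contribution $m_t(y)=\int_\Lambda G^\lambda_t(y,\eta)X_0(\eta)\,\D\eta$, centred Gaussian stochastic convolution $V_t(y)=\int_0^t\int_\Lambda G^\lambda_{t-s}(y,\eta)\,\mathcal{W}(\D\eta,\D s)$, and drift $D_t(y)=\int_0^t\int_\Lambda G^\lambda_{t-s}(y,\eta)f(X_s(\eta))\,\D\eta\D s$. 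Using the method-of-images representation of the Neumann kernel $G^\lambda$ on $\Lambda=\lambda\bar\Lambda$, a direct computation gives
\[
c\,t^{1/2}\le \sigma_\lambda^2(t,y) \coloneq \int_0^t\int_\Lambda G^\lambda_{t-s}(y,\eta)^2\,\D\eta\D s \le C\,t^{1/2}
\]
uniformly in $0<t\le T$, $y\in\Lambda$ and $\lambda\ge 1$: the reflection series contains only non-negative terms, so the free-space image alone delivers the lower bound while Gaussian tails keep the full sum bounded. This is the source of both the $t^{-1/4}$ prefactor and the variance scale $t^{1/2}$ appearing in the target bound.

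For the upper bound I would apply the Malliavin integration-by-parts formula (Proposition 2.1.4 of \citet{nualartMalliavinCalculusRelated2006})
\[
p_{f,\lambda}(t,y,x) = \EV[f]{\indicator_{\{X_t(y)>x\}}\,H_t(y)}, \qquad H_t(y) \coloneq \delta\bigl(\gamma_t(y)^{-1}\mathcal{D} X_t(y)\bigr),
\]
with Malliavin covariance $\gamma_t(y)=\norm{\mathcal{D} X_t(y)}_{L^2([0,T]\times\Lambda)}^2$. The linear SPDE \eqref{eq:malliavin_derivative} for $\mathcal{D} X_t(y)$ together with a Gronwall argument absorbs the $f'$-perturbation and yields the non-degeneracy $\gamma_t(y)\gtrsim \sigma_\lambda^2(t,y)\gtrsim t^{1/2}$; standard Malliavin $L^p$-estimates (using $\Lip{f}<\infty$ and the moment bound of Lemma \ref{lem:UniformBoudnednessSolution}) then give $\norm{H_t(y)}_{L^p(\prob[f]{})}\lesssim t^{-1/4}$, uniformly in $\lambda$. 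Combined with the sub-Gaussian deviation bound
\[
\prob[f]{\abs{X_t(y)-\EV[f]{X_t(y)}}>r}\le 2\exp\bigl(-r^2/(2C\sigma_\lambda^2(t,y))\bigr),
\]
derived by Girsanov comparison of $X_t(y)$ with the Gaussian $V_t(y)$, Hölder's inequality applied separately to $\indicator_{\{X_t(y)>x\}}$ and $\indicator_{\{X_t(y)<x\}}$ (using whichever half-line has smaller probability) delivers the claimed Gaussian upper density bound.

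For the lower bound I would restrict to short times $0<t\le t_1$ where the drift $D_t(y)$ is dominated in all relevant Malliavin--Sobolev norms by the Gaussian fluctuation $V_t(y)$. Following the Kusuoka--Stroock/Bally--Pardoux small-time technique -- a second integration by parts combined with Gaussian small-ball estimates -- the explicit pointwise Gaussian lower bound $(2\pi\sigma_\lambda^2(t,y))^{-1/2}\exp(-\cdot^2/(2\sigma_\lambda^2(t,y)))$ for the density of $V_t(y)$ transfers to $p_{f,\lambda}(t,y,\MTemptyplaceholder)$ up to a possibly smaller constant. The main obstacle is the \emph{uniformity in $\lambda\ge 1$}: near the boundary $\partial\Lambda$ the Neumann reflections could in principle deform the Malliavin covariance, but the monotonicity of the reflection series underlying the two-sided bound on $\sigma_\lambda^2(t,y)$ propagates through the Gronwall estimate for $\gamma_t(y)$ and locks in the whole-line scaling independently of the domain size.
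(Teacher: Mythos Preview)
The paper does not prove this proposition: it is imported verbatim as Proposition 4.1 of \citet{gaudlitzNonparametricEstimationReaction2023} and simply stated in Appendix \ref{subsec:ProbabilisticProperties} among the collected probabilistic facts. There is therefore no in-paper proof to compare against.

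That said, your sketch is broadly the standard route and aligns with what the cited reference does: the two-sided control of $\sigma_\lambda^2(t,y)\sim t^{1/2}$ (essentially Lemma \ref{lem:aux_GreenfunctionsGrowth} here for the upper half) together with a Gronwall argument on the linear equation \eqref{eq:malliavin_derivative} for $\mathcal{D}X_t(y)$ yields the two-sided bound on the Malliavin covariance $\gamma_t(y)\sim t^{1/2}$, which is exactly the mechanism exploited in the present paper in Step 2 of the proof of Proposition \ref{prop:DensityBoundedDerivative}. One point where your outline is too loose is the ``Girsanov comparison'' producing the sub-Gaussian tail for $X_t(y)-\EV[f]{X_t(y)}$: Girsanov changes the law of the whole path, not of a single coordinate, and does not directly give a pointwise concentration bound with constants depending only on $\Lip{f}$ and $T$. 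The clean way to get that tail (and in fact the full density upper and lower bounds in one stroke) is the Nourdin--Viens density formula \citep{nourdinDensityFormulaConcentration2009}, which expresses $p_{f,\lambda}(t,y,\cdot)$ in terms of the conditional variance-like object $g(x)=\EV[f]{\langle \mathcal{D}X_t(y),-\mathcal{D}L^{-1}X_t(y)\rangle\mid X_t(y)=x}$; the two-sided bound $g\sim t^{1/2}$ then follows from the Malliavin derivative bounds and delivers both Gaussian inequalities simultaneously, with the lower bound restricted to short times because the comparison constants in $\gamma_t(y)\gtrsim t^{1/2}$ degrade with $t$. This is the argument in the cited paper and avoids the separate Kusuoka--Stroock machinery you invoke for the lower bound.
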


    		\begin{corollary}[Corollary 4.3 of \citet{gaudlitzNonparametricEstimationReaction2023}]\label{cor:DensityBounds}
            For any $\lambda\ge 1$ and deterministic initial condition $X_0\in C(\Lambda)$ we have the following.
			\begin{enumerate}[(a)]
				\item\label{num:cordensitybounds_a} There exists a constant $0<p_{\max}<\infty$, depending only on $\Lip{f}$ and $T$, such that for all locations $y\in\Lambda$ and time points $0<t \le T$ the Lebesgue-density $p_{f,\lambda}(t,y,\MTemptyplaceholder)$ of $X_{t}(y)$ started at $X_0$ exists and satisfies
			\begin{equation*}
				p_{f,\lambda}(t,y,x)\le p_{\max}t^{-1/4}<\infty,\quad x\in\mathbb{R}.
			\end{equation*}
			\item\label{num:cordensitybounds_b} There exist time points $0<t_0<t_1\le T$ and a constant $0<p_{\min}<\infty$, depending on $\norm{X_0}_{L^\infty(\Lambda)}$, $\Xi$, $t_0$, $t_1$, $\Lip{f}$ and $T$, such that
			\begin{equation*}
				p_{f,\lambda}(t,y,x) \ge p_{\min}>0,\quad x\in\Xi,
			\end{equation*}
			for all $y\in\Lambda$ and $t_0 \le t\le t_1$.
            \item\label{num:cordensitybounds_c} For every $\lambda\ge 1$, $0<t\le T$ and $y\in\Lambda$ the density $p_{f,\lambda}(t,y,\MTemptyplaceholder)$ has Gaussian tails.
			\end{enumerate}
		\end{corollary}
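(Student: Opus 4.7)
All three parts are intended as essentially routine consequences of Proposition \ref{prop:DensityBounds}, combined with the uniform moment bound in Lemma \ref{lem:UniformBoudnednessSolution}. The plan is first to recenter the Gaussian-type two-sided bound of Proposition \ref{prop:DensityBounds} by the (uniformly bounded) mean $m_{f,\lambda}(t,y)\coloneq \EV[f]{X_{t}(y)}$, and then to read off each of the three assertions by evaluating the Gaussian factor at an appropriate point.

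For part \eqref{num:cordensitybounds_a}, I would write, for arbitrary $x\in\mathbb{R}$, $p_{f,\lambda}(t,y,x)=p_{f,\lambda}(t,y,(x-m_{f,\lambda}(t,y))-(-m_{f,\lambda}(t,y)))$, apply the upper bound of Proposition \ref{prop:DensityBounds}, and discard the exponential factor. This immediately gives $p_{f,\lambda}(t,y,x)\le C_1 t^{-1/4}$, so setting $p_{\max}\coloneq C_1$ yields the claim with constants depending only on $\Lip{f}$ and $T$.

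For part \eqref{num:cordensitybounds_b}, I would combine the lower bound of Proposition \ref{prop:DensityBounds} (valid for $0<t\le t_1$) with the uniform moment bound from Lemma \ref{lem:UniformBoudnednessSolution} (with $p=1$), which implies $\sup_{\lambda\ge 1,0\le t\le T,y\in\Lambda}|m_{f,\lambda}(t,y)|\le C_m$ for a constant $C_m$ depending only on $\|X_0\|_{L^\infty(\Lambda)}$, $T$, $f(0)$ and $\Lip{f}$. For any $x\in\Xi$ we then have $|x-m_{f,\lambda}(t,y)|\le \sup_{x\in\Xi}|x|+C_m\eqcolon R$, so the Gaussian exponential in the lower bound is at least $\exp(-R^2/(2c_2 t_0^{1/2}))$ as soon as $t\ge t_0>0$. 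Choosing any $0<t_0<t_1$ and setting $p_{\min}\coloneq c_1 t_1^{-1/4}\exp(-R^2/(2c_2 t_0^{1/2}))$ gives the stated strictly positive lower bound on $\Xi\times[t_0,t_1]\times\Lambda$.

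For part \eqref{num:cordensitybounds_c}, the upper bound of Proposition \ref{prop:DensityBounds} gives $p_{f,\lambda}(t,y,x)\le C_1 t^{-1/4}\exp(-(x-m_{f,\lambda}(t,y))^2/(2C_2 t^{1/2}))$. Using the elementary inequality $(x-a)^2\ge x^2/2-a^2$ together with the uniform bound $|m_{f,\lambda}(t,y)|\le C_m$, the exponent is absorbed into a Gaussian factor in $x$ times a harmless constant, yielding a Gaussian tail estimate of the form $p_{f,\lambda}(t,y,x)\le C_1 t^{-1/4}\exp(C_m^2/(2C_2 t^{1/2}))\exp(-x^2/(4C_2 t^{1/2}))$. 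I do not expect any real obstacle in this proof since it is purely computational once Proposition \ref{prop:DensityBounds} and Lemma \ref{lem:UniformBoudnednessSolution} are in hand; the only care required is to track that all constants depend solely on the quantities listed in the statement and are independent of $\lambda\ge 1$.
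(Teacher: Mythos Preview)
Your proposal is correct and follows essentially the same approach as the paper: parts \eqref{num:cordensitybounds_a} and \eqref{num:cordensitybounds_b} are read off from the two-sided Gaussian bound of Proposition \ref{prop:DensityBounds}, and part \eqref{num:cordensitybounds_c} combines that upper bound with the uniform mean bound $\abs{\EV[f]{X_t(y)}}\le C$ from Lemma \ref{lem:UniformBoudnednessSolution}. If anything, you are slightly more explicit than the paper for part \eqref{num:cordensitybounds_b}, where you correctly invoke Lemma \ref{lem:UniformBoudnednessSolution} to control the centring $m_{f,\lambda}(t,y)$ on $\Xi$; the paper only cites this lemma for \eqref{num:cordensitybounds_c} and otherwise defers to the cited source.
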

        \begin{proof}
            \eqref{num:cordensitybounds_a} and \eqref{num:cordensitybounds_b} follow directly from Proposition \ref{prop:DensityBounds}. The Gaussian tails in \eqref{num:cordensitybounds_c} follows by combining Proposition \ref{prop:DensityBounds} with the bound $\abs{\EV[f]{X_t(y)}}\le \EV[f]{\abs{X_t(y)}}\le C$ from Lemma \ref{lem:UniformBoudnednessSolution}.
        \end{proof}

\subsection{Upper bounds for the derivative of densities}\label{subsec:DensityDerivatives}

        This subsection contains the upper bound for the derivative of the density $\partial_x p_{f,\lambda}(t,y,x)$, which is required for Step 1 (Lemma \ref{lem:Bayes_BoundConditionalExpectation}) in the proof of the sub-Gaussian concentration in Theorem \ref{thm:SubgaussianConcentration}. We introduce $\mathfrak{H}\coloneq L^2([0,T],L^2(\Lambda))$ and continue with bounding the growth of the Green function $G_t^\lambda$.
        \begin{lemma}\label{lem:aux_GreenfunctionsGrowth}
            For any $\lambda\ge 1$ and $0<t\le T$ we have
            \begin{equation*}
                \norm{G_{t-\MTemptyplaceholder}^\lambda(y,\MTemptyplaceholder)}_{\mathfrak{H}}^2\le \bar{C} t^{1/2}
            \end{equation*}
            for some constant $0<\bar{C}<\infty$ depending on $T$.
        \end{lemma}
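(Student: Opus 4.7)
Unpacking the norm, the quantity to bound is
\begin{equation*}
    \norm{G_{t-\MTemptyplaceholder}^\lambda(y,\MTemptyplaceholder)}_{\mathfrak{H}}^2 = \int_0^t\int_\Lambda G_{t-s}^\lambda(y,\eta)^2\,\D\eta\,\D s.
\end{equation*}
The plan is to collapse the inner integral via the semigroup property of the Neumann heat kernel and then bound the resulting on-diagonal kernel by a Gaussian-type short-time estimate. Since $G_s^\lambda$ is symmetric and satisfies the Chapman--Kolmogorov identity, one has the standard identity
\begin{equation*}
    \int_\Lambda G_s^\lambda(y,\eta)^2\,\D\eta = G_{2s}^\lambda(y,y),\qquad 0<s\le T,\ y\in\Lambda.
\end{equation*}
Thus the task reduces to an on-diagonal upper bound for the Neumann heat kernel.

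The key step is to show that there exists $0<C<\infty$ depending only on $T$ such that
\begin{equation*}
    G_{2s}^\lambda(y,y)\le C s^{-1/2},\qquad 0<s\le T,\ y\in\Lambda,\ \lambda\ge 1.
\end{equation*}
I would prove this via the method-of-images representation for the Neumann heat kernel on $\Lambda = [0,\lambda]$, namely
\begin{equation*}
    G_s^\lambda(y,\eta) = \sum_{k\in\mathbb{Z}}\bigl[g_s(y-\eta-2k\lambda) + g_s(y+\eta-2k\lambda)\bigr],
\end{equation*}
where $g_s(x) = (4\pi s)^{-1/2}\exp(-x^2/(4s))$ is the Gaussian kernel on $\mathbb{R}$. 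Evaluating on the diagonal at time $2s$ gives a sum of the form $(8\pi s)^{-1/2}\sum_{k\in\mathbb{Z}}[\exp(-k^2\lambda^2/(2s))+\exp(-(y-k\lambda)^2/(2s))]$. The $k=0$ term contributes $O(s^{-1/2})$ directly; for $k\neq 0$, using $\lambda\ge 1$ the Gaussians decay as $\exp(-k^2/(2T))$ and the tail sum is controlled by comparing with $\int_0^\infty\exp(-x^2/(2s))\,\D x=\sqrt{\pi s/2}$, so these terms contribute only an $O(1)$-in-$s$ correction. Combining, $G_{2s}^\lambda(y,y)\lesssim_T s^{-1/2}$ uniformly in $\lambda\ge 1$ and $y\in\Lambda$.

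Plugging this into the reduction gives
\begin{equation*}
    \int_0^t G_{2(t-s)}^\lambda(y,y)\,\D s\le C\int_0^t(t-s)^{-1/2}\,\D s = 2C t^{1/2},
\end{equation*}
which yields the claim with $\bar C=2C$. The only technical step is the on-diagonal bound, and while elementary, it is the lone nontrivial ingredient; everything else reduces to integration. Presumably the same bound (or a very close analogue after the rescaling discussed in Section \ref{subsec:Rescaling}) is available in \citet{gaudlitzNonparametricEstimationReaction2023}, in which case it could simply be cited.
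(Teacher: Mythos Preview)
Your proof is correct and takes a somewhat different route from the paper. The paper rescales to the unit domain via $G_t^\lambda(\lambda\bar y,\lambda\bar\eta)=\lambda^{-1}G_{\lambda^{-2}t}^1(\bar y,\bar\eta)$, applies Davies' Gaussian upper bound $G_t^1(\bar y,\bar\eta)\le c(t^{-1/2}\vee 1)\exp(-(\bar y-\bar\eta)^2/(\tilde c t))$, and then integrates the squared Gaussian in $\bar\eta$ and $s$ directly. You instead collapse the spatial integral exactly via the Chapman--Kolmogorov identity $\int_\Lambda G_s^\lambda(y,\eta)^2\,\D\eta=G_{2s}^\lambda(y,y)$ and only need an on-diagonal bound, which you supply by the method of images. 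Both arguments are short and yield the same $t^{1/2}$ rate with $T$-dependent constants; your semigroup reduction is arguably cleaner since it avoids the Gaussian integration step, while the paper's route packages the $\lambda$-uniformity into a single rescaling and a citable off-diagonal estimate. One small comment: in your tail estimate you mix two bounds (summability of $\exp(-k^2/(2T))$ and comparison with the Gaussian integral $\sqrt{\pi s/2}$); either one alone suffices, and both give $G_{2s}^\lambda(y,y)\lesssim_T s^{-1/2}$, so this is not a gap. Your final remark is on point: the same on-diagonal bound also follows immediately from Davies' estimate combined with the scaling \eqref{eq:Scaling_Heat_Kernel}, which is exactly what the paper uses.
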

        \begin{proof}
            Define $\bar{y} \coloneq y/\lambda\in\bar{\Lambda}$. Then we have
            \begin{equation*}
                \norm{G_{t-\MTemptyplaceholder}^\lambda(y,\MTemptyplaceholder)}_{\mathfrak{H}}^2 = \int_0^t \int_\Lambda G_{t-s}^\lambda(\lambda\bar{y},\eta)^2\,\D \eta\D s = \lambda \int_0^t\int_{\bar{\Lambda}} G_{t-s}^\lambda(\lambda\bar{y},\lambda\bar{\eta})^2\,\D\bar{\eta}\D s.
            \end{equation*}
            Using the rescaling of the Green function $G_t^\lambda(\lambda\bar{y},\lambda{\bar{\eta}}) = \lambda^{-1}G_{\lambda^{-2}t}^1(\bar{y},\bar{\eta})$ for $\bar{y},\bar{\eta}\in\bar{\Lambda}$ from \eqref{eq:Scaling_Heat_Kernel}, we find
            \begin{equation*}
                \norm{G_{t-\MTemptyplaceholder}^\lambda(y,\MTemptyplaceholder)}_{\mathfrak{H}}^2 = \lambda^{-1}\int_0^t\int_{\bar{\Lambda}} G_{\lambda^{-2}(t-s)}^1(\bar{y},\bar{\eta})^2\,\D\bar{\eta}\D s.
            \end{equation*}
            Applying the bound $G_{t}^1(\bar{y},\bar{\eta})\le c (t^{-1/2}\vee 1)\exp(-(\bar{y}-\bar{\eta})^2/(\tilde{c}t))$ for some constants $0<c,\tilde{c}<\infty$ from Theorem 3.2.9 of \citet{daviesHeatKernelsSpectral1990}, we find
            \begin{equation*}
                \norm{G_{t-\MTemptyplaceholder}^\lambda(y,\MTemptyplaceholder)}_{\mathfrak{H}}^2\le c^2\tilde{c}^{1/2}\lambda^{-1}\int_0^t ((\lambda^{-2}(t-s))^{-1/2}\vee 1)^2 \lambda^{-1}(t-s)^{1/2}\,\D s\lesssim_T t^{1/2}.
            \end{equation*}\qedhere
        \end{proof}
    With this preparation, we can proceed to the proof of Proposition \ref{prop:DensityBoundedDerivative}. It relies on the explicit formula 
    \begin{equation*}
        \partial_x^n p_{f,\lambda}(t,y,x) = (-1)^n\EV[f]{\indicator(X_t(y)> x)H_{1^{\otimes (n+1)}}},\quad x\in\mathbb{R}, n\in\mathbb{N}_0,
    \end{equation*}
    which is based on the integration-by-parts formula of the Malliavin calculus and can be found in the proof of Theorem 2.1.4 of \citet{nualartMalliavinCalculusRelated2006}. With the Skorodhod integral $\delta$, the quantity $H_{1^{\otimes (n+1)}}$ is defined as
    \begin{equation*}
        H_{1^{\otimes 1}}\coloneq \delta \left(\frac{\mathcal{D} X_t(y)}{\norm{\mathcal{D} X_t(y)}_{\mathfrak{H}}^2}\right),\quad H_{1^{\otimes (n+1)}}\coloneq \delta\left(\frac{\mathcal{D}X_t(y)}{\norm{\mathcal{D}X_t(y)}_{\mathfrak{H}}^2} H_{1^{\otimes n}}\right),\quad n\in\mathbb{N}.
    \end{equation*}
    Subsequently, $\abs{\partial_x^n p_{f,\lambda}(t,y,x)}$ can be bounded by an successive application of the Hölder-inequality given in Proposition 1.5.6 of \citet{nualartMalliavinCalculusRelated2006} and upper and lower bounds on $\norm{\mathcal{D}X_t(y)}_{\mathfrak{H}}$. The latter bounds are based on the Green function bound of Lemma \ref{lem:aux_GreenfunctionsGrowth} and the Gronwall inequality.
	\begin{proof}[Proof of Proposition \ref{prop:DensityBoundedDerivative}]
		Fix a time point $0<t\le T$ and a location $y\in\Lambda$. Let $\gamma_{t,y}\coloneq \norm{\mathcal{D}X_t(y)}_\mathfrak{H}^2$ and define as in Equations (1.32) and (1.37) of \citet{nualartMalliavinCalculusRelated2006} the semi-norms
		\begin{align*}
			\norm{H}_{k,p}^p&\coloneq \EV[f]{\abs{H}^p}+\sum_{l=1}^{k} \EV[f]{\norm{\mathcal{D}^l H}_{\mathfrak{H}^{\otimes l}}^{p}},\quad k\in\mathbb{N},\quad p\ge 1,\\
            \norm{\tilde{H}}_{k,p}^p&\coloneq \EV[f]{\norm{\tilde{H}}_{\mathfrak{H}}^p}+\sum_{l=1}^{k} \EV[f]{\norm{\mathcal{D}^l \tilde{H}}_{\mathfrak{H}^{\otimes l}}^{p}},\quad k\in\mathbb{N},\quad p\ge 1,
		\end{align*}
		for any sufficiently Malliavin-differentiable random variable $H\in\mathbb{R}$ and $\tilde{H}\in\mathfrak{H}$. For $n\in\Set*{0,\dots, N}$, we use Theorem 2.1.4 and Proposition 2.1.4 of \citet{nualartMalliavinCalculusRelated2006}, to obtain
		\begin{equation*}
			\abs{\partial_x^n p_{f,\lambda}(t,y,x)} = \abs{\EV[f]{\indicator_{\{X_t(y)>x\}}H_{1^{\otimes (n+1)}}}}\le \EV[f]{\abs{H_{1^{\otimes (n+1)}}}^p}^{1/p}\le c_p \norm{\gamma_{t,y}^{-1}\mathcal{D}X_t(y)}_{n+1,q}^{n+1}
		\end{equation*}
		 for any $x\in\mathbb{R}$, where $p\ge 1$ is arbitrary and $q\ge 1$ and $0<c_p<\infty$ depend only on $p$. The application of Hölder-inequality for the norms $\norm{}_{r,s}$ (Proposition 1.5.6 of \citet{nualartMalliavinCalculusRelated2006}) yields
		\begin{equation}
			\abs{\partial_x^n p_{f,\lambda}(t,y,x)}\le \tilde{c}_p \norm{\gamma_{t,y}^{-1}}_{n+1,r}^{n+1}\norm{\mathcal{D}X_t(y)}_{n+1,s}^{n+1}\label{eq:aux_density_deriv}
		\end{equation}
		uniformly in $x\in\mathbb{R}$, where $1/q = 1/r+1/s$ and $0<\tilde{c}_p<\infty$ only depends on $p,r$ and $s$, compare page 102 of \citet{nualartMalliavinCalculusRelated2006}.\\
		\textbf{Step 1 (Controlling $\norm{\mathcal{D}X_t(y)}_{n,s}^n$).}
		We claim that for each $0<t\le T$ we can bound
		\begin{align}
			\begin{split}
				\sup_{\lambda\ge 1,y\in\Lambda}\norm{\mathcal{D} X_t(y)}_{\mathfrak{H}}& \le C_1t^{1/4},                                  \\
				\sup_{\lambda\ge 1, y\in\Lambda}\norm{\mathcal{D}^n X_t(y)}_{\mathfrak{H}^{\otimes n}} & \le C_n t^{1+n/4},\quad n=2,\dots, N+2,
			\end{split}\label{eq:Deriv_Momentbounds}
		\end{align}
		almost surely, where the constants $0<C_n<\infty$ depend on $n$, $K$ and $T$, but not on the domain size $\lambda\ge 1$, the time point $0<t\le T$ or the location $y\in\Lambda$. The deterministic bounds \eqref{eq:Deriv_Momentbounds} imply
		\begin{equation}
			\norm{\mathcal{D}X_t(y)}_{n,s}\lesssim _{n,T,K}t^{1/4},\label{eq:aux_DerivNorm}
		\end{equation}
		uniformly in $s\ge 1$, $0<t\le T$ and $y\in\Lambda$. We prove \eqref{eq:Deriv_Momentbounds} by induction over $n\in\Set*{1,\dots,N+2}$, similarly to Proposition 4.3 of \citet{ballyMalliavinCalculusWhite1998}.\\
		\textbf{Base case ($n=1$).} For $n=1$ and $0<t\le T$ we apply the expression \eqref{eq:malliavin_derivative} for the Malliavin derivative and Lemma \ref{lem:aux_GreenfunctionsGrowth} to obtain
		\begin{align*}
			\sup_{y\in\Lambda}\norm{\mathcal{D}X_t(y)}_{\mathfrak{H}} & \le \norm{G_{t-\MTemptyplaceholder}^\lambda(y,\MTemptyplaceholder)}_{\mathfrak{H}} + \norm{f'}_{L^\infty(\mathbb{R})}\int_0^t \int_{\Lambda} G_{t-s}^\lambda(y,\eta)\sup_{z\in\Lambda}\norm{\mathcal{D}X_s(z)}_{\mathfrak{H}}\,\D \eta\D s \\
            & \le \bar{C}t^{1/4} + \norm{f'}_{L^\infty(\mathbb{R})}  \int_0^t \sup_{z\in\Lambda}\norm{\mathcal{D}X_s(z)}_{\mathfrak{H}}\,\D s.
		\end{align*}
		Applying Gronwall's inequality yields the (deterministic) bound
		\begin{equation}
			\sup_{y\in\Lambda}\norm{\mathcal{D}X_t(y)}_{\mathfrak{H}} \le \bar{C}t^{1/4} e^{\norm{f'}_{L^\infty(\mathbb{R})}  t} =\colon C_1(t),\quad 0<t\le T,\label{eq:C_1}
		\end{equation}
		uniformly in $\lambda \ge 1$.\\
		\textbf{Intermezzo.} For the reader's convenience, we treat the case $n=2$ separately before continuing the induction. We have for all times $0< t\le T$ and locations $y\in\Lambda$ the equality
		\begin{equation*}
			\mathcal{D}^2X_t(y)=\begin{multlined}[t]
				\int_0^t \int_{\Lambda}  G_{t-s}^\lambda(y,\eta)f''(X_s(\eta))\mathcal{D}X_s(\eta)\otimes\mathcal{D} X_s(\eta)\,\D\eta \D s\\
				+ \int_0^t \int_{\Lambda} G_{t-s}^\lambda(y,\eta)f'(X_s(\eta))\mathcal{D}^2X_s(\eta)\,\D \eta\D s
			\end{multlined}
		\end{equation*}
		and thus
		\begin{align*}
			\sup_{y\in\Lambda}\norm{\mathcal{D}^2X_t(y)}_{\mathfrak{H}^{\otimes 2}} & \le  \begin{multlined}[t]
            \sup_{y\in\Lambda}\norm{f''}_{L^\infty(\mathbb{R})}\int_0^t \int_{\Lambda}  G_{t-s}^\lambda(y,\eta)\norm{\mathcal{D}X_s(\eta)}_{\mathfrak{H}}^2\,\D\eta \D s\\
            + \sup_{y\in\Lambda}\norm{f'}_{L^\infty(\mathbb{R})}\int_0^t \int_{\Lambda} G_{t-s}^\lambda(y,\eta)\norm{\mathcal{D}^2X_s(\eta)}_{\mathfrak{H}^{\otimes 2}}\,\D \eta\D s
            \end{multlined}\\
            & \le \begin{multlined}[t]
            \sup_{y\in\Lambda}\norm{f''}_{L^\infty(\mathbb{R})}\int_0^t\int_{\Lambda} G_{t-s}^\lambda(y,\eta)C_1(s)^2\,\D\eta\D s \\
            + \norm{f'}_{L^\infty(\mathbb{R})} \sup_{y\in\Lambda}\int_0^t\sup_{z\in\Lambda}\norm{\mathcal{D}^2X_s(z)}_{\mathfrak{H}^{\otimes 2}}\int_{\Lambda} G_{t-s}^\lambda(y,\eta)\,\D\eta\D s
		      \end{multlined} \\
              & \le \norm{f''}_{L^\infty(\mathbb{R})}  C_1(t)^2t + \norm{f'}_{L^\infty(\mathbb{R})} \int_0^t \sup_{z\in\Lambda}\norm{\mathcal{D}^2X_s(z)}_{\mathfrak{H}^{\otimes 2}}\,\D s
		\end{align*}
		for all $0<t\le T$. An application of Gronwall's inequality yields the bound
		\begin{equation}
			\sup_{y\in\Lambda}\norm{\mathcal{D}^2X_t(y)}_{\mathfrak{H}^{\otimes 2}} \le \norm{f''}_{L^\infty(\mathbb{R})} C_1(t)^2t e^{\norm{f'}_{L^\infty(\mathbb{R})} t}=\colon C_2(t),\quad 0<t\le T,\label{eq:Bayes_aux_C2}
		\end{equation}
		uniformly in $\lambda\ge 1$.\\
		\textbf{Induction step ($n-1\leadsto n$, $n\le N+2$).} For the induction step we use the chain rule for Malliavin derivatives in the form of (3.20) of \citep{SanzSole2005}. It implies
		\begin{equation*}
			\mathcal{D}^nX_t(y)=\begin{multlined}[t]
				\int_0^t \int_{\Lambda}  G_{t-s}^\lambda(y,\eta)\sum_{l=2}^{n}\sum_{P_l} c_l f^{(l)}(X_s(\eta))\bigotimes_{i=1}^l\mathcal{D}^{\abs{p_i}}X_s(\eta)\,\D\eta \D s\\
				+ \int_0^t \int_{\Lambda} G_{t-s}^\lambda(y,\eta)f'(X_s(\eta))\mathcal{D}^n X_s(\eta)\,\D \eta\D s,
			\end{multlined}
		\end{equation*}
		where $P_l$ denotes the set of partitions of $\Set{1,\dots,n}$ consisting of $l$ disjoint sets $p_1,\dots,p_l$ and $c_l>0$. In addition to $C_1(t)$ defined in \eqref{eq:C_1}, we define the quantities
		\begin{equation*}
			C_n(t)\coloneq \sum_{l=2}^{n}\sum_{P_l} c_l \norm{f^{(l)}}_{L^\infty(\mathbb{R})}\prod_{i=1}^lC_{\abs{p_i}}(t) t e^{\norm{f'}_{L^\infty(\mathbb{R})}  t},\quad n\in\mathbb{N}, n\ge 2,\quad 0<t\le T.
		\end{equation*}
		Note that $C_2(t)$ agrees with \eqref{eq:Bayes_aux_C2} and that for any $n\in\mathbb{N}$, $0<s\le t\le T$, the monotonicity $C_n(s)\le C_n(t)$ holds. Furthermore, for $n\ge 2$ and $0<t\le T$ holds $C_n(t)\lesssim_{K,T} t^{1+n/4}$. Then we find
		\begin{align*}
			\sup_{y\in\Lambda}\norm{\mathcal{D}^n X_t(y)}_{\mathfrak{H}^{\otimes n}} & \le
			\sup_{y\in\Lambda}\int_0^t \int_{\Lambda}  G_{t-s}^\lambda(y,\eta)\sum_{l=2}^{n}\sum_{P_l} c_l \norm{f^{(l)}}_{L^\infty(\mathbb{R})}\prod_{i=1}^l\norm{\mathcal{D}^{\abs{p_i}}X_s(\eta)}_{\mathfrak{H}^{\otimes \abs{p_i}}}\,\D\eta \D s\\
            & \quad + \sup_{y\in\Lambda}\norm{f'}_{L^\infty(\mathbb{R})}\int_0^t \int_{\Lambda} G_{t-s}^\lambda(y,\eta)\norm{\mathcal{D}^nX_s(\eta)}_{\mathfrak{H}^{\otimes n}}\,\D \eta\D s\\
            & \le
			\sup_{y\in\Lambda}\int_0^t \int_{\Lambda}  G_{t-s}^\lambda(y,\eta)\sum_{l=2}^{n}\sum_{P_l} c_l \norm{f^{(l)}}_{L^\infty(\mathbb{R})}\prod_{i=1}^lC_{\abs{p_i}}(s)\,\D\eta\D s\\
            & \quad+ \norm{f'}_{L^\infty(\mathbb{R})} \sup_{y\in\Lambda}\int_0^t\sup_{z\in\Lambda}\norm{\mathcal{D}^nX_s(z)}_{\mathfrak{H}^{\otimes n}}\int_{\Lambda} G_{t-s}^\lambda(y,\eta)\,\D\eta\D s
			\\
            & \le \sum_{l=2}^{n}\sum_{P_l} c_l \norm{f^{(l)}}_{L^\infty(\mathbb{R})} \prod_{i=1}^lC_{\abs{p_i}}(t)t+ \norm{f'}_{L^\infty(\mathbb{R})} \int_0^t \sup_{z\in\Lambda}\norm{\mathcal{D}^nX_s(z)}_{\mathfrak{H}^{\otimes n}}\,\D s.
		\end{align*}
		An application of Gronwall's inequality shows
		\begin{equation*}
			\sup_{y\in\Lambda}\norm{\mathcal{D}^nX_t(y)}_{\mathfrak{H}^{\otimes n}} \le \sum_{l=2}^{n}\sum_{P_l} c_l \norm{f^{(l)}}_{L^\infty(\mathbb{R})}\prod_{i=1}^lC_{\abs{p_i}}(t) t e^{\norm{f'}_{L^\infty(\mathbb{R})}  t}= C_n(t),\quad 0<t\le T,
		\end{equation*}
		uniformly in the domain size $\lambda\ge 1$ and $n\in\mathbb{N}$.\\
		\textbf{Step 2 (Controlling $\norm{\gamma_{t,y}^{-1}}_{n,r}^n$).}\\
		The upper bound for $n=1$ from \eqref{eq:Deriv_Momentbounds} and the lower bound from the proof of Lemma C.1 of \citet{gaudlitzNonparametricEstimationReaction2023} show
		\begin{equation*}
			\gamma_{t,y} = \norm{\mathcal{D}X_t(y)}_{\mathfrak{H}}^2\sim t^{1/2},
		\end{equation*}
		uniformly in $\lambda\ge 1$, $0< t\le T$ and $y\in\Lambda$. The recursion (2.24) of \citep{nualartMalliavinCalculusRelated2006} together with \eqref{eq:Deriv_Momentbounds} reveal that $\norm{\gamma_{t,y}^{-1}}_{n,r}^r$, $r\ge 1$, is dominated by its zeroth component when no derivative is taken, such that
		\begin{equation*}
			\norm{\gamma_{t,y}^{-1}}_{n,r}^r \lesssim_T \EV[f]{\abs{\gamma_{t,y}^{-1}}^r}\lesssim t^{-r/2},\quad 0<t\le T,\quad y\in\Lambda,
		\end{equation*}
		and thus
		\begin{equation}
			\norm{\gamma_{t,y}^{-1}}_{n,r}^{n}\lesssim_T t^{-n/2}\label{eq:aux_gamma_bounds}
		\end{equation}
		uniformly in $\lambda\ge 1$, $0< t\le T$ and $y\in\Lambda$.\\
		\textbf{Step 3 (Conclusion).}\\
		The claim follows from \eqref{eq:aux_density_deriv}, \eqref{eq:aux_DerivNorm} and \eqref{eq:aux_gamma_bounds}: There exists a constant $0<\tilde{C}<\infty$ depending on $T$ and $K$ such that for all $\lambda\ge 1$, $0<t\le T$, $y\in\Lambda$ and $x\in\mathbb{R}$ the bound
		\begin{equation*}
			\abs{\partial_x^n p_{f,\lambda}(t,y,x)} \le \tilde{C} t^{-(n+1)/2}t^{(n+1)/4} = \tilde{C}t^{-(n+1)/4}
		\end{equation*}
		holds.
	\end{proof}

	\subsection{Convergence to the global solution}\label{chap:Convergence}

	This section explores the behaviour of the semi-linear stochastic heat equation driven by space-time white noise on the growing spatial domain $\Lambda=\lambda\bar{\Lambda}$ with $\lambda\to\infty$. Without loss of generality we assume $0\in \bar{\Lambda}$ such that $\Lambda = \lambda\bar{\Lambda}\to \mathbb{R}$ as $\lambda\to\infty$. The goal is to show the weak convergence $X_t(\lambda \bar{y})\xrightarrow{d}Z_t^{\bar{y}}(0)$ as $\lambda\to\infty$ for every $0\le t\le T$ and $\bar{y}\in\bar{\Lambda}$ and proving Theorem \ref{thm:ErgodicProperties} \eqref{num:thm_Ergodicity1}. Denote by $\star$ the operation
	\begin{equation*}
		(u\star v)(y)\coloneq \int_{\mathbb{R}}u(y,\eta)v(\eta)\,\D\eta,\quad y\in\mathbb{R},
	\end{equation*}
	for any functions $u\colon \mathbb{R}^2\to\mathbb{R}$, $v\colon\mathbb{R}\to\mathbb{R}$ such that $u(y,\MTemptyplaceholder)v(\MTemptyplaceholder)\in L^1(\mathbb{R})$ for all $y\in\mathbb{R}$.
	 Let $\phi_t$ be the heat kernel on $\mathbb{R}$, that is
	\begin{equation*}
		\phi_t(y,\eta) = \frac{1}{\sqrt{4\pi t}}\exp\left(-\frac{(y-\eta)^2}{4t}\right),\quad t>0,\quad y,\eta\in\mathbb{R}.
	\end{equation*}
	Recall that the process $(Z_t^{\bar{y}}(y)\,\vert \, y\in\mathbb{R}, 0\le t\le T)$ solving the semi-linear heat equation \eqref{eq:Z} can be written as
	\begin{equation}
		Z_t^{\bar{y}}(y)= \int_0^t (\phi_{t-s}\star f(Z_s^{\bar{y}}))(y)\,\D s + \bar{Z}_t^{\bar{y}}(y),\quad  \bar{Z}_t^{\bar{y}}(y)\coloneq \chi(\bar{y})+\int_{\mathbb{R}}\phi_{t-s}(y,\eta)\mathcal{W}(\D\eta,\D s),\label{eq:ConvGlobal_Z}
	\end{equation}
	where $\mathcal{W}$ is understood as spatially homogeneous (space-time white) noise $\mathcal{W}$ on $\mathbb{R}$, compare \citep{Dalang2011}.
     Denote by $G_t^\lambda(y,\eta)$ for $t>0$ and $y,\eta\in\Lambda$ the Neumann heat kernel on $\Lambda$. Without further mentioning we will extend $G_t^\lambda$ by zero to a function on $\mathbb{R}\times\mathbb{R}$. Define the process
	\begin{align}
    \begin{split}
    \bar{X}_t(y)&\coloneq (G_{t}^\lambda \star X_0)(y) +  \int_{\mathbb{R}}G_{t-s}^\lambda(y,\eta)\mathcal{W}(\D\eta,\D s),\\
		X_t(y)&\coloneq \int_0^t (G_{t-s}^\lambda\star f(X_s))(y)\,\D s + \bar{X}_t(y),
    \end{split}\label{eq:ConvGlobal_X}
	\end{align}
	for $0\le t\le T, y\in \mathbb{R}$, driven by the same noise $\mathcal{W}$ as $Z_t^{\bar{y}}$. Note that the process $(X_t)_{t\in[0,T]}$ defined in \eqref{eq:ConvGlobal_X} and the process $(X_t)_{t\in[0,T]}$ defined in \eqref{eq:SPDE} have the same marginal laws for every $0\le t\le T$ and $y\in\Lambda$.
	 Since we are only aiming to prove convergence in distribution $X_t(\lambda\bar{y})\xrightarrow{d}Z_t^{\bar{y}}(0)$ as $\lambda\to\infty$ for every $0\le t\le T$ and $\bar{y}\in\bar{\Lambda}$, we keep the notation $X$ for both processes for notational simplicity.

	For a standard Brownian motion $(B_t)_{t\ge 0}$ define the stopping time
	\begin{equation*}
		\tau_\lambda\coloneq \inf\Set{t\ge 0\,\colon\,B_{2t}\not\in \lambda\bar{\Lambda}}.
	\end{equation*}
	For any location $y\in\Lambda$ and time $t\ge 0$, the probability of the event $\Set{\tau_\lambda\le t}$, given that $B_0=y$, is denoted by $\prob[y]{\tau_\lambda\le t}$. Lemma \ref{lem:Stopping_time} implies $\prob[\lambda\bar{y}]{\tau_\lambda\le t}\to 0$ as $\lambda\to \infty$ for every $\bar{y}\in\bar{\Lambda}$ and $t>0$. Recall that for $A\subset\mathbb{R}$ and $y\in\mathbb{R}$ we define their distance as $\operatorname{dist}(y,A)\coloneq \inf\Set{\abs{y-\eta}\given\eta\in A}$. The main result of this section is the following (pointwise) convergence of $X_t(\lambda \bar{y})$ to $Z_t^{\bar{y}}(\lambda \bar{y})$.

	\begin{proposition}\label{prop:Convergencetoglobal}
		Assume that $f\colon\mathbb{R}\to\mathbb{R}$ is Lipschitz-continuous and that $\chi\in H^{\tilde{s}}(\bar{\Lambda})$ for some $\tilde{s}>1/2$. For any $p\ge 1$ we have convergence
		\begin{equation*}
			\EV[f]{\abs{Z_t^{\bar{y}}(\lambda \bar{y}) - X_t(\lambda \bar{y})}^p}\to 0,\quad \bar{y}\in\bar{\Lambda},\quad  0\le t\le T,
		\end{equation*}
		as $\lambda\to \infty$ for the process $X$ defined in \eqref{eq:ConvGlobal_X}. In particular, the process $X$ defined in \eqref{eq:SPDE} satisfies $X_t(\lambda \bar{y})\xrightarrow{d}Z_t^{\bar{y}}(0)$ for all $0\le t\le T$ and $\bar{y}\in\bar{\Lambda}$. More precisely, for any exponent $p\ge 1$ and location $y\in\Lambda$ define
		\begin{equation*}
			K(y,\lambda,p)\coloneq \exp\left(-\frac{\operatorname{dist}(y,\partial\Lambda)^2}{8 c T p}\right)\begin{cases}(1 + \log(\lambda) + \operatorname{dist}(y,\partial\Lambda)^2),&p=2\\ 2,& p\neq 2,\end{cases}
		\end{equation*}
		where $c>1$ is arbitrary. Then, for all $0\le t\le T$, $y=\lambda\bar{y} \in \Lambda$ and $0<\epsilon<\tilde{s}-1/2$,
		\begin{align}
			\EV[f]{\abs{Z_t^{\bar{y}}(y)-X_t(y)}^p}\le C \left(\prob[y]{\tau_\lambda\le t}^{p/2}+ K(y,\lambda,p)+K(y,\lambda,2p)+\lambda^{-p\epsilon}\norm{\chi}_{H^{\tilde{s}}(\bar{\Lambda})}^p\right),\label{eq:ConvergenceGlobal_Claim}
		\end{align}
		where the constant $C$ depends only on $T$, $\Lip{f}$ and $p$.
	\end{proposition}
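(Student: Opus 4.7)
The plan is to decompose $Z_t^{\bar{y}}(y) - X_t(y)$ into a Gaussian-plus-initial-condition part and a nonlinear reaction part, and to apply Gronwall's inequality. Using the mild representations \eqref{eq:ConvGlobal_Z} and \eqref{eq:ConvGlobal_X} driven by the same noise $\mathcal{W}$, one decomposes
\begin{equation*}
Z_t^{\bar{y}}(y) - X_t(y) = \bigl(\bar{Z}_t^{\bar{y}}(y)-\bar{X}_t(y)\bigr) + \int_0^t \bigl((\phi_{t-s}-G^\lambda_{t-s})\star f(Z_s^{\bar{y}})\bigr)(y)\,\D s + \int_0^t \bigl(G^\lambda_{t-s}\star(f(Z_s^{\bar{y}})-f(X_s))\bigr)(y)\,\D s.
\end{equation*}
The last term is absorbed by $\Lip{f}$ via $\int_{\mathbb{R}} G^\lambda_{t-s}(y,\eta)\,\D\eta \le 1$ and Gronwall's inequality, so it suffices to bound the first two terms in $L^p$.

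For the Gaussian difference $\bar{Z}_t^{\bar{y}}(y) - \bar{X}_t(y)$ I would split off the deterministic initial-condition part $\chi(\bar{y}) - (G^\lambda_t\star X_0)(y)$ and the stochastic-integral part $\int(\phi_{t-s}(y,\eta) - G^\lambda_{t-s}(y,\eta))\,\mathcal{W}(\D\eta,\D s)$. For the deterministic part, the scaling identity $G^\lambda_t(y,\eta)=\lambda^{-1}G^1_{t/\lambda^2}(y/\lambda,\eta/\lambda)$ together with $X_0(y)=\chi(y/\lambda)$ identifies the quantity with the action of the Neumann heat semi-group on $\bar{\Lambda}$ at vanishing time $t/\lambda^2 \to 0$ applied to the fixed function $\chi \in H^{\tilde{s}}(\bar{\Lambda})$; standard smoothing bounds combined with the Sobolev embedding $H^{\tilde{s}}(\bar{\Lambda})\hookrightarrow C(\bar{\Lambda})$ yield the $\lambda^{-p\epsilon}\norm{\chi}_{H^{\tilde{s}}(\bar{\Lambda})}^p$ contribution for any $0<\epsilon<\tilde{s}-1/2$. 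For the Gaussian stochastic integral, passage to $L^p$ via Gaussian moment equivalence reduces the problem to estimating $\int_0^t\int_{\mathbb{R}}(\phi_{t-s}(y,\eta)-G^\lambda_{t-s}(y,\eta))^2\,\D\eta\,\D s$, which I evaluate via the method-of-images representation of $G^\lambda$ as a sum over reflections of $\phi$ across $\partial\Lambda$. The dominant reflection across the nearest boundary point contributes a Gaussian factor $\exp(-\dist(y,\partial\Lambda)^2/(8cTp))$ after spatial integration (producing the $K(y,\lambda,p)$ term), while the cumulative mass of higher-order reflections is controlled via reflected Brownian motion by the escape probability $\prob[y]{\tau_\lambda\le t}$.

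The remaining nonlinear kernel term $\int_0^t((\phi_{t-s}-G^\lambda_{t-s})\star f(Z_s^{\bar{y}}))(y)\,\D s$ is treated analogously: by spatial stationarity of $y\mapsto Z_s^{\bar{y}}(y)$ (Lemma 7.1 of \citet{chenSpatialErgodicitySPDEs2021}), uniform $L^p$-moment bounds on $Z^{\bar{y}}$ analogous to Lemma \ref{lem:UniformBoudnednessSolution}, and the Lipschitz property of $f$, Minkowski's inequality pulls the $L^p$-norm inside and reduces the control to the $L^1(\mathbb{R})$-norm of $\phi_{t-s}-G^\lambda_{t-s}$ in the spatial variable, bounded again by the same reflection estimates. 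The main obstacle will be carrying out the reflection-and-Gaussian-integration computation with sharp enough constants to match the exponential factor $\exp(-\dist(y,\partial\Lambda)^2/(8cTp))$ appearing in $K(y,\lambda,p)$: this requires tracking how the passage from $L^2$ to $L^p$ moments interacts with Gaussian spatial integration and selecting the parameter $c>1$ to absorb $p$-dependent constants uniformly in $0\le t\le T$, while also isolating the logarithmic correction in the case $p=2$. Once these kernel-difference estimates are in place, a Gronwall argument on $\EV[f]{\abs{Z_t^{\bar{y}}(y)-X_t(y)}^p}$ yields \eqref{eq:ConvergenceGlobal_Claim}; the qualitative conclusion follows because $\dist(\lambda\bar{y},\partial\Lambda)\to\infty$ for any $\bar{y}$ in the open interval $\bar{\Lambda}$ and $\prob[\lambda\bar{y}]{\tau_\lambda\le t}\to 0$ by Lemma \ref{lem:Stopping_time}.
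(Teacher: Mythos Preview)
Your decomposition and the treatment of the initial-condition part are essentially what the paper does (the paper packages the semi-group argument as Lemma~\ref{lem:Impact_Initial_Condition}). There is, however, a genuine gap in your Gronwall step, and a related misattribution of where the $K(y,\lambda,p)$ terms come from.

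The issue is that the nonlinear term $\int_0^t (G^\lambda_{t-s}\star(f(Z_s^{\bar{y}})-f(X_s)))(y)\,\D s$ is a \emph{spatial convolution}: after Lipschitz and Jensen you obtain an inequality of the form
\[
u(t,y)\lesssim A(t,y)+\int_0^t\!\int_\Lambda G^\lambda_{t-s}(y,\eta)\,u(s,\eta)\,\D\eta\,\D s,\qquad u(t,y)\coloneq\EV[f]{|Z_t^{\bar{y}}(y)-X_t(y)|^p},
\]
and a pointwise-in-$y$ Gronwall does not close because the right-hand side samples $u(s,\cdot)$ at all spatial points, while a $\sup_y$-Gronwall would destroy the $y$-dependent bound \eqref{eq:ConvergenceGlobal_Claim}. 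The paper resolves this by an explicit iterative expansion (Lemma~\ref{lem:Expansion/Gronwall}) that uses the semi-group identity $G^\lambda_{t-s}\star G^\lambda_{s-r}=G^\lambda_{t-r}$ (Lemma~\ref{lem:Convolution}) to collapse the iterated convolutions; the resulting series sums to $e^{\Lip{f}t}$ and leaves, in addition to your two ``direct'' terms, two \emph{convolved} terms $\int_0^t G^\lambda_{t-s}\star|\bar Z_s^{\bar y}-\bar X_s|^p\,\D s$ and $\int_0^t G^\lambda_{t-s}\star\bigl(\prob[\cdot]{\tau_\lambda\le\cdot}^{p-1}\!\int_0^s|\phi_{s-r}-G^\lambda_{s-r}|\star|f(Z_r^{\bar y})|^p\,\D r\bigr)\,\D s$.

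This matters because those convolved terms are exactly where $K(y,\lambda,p)$ and $K(y,\lambda,2p)$ come from. The direct stochastic integral $\int_0^t\!\int_{\mathbb R}(\phi_{t-s}-G^\lambda_{t-s})^2\,\D\eta\,\D s$ is bounded in the paper simply by $t^{1/2}\prob[y]{\tau_\lambda\le t}$ (Lemma~\ref{lem:DomainDifferences}, via domain monotonicity of the Neumann kernel and the reflected-Brownian-motion interpretation rather than method of images), producing the $\prob[y]{\tau_\lambda\le t}^{p/2}$ term in \eqref{eq:ConvergenceGlobal_Claim}. The quantity $K(y,\lambda,p)$, with its Gaussian-in-$\dist(y,\partial\Lambda)$ factor and the logarithmic correction at $p=2$, arises instead from estimating $\int_0^t\!\int_\Lambda G^\lambda_{t-s}(y,z)\,\prob[z]{\tau_\lambda\le s}^{p/2}\,\D z\,\D s$ (Lemma~\ref{lem:Aux_Convergence}(b)), i.e.\ from smoothing the exit probability against the heat kernel---a step that your outline omits.
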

	Before proving Proposition \ref{prop:Convergencetoglobal}, a few remarks are in order.
	\begin{remark}[On $K(y,\lambda,p)$]~
		\begin{enumerate}[(a)]
		\item For any location $\bar{y}\in\bar{\Lambda}$, we have $K(\lambda \bar{y},\lambda,p)\sim \exp(-\lambda^2/(8cTP))\to 0$ as $\lambda\to \infty$. This convergence is not uniform in $\bar{y}\in\bar{\Lambda}$ due to the effect of the boundary conditions.
				\item By integration we find for all $1\le \lambda<\infty$ and $r>0$ the bound
				\begin{equation}
					\frac{1}{\lambda}\int_\Lambda K(y,\lambda,p)^r\,\D y\lesssim_{T,p} \lambda^{-1}\begin{cases}\log(\lambda)^r,&p=2,\\ 1,&p\neq 2.\end{cases}\label{eq:Greenfunction_IntegratedProb2}
				\end{equation}
		\end{enumerate}
	\end{remark}

	\begin{corollary}\label{cor:Convergence_polynomials}
		Assume that $f\colon\mathbb{R}\to\mathbb{R}$ is Lipschitz-continuous and that $\chi\in H^{\tilde{s}}(\bar{\Lambda})$ for some $\tilde{s}>3/2$. Let $(X_t)_{t\in[0,T]}$ be either the process defined in \eqref{eq:SPDE} or in \eqref{eq:ConvGlobal_X}. For any Lipschitz-continuous function $h\colon\mathbb{R}\to\mathbb{R}$ and $k\in\mathbb{N}$ we have
		\begin{equation*}
			\abs[\bigg]{\EV[f][][\Big]{\frac{1}{\lambda}\int_0^T\int_\Lambda h(X_t(y))^k\,\D y\D t}-\int_0^T\int_{\bar{\Lambda}}\EV[f][][\Big]{h(Z_t^{\bar{y}}(0))^k}\,\D\bar{y}\D t}\le C\Lip{h} \lambda^{-1},
		\end{equation*}
        where the constant $0<C<\infty$ depends on $T$, $\Lip{f}$, $k$ and $\norm{\chi}_{H^{\tilde{s}}(\bar{\Lambda})}$.
	\end{corollary}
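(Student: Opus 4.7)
The plan is to reduce the corollary to the pointwise $L^p$-convergence furnished by Proposition \ref{prop:Convergencetoglobal}, combined with the spatial stationarity of $Z^{\bar{y}}$ and a standard linearisation of the nonlinearity $x\mapsto h(x)^k$. First I would rescale $y=\lambda\bar{y}$ to write
\begin{equation*}
    \frac{1}{\lambda}\int_0^T\int_\Lambda \EV[f]{h(X_t(y))^k}\,\D y\,\D t = \int_0^T\int_{\bar{\Lambda}}\EV[f]{h(X_t(\lambda\bar{y}))^k}\,\D\bar{y}\,\D t,
\end{equation*}
and exploit the stationarity $Z_t^{\bar{y}}(y)\overset{d}{=}Z_t^{\bar{y}}(0)$ (Lemma 7.1 of \citet{chenSpatialErgodicitySPDEs2021}) to rewrite the target integral as $\int_0^T\int_{\bar{\Lambda}}\EV[f]{h(Z_t^{\bar{y}}(\lambda\bar{y}))^k}\,\D\bar{y}\,\D t$. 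The absolute difference I need to bound is therefore the spatial average of $|\EV[f]{h(X_t(\lambda\bar{y}))^k-h(Z_t^{\bar{y}}(\lambda\bar{y}))^k}|$.

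Next I would linearise using $|a^k-b^k|\le k|a-b|(|a|^{k-1}+|b|^{k-1})$ and the Lipschitz property of $h$ to obtain
\begin{equation*}
    |h(X)^k-h(Z)^k|\le k\Lip{h}\,|X-Z|\bigl(|h(X)|^{k-1}+|h(Z)|^{k-1}\bigr).
\end{equation*}
For an exponent $p>2$ to be chosen and its Hölder conjugate $q=p/(p-1)$, Hölder's inequality then yields a factor $\EV[f]{|X-Z|^p}^{1/p}$ multiplied by a moment of $h(X)$ and $h(Z)$ of order $(k-1)q$. The latter is uniformly bounded in $\lambda$, $t$ and $y$: for $X$ this is Lemma \ref{lem:UniformBoudnednessSolution}, while for $Z$ the same mild-form argument applies because the initial condition $\chi(\bar{y})$ is bounded (Sobolev embedding for $\tilde{s}>1/2$) and the heat kernel on $\mathbb{R}$ is integrable; $h$ being Lipschitz only adds a linear dependence on $|X|,|Z|$.

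It remains to integrate $\EV[f]{|X_t(\lambda\bar{y})-Z_t^{\bar{y}}(\lambda\bar{y})|^p}^{1/p}$ in $\bar{y}$. Applying Proposition \ref{prop:Convergencetoglobal} with any $p>2$ and taking the $p$-th root gives
\begin{equation*}
    \EV[f]{|X-Z|^p}^{1/p}\le C\Bigl(\prob[\lambda\bar{y}]{\tau_\lambda\le t}^{1/2}+K(\lambda\bar{y},\lambda,p)^{1/p}+K(\lambda\bar{y},\lambda,2p)^{1/p}+\lambda^{-\epsilon}\norm{\chi}_{H^{\tilde{s}}(\bar{\Lambda})}\Bigr).
\end{equation*}
Changing variables back to $y=\lambda\bar{y}$ and integrating, Lemma \ref{lem:Stopping_time} bounds the exit-probability contribution by an integrable exponential in $\operatorname{dist}(y,\partial\Lambda)^2$, and the integrated $K$-estimate \eqref{eq:Greenfunction_IntegratedProb2} (used with $r=1$ in the case $p\neq 2$, where no logarithmic factor appears) produces the required $\lambda^{-1}$ decay for the middle two terms.

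The main obstacle is controlling the residual term $\lambda^{-\epsilon}\norm{\chi}_{H^{\tilde{s}}(\bar{\Lambda})}$: a naive estimate only yields rate $\lambda^{-\epsilon}$ rather than $\lambda^{-1}$. This is precisely where the strengthened assumption $\tilde{s}>3/2$ (rather than $\tilde{s}>1/2$) enters; it allows me to choose $\epsilon\in(1,\tilde{s}-1/2)$ in Proposition \ref{prop:Convergencetoglobal}, absorbing this term into the $\lambda^{-1}$ bound. The related subtlety is the choice of $p>2$: this is needed to keep the $K(\MTemptyplaceholder,\lambda,p)$ factor purely Gaussian so that the spurious $\log(\lambda)$ factor arising only in the case $p=2$ of \eqref{eq:Greenfunction_IntegratedProb2} does not appear. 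With these choices combined, all four contributions are $\mathcal{O}(\lambda^{-1})$ uniformly in $t\in[0,T]$, establishing the claim with $C$ depending on $T$, $\Lip{f}$, $k$ and $\norm{\chi}_{H^{\tilde{s}}(\bar{\Lambda})}$.
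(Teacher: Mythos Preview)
Your proposal is correct and follows essentially the same route as the paper: rescale and use spatial stationarity of $Z^{\bar{y}}$, linearise the $k$-th power, apply H\"older with an exponent avoiding $p=2$, control the moment factor via the uniform bounds for $X$ and $Z$, and then integrate the pointwise estimate from Proposition~\ref{prop:Convergencetoglobal} using \eqref{eq:Greenfunction_IntegratedProb1}, \eqref{eq:Greenfunction_IntegratedProb2} and the choice $\epsilon\ge 1$. One minor slip: when you invoke \eqref{eq:Greenfunction_IntegratedProb2} you actually need the exponent $r=1/p$ (not $r=1$) since you have already taken the $p$-th root, but the bound is stated for all $r>0$, so this is harmless.
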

	\begin{proof}
		Note that $Z_t^{\bar{y}}(0) \overset{d}{=} Z_t^{\bar{y}}(y)$ by the stationarity of the process $y\mapsto Z_t^{\bar{y}}(y)$. This implies
		\begin{align*}
			\abs[\bigg]{\EV[f][][\Big]{\frac{1}{\lambda}\int_0^T\int_\Lambda h(X_t(y))^k\,\D y\D t}-\int_0^T\int_{\bar{\Lambda}}\EV[f][][\Big]{h(Z_t^{\bar{y}}(0))^k}\,\D\bar{y}\D t}\hspace{-18em} &\\
			& =  \frac{1}{\lambda}\abs[\bigg]{\EV[f][][\Big]{\int_0^T\int_\Lambda h(X_t(y))^k\,\D y\D t}-\EV[f][][\Big]{\int_0^T\int_{\Lambda} h(Z_t^{y/\lambda}(y))^k\,\D y\D t}}\\
			& \le \frac{1}{\lambda}\int_0^T\int_\Lambda \EV[f][][\Big]{\abs[\Big]{\left(h(X_t(y))- h(Z_t^{y/\lambda}(y))\right)\Big(\sum_{i=1}^k h(X_t(y))^{k-i}h\Big(Z_t^{y/\lambda}(y)\Big)^{i-1}\Big)}}\,\D y\D t.
            \end{align*}
            Applying Hölder's inequality with $r>1$, $r\neq 2$ and $1/r+1/\tilde{r}=1$ we find
        \begin{align*}
			\abs[\bigg]{\EV[f][][\Big]{\frac{1}{\lambda}\int_0^T\int_\Lambda h(X_t(y))^k\,\D y\D t}-\int_0^T\int_{\bar{\Lambda}}\EV[f][][\Big]{h(Z_t^{\bar{y}}(0))^k}\,\D\bar{y}\D t}\hspace{-21em} &\\
            & \le \frac{1}{\lambda}\int_0^T\int_\Lambda \Big(\EV[f][][\Big]{\left(h(X_t(y))- h(Z_t^{y/\lambda}(y)\right)^r}^{1/r}\EV[f][][\Big]{\Big(\sum_{i=1}^k h(X_t(y))^{k-i}h(Z_t^{y/\lambda}(y))^{i-1}\Big)^{\tilde{r}}}^{1/{\tilde{r}}}\,\D y\D t.
		\end{align*}
		By Lemma \ref{lem:UniformBoudnednessSolution} and Theorem 25 of \citet{Dalang2011}, we can deduce for all $n\in\mathbb{N}$ the bounds
		\begin{equation*}
			\sup_{\lambda\ge 1 ,y\in \Lambda, 0\le t\le T}\EV[f]{\abs{X_t(y)}^n}\le c,\quad\sup_{y\in \mathbb{R}, 0\le t\le T,\bar{y}\in\bar{\Lambda}}\EV[f]{\abs{Z_t^{\bar{y}}(y)}^n}\le \tilde{c},
		\end{equation*}
        where the constants $0<c,\tilde{c}<\infty$ depend on $T$, $\Lip{f}$, $n$ and $\norm{\chi}_{L^\infty(\bar{\Lambda})}$. Using the Lipschitz-continuity of $h$, Proposition \ref{prop:Convergencetoglobal} with $p=r\neq 2$ and finally the bounds \eqref{eq:Greenfunction_IntegratedProb2} and \eqref{eq:Greenfunction_IntegratedProb1} we find
		\begin{align*}
			\abs[\bigg]{\EV[f][][\Big]{\frac{1}{\lambda}\int_0^T\int_\Lambda h(X_t(y))^k\,\D y\D t}-\int_0^T\int_{\bar{\Lambda}}\EV[f][][\Big]{h(Z_t^{\bar{y}}(0))^k}\,\D\bar{y}\D t}\hspace{-13em} &\\
			  & \lesssim_{T,\Lip{f},k,\norm{\chi}_{L^\infty(\bar{\Lambda})}}\Lip{h}\frac{1}{\lambda}\int_0^T\int_{\Lambda} \EV*[f]{\abs[\Big]{X_t(y)-Z_t^{y/\lambda}(y)}^r}^{1/r}\,\D y\D t \\
			& \lesssim_{T,\Lip{f},r}\Lip{h}\frac{1}{\lambda}\int_0^T\int_{\Lambda} \sqrt{\prob[y]{\tau_\lambda\le t}} + K(y,\lambda,r)^{1/r}+K(y,\lambda,2r)^{1/r}\\
            &\quad +\lambda^{-\epsilon}\norm{\chi}_{H^{\tilde{s}}(\bar{\Lambda})}\,\D y\D t \\
			  & \lesssim_{T}\Lip{h} \left(\lambda^{-1}+\lambda^{-1}+\lambda^{-1} + \lambda^{-\epsilon}\norm{\chi}_{H^{\tilde{s}}(\bar{\Lambda})}\right)\\
              & \lesssim_{\norm{\chi}_{H^{\tilde{s}}(\bar{\Lambda})}} \Lip{h} \lambda^{-1},
		\end{align*}
        where we chose $\epsilon\ge 1$ in the last line.
	\end{proof}

	\subsubsection{Proving Proposition \ref{prop:Convergencetoglobal}}

	The proof of the bound \eqref{eq:ConvergenceGlobal_Claim} is based on two ingredients. First, it requires the convergence of the stochastic integrals $\bar{X}$ to $\bar{Z}^{\bar{y}}$ in \eqref{eq:ConvGlobal_Z} and \eqref{eq:ConvGlobal_X}, which is proved using the convergence of the corresponding Green functions. The impact of the varying initial conditions can be controlled by understanding the action of the heat semi-group $t\mapsto G_t\star\MTemptyplaceholder$ for short times, as the subsequent Lemma shows. To this end, a convenient tool is the scaling property of the Neumann heat kernel $G_t^\lambda$ on $\Lambda$ in terms of the Neumann heat kernel $G_t^1$ on $\bar{\Lambda}$ such that
    \begin{equation}
        G_t^\lambda(y,\eta) = \lambda^{-1} G_{\lambda^{-2}t}^1(\lambda^{-1}y,\lambda^{-1}\eta),\quad t>0, y,\eta\in\Lambda.\label{eq:Scaling_Heat_Kernel}
    \end{equation}
    Second, we perform a Gronwall argument to extend the convergence to the semi-linear case.

    \begin{lemma}\label{lem:Impact_Initial_Condition}
    Assume that $\chi\in H^{\tilde{s}}(\bar{\Lambda})$ for some $\tilde{s}>1/2$ and let $0<\epsilon< \tilde{s}-1/2$. Then there exists an absolute constant $0<C<\infty$ such that
    \begin{equation*}
        \norm{X_0 - G_t^\lambda \star X_0}_{L^\infty(\Lambda)}=\norm{\chi-G_{\lambda^{-2}t}^1\star \chi}_{L^\infty(\bar{\Lambda})}\le C t^{\epsilon/2}\lambda^{-\epsilon}\norm{\chi}_{H^{\tilde{s}}(\bar{\Lambda})},\quad 0<t\le T.
    \end{equation*}
\end{lemma}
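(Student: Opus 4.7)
First I would reduce the two norms to a single one by the change of variables $\bar\eta = \eta/\lambda$ in the convolution. Indeed, from the scaling identity \eqref{eq:Scaling_Heat_Kernel} and $X_0(y) = \chi(y/\lambda)$,
\begin{equation*}
    (G_t^\lambda \star X_0)(\lambda \bar y) = \int_{\bar\Lambda} G_{\lambda^{-2}t}^1(\bar y, \bar\eta)\chi(\bar\eta)\,\D\bar\eta = (G_{\lambda^{-2}t}^1\star\chi)(\bar y),\quad \bar y\in\bar\Lambda,
\end{equation*}
which establishes the claimed equality and reduces the task to proving
\begin{equation*}
    \norm{\chi - G_s^1\star \chi}_{L^\infty(\bar\Lambda)} \le C s^{\epsilon/2}\norm{\chi}_{H^{\tilde s}(\bar\Lambda)}
\end{equation*}
with $s=\lambda^{-2}t$, for any fixed $0<\epsilon<\tilde s - 1/2$.

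Next I would prove this Neumann-heat-semigroup estimate by a spectral argument. Let $(e_k)_{k\ge 0}$ be an $L^2(\bar\Lambda)$-orthonormal basis of eigenfunctions of the Neumann Laplacian on $\bar\Lambda$ with eigenvalues $0=\mu_0\le \mu_1\le\dots$, and expand $\chi=\sum_k c_k e_k$. Then $G_s^1\star \chi=\sum_k e^{-s\mu_k}c_k e_k$, and the wavelet characterisation \eqref{eq:Sobolev_Statistician} (or equivalently the spectral definition) of $H^r(\bar\Lambda)$ yields the equivalent norm $\norm{\chi}_{H^r(\bar\Lambda)}^2\sim \sum_k (1+\mu_k)^r c_k^2$ for $0\le r\le \tilde s$. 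Using the elementary inequality $(1-e^{-x})^2\le x^{\epsilon}$ valid for all $x\ge 0$ and $0\le\epsilon\le 2$, we obtain
\begin{equation*}
    \norm{\chi-G_s^1\star\chi}_{H^{\tilde s-\epsilon}(\bar\Lambda)}^2\lesssim \sum_k (1+\mu_k)^{\tilde s-\epsilon}(1-e^{-s\mu_k})^2 c_k^2\le s^\epsilon\sum_k (1+\mu_k)^{\tilde s}c_k^2\lesssim s^\epsilon\norm{\chi}_{H^{\tilde s}(\bar\Lambda)}^2.
\end{equation*}

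Finally, since $\tilde s-\epsilon>1/2$ by the choice of $\epsilon$, the one-dimensional Sobolev embedding $H^{\tilde s-\epsilon}(\bar\Lambda)\hookrightarrow L^\infty(\bar\Lambda)$ applies and gives
\begin{equation*}
    \norm{\chi-G_s^1\star\chi}_{L^\infty(\bar\Lambda)}\lesssim \norm{\chi-G_s^1\star\chi}_{H^{\tilde s-\epsilon}(\bar\Lambda)}\lesssim s^{\epsilon/2}\norm{\chi}_{H^{\tilde s}(\bar\Lambda)},
\end{equation*}
which after substituting $s=\lambda^{-2}t$ becomes the claimed bound $C t^{\epsilon/2}\lambda^{-\epsilon}\norm{\chi}_{H^{\tilde s}(\bar\Lambda)}$. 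No step is really hard here; the only subtle point is the choice of smoothness loss $\epsilon$, which must simultaneously produce the gain $s^{\epsilon/2}$ via $(1-e^{-x})^2\le x^\epsilon$ and leave enough regularity $\tilde s-\epsilon>1/2$ to invoke the Sobolev embedding on the interval $\bar\Lambda$.
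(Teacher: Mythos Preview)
Your proof is correct and follows the same overall route as the paper: reduce to $\bar\Lambda$ via the heat-kernel scaling \eqref{eq:Scaling_Heat_Kernel}, apply the Sobolev embedding $H^{\tilde s-\epsilon}(\bar\Lambda)\hookrightarrow L^\infty(\bar\Lambda)$, and then establish the semigroup smoothing bound $\norm{\chi-G_s^1\star\chi}_{H^{\tilde s-\epsilon}}\lesssim s^{\epsilon/2}\norm{\chi}_{H^{\tilde s}}$.

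The only difference is in the last step. The paper invokes the norm equivalence $\norm{\cdot}_{H^{2\gamma}(\bar\Lambda)}\sim\norm{(-\Updelta)^\gamma\cdot}_{L^2(\bar\Lambda)}$ (Yagi) together with the abstract analytic-semigroup estimate $\norm{(I-e^{tA})u}\lesssim t^{\alpha}\norm{(-A)^\alpha u}$ (Pazy). You instead expand in Neumann eigenfunctions and use the elementary pointwise inequality $(1-e^{-x})^2\le x^\epsilon$, which is precisely the concrete self-adjoint version of Pazy's bound. Your argument is a bit more hands-on and avoids the two external references, at the cost of implicitly using the same norm equivalence (your claim $\norm{\chi}_{H^r(\bar\Lambda)}^2\sim\sum_k(1+\mu_k)^rc_k^2$ is exactly Yagi's result, not the wavelet characterisation \eqref{eq:Sobolev_Statistician}, which lives on $\Xi$). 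Either way the content is the same.
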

\begin{proof}
Using the scaling of the Green kernel from \eqref{eq:Scaling_Heat_Kernel}, we deduce for every $y\in\Lambda$
\begin{align*}
  (G_t^\lambda\star X_0)(y) &=\int_\Lambda G_t^\lambda (y,\eta)X_0(\eta)\,\D \eta =\lambda\int_{\bar{\Lambda}}G_t^\lambda (y,\lambda\bar{\eta})\chi(\bar{\eta})\,\D\bar{\eta}= \int_{\bar{\Lambda}}G_{\lambda^{-2}t}^1 (y/\lambda,\bar{\eta})\chi(\bar{\eta})\,\D\bar{\eta}\\
  &= (G_{\lambda^{-2}t}^1 \star \chi)(y/\lambda),
\end{align*}
which shows the claimed equality. Since $\tilde{s}-\epsilon>1/2$, the Sobolev embedding (Theorem 4.12 of \citet{fournierSobolevSpaces2003}) yields
\begin{equation*}
    \norm{\chi-G_{\lambda^{-2}t}^1\star \chi}_{L^\infty(\bar{\Lambda})}\lesssim \norm{\chi-G_{\lambda^{-2}t}^1\star \chi}_{H^{\tilde{s}-\epsilon}(\bar{\Lambda})}.
\end{equation*}
 By Theorem 16.6 of \citet{Yagi2010}, the interpolation norm $\norm{}_{\gamma}\coloneq \norm{(-\Updelta)^{\gamma}\MTemptyplaceholder}_{L^2(\bar{\Lambda})}$ of the Neumann Laplacian on $\bar{\Lambda}$ is equivalent to the Sobolev norm $\norm{}_{H^{2\gamma}(\bar{\Lambda})}$, $0\le\gamma\le 1$. Since $\chi\in \operatorname{dom}(-\Updelta)^{(\tilde{s}-\epsilon)/2}$ and powers of $(-\Updelta)$ commute with its semi-group $t\mapsto G_t^1\star \MTemptyplaceholder$, we find
\begin{equation*}
    \norm{\chi-G_{\lambda^{-2}t}^1\star \chi}_{L^\infty(\bar{\Lambda})}\lesssim \norm{(-\Updelta)^{(\tilde{s}-\epsilon)/2}\chi - G_{\lambda^{-2}t}^1\star (-\Updelta)^{(\tilde{s}-\epsilon)/2}\chi}_{L^2(\bar{\Lambda})}.
\end{equation*}
Since the Neumann Laplacian generates an analytic semi-group $t\mapsto G_t^1\star \MTemptyplaceholder$ on $L^2(\bar{\Lambda})$, we can apply Theorem 6.13 (d) of \citet{Pazy1983} to deduce
\begin{equation*}
    \norm{\chi-G_{\lambda^{-2}t}^1\star \chi}_{L^\infty(\bar{\Lambda})}\lesssim (\lambda^{-2}t)^{\epsilon/2}\norm{(-\Updelta)^{\tilde{s}/2}\chi}_{\epsilon/2}\lesssim \lambda^{-\epsilon}t^{\epsilon/2}\norm{\chi}_{H^{\tilde{s}}(\bar{\Lambda})}.
\end{equation*}\qedhere
\end{proof}

    \textbf{Step 1: Preliminaries on Green functions.}

		In the following, we will make frequent use of the fact that the Green functions $\phi_t$ and $G_t^\lambda$ can be represented as transition probabilities of Markov processes.
            Since the Neumann heat kernel $G_t^\lambda$ can be interpreted as the transition probabilities of a Brownian motion $(B_{2t}^\leftrightarrow)$, which is reflected when hitting the boundary of $\Lambda$ (see Section 2 of \citet{kendallCoupledBrownianMotions1989}), it is natural to expect that $G_t^{\lambda_1}(y,\eta)\le G_t^{\lambda_2}(y,\eta)$ whenever $\lambda_1\ge\lambda_2$ since then $\lambda_1 \bar{\Lambda}\supset \lambda_2\bar{\Lambda}$. In contrast to the Dirichlet heat kernel, however, the domain monotonicity property for Neumann boundary conditions is not true in general and counter examples are given by \citet{bassDomainMonotonicityNeumann1993}. \citet{kendallCoupledBrownianMotions1989} proves that
			      \begin{equation}
				      G_t^{\lambda_1}(y,\eta)\le G_t^{\lambda_2}(y,\eta),\quad 0< t\le T,\quad  y,\eta\in \lambda_2 \bar{\Lambda},\quad \lambda_1\ge \lambda_2,\label{eq:Greenfunction_Monotonicity_neumann}
			      \end{equation}
			      whenever there exists a ball $B$ between $\lambda_1 \bar{\Lambda}$ and $\lambda_2 \bar{\Lambda}$ such that $\lambda_2 \bar{\Lambda}\subset B\subset \lambda_1 \bar{\Lambda}$. Since $0\in\bar{\Lambda}$, we find $\Lambda\uparrow\mathbb{R}$ as $\lambda\to \infty$. Consequently, \eqref{eq:Greenfunction_Monotonicity_neumann} holds for $\lambda_1\gg \lambda_2$ sufficiently large.

	\begin{lemma}\label{lem:DomainDifferences}
		Let $1\le\lambda<\infty$, $0<t\le T$, $y\in\Lambda$. Then
		\begin{align}
			\int_{\mathbb{R}}\abs{\phi_{t}(y,\eta) - G_{t}^\lambda(y,\eta)}\,\D\eta & \le 2\prob[y]{\tau_\lambda\le t},\label{eq:domainDifferences_Neu_1}                      \\
			\int_{\mathbb{R}}(\phi_{t}(y,\eta) - G_{t}^\lambda(y,\eta))^2\,\D\eta   & \le C t^{-1/2}\prob[y]{\tau_\lambda\le t}, \label{eq:domainDifferences_Neu_2}
		\end{align}
		where the constant $0<C<\infty$ depends only on $T$.
	\end{lemma}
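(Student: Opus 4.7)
The plan is to prove both bounds via a probabilistic coupling between standard Brownian motion and reflected Brownian motion on $\Lambda$, exploiting that $\phi_t(y,\cdot)$ and $G_t^\lambda(y,\cdot)$ are the time-$2t$ transition densities of these two processes started at $y$.

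For the $L^1$ bound \eqref{eq:domainDifferences_Neu_1}, I would couple the standard Brownian motion $(B_{2s})_{s\ge 0}$ with the reflected Brownian motion $(B_{2s}^\leftrightarrow)_{s\ge 0}$ so that they agree path-wise until the exit time $\tau_\lambda$ of $B$ from $\Lambda$. For any Borel set $A\subset\mathbb{R}$, the difference $\mathrm{P}_y(B_{2t}\in A)-\mathrm{P}_y(B_{2t}^\leftrightarrow\in A)$ is supported on $\{\tau_\lambda\le t\}$, so the total variation between the two laws is at most $\mathrm{P}_y(\tau_\lambda\le t)$. Since the $L^1$-distance between densities equals twice the total variation, this gives the factor $2$ in \eqref{eq:domainDifferences_Neu_1}.

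For the $L^2$ bound \eqref{eq:domainDifferences_Neu_2}, I would combine \eqref{eq:domainDifferences_Neu_1} with the elementary estimate
\begin{equation*}
    \int_{\mathbb{R}}(\phi_t(y,\eta)-G_t^\lambda(y,\eta))^2\,\D\eta\le \norm{\phi_t(y,\MTemptyplaceholder)-G_t^\lambda(y,\MTemptyplaceholder)}_{L^\infty(\mathbb{R})}\int_{\mathbb{R}}\abs{\phi_t(y,\eta)-G_t^\lambda(y,\eta)}\,\D\eta.
\end{equation*}
The explicit Gaussian formula gives $\norm{\phi_t(y,\MTemptyplaceholder)}_{L^\infty(\mathbb{R})}=(4\pi t)^{-1/2}$, and the Davies heat kernel estimate already cited in the proof of Lemma \ref{lem:aux_GreenfunctionsGrowth} (Theorem 3.2.9 of \citet{daviesHeatKernelsSpectral1990}), combined with the scaling relation \eqref{eq:Scaling_Heat_Kernel}, yields $G_t^\lambda(y,\eta)\lesssim (t^{-1/2}\vee 1)$ uniformly in $\lambda\ge 1$ and $y,\eta\in\Lambda$. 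For $0<t\le T$ this collapses to $\norm{G_t^\lambda(y,\MTemptyplaceholder)}_{L^\infty(\mathbb{R})}\le C_T t^{-1/2}$, and substituting back delivers \eqref{eq:domainDifferences_Neu_2} with a constant depending only on $T$.

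I do not anticipate serious obstacles: the coupling is standard for reflected versus killed/free Brownian motion, and the sup-norm bound on the Neumann heat kernel is a classical input already invoked elsewhere in the paper. The only mildly technical point is justifying uniformity in $\lambda\ge 1$ of the Davies bound, which is immediate from the scaling identity $G_t^\lambda(y,\eta)=\lambda^{-1}G_{\lambda^{-2}t}^1(y/\lambda,\eta/\lambda)$ and the fact that $\lambda^{-2}t\le T$ whenever $0<t\le T$ and $\lambda\ge 1$.
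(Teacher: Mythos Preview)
Your proposal is correct and the $L^2$ step is essentially identical to the paper's: bound the square by the sup-norm times the $L^1$-norm, and control the sup-norm of both kernels by $t^{-1/2}$ via the Davies estimate and the scaling relation \eqref{eq:Scaling_Heat_Kernel}.

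The $L^1$ step differs in an interesting way. The paper removes the absolute value by invoking Kendall's Neumann domain monotonicity result \eqref{eq:Greenfunction_Monotonicity_neumann}, which gives $\phi_t(y,\eta)\le G_t^\lambda(y,\eta)$ on $\Lambda$; the integral then splits cleanly into a piece over $\Lambda$ (difference of two probability kernels) and a piece over $\Lambda^c$ (only $\phi_t$ contributes), and choosing $A=\mathbb{R}$ collapses everything to $2\prob[y]{\tau_\lambda\le t}$. Your coupling argument bypasses this entirely: since $B$ and $B^{\leftrightarrow}$ agree up to $\tau_\lambda$, the total variation between their time-$2t$ laws is at most $\prob[y]{\tau_\lambda\le t}$, and the $L^1$ distance is twice that. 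This is more elementary and sidesteps the subtlety (noted in the paper) that Neumann domain monotonicity can fail in general; on the other hand, the paper's route yields the pointwise sign information $G_t^\lambda\ge\phi_t$ on $\Lambda$, which is not needed here but could be useful elsewhere.
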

	\begin{proof}
        The domain monotonicity \eqref{eq:Greenfunction_Monotonicity_neumann} implies for any Borel set $A\subset \mathbb{R}$, location $y\in \Lambda$ and time $0< t\le T$ the bound
		\begin{align*}
			\int_{A}\abs{G_t^\lambda(y,\eta) - \phi_t(y,\eta)}\,\D\eta &= \int_{A\cap \Lambda} (G_t^\lambda(y,\eta)-\phi_t(y,\eta))\,\D\eta + \int_{A\cap \Lambda^c}\phi_t(y,\eta)\,\D\eta\\
			&= \int_{A\cap \Lambda} G_t^\lambda(y,\eta)\,\D\eta -\int_{A\cap \Lambda}\phi_t(y,\eta)\,\D\eta + \int_{A\cap \Lambda^c}\phi_t(y,\eta)\,\D\eta.
		\end{align*}
		Recalling that $\phi_t$ and $G_t^\lambda$ are the transition kernels of $B_t$ and $B_{2t}^\leftrightarrow$, respectively, we find
		\begin{align*}
			\int_{A}\abs{G_t^\lambda(y,\eta) - \phi_t(y,\eta)}\,\D\eta & = \prob[y]{B_{2t}^\leftrightarrow\in A\cap \Lambda} - \prob[y]{B_{2t}\in A\cap \Lambda} + \prob[y]{B_{2t}\in A\cap \Lambda^c} \\
			  & = \prob[y]{B_{2t}^\leftrightarrow\in A\cap \Lambda } - \prob[y]{B_{2t}\in A} + 2 \prob[y]{B_{2t}\in A\cap \Lambda^c}              \\
			  & \le \prob[y]{B_{2t}^\leftrightarrow\in A\cap \Lambda} - \prob[y]{B_{2t}\in A} + 2\prob[y]{\tau_\lambda\le t}.
		\end{align*}
		Choosing $A=\mathbb{R}$ yields \eqref{eq:domainDifferences_Neu_1}. Since $G_t^\lambda(y,\eta) =\lambda^{-1} G_{\lambda^{-2} t}^1(\lambda^{-1}y,\lambda^{-1}\eta)$ by \eqref{eq:Scaling_Heat_Kernel}, where the latter is the Green kernel on $\bar{\Lambda}$, we can apply Theorem 3.2.9 of \citet{daviesHeatKernelsSpectral1990} to obtain
		\begin{equation}
			G_t^\lambda(y,\eta) = \lambda^{-1} G_{\lambda^{-2} t}^1(\lambda^{-1}y,\lambda^{-1}\eta)\lesssim \lambda^{-1}((\lambda^{-2} t)^{-1/2}\vee 1)\lesssim_T t^{-1/2} ,\quad y,\eta\in\Lambda.
		\end{equation}
		This implies the bound
		\begin{align*}
			\int_{\mathbb{R}}(G_t^\lambda(y,\eta) - \phi_t(y,\eta))^2\,\D\eta & \le \int_{\mathbb{R}} \max(G_t^\lambda(y,\eta), \phi_t(y,\eta))\abs{G_t^\lambda(y,\eta)-\phi_t(y,\eta)}\,\D\eta \\
			& \lesssim_{T}  t^{-1/2} \int_{\mathbb{R}}\abs{G_t^\lambda(y,\eta) - \phi_t(y,\eta)}\,\D\eta                          \\
			 & \lesssim t^{-1/2}\prob[y]{\tau_\lambda\le T}.
		\end{align*}\qedhere
	\end{proof}
	The following Lemma quantifies the convergence $\prob[\lambda\bar{y}]{\tau_\lambda\le t}\to 0$ as $\lambda\to\infty$ for any fixed $\bar{y}\in\bar{\Lambda}$ and follows by classical arguments.
	\begin{lemma}\label{lem:Stopping_time}
		For all $y\in\Lambda$ and $t> 0$ we have
		\begin{equation}
			\prob[y]{\tau_\lambda \le t}\lesssim 1\wedge \frac{\sqrt{t}}{\operatorname{dist}(y,\partial\Lambda)}\exp\left(-\frac{\operatorname{dist}(y,\partial\Lambda)^2}{4t}\right)\label{eq:Greenfunction_StoppingTimeEstimate}
		\end{equation}
		and
		\begin{equation}
			\frac{1}{\lambda}\int_\Lambda \sqrt{\prob[y]{\tau_\lambda\le T}}\,\D y \lesssim_T \lambda^{-1}.\label{eq:Greenfunction_IntegratedProb1}
		\end{equation}
	\end{lemma}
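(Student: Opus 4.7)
My plan is to first establish the pointwise tail bound \eqref{eq:Greenfunction_StoppingTimeEstimate} by comparing the exit time from $\Lambda$ to the first passage time of Brownian motion through the nearer boundary point, and then to deduce \eqref{eq:Greenfunction_IntegratedProb1} by a change of variables that turns the spatial integral into a one-dimensional Gaussian tail integral.

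For \eqref{eq:Greenfunction_StoppingTimeEstimate}, fix $y\in\Lambda$ and let $d=\operatorname{dist}(y,\partial\Lambda)$. Since $\Lambda$ is an interval, if $B_0=y$ then $\{\tau_\lambda\le t\}$ is contained in the event that the (time-rescaled) Brownian motion $(B_{2s})_{s\ge 0}$ hits either endpoint of $\Lambda$ by time $t$, and hence in the event that the one-sided first passage time $\tau_d=\inf\{s\ge 0\colon B_{2s}=y\pm d\}$ satisfies $\tau_d\le t$. By the reflection principle this is controlled by $2\,\prob[y]{\abs{B_{2t}-y}\ge d}$. The standard Mill's ratio bound for the Gaussian tail yields
\begin{equation*}
    \prob[y]{\tau_\lambda\le t}\;\le\; 4\,\prob[0]{B_{2t}\ge d}\;\lesssim\;\frac{\sqrt{t}}{d}\exp\left(-\frac{d^2}{4t}\right),
\end{equation*}
and the trivial bound by $1$ gives the stated minimum.

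For \eqref{eq:Greenfunction_IntegratedProb1}, after substituting $y=\lambda\bar y$ and noting $\operatorname{dist}(\lambda\bar y,\partial\Lambda)=\lambda\,\operatorname{dist}(\bar y,\partial\bar\Lambda)$, it suffices to show
\begin{equation*}
    \int_{\bar\Lambda}\sqrt{\prob[\lambda\bar y]{\tau_\lambda\le T}}\,\D\bar y\;\lesssim_T\;\lambda^{-1}.
\end{equation*}
Writing $\bar d(\bar y)=\operatorname{dist}(\bar y,\partial\bar\Lambda)$ and applying the square root of \eqref{eq:Greenfunction_StoppingTimeEstimate}, the integrand is bounded by $T^{1/4}(\lambda\bar d(\bar y))^{-1/2}\exp(-\lambda^2\bar d(\bar y)^2/(8T))$. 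Splitting $\bar\Lambda$ into its two halves and changing variables to $v=\lambda\bar d(\bar y)$ (so $\D\bar y=\lambda^{-1}\D v$) reduces the integral to a constant multiple of
\begin{equation*}
    \frac{T^{1/4}}{\lambda}\int_{0}^{\lambda\abs{\bar\Lambda}/2} v^{-1/2}\exp\!\left(-\frac{v^2}{8T}\right)\D v,
\end{equation*}
which is bounded by $\lambda^{-1}$ times a convergent integral depending only on $T$, since $v^{-1/2}$ is integrable at the origin and the Gaussian factor dominates at infinity. This gives the claimed bound.

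The argument is quite routine; the only subtle point is to make sure that when bounding $\prob[y]{\tau_\lambda\le t}$ we genuinely get the distance to the \emph{nearer} endpoint of $\Lambda$ in the Gaussian exponent, which is what the one-sided passage time estimate yields. After that, the scaling substitution $v=\lambda\bar d(\bar y)$ absorbs precisely one factor of $\lambda$, producing the stated rate $\lambda^{-1}$.
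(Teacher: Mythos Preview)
Your proposal is correct and follows essentially the same route as the paper: both bound $\prob[y]{\tau_\lambda\le t}$ by the tail of the two-sided Brownian maximum $\max_{s\le t}\abs{B_{2s}-y}$ and then apply a Gaussian tail estimate (the paper cites Karatzas--Shreve (2.8.3'), you use reflection plus Mill's ratio), and for \eqref{eq:Greenfunction_IntegratedProb1} both simply take square roots and integrate, where your explicit change of variables $v=\lambda\bar d(\bar y)$ just spells out what the paper leaves implicit. One cosmetic point: what you call the ``one-sided first passage time'' $\tau_d=\inf\{s\colon B_{2s}=y\pm d\}$ is actually the two-sided exit time from $(y-d,y+d)$; the argument is unaffected.
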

	\begin{proof}
		Using the estimate (2.8.3') of \citet{karatzasBrownianMotionStochastic1998} we can bound
		\begin{align*}
			\prob[y]{\tau_\lambda \le t} & =\prob[y]{\Exists 0\le s\le t\given  B_{2s}\in\partial\Lambda}\le \prob[y]{\max_{s\in[0,t]}\abs{B_{2s}-y}\ge \operatorname{dist}(y,\partial\Lambda)}        \\
			   & = \prob[0]{\max_{s\in[0,t]}\abs{B_{2s}}\ge \operatorname{dist}(y,\partial \Lambda)} = \prob[0]{\max_{s\in[0,t]}\abs{B_s}\ge \operatorname{dist}(y,\partial \Lambda)/\sqrt{2}} \\
	           & \le \sqrt{\frac{t}{\pi}}\frac{4}{\operatorname{dist}(y,\partial \Lambda)}\exp\left(-\frac{\operatorname{dist}(y,\partial \Lambda)^2}{4t}\right)                               \\
			     & \lesssim \frac{\sqrt{t}}{\operatorname{dist}(y,\partial\Lambda)}\exp\left(-\frac{\operatorname{dist}(y,\partial\Lambda)^2}{4t}\right).
		\end{align*}
		The bound \eqref{eq:Greenfunction_IntegratedProb1} follows from taking the square root and integrating over $y\in\Lambda$.
	\end{proof}

    \textbf{Step 2: A Gronwall argument.}\\
    The subsequent lemma follows from the semi-group property of the heat kernel and is the basis for the upcoming Gronwall argument.
	\begin{lemma}\label{lem:Convolution}
		Let $n\in\mathbb{N}_0$ and let $g\colon [0,T]\times\mathbb{R}\to\mathbb{R}_{\ge 0}$ be measurable. Then
		\begin{equation*}
			\int_0^t \bigg(G_{t-s}^\lambda\star \int_0^s (s-r)^n G_{s-r}^\lambda\star g(r,\MTemptyplaceholder)\,\D r\bigg)(y)\,\D s=(n+1)^{-1}\int_0^t (t-r)^{n+1} (G_{t-r}^\lambda \star g(r,\MTemptyplaceholder))(y)\,\D r
		\end{equation*}
		for all $0\le t\le T$ and $y\in\mathbb{R}$.
	\end{lemma}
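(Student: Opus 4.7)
The plan is to prove the identity by a direct computation that exploits the semi-group property of the Neumann heat kernel. Since $g\geq 0$, all integrands are non-negative and Tonelli's theorem applies throughout, so we can freely interchange the order of integration.

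First, I would expand the nested convolution on the left-hand side as an iterated integral over $\Lambda$ (or equivalently $\mathbb{R}$, extending $G^\lambda$ by zero) to obtain
\begin{equation*}
    \int_0^t\int_0^s (s-r)^n \left(G_{t-s}^\lambda \star G_{s-r}^\lambda\star g(r,\MTemptyplaceholder)\right)(y)\,\D r\,\D s.
\end{equation*}
Then I would apply Tonelli to swap the $\D s$ and $\D r$ integrals over the triangular region $0\le r\le s\le t$, yielding an inner integral from $r$ to $t$ in $s$.

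Next, the crucial step is invoking the semi-group property of the Neumann heat kernel, $G_{t-s}^\lambda\star G_{s-r}^\lambda = G_{t-r}^\lambda$, valid since $G^\lambda$ is the Green kernel of a $C_0$-semi-group on $L^2(\Lambda)$ (and the zero-extension off $\Lambda$ preserves the composition rule because all intermediate integrations over $\eta$ automatically restrict to $\Lambda$). After substituting, the factor $(G_{t-r}^\lambda\star g(r,\MTemptyplaceholder))(y)$ no longer depends on $s$, so the inner $s$-integral reduces to the elementary computation $\int_r^t (s-r)^n\,\D s = (t-r)^{n+1}/(n+1)$. This yields the right-hand side.

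I do not anticipate any genuine obstacle; the statement is essentially bookkeeping around the semi-group property combined with Tonelli and a polynomial integral. The only mild care required is in verifying that the semi-group identity holds pointwise for $y\in\mathbb{R}$ under the zero-extension convention, which follows because both sides vanish for $y\notin\Lambda$, while for $y\in\Lambda$ the $\eta$-integration effectively runs over $\Lambda$ and the semi-group law on $\Lambda$ applies directly.
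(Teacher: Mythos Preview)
Your proposal is correct and matches the paper's own proof essentially line for line: expand the $\star$-convolution, use the semi-group identity $G_{t-s}^\lambda\star G_{s-r}^\lambda=G_{t-r}^\lambda$, swap the $s$ and $r$ integrals via Tonelli, and evaluate $\int_r^t (s-r)^n\,\D s=(t-r)^{n+1}/(n+1)$. The only cosmetic difference is that the paper applies the semi-group property before swapping the order of integration rather than after, which is immaterial.
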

	\begin{proof}
		Using $G_{t_1}\star G_{t_2}=G_{t_1+t_2}$ for any $0< t_1,t_2<\infty$ we compute
		\begin{align*}
			\int_0^t \left(G_{t-s}^\lambda\star \int_0^s (s-r)^n G_{s-r}^\lambda\star g(r,\MTemptyplaceholder)\,\D r\right)(y)\,\D s & = \int_0^t \int_0^s (s-r)^n (G_{t-s}^\lambda\star (G_{s-r}^\lambda \star g(r,\MTemptyplaceholder)))(y)\,\D r\D s \\
			 & = \int_0^t\int_0^s (s-r)^n (G_{t-r}^\lambda \star g(r,\MTemptyplaceholder))(y)\,\D r\D s                     \\
	       & =\int_0^t (G_{t-r}^\lambda \star g(r,\MTemptyplaceholder))(y) \int_r^t (s-r)^n\,\D s\D r                     \\
	       & =(n+1)^{-1}\int_0^t (t-r)^{n+1} (G_{t-r}^\lambda \star g(r,\MTemptyplaceholder))(y)\,\D r.
		\end{align*}\qedhere
	\end{proof}
	We continue with a Gronwall argument based on Lemma \ref{lem:Convolution} and represent $Z_t(y)-X_t(y)$ in terms of differences of the Green functions $G_t^\lambda$ and $\phi_t$ in Lemma \ref{lem:Expansion/Gronwall}. Subsequently, Lemma \ref{lem:Aux_Convergence} bounds the terms individually using Lemmas \ref{lem:DomainDifferences} and \ref{lem:Stopping_time}.

	\begin{lemma}\label{lem:Expansion/Gronwall}
		Assume that $f\colon\mathbb{R}\to\mathbb{R}$ is Lipschitz-continuous. Let $y=\lambda\bar{y}\in\Lambda$ and $0\le t\le T$. We can bound
		\begin{align*}
			\abs{Z_t^{\bar{y}}(y)-X_t(y)}^p&\lesssim_{p,T,\Lip{f}}\abs{\bar{Z}^{\bar{y}}_t(y)-\bar{X}_t(y)}^p\\
            & \quad  +\prob[y]{\tau_\lambda\le t}^{p-1}\int_0^t(\abs{\phi_{t-s}-G_{t-s}^\lambda}\star\abs{f(Z_s^{\bar{y}})}^p)(y)\,\D s\\
            & \quad +\int_0^t (G_{t-s}^\lambda \star\abs{\bar{Z}_s^{\bar{y}}-\bar{X}_s}^p)(y)\,\D s\\
            & \quad +\int_0^t \left(G_{t-s}^\lambda \star\left(\prob[\MTemptyplaceholder]{\tau_\lambda\le t}^{p-1}\int_0^s \abs{\phi_{s-r}-G_{s-r}^\lambda}\star \abs{f(Z_r^{\bar{y}})}^p\,\D r\right)\right)(y)\,\D s,
		\end{align*}
		where the hidden constant in $\lesssim$ depends only on $p$, $T$, $\Lip{f}$ and $\norm{\chi}_{L^\infty(\bar{\Lambda})}$.
	\end{lemma}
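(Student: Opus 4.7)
The plan is to exploit the mild formulations \eqref{eq:ConvGlobal_Z} and \eqref{eq:ConvGlobal_X} by writing
\begin{align*}
    Z_t^{\bar{y}}(y) - X_t(y) &= (\bar{Z}_t^{\bar{y}}(y) - \bar{X}_t(y)) + \int_0^t ((\phi_{t-s} - G_{t-s}^\lambda) \star f(Z_s^{\bar{y}}))(y)\,\D s\\
    &\quad + \int_0^t (G_{t-s}^\lambda \star (f(Z_s^{\bar{y}}) - f(X_s)))(y)\,\D s,
\end{align*}
and then passing to $p$-th powers using $|a+b+c|^p \lesssim_p |a|^p + |b|^p + |c|^p$ together with the Lipschitz estimate $|f(Z_s^{\bar{y}}) - f(X_s)| \le \Lip{f}|Z_s^{\bar{y}} - X_s|$. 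This reduces the problem to estimating the $p$-th powers of the two remaining convolution integrals.

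For the integral involving $|\phi_{t-s} - G_{t-s}^\lambda|$, I would apply Hölder's inequality with conjugate exponents $p/(p-1)$ and $p$ against the positive measure $|\phi_{t-s}(y,\eta) - G_{t-s}^\lambda(y,\eta)|\,\D\eta\D s$, whose total mass on $[0,t]\times\mathbb{R}$ is at most $2T\,\prob[y]{\tau_\lambda\le t}$ by Lemma \ref{lem:DomainDifferences} and the monotonicity $\prob[y]{\tau_\lambda\le s}\le\prob[y]{\tau_\lambda\le t}$ for $s\le t$. This produces precisely the factor $\prob[y]{\tau_\lambda\le t}^{p-1}$ in front of $\int_0^t(|\phi_{t-s}-G_{t-s}^\lambda|\star|f(Z_s^{\bar{y}})|^p)(y)\,\D s$. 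For the Lipschitz term, Jensen's inequality against the subprobability measure $G_{t-s}^\lambda(y,\eta)\,\D\eta\D s$, which has total mass at most $T$ since $\int_{\mathbb{R}}G_{t-s}^\lambda(y,\eta)\,\D\eta\le 1$, gives
\begin{equation*}
    \bigg(\int_0^t (G_{t-s}^\lambda\star|Z_s^{\bar{y}}-X_s|)(y)\,\D s\bigg)^p \lesssim_T \int_0^t (G_{t-s}^\lambda\star|Z_s^{\bar{y}}-X_s|^p)(y)\,\D s.
\end{equation*}

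Setting $g(t,y)=|Z_t^{\bar{y}}(y)-X_t(y)|^p$ and collecting the other contributions into a source $h(t,y)$ consisting of $|\bar{Z}_t^{\bar{y}}(y)-\bar{X}_t(y)|^p$ plus the second term of the claim, the above estimates combine into the Volterra-type inequality $g(t,y)\le C h(t,y)+C\int_0^t (G_{t-s}^\lambda\star g(s,\cdot))(y)\,\D s$ with $C=C(p,T,\Lip{f})$. I would iterate this inequality and invoke Lemma \ref{lem:Convolution} recursively to rewrite the $k$-fold iterated kernel as $\frac{C^{k}}{(k-1)!}\int_0^t(t-r)^{k-1}(G_{t-r}^\lambda\star h(r,\cdot))(y)\,\D r$. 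The factorial decay ensures convergence of the series and that the remainder $T^{n+1}g$ vanishes (using the slow spatial growth of $Z$ and $X$ to control $g$ in the iteration), leaving $g(t,y)\lesssim_{p,T,\Lip{f}} h(t,y)+\int_0^t (G_{t-s}^\lambda\star h(s,\cdot))(y)\,\D s$. Substituting the explicit form of $h$ recovers exactly the four terms of the claim. The main delicate point is the Hölder step: the factor $|\phi_{t-s}-G_{t-s}^\lambda|$ must be split as $|\phi-G^\lambda|^{(p-1)/p}\cdot |\phi-G^\lambda|^{1/p}|f(Z)|$ so that both factors remain integrable against the bound from Lemma \ref{lem:DomainDifferences}, which is precisely what produces the $(p-1)$-th power of the exit probability demanded by the statement.
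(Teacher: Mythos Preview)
Your proposal is correct and uses the same ingredients as the paper's proof: the mild-formulation decomposition, the Lipschitz bound, Jensen/H\"older against the finite measures $G_{t-s}^\lambda(y,\eta)\,\D\eta$ and $\abs{\phi_{t-s}-G_{t-s}^\lambda}(y,\eta)\,\D\eta$ (whose total masses are controlled by Lemma~\ref{lem:DomainDifferences}), and the iteration via Lemma~\ref{lem:Convolution} with factorial decay killing the remainder. The only difference is the order of operations: the paper first carries out the Gronwall iteration at the level $p=1$ and only afterwards applies Jensen's inequality to pass to general $p$, whereas you first apply H\"older/Jensen to reach a Volterra inequality for $\abs{Z-X}^p$ and then iterate. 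Both orderings lead to the same four terms; the paper's route keeps the iteration step slightly cleaner (one works with $\abs{Z-X}$ rather than $\abs{Z-X}^p$), but there is no substantive difference. One minor point: your justification for the vanishing remainder should appeal to pathwise continuity of $X$ and $Z$ on the compact set $[0,T]\times\Lambda$ (so that $\sup_{s,\eta\in\Lambda}\abs{Z_s^{\bar y}(\eta)-X_s(\eta)}^p<\infty$ a.s.), not to ``slow spatial growth''.
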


	\begin{proof}
		\textbf{Step 1 ($p=1$ using a Gronwall expansion).} First expanding $X_t-Z_t^{\bar{y}}$ using \eqref{eq:ConvGlobal_Z} and \eqref{eq:ConvGlobal_X} into their linear and nonlinear parts, then using the triangle inequality for $\abs{\MTemptyplaceholder}$ and finally using the Lipschitz-continuity of $f$, we find
		\begin{align}
			\abs{Z_t^{\bar{y}}(y) -X_t(y)}\hspace{-0em} & \hspace{0em}\le \abs{\bar{Z}_t^{\bar{y}}(y)-\bar{X}_t(y)} + \abs[\bigg]{\int_0^t (\phi_{t-s}\star f(Z_s^{\bar{y}}))(y) - (G_{t-s}^\lambda\star f(X_s))(y)\,\D s}\nonumber \\
            & \le \abs[bigg]{\bar{Z}_t^{\bar{y}}(y)-\bar{X}_t(y)} + \abs[\bigg]{\int_0^t ((\phi_{t-s}-G_{t-s}^\lambda)\star f(Z_s^{\bar{y}}))(y)\,\D s} \nonumber  \\
            & \quad +\abs[\bigg]{\int_0^t (G_{t-s}^\lambda\star (f(X_s) - f(Z_s^{\bar{y}})))(y)\,\D s}\nonumber \\
            & \le\abs{\bar{Z}_t^{\bar{y}}(y)-\bar{X}_t(y)} + \int_0^t (\abs{\phi_{t-s}-G_{t-s}^\lambda}\star \abs{f(Z_s^{\bar{y}})})(y)\,\D s \nonumber\\
            & \quad +\Lip{f} \int_0^t (G_{t-s}^\lambda\star \abs{X_s - Z_s^{\bar{y}}})(y)\,\D s.\label{eq:Conv_aux_lastSummand}
		\end{align}
		Analogously expanding \eqref{eq:Conv_aux_lastSummand} yields the expression
		\begin{align*}
			\int_0^t (G_{t-s}^\lambda\star \abs{X_s - Z_s^{\bar{y}}})(y)\,\D s \hspace{-0em} & \hspace{0em}\le  \int_0^t (G_{t-s}^\lambda\star \abs{\bar{Z}_s^{\bar{y}}-\bar{X}_s})(y)\,\D s\\
            & \quad + \int_0^t \left(G_{t-s}^\lambda\star \int_0^s \abs{\phi_{s-r}-G_{s-r}^\lambda}\star \abs{f(Z_r^{\bar{y}})}\,\D r\right)(y)\,\D s        \\
            & \quad + \Lip{f}\int_0^t \left(G_{t-s}^\lambda\star \int_0^s G_{s-r}^\lambda\star \abs{X_r - Z_r^{\bar{y}}}\,\D r\right)(y)\,\D s.
		\end{align*}
		Lemma \ref{lem:Convolution} with $n=0$ and $g(r,\eta)=\abs{X_t(\eta) - Z_t^{\bar{y}}(\eta)}$ implies
		\begin{equation*}
			\int_0^t \bigg(G_{t-s}^\lambda\star \int_0^s G_{s-r}^\lambda\star \abs{X_r- Z_r^{\bar{y}}}\,\D r\bigg)(y)\,\D s = \int_0^t (t-s) (G_{t-s}^\lambda\star \abs{X_s - Z_s^{\bar{y}}})(y)\,\D s.
		\end{equation*}
		Recursively, we obtain for any $N\in\mathbb{N}$ the bound
		\begin{align*}
			\abs{Z_t^{\bar{y}}(y)-X_t(y)}\hspace{-3em} & \hspace{3em}\le \abs{\bar{Z}_t^{\bar{y}}(y)-\bar{X}_t(y)} +\int_0^t(\abs{\phi_{t-s}-G_{t-s}^\lambda}\star\abs{f(Z_s^{\bar{y}})})(y)\,\D s\\
            & \quad +\sum_{n=0}^N\frac{\Lip{f}^{n+1}}{n!}\int_0^t (t-s)^{n}(G_{t-s}^\lambda \star\abs{\bar{Z}_s^{\bar{y}}-\bar{X}_s})(y)\,\D s\\
            & \quad +\sum_{n=0}^N\frac{\Lip{f}^{n+1}}{n!}\int_0^t (t-s)^{n}\bigg(G_{t-s}^\lambda \star\int_0^s \abs{\phi_{s-r}-G_{s-r}^\lambda}\star \abs{f(Z_r^{\bar{y}})}\,\D r\bigg)(y)\,\D s \\
            & \quad +R_N,
		\end{align*}
		with \begin{align*}
			R_N & = \frac{\Lip{f}^{N+2}}{(N+1)!}\int_0^t (t-s)^{N+1}(G_{t-s}^\lambda\star \abs{X_s - Z_s^{\bar{y}}})(y)\,\D s\\
            & \le \sup_{\eta\in\mathbb{R},s\in [0,T]} (\abs{X_s(\eta)} + \abs{Z_s^{\bar{y}}(\eta)})\frac{\Lip{f}^{N+2}}{(N+1)!}\int_0^t (t-s)^{N+1}(G_{t-s}^\lambda\star 1)(y)\,\D s \\
            & \lesssim \frac{\Lip{f}^{N+2}}{(N+1)!}\int_0^t (t-s)^{N+1}\,\D s\lesssim  \frac{\Lip{f}^{N+2}}{(N+2)!}\xrightarrow{N\to\infty} 0,
		\end{align*}
		uniformly in $1\le\lambda<\infty$ by Lemma \ref{lem:UniformBoudnednessSolution} and Theorem 4.3 of \citet{Dalang2011}. This implies the representation
		\begin{align*}
			\abs{Z_t^{\bar{y}}(y)-X_t(y)}&\le \abs{\bar{Z}_t^{\bar{y}}(y)-\bar{X}_t(y)} +\int_0^t(\abs{\phi_{t-s}-G_{t-s}^\lambda}\star\abs{f(Z_s^{\bar{y}})})(y)\,\D s\\
            & \quad +\sum_{n\in\mathbb{N}_0}\frac{\Lip{f}^{n+1}}{n!}\int_0^t (t-s)^{n}(G_{t-s}^\lambda \star\abs{\bar{Z}_s^{\bar{y}}-\bar{X}_s})(y)\,\D s \\
            & \quad +\sum_{n\in\mathbb{N}_0}\frac{\Lip{f}^{n+1}}{n!}\int_0^t (t-s)^{n}\bigg(G_{t-s}^\lambda \star\int_0^s \abs{\phi_{s-r}-G_{s-r}^\lambda}\star \abs{f(Z_r^{\bar{y}})}\,\D r\bigg)(y)\,\D s.
		\end{align*}
		Estimating $t-s\le t$ and using the exponential series, we obtain
		\begin{align*}
			\abs{Z_t^{\bar{y}}(y)-X_t(y)} & \le \abs{\bar{Z}_t^{\bar{y}}(y)-\bar{X}_t(y)} +\int_0^t(\abs{\phi_{t-s}-G_{t-s}^\lambda}\star\abs{f(Z_s^{\bar{y}})})(y)\,\D s\\
            & \quad +\Lip{f} e^{\Lip{f} t}\int_0^t (G_{t-s}^\lambda \star\abs{\bar{Z}_s^{\bar{y}}-\bar{X}_s})(y)\,\D s\\
            & \quad +\Lip{f} e^{\Lip{f} t} \int_0^t \bigg(G_{t-s}^\lambda \star\int_0^s (\phi_{s-r}-G_{s-r}^\lambda)\star \abs{f(Z_r^{\bar{y}})}\,\D r\bigg)(y)\,\D s.
		\end{align*}
		\textbf{Step 2 ($p\ge 1$ using Jensen's inequality).} Note that $G_{t-s}^\lambda(y,\eta)\D\eta$ and $\abs{\phi_{t-s}(y,\eta)-G_{t-s}^\lambda(y,\eta)}\D \eta$ are finite measures (with total mass $= 1$ and $\le2\prob[y]{\tau_\lambda\le t-s}$, respectively) for all $0\le s\le t\le T$ and $y\in\mathbb{R}$ by Lemma \ref{lem:DomainDifferences}. We can use Jensen's inequality to infer for any $p\ge1$ the bound
		\begin{align*}
			\abs{Z_t^{\bar{y}}(y)-X_t(y)}^p&\lesssim_{p,T} \abs{\bar{Z}_t^{\bar{y}}(y)-\bar{X}_t(y)}^p\\
            & \quad +\prob[y]{\tau_\lambda\le t}^{p-1}\int_0^t(\abs{\phi_{t-s}-G_{t-s}^\lambda}\star\abs{f(Z_s^{\bar{y}})}^p)(y)\,\D s\\
            & \quad +\Lip{f}^pe^{p\Lip{f} t}\bigg[\int_0^t (G_{t-s}^\lambda \star\abs{\bar{Z}_s^{\bar{y}}-\bar{X}_s}^p)(y)\,\D s\\
            & \quad +\int_0^t \bigg(G_{t-s}^\lambda \star\bigg(\prob[\MTemptyplaceholder]{\tau_\lambda\le t}^{p-1}\int_0^s \abs{\phi_{s-r}-G_{s-r}^\lambda}\star \abs{f(Z_r^{\bar{y}})}^p\,\D r\bigg)\bigg)(y)\,\D s\bigg] \\
            & \lesssim_{T,\Lip{f}}\abs{\bar{Z}_t^{\bar{y}}(y)-\bar{X}_t(y)}^p +\prob[y]{\tau_\lambda\le t}^{p-1}\int_0^t(\abs{\phi_{t-s}-G_{t-s}^\lambda}\star\abs{f(Z_s^{\bar{y}})}^p)(y)\,\D s\\
            & \quad +\int_0^t (G_{t-s}^\lambda \star\abs{\bar{Z}_s^{\bar{y}}-\bar{X}_s}^p)(y)\,\D s\\
            & \quad +\int_0^t \left(G_{t-s}^\lambda \star\left(\prob[\MTemptyplaceholder]{\tau_\lambda\le t}^{p-1}\int_0^s \abs{\phi_{s-r}-G_{s-r}^\lambda}\star \abs{f(Z_r^{\bar{y}})}^p\,\D r\right)\right)(y)\,\D s.
		\end{align*}\qedhere
	\end{proof}
	\begin{lemma}\label{lem:Aux_Convergence}
		Assume that $f\colon\mathbb{R}\to\mathbb{R}$ is globally Lipschitz-continuous and that $\chi\in H^{\tilde{s}}(\bar{\Lambda})$ for some $\tilde{s}>1/2$. Let $0<\epsilon<\tilde{s}-1/2$, $p\ge 1$ and $y=\lambda\bar{y}\in\Lambda$. We have the following bounds, where the hidden constant in $\lesssim$ does not depend on the time $0\le t\le T$, location $y\in\Lambda$ or domain size $1\le\lambda<\infty$.
		\begin{enumerate}[(a)]
			\item\label{num:Aux_convergence_a} Linear part I:
			      \begin{align*}
				      \EV[f]{\abs{\bar{Z}_t^{\bar{y}}(y)-\bar{X}_t(y)}^p} 
                      & \lesssim_{T,p} t^{p/4}\prob[y]{\tau_\lambda\le t}^{p/2}+t^{p\epsilon/2}\lambda^{-p\epsilon}\norm{\chi}_{H^{\tilde{s}}(\bar{\Lambda})}^p.
			      \end{align*}
			\item\label{num:Aux_convergence_b} Linear part II:
			      \begin{equation*}
				      \EV[f][][\bigg]{\int_0^t (G_{t-s}^\lambda\star \abs{\bar{Z}_s^{\bar{y}}-\bar{X}_s}^p)(y)\D s}
                       \lesssim_{T,p} K(y,\lambda,p) + t^{p\epsilon/2+1}\lambda^{-p\epsilon}\norm{\chi}_{H^{\tilde{s}}(\bar{\Lambda})}^p.
			      \end{equation*}
			\item Remainder I:
			      \begin{equation*}
				      \prob[y]{\tau_\lambda\le t}^{p-1}\EV[f][][\bigg]{\int_0^t \abs{\phi_{t-s}-G_{t-s}^\lambda}\star \abs{f(Z_s^{\bar{y}})}^p)(y)\,\D s}\lesssim_{T,\Lip{f},p} t\prob[y]{\tau_\lambda\le t}^p.
			      \end{equation*}
			\item  Remainder II:
			      \begin{align*}
				      \EV[f][][\bigg]{\int_0^t \left(G_{t-s}^\lambda \star\left(\prob[\MTemptyplaceholder]{\tau_\lambda\le t}^{p-1}\int_0^s \abs{\phi_{s-r}-G_{s-r}^\lambda}\star \abs{f(Z_r^{\bar{y}})}^p\,\D r\right)\right)(y)\,\D s}\hspace{-10em} &\\
                      & \lesssim_{T,\Lip{f},p}K(y,\lambda,2p).
			      \end{align*}
		\end{enumerate}
	\end{lemma}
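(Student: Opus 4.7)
My plan is to bound $\bar Z^{\bar y}_s(\eta) - \bar X_s(\eta)$ pointwise by splitting it into a deterministic initial-condition contribution and a centred Gaussian stochastic contribution; parts (c) and (d) will then follow from uniform $p$-th moment bounds on $f(Z^{\bar y}_r(\cdot))$ combined with the $L^1$ and $L^2$ estimates of Lemma~\ref{lem:DomainDifferences}. For part (a), I would write
\begin{equation*}
\bar Z^{\bar y}_t(y) - \bar X_t(y) = \bigl[\chi(\bar y) - (G^\lambda_t \star X_0)(y)\bigr] + \int_0^t\!\!\int_{\mathbb R}\bigl[\phi_{t-s}(y,\eta) - G^\lambda_{t-s}(y,\eta)\bigr]\mathcal W(\D\eta,\D s).
\end{equation*}
Using $X_0(\cdot)=\chi(\cdot/\lambda)$ and the scaling \eqref{eq:Scaling_Heat_Kernel}, the deterministic term equals $\chi(\bar y) - (G^1_{\lambda^{-2}t}\star \chi)(\bar y)$, bounded by $Ct^{\epsilon/2}\lambda^{-\epsilon}\norm{\chi}_{H^{\tilde s}(\bar\Lambda)}$ via Lemma~\ref{lem:Impact_Initial_Condition}. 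The stochastic term is centred Gaussian; Lemma~\ref{lem:DomainDifferences}\eqref{eq:domainDifferences_Neu_2} and monotonicity of $s\mapsto \prob[y]{\tau_\lambda\le s}$ bound its variance by $C\sqrt t\,\prob[y]{\tau_\lambda\le t}$, and standard Gaussian moments yield the $t^{p/4}\prob[y]{\tau_\lambda\le t}^{p/2}$ contribution.

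For part (b), I would integrate the $\eta$-pointwise analogue of (a) against $G^\lambda_{t-s}(y,\eta)\,\D\eta\,\D s$. At $\eta\in\Lambda$ with $\bar\eta := \eta/\lambda$, the deterministic difference becomes $\chi(\bar y) - (G^1_{\lambda^{-2}s}\star\chi)(\bar\eta)$, which I split as $[\chi(\bar y)-\chi(\bar\eta)] + [\chi(\bar\eta) - (G^1_{\lambda^{-2}s}\star\chi)(\bar\eta)]$. The second piece is bounded uniformly by Lemma~\ref{lem:Impact_Initial_Condition}. For the first, the Sobolev embedding $H^{\tilde s}(\bar\Lambda)\hookrightarrow C^{\epsilon}(\bar\Lambda)$ (valid since $\epsilon<\tilde s - 1/2$) gives $|\chi(\bar y)-\chi(\bar\eta)| \lesssim |\bar y-\bar\eta|^\epsilon\norm{\chi}_{H^{\tilde s}} = |y-\eta|^\epsilon\lambda^{-\epsilon}\norm{\chi}_{H^{\tilde s}}$. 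Using the Gaussian moment $\int G^\lambda_{t-s}(y,\eta)|y-\eta|^{p\epsilon}\,\D\eta \lesssim (t-s)^{p\epsilon/2}$, the deterministic contribution integrates to at most $t^{p\epsilon/2+1}\lambda^{-p\epsilon}\norm{\chi}^p_{H^{\tilde s}}$. The stochastic $p$-th moment contributes $\int_0^t\!\int G^\lambda_{t-s}(y,\eta)\,s^{p/4}\prob[\eta]{\tau_\lambda\le s}^{p/2}\,\D\eta\,\D s$; combining the Gaussian upper bound on $G^\lambda_{t-s}$ with Lemma~\ref{lem:Stopping_time} and completing the square in the exponent produces $K(y,\lambda,p)$, the borderline case $p=2$ generating the logarithmic factor.

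For parts (c) and (d), the Lipschitz property of $f$ together with $\sup_{r,\zeta,\bar y}\EV[f]{|Z^{\bar y}_r(\zeta)|^p}<\infty$ (Theorem~25 of \citet{Dalang2011}, as already used in the proof of Corollary~\ref{cor:Convergence_polynomials}) gives $\sup_{r,\zeta,\bar y}\EV[f]{|f(Z^{\bar y}_r(\zeta))|^p}\le C$. For (c), taking expectation under the integral and applying Lemma~\ref{lem:DomainDifferences}\eqref{eq:domainDifferences_Neu_1} yields an inner bound $\int_0^t\prob[y]{\tau_\lambda\le t-s}\,\D s\le t\,\prob[y]{\tau_\lambda\le t}$, which multiplied by the prefactor $\prob[y]{\tau_\lambda\le t}^{p-1}$ gives exactly $t\,\prob[y]{\tau_\lambda\le t}^p$. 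For (d), the same step bounds the inner double integral by $T\prob[\eta]{\tau_\lambda\le T}^p$, reducing the claim to $\int G^\lambda_{t-s}(y,\eta)\prob[\eta]{\tau_\lambda\le T}^p\,\D\eta \lesssim K(y,\lambda,2p)$. Using the Gaussian upper bound on $G^\lambda_{t-s}$ and the $p$-th power of Lemma~\ref{lem:Stopping_time}, this is a Gaussian-against-Gaussian integral; the elementary inequality $\dist(\eta,\partial\Lambda)^2 \ge \tfrac12\dist(y,\partial\Lambda)^2 - (y-\eta)^2$ followed by completing the square leaves a Gaussian exponent matching that defining $K(y,\lambda,2p)$, where the doubling of $p$ reflects the portion of the decay absorbed to restore an integrable Gaussian in $\eta$.

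The main obstacle I anticipate is the bookkeeping of the Gaussian-against-Gaussian integrals in (b) and (d): producing the correct exponent $\dist(y,\partial\Lambda)^2/(8cTp)$ (respectively $1/(8cT\cdot 2p)$) with constants uniform in $\lambda\ge 1$ and $y\in\Lambda$, and, in the borderline case $p=2$, recovering the $(1+\log\lambda+\dist(y,\partial\Lambda)^2)$ factor from the $(t-s)^{-1/2}$ singularity of $G^\lambda_{t-s}$ rather than losing it to a crude uniform bound on the kernel.
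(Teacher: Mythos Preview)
Your approach to parts (a), (c), and (d) matches the paper's proof essentially line by line. For the deterministic half of (b), your split $[\chi(\bar y)-\chi(\bar\eta)] + [\chi(\bar\eta) - (G^1_{\lambda^{-2}s}\star\chi)(\bar\eta)]$ via the Sobolev--H\"older embedding is in fact more careful than the paper, which invokes Lemma~\ref{lem:Impact_Initial_Condition} directly even though that lemma controls $|X_0(\eta)-(G^\lambda_s\star X_0)(\eta)|$ rather than $|X_0(y)-(G^\lambda_s\star X_0)(\eta)|$; your extra H\"older term closes this cleanly and integrates to the stated bound.

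There is, however, a genuine gap in your plan for the stochastic half of (b) (and, by reference, (d)). You attribute the $p=2$ logarithm to the $(t-s)^{-1/2}$ singularity of $G^\lambda_{t-s}$, but that time singularity integrates to $2\sqrt t$ and is harmless for every $p$. The logarithm actually comes from the \emph{spatial} prefactor $\dist(z,\partial\Lambda)^{-1}$ in the stopping-time estimate~\eqref{eq:Greenfunction_StoppingTimeEstimate}: after raising to the power $p/2$, the integral $\int \dist(z,\partial\bar\Lambda)^{-p/2}\,\D z$ near $\partial\bar\Lambda$ diverges logarithmically precisely at $p=2$. Your completing-the-square inequality $\dist(\eta,\partial\Lambda)^2\ge\tfrac12\dist(y,\partial\Lambda)^2-(y-\eta)^2$ correctly extracts the Gaussian factor $\exp(-\dist(y,\partial\Lambda)^2/(8cTp))$ from the exponentials, but does nothing about this polynomial prefactor. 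The paper handles it by introducing a boundary layer $\mathcal C_h=\{z\in\bar\Lambda:\dist(z,\partial\bar\Lambda)<h\}$: on $\mathcal C_h$ one uses the trivial bound $\prob[\lambda z]{\tau_\lambda\le T}\le 1$, contributing $\lesssim \lambda h$; on $\bar\Lambda\setminus\mathcal C_h$ one combines~\eqref{eq:Greenfunction_StoppingTimeEstimate} with your triangle-type inequality on $\dist(\cdot,\partial\bar\Lambda)$, contributing $\lesssim \lambda^{1-p/2}\zeta\cdot h^{1-p/2}$ (or $\zeta\log(1/h)$ at $p=2$) with $\zeta=\exp(-\dist(y,\partial\Lambda)^2/(16cT))$; optimising $h=\zeta^{2/p}/\lambda$ then produces exactly $K(y,\lambda,p)$. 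This cutoff-and-optimise step is the missing ingredient; once you insert it, your argument is complete.
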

	\begin{proof}~
		\begin{enumerate}[(a)]
			\item Using the triangle inequality, Gaussian hypercontractivity and Itô's isometry we can write
            \begin{align*}
                \EV[f]{\abs{\bar{Z}_t^{\bar{y}}(y)-\bar{X}_t(y)}^p}&\lesssim_p  \EV[f][][\bigg]{\abs[\bigg]{\int_0^t\int_{\mathbb{R}} (\phi_{t-s}(y,\eta)-G_{t-s}^\lambda(y,\eta)) \mathcal{W}(\D\eta,\D s)}^p}\\
                &\quad+ \abs{X_0(y) - (G_t^\lambda \star X_0)(y)}^p\\
                &\lesssim_p \Big(\int_0^t \int_{\mathbb{R}}(\phi_{t-s}(y,\eta)-G_{t-s}^\lambda(y,\eta))^2\,\D\eta\D s\Big)^{p/2}\\
                &\quad+ \abs{X_0(y) - (G_t^\lambda \star X_0)(y)}^p.
            \end{align*}
            The claim follows Lemmas \ref{lem:DomainDifferences} and \ref{lem:Impact_Initial_Condition}.
			\item Using the triangle inequality, Gaussian hypercontractivity and Itô's isometry we find 
            \begin{align*}
                \EV[f][][\bigg]{\int_0^t (G_{t-s}^\lambda\star \abs{\bar{Z}_s^{\bar{y}}-\bar{X}_s}^p)(y)\,\D s}\hspace{-4em} &\\
                      &\lesssim_p \int_0^t \int_{\Lambda}G_{t-s}^\lambda(y,z)\Big(\int_0^s \int_{\mathbb{R}}(\phi_{s-r}(z,\eta)-G_{s-r}^\lambda(z,\eta))^2\,\D\eta\D r\Big)^{p/2}\D z\D s\\
                      &\quad+ \int_0^t (G_{t-s}^\lambda \star \abs{X_0(y)-G_s^\lambda\star X_0}^p)(y)\,\D s.
            \end{align*}
            By Lemma \ref{lem:Impact_Initial_Condition} the second summand is bounded by $C t^{p\epsilon/2+1}\lambda^{-p\epsilon}\norm{\chi}_{H^{\tilde{s}}(\bar{\Lambda})}^p$ for some constant $0<C<\infty$ depending on $p$. We proceed by bounding the first summand. 
            Take some $h=h(\lambda)>0$ such that $h<\operatorname{diam}(\bar{\Lambda})/2$ and set $\mathcal{C}_h = (B_h(0)+\partial \bar{\Lambda})\cap \bar{\Lambda}$. Using first Itô's isometry, then the Property \eqref{num:Aux_convergence_a} and finally the bound \eqref{eq:Greenfunction_StoppingTimeEstimate}, we compute
			      \begin{align}
				      \int_0^t \int_{\Lambda}G_{t-s}^\lambda(y,z)\Big(\int_0^s \int_{\mathbb{R}}(\phi_{s-r}(z,\eta)-G_{s-r}^\lambda(z,\eta))^2\,\D\eta\D r\Big)^{p/2}\D z\D s\hspace{-25em}&\nonumber\\
                      & \lesssim_{T}\int_0^t  \int_{\Lambda}G_{t-s}^\lambda(y,z)\prob[z]{\tau_\lambda\le T}^{p/2}\,\D z\D s\label{eq:Convergence_Global_aux1}\\
                      & =\lambda\int_0^t  \int_{\bar{\Lambda}}G_{t-s}^\lambda(y,\lambda z)\prob[\lambda z]{\tau_\lambda\le T}^{p/2}\,\D z\D s\nonumber\\
                      & \lesssim_T \lambda\int_0^t  \int_{\bar{\Lambda}\setminus \mathcal{C}_h}G_{t-s}^\lambda(y,\lambda z)\frac{\exp\left(-\frac{p\lambda^2\operatorname{dist}(z,\partial\bar{\Lambda})^2}{8 T}\right)}{\lambda^{p/2}\operatorname{dist}(z,\partial\bar{\Lambda})^{p/2}}\,\D z\D s+ +\lambda\int_0^t  \int_{ \mathcal{C}_h}G_{t-s}^\lambda(y,\lambda z)\,\D z\D s\nonumber  \\
                      & \lesssim_T \lambda \int_0^t  \int_{\bar{\Lambda}\setminus \mathcal{C}_h}G_{t-s}^\lambda(y,\lambda z)\frac{\exp\left(-\frac{p\lambda^2\operatorname{dist}(z,\partial\bar{\Lambda})^2}{8 T}\right)}{\lambda^{p/2}\operatorname{dist}(z,\partial\bar{\Lambda})^{p/2}}\,\D z\D s+\lambda h\nonumber \\
                      & =\colon I + \lambda h\nonumber.
			      \end{align}
			      Recall the scaling of the heat kernel from \eqref{eq:Scaling_Heat_Kernel}. Applying the bound $G_t^1\lesssim_{T} \phi_{ct}$, for $0\le t\le T$ and any constant $c>1$ from Theorem 3.2.9 of \citet{daviesHeatKernelsSpectral1990}, we obtain
			      \begin{align*}
				      I & =\int_0^t  \int_{\bar{\Lambda}\setminus \mathcal{C}_h}G_{\lambda^{-2}(t-s)}^1(\lambda^{-1}y,z)\frac{\exp\left(-\frac{p\lambda^2\operatorname{dist}(z,\partial \bar{\Lambda})^2}{8 T}\right)}{\lambda^{p/2}\operatorname{dist}(z,\partial \bar{\Lambda})^{p/2}}\,\D z\D s                                                                      \\
				        & \lesssim_{T}\lambda\int_0^t  (t-s)^{-1/2}\int_{\bar{\Lambda}\setminus \mathcal{C}_h}\frac{\exp\left(-\frac{\lambda^2(\lambda^{-1}y-z)^2}{4c (t-s)}\right)\exp\left(-\frac{p\lambda^2\operatorname{dist}(z,\partial \bar{\Lambda})^2}{8 T}\right)}{\lambda^{p/2}\operatorname{dist}(z,\partial \bar{\Lambda})^{p/2}}\,\D z\D s \\
				        & \le \lambda^{1-p/2}\int_0^t  (t-s)^{-1/2}\int_{\bar{\Lambda}\setminus \mathcal{C}_h}\frac{\exp\left(-\frac{\lambda^2(\lambda^{-1}y-z)^2}{8c T}\right)\exp\left(-\frac{\lambda^2\operatorname{dist}(z,\partial \bar{\Lambda})^2}{8c T}\right)}{\operatorname{dist}(z,\partial \bar{\Lambda})^{p/2}}\,\D z\D s.                          
			      \end{align*}
			      Since\footnote{Consider two cases: If $a$ and $b$ project to the same boundary point $p$, use triangle inequality. Otherwise, with projection points $p_{a},p_b\in\partial\bar{\Lambda}$, we find $\abs{p_{a}-a}=\operatorname{dist}(a,\partial\bar{\Lambda}) \le \abs{p_b-a}=\abs{p_b-b}+\abs{b-a}$.}
			      \begin{equation*}
				      \operatorname{dist}(a,\partial \bar{\Lambda})\le \abs{a-b} + \operatorname{dist}(b,\partial \bar{\Lambda}),\quad a,b\in\bar{\Lambda},
			      \end{equation*}
			      we find
			      \begin{align*}
				      I & \lesssim \lambda^{1-p/2}\exp\left(-\frac{\lambda^2\operatorname{dist}(\lambda^{-1}y,\partial\bar{\Lambda})^2}{16c T}\right)\int_0^t (t-s)^{-1/2}\D s \int_{\bar{\Lambda}\setminus \mathcal{C}_h}\operatorname{dist}(z,\partial \bar{\Lambda})^{-p/2}\,\D z \\
				        & \lesssim_{T,p} \lambda^{1-p/2}\exp\left(-\frac{\lambda^2\operatorname{dist}(\lambda^{-1}y,\partial \bar{\Lambda})^2}{16c T}\right) \begin{cases}\log(h^{-1}),&p=2,\\ h^{-p/2+1},&p\neq 2.\end{cases}
			      \end{align*}
			      Define $\zeta\coloneq \exp\left(-\frac{\lambda^2\operatorname{dist}(\lambda^{-1}y,\partial \bar{\Lambda})^2}{16c T}\right)$, then the choice $h=\zeta^{2/p}/\lambda$ ensures that
			      \begin{align*}
				      \int_0^t \int_{\Lambda}G_{t-s}^\lambda(y,z)\Big(\int_0^s \int_{\mathbb{R}}(\phi_{s-r}(z,\eta)-G_{s-r}^\lambda(z,\eta))^2\,\D\eta\D r\Big)^{p/2}\D z\D s\hspace{-24em} &\\
                      & \lesssim_{T,p} \lambda h + \lambda^{1-p/2}\zeta \begin{cases}\log(h^{-1}),&p=2,\\h^{1-p/2},&p\neq 2,\end{cases}\\
                      & \lesssim_{T,p} \zeta^{2/p}\begin{cases}(1 + \log(\lambda) + \operatorname{dist}(\lambda^{-1}y,\partial\bar{\Lambda})^2),&p=2\\ 2,& p\neq 2,\end{cases} \\
                      & =K(y,\lambda,p).
			      \end{align*}
			\item Using Lemma \ref{lem:DomainDifferences} in the second estimate we compute
			      \begin{align*}
				      \prob[y]{\tau_\lambda\le t}^{p-1}\EV[f][][\bigg]{\int_0^t (\abs{\phi_{t-s}-G_{t-s}^\lambda}\star \abs{f(Z_s^{\bar{y}})}^p)(y)\,\D s}\hspace{-17em} &\\
                      & \lesssim \prob[y]{\tau_\lambda\le t}^{p-1}\sup_{\eta\in\mathbb{R}, s\in[0,T]}\EV[f]{\abs{f(Z_s^{\bar{y}}(\eta))}^p}\int_0^t\int_{\mathbb{R}}\abs{\phi_{t-s}(y,\eta)-G_{t-s}^\lambda(y,\eta)}\,\D\eta\D s \\
                      & \le 2\prob[y]{\tau_\lambda\le t}^{p-1}\sup_{\eta\in\mathbb{R}, s\in[0,T]}\EV[f]{\abs{f(Z_s^{\bar{y}}(\eta))}^p}\int_0^t\prob[y]{\tau_\lambda\le t-s}\,\D s\\
                      & \lesssim\sup_{\eta\in\mathbb{R}, s\in[0,T]}\EV[f]{\abs{f(Z_s^{\bar{y}}(\eta))}^p}t\prob[y]{\tau_\lambda\le t}^p.
			      \end{align*}
			      The claim follows by noting $\sup_{\eta\in\mathbb{R}, s\in[0,T]}\EV[f]{\abs{f(Z_s^{\bar{y}}(\eta))}^p}\le \tilde{C}$ for some constant $0<\tilde{C}<\infty$ depending on $T$, $\Lip{f}$, $p$ and $\norm{\chi}_{L^\infty(\bar{\Lambda})}$ by Theorem 4.3 of \citet{Dalang2011}.
			\item Using Lemma \ref{lem:DomainDifferences} and Theorem 4.3 of \citet{Dalang2011} in the second estimate we compute
			      \begin{align*}
				      \EV[f][][\bigg]{\int_0^t \left(G_{t-s}^\lambda \star\left(\prob[\MTemptyplaceholder]{\tau_\lambda\le t}^{p-1}\int_0^s \abs{\phi_{s-r}-G_{s-r}^\lambda}\star \abs{f(Z_r^{\bar{y}})}^p\,\D r\right)\right)(y)\,\D s}\hspace{-22em} & \\
                      & \le \sup_{\eta\in\mathbb{R}, s\in[0,T]}\EV[f]{\abs{f(Z_s^{\bar{y}}(\eta))}^p}\\
                      & \quad \int_0^t \int_{\Lambda} G_{t-s}^\lambda(y,z)\prob[z]{\tau_\lambda\le t}^{p-1} \int_0^s \int_{\mathbb{R}}\abs{\phi_{s-r}(z,\eta)-G_{s-r}^\lambda(z,\eta)}\,\D\eta \D r\D z\D s \\
                      & \lesssim_{T,\Lip{f},p,\norm{\chi}_{L^\infty(\bar{\Lambda})}}\int_0^t \int_{\Lambda} G_{t-s}^\lambda(y,z)\prob[z]{\tau_\lambda\le s}^p\, \D z\D s.
			      \end{align*}
			      Proceeding exactly as in the proof of Property \eqref{num:Aux_convergence_b} with $2p$ instead of $p$ (compare \eqref{eq:Convergence_Global_aux1}), we find
			      \begin{equation*}
				      \EV[f][][\bigg]{\int_0^t \left(G_{t-s}^\lambda \star\left(\prob[\MTemptyplaceholder]{\tau_\lambda\le t}^{p-1}\int_0^s \abs{\phi_{s-r}-G_{s-r}^\lambda}\star \abs{f(Z_r^{\bar{y}})}^p\,\D r\right)\right)(y)\,\D s} \lesssim_{T,\Lip{f},p} K(y,\lambda,2p).
			      \end{equation*}
		\end{enumerate}
	\end{proof}

	\subsubsection{Proof of Proposition \ref{prop:Convergencetoglobal}}\label{subsec:Proof_Convergence_global}

	\begin{proof}[Proof of Proposition \ref{prop:Convergencetoglobal}]
		Using first Lemma \ref{lem:Expansion/Gronwall} and then Lemma \ref{lem:Aux_Convergence} implies
		\begin{align*}
			\EV[f]{\abs{Z_t^{\bar{y}}(y)-X_t(y)}^p}\hspace{-6em} & \hspace{6em}\lesssim_{p,T,\Lip{f}}\EV[f]{\abs{\bar{Z}_t^{\bar{y}}(y)-\bar{X}_t(y)}^p}\\
            &\quad +\EV[f][][\bigg]{\int_0^t (G_{t-s}^\lambda \star\abs{\bar{Z}_s^{\bar{y}}-\bar{X}_s}^p)(y)\,\D s}\\
            & \quad + \prob[y]{\tau_\lambda\le t}^{p-1}\EV[f][][\bigg]{\int_0^t(\abs{\phi_{t-s}-G_{t-s}^\lambda}\star\abs{f(Z_s^{\bar{y}})}^p)(y)\,\D s}\\
            & \quad+\EV[f][][\bigg]{\int_0^t \left(G_{t-s}^\lambda \star\left(\prob[\MTemptyplaceholder]{\tau_\lambda\le t}^{p-1}\int_0^s \abs{\phi_{s-r}-G_{s-r}^\lambda}\star \abs{f(Z_r^{\bar{y}})}^p\,\D r\right)\right)(y)\,\D s} \\
            & \lesssim_{T,\Lip{f},p} t^{p/4}\prob[y]{\tau_\lambda\le t}^{p/2}+t^{p\epsilon/2}\lambda^{-p\epsilon}\norm{\chi}_{H^{\tilde{s}}(\bar{\Lambda})}^p+t \prob[y]{\tau_\lambda\le t}^p\\
            &\quad +t^{p\epsilon/2+1}\lambda^{-p\epsilon}\norm{\chi}_{H^{\tilde{s}}(\bar{\Lambda})}^p+K(y,\lambda,p)+K(y,\lambda,2p)\\
            & \lesssim_{T} \prob[y]{\tau_\lambda\le t}^{p/2}+ K(y,\lambda,p)+K(y,\lambda,2p)+\lambda^{-p\epsilon}\norm{\chi}_{H^{\tilde{s}}(\bar{\Lambda})}^p,
		\end{align*}
        which shows \eqref{eq:ConvergenceGlobal_Claim}. Given the bound \eqref{eq:ConvergenceGlobal_Claim}, the claimed convergence of $X_t(\lambda\bar{y})$ to $Z_t^{\bar{y}}(0)$ follows from $Z_t^{\bar{y}}(y)\overset{d}{=}Z_t^{\bar{y}}(0)$ for all $y\in\mathbb{R}$ and $0\le t\le T$ by Lemma 7.1 of \citet{chenSpatialErgodicitySPDEs2021}.
	\end{proof}

\printbibliography

\end{document}